\documentclass[10pt,reqno]{amsart}
\usepackage[T1]{fontenc}
\usepackage[utf8]{inputenc}
\usepackage[english]{babel}
\usepackage{amsmath}
\usepackage{amsfonts}
\usepackage{amssymb}
\usepackage{amsthm}
\usepackage{bm}
\usepackage{bbm}
\usepackage{braket}
\usepackage{graphicx}
\usepackage{subcaption}

\usepackage{marginnote}
\usepackage[margin=1.2in]{geometry}
\usepackage{float}

\usepackage{color}
\usepackage[colorlinks=true, pdfstartview=FitV, linkcolor=darkblue, citecolor=darkred, urlcolor=cyan]{hyperref}
\definecolor{darkred}{RGB}{203,65,84}
\definecolor{darkblue}{RGB}{70,130,180}
\definecolor{brown}{RGB}{139,69,19}

\usepackage{dsfont} 
\usepackage{pxfonts}
\usepackage{microtype}

\usepackage{csquotes}

\newtheorem{theorem}{Theorem}[subsection]
\newtheorem{lemma}[theorem]{Lemma}
\newtheorem{proposition}[theorem]{Proposition}
\newtheorem{corollary}[theorem]{Corollary}

\newtheorem{definition}[theorem]{Definition}

\newcounter{foo}
\newtheorem{theo}[foo]{Theorem}

\newcommand{\rmks}{\vskip 0.1 truecm\noindent{\sc Remarks: }}
\newcommand{\rmka}{\vskip 0.1 truecm\noindent{\sc Remark: }}

\newcommand{\hh}{{\mathbf H^2}}
\newcommand{\uh}{{\mathsf U}\hh}
\newcommand{\USi}{{\mathsf U}\Sigma}
\newcommand{\bary}{\operatorname{Bary}}
\newcommand{\bh}{\partial_\infty \hh}

\newcommand{\G}{\ms G}

\newcommand{\psld}{\mathsf{PSL}_2(\mathbb R)}
\newcommand{\pslt}{\mathsf{PSL}_3(\mathbb R)}

\renewcommand{\angle}{\sphericalangle}


\newcommand{\End}{\operatorname{End}}
\newcommand{\tr}{\operatorname{Tr}}

\newcommand{\Id}{\operatorname{Id}}


\renewcommand{\d}{{\rm d}}
\newcommand{\D}{{\mathrm D}}

\newcommand{\pt}{{\partial_t}}
\newcommand{\ps}{{\partial_s}}

\newcommand{\sbt}{\,\begin{picture}(-1,1)(-1,-1)\circle*{2}\end{picture}\ }
\renewcommand{\dot}[1]{\overset{\sbt}{#1}}


\newcommand{\ms}{\mathsf}
\newcommand{\mk}{\mathfrak}
\newcommand{\pp}{\mathrm p}
\newcommand{\qq}{\mathrm q}
\newcommand{\PP}{\mathrm P}
\newcommand{\QQ}{\mathrm Q}
\renewcommand{\aa}{\mathrm a}
\renewcommand{\AA}{\mathbf a}

\newcommand{\uu}{\mathbf u}
\newcommand{\vv}{\mathbf v}

\newcommand{\Ell}{\mathsf{L}}

\newcommand{\T}{\ms T}

\newcommand{\defeq}{\coloneqq}
\newcommand{\eqdef}{\eqqcolon}
\renewcommand{\leq}{\leqslant}
\renewcommand{\geq}{\geqslant}

\renewcommand{\epsilon}{\varepsilon}
\renewcommand{\phi}{\varphi}

\newcommand{\GG}{{\mathcal C}}
\newcommand{\GT}{{\mathcal G}}
\newcommand{\casper}{\boldsymbol{1}}


\newcommand{\flo}[1]{\left(#1_t\right)_{t\in\mathbb R}}
\renewcommand{\fam}[1]
{
\left(#1_t\right)_{t\in ]-\epsilon,\epsilon[}
}
\newcommand{\famD}[1]
{
\left(#1_u\right)_{u\in\mathrm D}
}
\newcommand{\seq}[1]{ \{{#1}_m\}_{m\in\mathbb N}}
\newcommand{\mapping}[4] { \left\{
    \begin{array}{rcl}
      #1 &\rightarrow& #2\\
      #3 &\mapsto& #4
    \end{array}
  \right.  }

\newcommand{\bS}{\bm{S}}
\newcommand{\bI}{\mathsf{I}}
\newcommand{\J}{\bm{J}}

  \newcommand{\Gk}{\operatorname{Gr}_k}
    \newcommand{\Gnk}{\operatorname{Gr}_{n-k}}
  \newcommand{\bgamma}{{\boldsymbol{\gamma}}}

\newcommand{\bzeta}{{\eta}}
\newcommand{\bOm}{\bold{\Omega}}
\newcommand{\bLa}{\bold{\Lambda}}
\newcommand{\bXi}{\bold{\Xi}}  



\usepackage{blindtext}

\makeatletter
\renewcommand\part{%
   \if@noskipsec \leavevmode \fi
   \par
   \addvspace{4ex}%
   \@afterindentfalse
   \secdef\@part\@spart}

\def\@part[#1]#2{%
    \ifnum \c@secnumdepth >\m@ne
      \refstepcounter{part}%
      \addcontentsline{toc}{part}{\thepart\hspace{1em}#1}%
    \else
      \addcontentsline{toc}{part}{#1}%
    \fi
    {\vskip 2cm\parindent \z@ \raggedright
     \interlinepenalty \@M
     \normalfont
     \ifnum \c@secnumdepth >\m@ne
       \Large\bfseries \partname\nobreakspace\thepart
       \par\nobreak
     \fi
     \huge \bfseries #2%
     \par}%
    \nobreak
    \vskip 3cm
    \@afterheading}
\def\@spart#1{%
    {\parindent \z@ \raggedright
     \interlinepenalty \@M
     \normalfont
     \huge \bfseries #1\par}%
     \nobreak
     \vskip 3ex
     \@afterheading}
\makeatother 

\title{Ghost polygons, Poisson bracket and convexity}
\author{Martin Bridgeman \and Fran\c{c}ois Labourie}
\thanks{M.~Bridgeman acknowledges funding by NSF grant DMS-2005498 and the Simons Fellowship 675497F.\\ \indent F.~Labourie acknowledges funding by the European Research Council under ERC-Advanced grant 101095722.  }

\begin{document}
\maketitle

\begin{abstract}
The moduli space of Anosov representations of a surface group in a semisimple group -- an open set in the character variety -- admits many more natural functions than the regular functions. We will study in particular length functions and correlation functions. Our main result is a formula that computes the Poisson bracket of those functions using some combinatorial devices called {\em ghost polygons} and {\em ghost bracket} encoded in a formal algebra called the {\em ghost algebra} related in some cases to the swapping algebra introduced by the second author. As a consequence of our main theorem, we show that the set of those functions -- length and correlation -- is stable under the Poisson bracket. We give two applications: firstly in the presence of positivity we prove the convexity of length functions, generalizing a result of Kerckhoff in Teichm\"uller space, secondly we exhibit subalgebras of commuting functions  associated to geodesic laminations. An important tool is the study of {\em uniformly hyperbolic bundles} which is a generalization of Anosov representations beyond periodicity.	
\end{abstract}
\section*{Introduction} The character variety of a discrete group  $\Gamma$ in a Lie group $\mathsf G$ admits a natural class of functions: the algebra of regular functions generated as a polynomial algebra by trace functions or {\em characters}. When $\Gamma$ is a surface group,  the character variety becomes equipped with a symplectic form generalizing the Poincar\'e intersection form -- called the Atiyah--Bott--Goldman symplectic form \cite{Atiyah:1983,  Goldman:1984, Labourie:2013ka} --  and a fundamental theorem of Goldman \cite{Goldman:1986} shows that the algebra of regular functions is stable under the Poisson bracket and more precisely that the bracket of two characters is expressed using a beautiful combinatorial structure on the ring generated by characters. 
The Poisson bracket associated to a surface group has been heavily studied in \cite{Goldman:1986}, \cite{Turaev:1991wk}; and in the context of Hitchin representations the link between the symplectic structure, coordinates  and  cluster algebras discovered by Fock--Goncharov in  \cite{Fock:2006a} (see also  Bonahon--Dreyer \cite{Bonahon:2014wo}), has generated a lot of attention: for instance see  Sun--Wienhard--Zhang \cite{Sun:2020vm}, Nie \cite{Nie:2013tu}, Sun--Zhang \cite{Sun:2017}, Choi--Jung--Kim \cite{Choi:2020aa} and Sun \cite{Sun:2021tj} for more results, and also relations with the swapping algebra \cite{Labourie:2012vka}.

 On the other hand the deformation space of Anosov representations admits many other natural functions besides regular functions. {\em Length functions}, associated to any geodesic current, studied by Bonahon \cite{Bonahon:1988} in the context of Teichm\"uller theory, play a prominent role for Anosov representations for instance in \cite{Bonahon:2014woa} and \cite{Bridgeman:2015ba}. Another class are the {\em correlation functions}, defined in \cite{Labourie:2012vka} and \cite{Bridgeman:2020vg}. 
 
 These functions are defined as follows. For the sake of simplicity, we focus in this introduction on the case of a projective Anosov representation $\rho$ of a hyperbolic group $\Gamma$ in $\mathsf{SL}(V)$. In that case, for every non trival element $g$ in $\Gamma$, as part of the Anosov property, $\rho(g)$ has an attractive fixed point $\xi(g)$ in $\mathbf P(V)$ and an attractive fixed point $\xi^*(g)$ in  $\mathbf P(V^*)$, therefore associating to $g$ the rank 1-projector $\pp_\rho(g)$ whose image is  $\xi(g)$ and kernel is  $\xi^*(g)$. The projector only depends on the endpoints of $g$ in $\partial_\infty \Gamma$. The assignment $g\mapsto \pp_\rho(g)$  can then -- thanks to the Anosov property again -- be extended to any {\em geodesic} $g$ of $\Gamma$, that is, a pair of distinct points in $\partial_\infty\Gamma$. The correlation function $\T_G$ associated to  a {\em configuration of $n$-geodesics} -- that is, an $n$-tuple  $G=(g_1,\ldots,g_n)$ of geodesics up to cyclic transformation -- is then
$$
\T_G:\rho\mapsto \T_G(\rho)\defeq\tr(\pp_\rho(g_n)\ldots\pp_{\rho}(g_1))\ .
$$
In Teichmüller theory, the correlation function of two geodesics is  the cross-ratio of the endpoints. Generally, the correlation functions of geodesics in Teichm\"uller theory is a rational function of cross-ratios. This is no longer the case in the higher rank.

For instance if $C$ is a geodesic triangle  given by the three oriented geodesics $(g_1,g_2,g_3)$, the map 
$$
\T^*_C:\rho\mapsto \T^*_C(\rho)\defeq\tr(\pp_\rho(g_1)\pp_{\rho}(g_2)\pp_{\rho}(g_3))\ ,
$$
is related to Goncharov triple ratio on the real projective plane. 

For a geodesic current $\mu$, its length function $\Ell_\mu$ is defined by an averaging process -- see equation \eqref{eq-def:length}. One can also average correlation functions: say a $\Gamma$-invariant measure $\mu$  on the set $\GG^n$ of generic $n$-tuples of geodesics is  an {\em integrable cyclic current} if it is invariant under cyclic transformations and satisfies some integrability conditions -- see section \ref{sec:current} for precise definitions. Then the {\em $\mu$-correlation function} or $\mu$-averaged correlation function  is
$$
\T_\mu:\rho\mapsto\int_{\GG^n/\Gamma}\T_G(\rho)\ \d\mu\ .
$$
The corresponding functions are analytic (see \cite{Bridgeman:2015ba}), but rarely algebraic.

In the case when $\Gamma$ is a surface group, the algebra of functions on the deformation space of Anosov representations admits a Poisson bracket coming from the Atiyah--Bott--Goldman symplectic form. 

To uniformize our notation, we write $\T^k_\mu$ for $\T_\mu$ when $\mu$ is supported on $\GG^k$ and $\T^1_\nu=\Ell_\nu$ for the length function of a geodesic current $\nu$. Then, one of the main results of this article, Theorem \ref{theo:poiss-bracket},  gives as a corollary 
\begin{theo}[{\sc Poisson stability}]\label{theo:A}
	The space of length functions and   correlation functions is stable under the Poisson bracket. More precisely there exists a Lie bracket on the polynomial algebra formally generated by  tuples of geodesics $(G,H)\mapsto [G,H]$ so that
	$$
	\{\T^k_\mu,\T^p_\nu\}=\int_{\GG^{n+m}/\Gamma}\T_{[G,H]}(\rho)\ \ \d\mu(G)\d\nu(H)\ .
	$$
\end{theo}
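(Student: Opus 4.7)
The strategy is to first establish a Goldman-type formula for the Poisson bracket of two individual correlation functions $\T_G$ and $\T_H$, then integrate against $\mu\otimes\nu$. The template is Goldman's classical formula $\{\tr\rho(\gamma),\tr\rho(\delta)\}=\sum_{p\in\gamma\cap\delta}\epsilon(p)\tr\rho(\gamma_p\delta_p)$, which I would aim to extend from characters of closed curves to correlation functions of rank-one projectors attached to (possibly infinite) geodesics.

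\emph{Step 1: variation of projectors.} I would first compute the differential of $\T_G$ at $\rho$ in the direction of a cocycle $u\in Z^1(\Gamma,\mathrm{ad}\,\rho)$ by Leibniz:
$$
d\T_G(u)=\sum_{i=1}^n \tr\bigl(\pp_\rho(g_n)\cdots d\pp_\rho(g_i)(u)\cdots \pp_\rho(g_1)\bigr).
$$
The variation $d\pp_\rho(g_i)(u)$ of a rank-one projector decomposes into two pieces localized at the attracting and repelling endpoints of $g_i$, expressible in terms of how the flag curve deforms under $u$. Via the Anosov property, this variation can be represented as an integral along the geodesic $g_i$ with exponentially decaying weights, giving a geometric "line of dipoles" supported on each $g_i$.

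\emph{Step 2: localization at intersections.} I would then use the Atiyah--Bott--Goldman pairing to identify the Hamiltonian vector field of $\T_G$ with this distribution supported on $\bigcup_i g_i$. Pairing with $d\T_H$, whose "dual" structure is supported on $\bigcup_j h_j$, localizes the computation at the intersection points $p\in g_i\cap h_j$ in the universal cover, modulo $\Gamma$. At each such intersection point, standard Goldman-type surgery yields a new configuration $G\#_p H$ of length $n+m$ (with a sign $\epsilon(p)$) obtained by interleaving the two cyclic tuples at the position $i,j$. This should give the desired pointwise formula
$$
\{\T_G,\T_H\}=\sum_{p\in G\pitchfork H/\Gamma}\epsilon(p)\,\T_{G\#_p H},
$$
which defines the formal bracket $[G,H]=\sum_p \epsilon(p)\,(G\#_p H)$. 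Bilinear extension to the free polynomial algebra and the Jacobi identity are inherited from the Poisson structure itself.

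\emph{Step 3: integration.} Finally I would integrate the pointwise formula against $\d\mu(G)\,\d\nu(H)$. The integrability conditions imposed on cyclic currents are designed precisely to make both sides absolutely convergent and to justify the exchange of Poisson bracket and integration, using analyticity of $\T_\mu,\T_\nu$ on the deformation space together with a Fubini argument.

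\emph{Main obstacle.} The genuinely delicate step is Step 2: the Hamiltonian vector field of $\T_G$ does not sit on a compact closed curve as in Goldman's original setting but rather on a union of biinfinite geodesics, so the "intersection counting" must be made sense of as an absolutely convergent sum (over a possibly infinite set of $\Gamma$-orbits of intersection points) with tails controlled by the Anosov exponential decay. Packaging this data into the finite combinatorial object of a ghost polygon, and checking that the signs and cyclic orderings produce a well-defined Lie bracket on the formal algebra, is where the core content of the theorem lies.
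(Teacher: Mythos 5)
Your overall architecture (variation of the projector via the Anosov contraction, identification of a Hamiltonian supported near the configuration, localization by an intersection pairing, then Fubini against $\mu\otimes\nu$) does match the paper's strategy, and your Steps 1 and 3 correspond to genuine results there (the cohomological equation for $\dot\pp$, and the two exchange-of-integrals theorems). But Step 2, which you yourself flag as "where the core content of the theorem lies," is not just unfinished — as stated it is wrong, in two ways. First, the bracket does \emph{not} localize only at intersection points of the visible geodesics $g_i\cap h_j$. The derivative $\d\T_G(\dot\nabla)$ is an integral over the full \emph{ghost polygon} of $G$: the half-geodesic integrals coming from each $\dot\pp(g_i)$ reassemble into integrals over the visible edges \emph{and} over the ghost edges $\theta_{2i+1}=(g_{i+1}^-,g_i^+)$, with alternating signs. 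Consequently the dual form $\Omega_{\rho(G)}$ has support near the ghost edges as well, and the pairing of two such forms picks up visible--visible, visible--ghost and ghost--ghost crossings. The correct formula is a sum over all $4nm$ pairs of edges of the two ghost polygons weighted by $(-1)^{i+j}\epsilon(\zeta_j,\theta_i)$; in particular $\{\T_G,\T_H\}$ can be nonzero even when no visible edge of $G$ meets a visible edge of $H$, which your formula would force to vanish.

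Second, the terms produced are not single interleaved configurations of length $n+m$. In the projective case each term is the \emph{product} $G\cdot H\cdot\lceil\zeta_j,\theta_i\rceil$ in the polynomial algebra (a rank-one identity shows an interleaved trace equals $\T_G\T_H\T_{\lceil g_i,\zeta_j\rceil}$, i.e.\ it reproduces only one species of term, with the wrong indexing set and signs); in the general $\Theta$ case the terms are concatenations of \emph{opposite configurations} $\lceil\theta_i^*,\sigma_j^*\rceil$ and do not factor at all. Finally, the Jacobi identity is not "inherited from the Poisson structure": on the formal algebra it only holds under a disjointness condition on vertices and requires a separate combinatorial proof (and, for the projective case, a factorization through the swapping algebra), since the evaluation $G\mapsto\T_G(\rho)$ has a large kernel. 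So the missing content is precisely the ghost-edge bookkeeping, and it changes the answer rather than merely repackaging it.
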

The full result, Theorem \ref{theo:poiss-bracket} and its Corollary \ref{coro:Poisson-stab}, allows us to recursively use this formula and indeed obtain stability. 

In Theorem \ref{theo:ham-cor} we compute explicitly what is the Hamiltonian vector field of the correlation functions. For instance in Teichmüller theory, this allows us to compute the higher derivatives of a length function  along twist orbits  by a combinatorial formula involving cross-ratios.

The bracket  $(G,H)\mapsto [G,H]$ -- that we call the {\em ghost bracket} --  is combinatorially constructed.  
In this introduction,  we explain the ghost bracket in the simple projective case and refer to section \ref{sec:ghost-polygon} for more details.  Recall first that an {\em ideal polygon} -- not necessarily embedded -- is a sequence $(h_1,\ldots, h_n)$  of geodesics in $\hh$  such that the endpoint of $h_i$ is the starting point of $h_{i+1}$.  Let then $G$ be the configuration of $n$ geodesics $(g_1,\ldots, g_n)$, with the endpoint of $g_i$ not equal the starting point of $g_{i+1}$. The associated {\em ghost polygon}  is  given by the uniquely defined  configuration $(\theta_1,\ldots \theta_{2n})$ of geodesics  (see figure \eqref{fig:Ghost2}) such that 
\begin{figure}[h]
\begin{subfigure}[h]{0.3\textwidth}   \begin{center}
        \caption{Configuration of geodesics}\label{fig:Config}
  \end{center}
  \end{subfigure}
\quad    
 \begin{subfigure}[h]{0.3\textwidth} \begin{center}
  \includegraphics[width=0.8\textwidth]{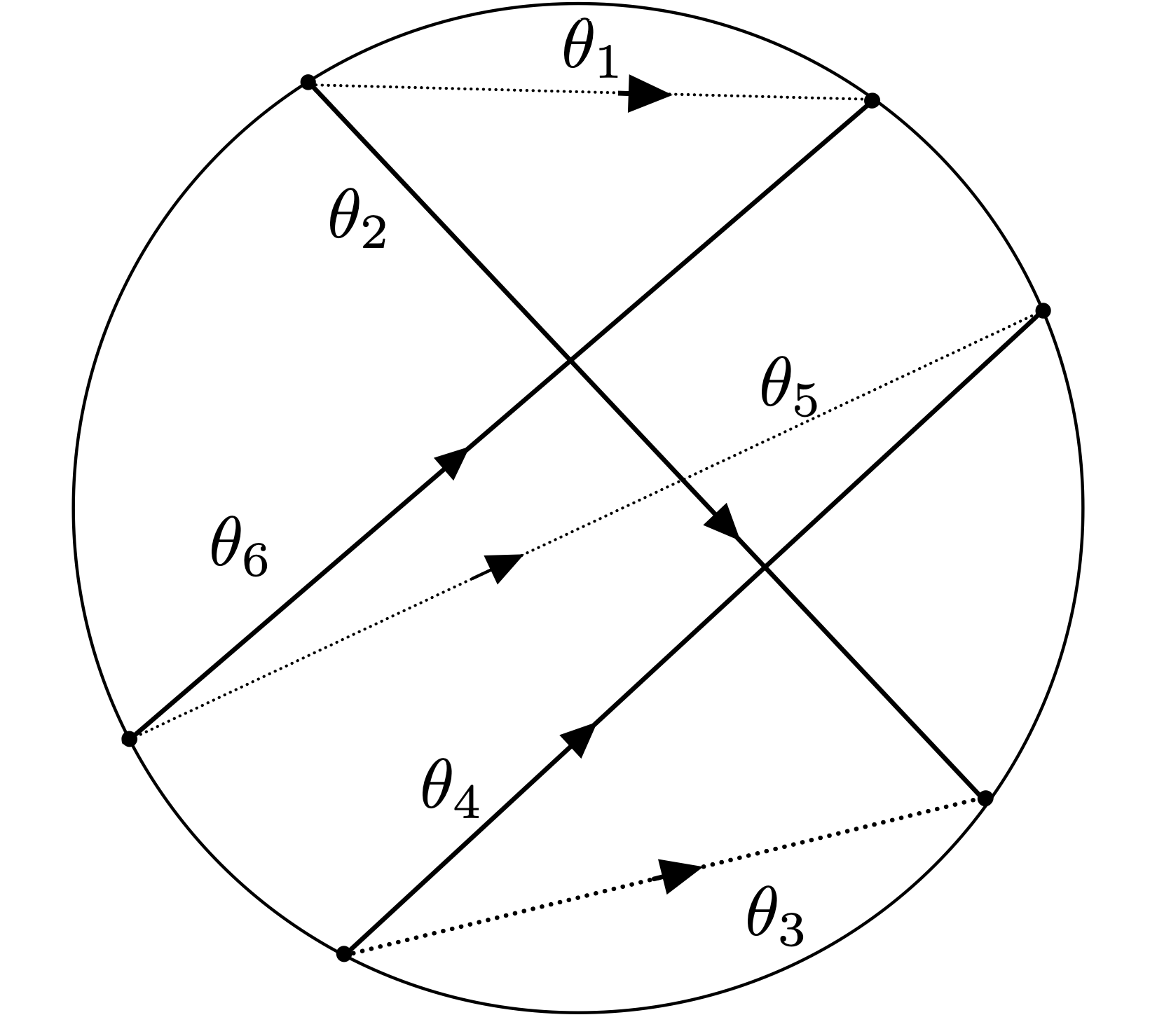}
  \end{center}
    \caption{Ghost Polygon}\label{fig:Gp}
 \end{subfigure}  
 \caption{Two ways to see a cyclically ordered tuple of geodesics}
 \label{fig:Ghost2}
\end{figure}

\begin{enumerate}
 	\item $(\bar\theta_1,\theta_2,\bar\theta_3 \ldots,\bar\theta_{2n-1},\theta_{2n})$ is an ideal polygon,
 	\item for all $i$, $\theta_{2i}=g_i$ and is called a {\em visible edge}, while $\theta_{2i+1}$ is called a  {\em ghost edge}.
 \end{enumerate}
We now denote by $\lceil g,h\rceil$ the configuration of two geodesics $g$ and $h$,   $\epsilon(g,h)$ their algebraic intersection,  and $\bar g$ is the geodesic $g$ with the opposite orientation. 
Then if $(\theta_i,\ldots,\theta_{2n})$ and $(\zeta_i,\ldots,\zeta_{2p})$ are the two ghost polygons associated to the configurations $G$ and $H$, we define the {\em projective ghost bracket} of $G$ and $H$ as \begin{eqnarray}
 	[G,H]\defeq G\cdot H\cdot\left(\sum_{i,j}(-1)^{i+j}\epsilon(\zeta_j,\theta_i)\  \lceil \zeta_j,\theta_i\rceil\right)\ ,\label{eq:def-ghost-bra}
 \end{eqnarray}
 which we consider as an element of the  polynomial algebra formally generated by configurations of geodesics. We have similar formulas when $G$ or $H$ are geodesics, thus generalizing Wolpert's cosine formula \cite{Wolpert:1983td}. In the case presented in the introduction -- the study of projective Anosov representations --  the ghost bracket is actually a Poisson bracket and is easily expressed in paragraph \ref{sec:bracket-ghost} using  the swapping bracket introduced by the second author in \cite{Labourie:2012vka}. Formula \eqref{eq:def-ghost-bra} is very explicit and the Poisson Stability Theorem \ref{theo:A} now becomes an efficient tool to compute recursively brackets of averaged correlations functions and length functions.
\vskip 0.2 truecm
In this spirit, we give two applications of this stability theorem. Following Martone--Zhang \cite{Martone:2019uf}, say a projective Anosov representation $\rho$ {\em admits a positive cross ratio} if
$0<\tr(\pp_\rho(g)\pp_\rho(h))<1$ for any two intersecting geodesics $g$ and $h$. Examples come from Teichm\"uller
 spaces and   Hitchin representations \cite{Labourie:2005,Martone:2019uf}. More generally positive representations are associated to positive cross ratios \cite{BGLPW}. Our first application is a generalisation of the convexity theorem of Kerckhoff \cite{Kerckhoff:1983th} and was the initial reason for our investigation:
 
 \begin{theo}[\sc Convexity Theorem]\label{theo:B} Let $\mu$ be the geodesic current associated to a measured geodesic lamination, $\Ell_\mu$ the associated length function. Let $\rho$ be a projective Anosov representation which admits a positive cross ratio, then for any geodesic current $\nu$,
 $$
 \{\Ell_\mu,\{\Ell_\mu,\Ell_\nu\}\}\geq 0\ .
 $$
 Furthermore the inequality is strict if and only if  $i(\mu,\nu) \neq 0$.
 \end{theo}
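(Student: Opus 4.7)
The plan is to deduce the Convexity Theorem from Theorem~A applied twice, combined with the structural features of measured laminations and the positive cross-ratio hypothesis.

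First, Theorem~A applied with $k=p=1$ and iterated via Corollary~\ref{coro:Poisson-stab} yields
$$
\{\Ell_\mu,\{\Ell_\mu,\Ell_\nu\}\}(\rho)=\int_{\GG^{3}/\Gamma}\T_{[g_1,[g_2,h]]}(\rho)\,\d\mu(g_1)\,\d\mu(g_2)\,\d\nu(h),
$$
a triple integral of correlation functions attached to the configurations built from $g_1,g_2,h$ and their ghost edges, according to the length-function analog of formula~\eqref{eq:def-ghost-bra} (which generalizes Wolpert's cosine formula). The next step is to exploit the lamination hypothesis: for $\mu\otimes\mu$-almost every pair $(g_1,g_2)$ the leaves are disjoint, so $\epsilon(g_1,g_2)=0$, and since the ghost edges produced from $g_2$ share endpoints with $g_2$, they also do not cross $g_1$. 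Every term in the expansion of $[g_1,[g_2,h]]$ in which $g_1$ is bracketed with an edge or ghost edge originating from $g_2$ therefore vanishes, leaving an integrand supported on triples $(g_1,g_2,h)$ such that both $g_1$ and $g_2$ cross $h$, and consisting of triple correlation functions of the form $\tr(\pp_\rho(g_1)\pp_\rho(g_2)\pp_\rho(h))$ weighted by $\epsilon(g_1,h)\,\epsilon(g_2,h)$ together with the combinatorial signs $(-1)^{i+j}$.

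The final step is to check that this surviving integrand is non-negative on the support of $\mu\otimes\mu\otimes\nu$, which is precisely where the positive cross-ratio hypothesis enters. Under that hypothesis the rank-one projectors $\pp_\rho(g)$ form a positivity structure, and for three pairwise intersecting, suitably oriented geodesics the triple trace $\tr(\pp_\rho(g_1)\pp_\rho(g_2)\pp_\rho(h))$ has a definite sign, which conspires with the combinatorial $(-1)^{i+j}$ factors and the intersection-number weights emerging from~\eqref{eq:def-ghost-bra} to yield pointwise non-negativity. The strict inequality when $i(\mu,\nu)\neq 0$ then follows because in that case the set of triples with $\epsilon(g_i,h)\neq 0$ for $i=1,2$ has positive $\mu\otimes\mu\otimes\nu$-measure, on which the triple trace is strictly positive.

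The main obstacle is precisely this positivity statement for triple traces of rank-one projectors, together with the sign-bookkeeping needed to reconcile it with the $(-1)^{i+j}\epsilon(\zeta_j,\theta_i)$ combinatorics of~\eqref{eq:def-ghost-bra}. Once this is secured, the proof reduces to Kerckhoff's convexity argument in combinatorial disguise: each intersection of a leaf of $\mu$ with a leaf of $\nu$ contributes a non-negative summand, now expressed through correlation functions rather than angle cosines.
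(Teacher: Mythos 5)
Your skeleton matches the paper's: reduce via the Poisson Stability Theorem to the triple integral $\int_{\GG^3/\Gamma}\T_{[g_1,[g_0,h]]}(\rho)\,\d\mu(g_0)\d\mu(g_1)\d\nu(h)$, use that $\epsilon(g_0,g_1)=0$ for $\mu\otimes\mu$-almost every pair of leaves to kill the cross terms, and then invoke positivity of the cross ratio. But the step you yourself flag as ``the main obstacle'' is exactly the heart of the proof, and the way you propose to resolve it is not correct. After the simplification, the surviving integrand is \emph{not} a signed combination of triple traces $\tr(\pp_\rho(g_1)\pp_\rho(g_2)\pp_\rho(h))$ whose positivity you could hope to establish termwise: the double bracket computes (Lemma \ref{lem:tripBrak}) to the \emph{connected} quantity
$$
[g_1,[g_0,h]]=\epsilon_0\epsilon_1\left(\lceil g_1,h,g_0\rceil-\lceil g_1,h\rceil\,\lceil g_0,h\rceil\right),
$$
and neither summand has a definite sign by itself; only the difference does. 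The mechanism that gives the sign is the cross-ratio identity
$$
\frac{\lceil g_1,h,g_0\rceil}{\lceil g_1,h\rceil\,\lceil g_0,h\rceil}=\lceil\gamma_0,\gamma_1\rceil\ ,\qquad \gamma_0=(g_0^+,h^-),\ \gamma_1=(h^+,g_1^-)\ ,
$$
which rewrites the integrand as $\epsilon_0\epsilon_1\,\T_{\lceil g_1,h\rceil}\T_{\lceil g_0,h\rceil}\left(\T_{\lceil\gamma_0,\gamma_1\rceil}-1\right)$. One then checks that $\gamma_0$ and $\gamma_1$ cross precisely when $\epsilon_0\epsilon_1<0$ and are disjoint (with the right cyclic order of endpoints) when $\epsilon_0\epsilon_1>0$, so that the positive cross ratio hypothesis --- in its equivalent form $\T_{\lceil x,y\rceil}>1$ for disjoint, cyclically ordered pairs and $0<\T_{\lceil x,y\rceil}<1$ for crossing pairs --- forces $\epsilon_0\epsilon_1(\T_{\lceil\gamma_0,\gamma_1\rceil}-1)>0$. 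Your formulation (``for three pairwise intersecting geodesics the triple trace has a definite sign'') is also off on the geometry: in the relevant configuration $g_0$ and $g_1$ do \emph{not} intersect each other, and the auxiliary geodesics $\gamma_0,\gamma_1$ built from mixed endpoints are what carry the sign information, not the original triple.

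Two further points need attention for the ``strict if and only if $i(\mu,\nu)\neq0$'' claim. First, the degenerate cases $|\epsilon_0\epsilon_1|=1/2$ (shared endpoints at infinity) must be shown to contribute zero, which requires a separate case check; strict positivity of the integrand holds only when $|\epsilon_0\epsilon_1|=1$, i.e.\ for genuine interior intersections, so positivity of the measure of $\{\epsilon(g_i,h)\neq0\}$ is not enough. Second, to produce a triple $(g_1,g_0,h)$ of interior intersections in the support of $\mu\otimes\mu\otimes\nu$ one needs a small argument (the paper takes $g_1=\gamma g_0$ for a suitable $\gamma\in\Gamma$, treating separately the cases where $h$ does or does not descend to a closed geodesic). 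As written, your proposal leaves the Sign Lemma --- the actual content of the theorem --- unproved, and the route you sketch toward it would not close.
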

Recall that in a symplectic manifold $\{f,\{f,g\}\}\geq 0$ is equivalent to the fact that $g$ is convex along the Hamiltonian curves of $f$. This theorem involves a generalization of Wolpert's sine formula \cite{Wolpert:1983td}. 

Our second, and less surprising, application is to construct commuting subalgebras in the Poisson algebra of correlation functions. Let $\mathcal L$ be a geodesic lamination whose complement is a union of geodesic triangles $C_i$. To each such triangle, we call the associated correlation function $\T^*_{C_i}$ an {\em associated triangle function}. The {\em subalgebra associated to the lamination} is the subalgebra generated by triangle functions and length functions for geodesic currents supported on $\mathcal L.$
\begin{theo}[\sc Commuting subalgebra]\label{theo:C} 
	For any geodesic lamination whose complement is a union of geodesic triangles, the associated subalgebra is commutative with respect to the Poisson bracket.
\end{theo}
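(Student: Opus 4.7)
The plan is to apply the Poisson Stability Theorem (Theorem A), combined with the explicit ghost bracket formula \eqref{eq:def-ghost-bra}, and show that every bracket computed among the generators vanishes term by term. Since the Poisson bracket is a derivation, it is enough to verify commutativity on a set of algebra generators, namely the length functions $\Ell_\mu$ with $\mu$ a current supported on $\mathcal L$ and the triangle functions $\T^*_{C_i}$ for the complementary triangles $C_i$.

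The first step is to describe the ghost polygons of the generators. For a current supported on $\mathcal L$, the relevant configurations are single leaves $g$ of $\mathcal L$, whose associated ghost ``polygon'' consists of $g$ alone and has no non--degenerate ghost edge. For a triangle function $\T^*_{C_i}$ with $C_i=(g_1,g_2,g_3)$ forming a geodesic triangle, the ghost edge $\theta_{2i+1}$ is forced to connect the starting point of $g_{i+1}$ to the endpoint of $g_i$; since these two endpoints coincide at a vertex of the triangle, the ghost edges are all degenerate (points on $\bh$). Thus the only non--degenerate edges appearing in the ghost polygons of either type of generator are the \emph{visible} edges, which are: leaves of $\mathcal L$ (in the length--function case), and the three boundary geodesics of a complementary triangle (in the triangle--function case).

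The second step is to observe that the boundary geodesics of a complementary ideal triangle are themselves leaves of $\mathcal L$. Indeed, $\mathcal L$ is closed, and the boundary of any complementary region consists of accumulation points of leaves, hence lies in $\mathcal L$. Consequently, for any pair $G,H$ drawn from the generators, every non--degenerate edge appearing in either ghost polygon is a leaf of $\mathcal L$. Since any two leaves of a geodesic lamination are disjoint in $\hh$, their algebraic intersection $\epsilon(\zeta_j,\theta_i)$ is zero; and the degenerate ghost edges contribute nothing to the sum. Therefore every term in \eqref{eq:def-ghost-bra} vanishes, so $[G,H]=0$ on the support of the relevant measures.

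The third step is to integrate: by Theorem A, the Poisson bracket of any two generators equals the integral over $\mathcal C^{n+m}/\Gamma$ of $\T_{[G,H]}(\rho)\,\d\mu(G)\d\nu(H)$, which vanishes because the integrand is identically zero. Invoking the stronger Corollary \ref{coro:Poisson-stab} recursively handles polynomial combinations of the generators. The main obstacle I anticipate is the correct bookkeeping in the degenerate case: one must verify that the ghost bracket formula genuinely extends to configurations whose ghost edges collapse to points (as happens for every triangle function), and that the measures involved put no mass on configurations where the clean ``leaves don't cross'' picture fails (e.g.\ atoms along common endpoints at triangle vertices). Once this is justified, the commutativity follows immediately from the intersection--vanishing observation above.
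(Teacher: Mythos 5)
There is a genuine gap, and it sits exactly where you flagged your own worry at the end. Your argument rests on the claim that two disjoint leaves of $\mathcal L$ have $\epsilon(\zeta_j,\theta_i)=0$, so that the ghost bracket vanishes term by term. But in this paper's convention $\epsilon(g,h)=\pm\tfrac12$ whenever $g$ and $h$ are disjoint but share an endpoint at infinity, and this is the \emph{generic} situation for the generators at hand, not a measure-zero degeneracy: the three sides of a complementary ideal triangle pairwise share ideal vertices, and a leaf of $\mathcal L$ adjacent to such a vertex shares an endpoint with two of the triangle's sides. Moreover the triangle function is defined as the configuration $\lceil a_1,a_3,a_2\rceil$ (with the order reversed), whose ghost edges are the reversed sides $\bar a_i$ — genuine geodesics, not phantom ones — so these too contribute nonzero terms $\epsilon=\pm\tfrac12$. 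Hence the individual terms of \eqref{eq:def-ghost-bra} do \emph{not} vanish, and commutativity cannot be read off termwise.

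The paper's actual proof handles this by a cancellation rather than termwise vanishing: one first checks that $\lceil g,h\rceil+\lceil g,\bar h\rceil=1$ whenever $\epsilon(g,h)=\pm\tfrac12$, which collapses the bracket of a geodesic $g$ disjoint from the interior of $\delta=(a_1,a_2,a_3)$ to
$[g,t]=t\sum_{i}\epsilon(g,a_i)$,
and then shows $\sum_i\epsilon(g,a_i)=0$ by the cocycle property of the intersection number (a short case analysis according to whether $g$ is a side of $\delta$ or shares a vertex with it). The bracket of two triangle functions with disjoint interiors reduces to the same computation via $[t_1,t_0]=t_0\sum_i[t_1,\bar a_i-a_i]$. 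Your reduction to generators and the appeal to Theorem~\ref{theo:A} / Corollary~\ref{coro:Poisson-stab} to pass from the combinatorial bracket to the Poisson bracket are fine; what is missing is the entire half-integer cancellation mechanism, without which the central claim of Step 2 is false as stated.
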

In the context of coordinate functions and Hitchin representations, note that it is a well known fact that  geodesic laminations are associated to commuting subalgebras \cite{Fock:2006a,Bonahon:2014woa,Sun:2017}, but observe that  our results are more general: they are  about any type of Anosov representations.

\vskip 0.5 truecm

We now give a brief outline of the paper and of some of our constructions of independent interest, namely the construction of a theory of a (non linear) intersection and integration for ghost polygons, as well as that of uniformly hyperbolic bundles.

 This paper has been structured in several parts and sections, all of them preceded by an introduction describing the content of these sections or paragraphs. We concentrate in this introduction on the projective case. 
\vskip 0.2 truecm
 In the first and preliminary section, we recall classical constructions; first in the hyperbolic plane, we explain the dual form of a geodesic $\omega_g$ and 
the intersection $\epsilon(g,h)$ of two geodesics $g$ and $h$ related by the following formula that we give to explain our later motivations:
\begin{equation}
	\epsilon(g,h)=\int_{g}\omega_h=-\int_h\omega_g=\int_\hh \omega_g\wedge\omega_h\ .\label{eq:hyp-int}
\end{equation}
We also recall the Atiyah--Bott--Goldman symplectic form.
\vskip 0.2 truecm
Our first part {\bf on the hyperbolic plane} deals with a generalization of the above formula. First we need to introduce "Anosov representations without a group", we call the corresponding notion a {\em uniformly hyperbolic bundle}. This is the background of the whole project.  Given such a uniformly hyperbolic bundle $\rho$ and a configuration $G$ of geodesics -- that we call a {\em ghost polygon} -- we are then able to compute the derivatives of the  correlation function $\T_G$, for a variation $\dot\nabla$ of the uniformly hyperbolic bundle at $\rho$:
$$
\d \T_G \left(\dot\nabla\right)=\oint_{\rho(G)}\dot\nabla\ ,
$$
where the righthand side is a procedure called {\em ghost integration} which involves computing solutions of the cohomological equation of dynamical systems. Motivated by this computation, and fixing a uniformly hyperbolic bundle,  we introduce the ghost intersection  $\bI_\rho(G,H)$ of two ghost polygons $G$ and $H$ and the ghost dual form  $\Omega_{\rho(G)}$ -- with values in some endomorphism bundle --  of a ghost polygon so that 
$$
\bI_\rho(G,H)=\oint_{\rho(G)}\Omega_H=-\oint_{\rho(H)}\Omega_G=\int_\hh \tr\left(\Omega_H\wedge\Omega_G\right)\ ,
$$
a formalism reminiscent of formulae \eqref{eq:hyp-int}. All this is better encoded by introducing a {\em ghost bracket} on a {\em ghost algebra} -- the polynomial algebra generated by ghost polygons -- and extending correlation functions to the ghost algebra to get
$$
\T_{[G,H]}(\rho)=\bI_{\rho}(G,H)\ .
$$ The constructions outlined above  are the analogues of classical constructions (integration along a path, intersection of geodesics) in differential topology described in section \ref{sec:prelim}, but in a non abelian setting.
\vskip 0.2 truecm
The second part {\bf on  closed surfaces} then moves to averaging  correlation functions and length functions, by using currents (for geodesics) and {\em cyclic currents} for ghost polygons. This is the part where we prove Theorem \ref{theo:A}. We prove this by carefully exchanging some integrals and using the formulae that we have obtained for the derivatives of correlation functions.
\vskip 0.2 truecm
In the third part, we prove the {\bf applications}: Theorems \ref{theo:B} and \ref{theo:C}. In the final  part, {\bf addendum}, we  establish a result of independent interest on the ghost bracket: the ghost bracket satisfies the Jacobi identity except for degenerate cases. We also recall some technical points.
 \vskip 0.2 truecm

Let us finally comment on two points:

  \vskip 0.2 truecm

\noindent{\bf The general case of (non) projective Anosov representations. \ } For the sake of simplicity, this introduction focused on the case of  projective Anosov representations. More generally, one can construct correlation functions out of {\em geodesics decorated} with weights of the Lie group $\G$ with respect to a $\Theta$-Anosov representations. The {\em $\Theta$-decorated correlation functions} are described by configurations of $\Theta$-decorated geodesics.  The full machinery developed in this article computes more generally the brackets of these decorated correlation functions. Using that terminology, the Poisson Stability Theorem \ref{theo:A} still holds with the same statement, but the ghost bracket has to be replaced by a {\em decorated ghost bracket} which follows  a construction  given in paragraph \ref{par:theta-ghost-bra}, slightly more involved  than formula \eqref{eq:def-ghost-bra} and not anymore related to the swapping bracket.
\vskip 0.2 truecm 
\noindent{\bf Beyond representations: uniformly hyperbolic bundles. \ }  One of the novelties of this paper is the introduction of a new tool  allowing us to describe ``universal Anosov representations`` in the spirit of universal Teichmüller spaces: the definition of {\em uniformly hyperbolic bundles}. This new tool allows us to extend results obtained for Anosov representations, notably stability and limit curves, in a situation where no periodicity according to a discrete group is required. In particular, the (not averaged) correlation functions make sense and we are able to compute the variation of such a correlation function in proposition \ref{pro:dT-integ}. This result follows in particular from the solution of the (dynamical) cohomological equation (proposition \ref{pro:deriv-fund-proj}). Important constructions such as  ghost integration  and ghost intersection are given in the context of uniformly hyperbolic bundles. 

\vskip 1 truecm
We would like to thank Dick Canary and Tianqi Wang for very useful comments, Fanny Kassel, Joaqu{\'i}n Lema, Curt McMullen,  Andrés Sambarino and  Tengren Zhang for their interest and remarks.

\tableofcontents
\section{Preliminary}\label{sec:prelim}

In this preliminary section, we recall basic and well known facts about the intersection of geodesics in the hyperbolic plane, dual forms to geodesics and the Goldman symplectic form. We also introduce one of the notions important for this paper: geodesically bounded forms.

\subsection{The hyperbolic plane, geodesics and forms}\label{sec:hypplane}

We first recall classical results and constructions about closed geodesics in the hyperbolic plane.

\subsubsection{Geodesics and intersection}\label{sec:geod-inter}

Let us choose an orientation in $\hh$. We denote in this paper by $\GG$ the space of oriented geodesics of $\hh$ that we identify with the space or pairwise distinct points in $\bh$.  We denote by $\bar g$ the geodesic $g$ with the opposite orientation.

\begin{definition} Let $g_0$ and $g_1$ be two oriented geodesics.
	The {\em intersection of $g_0$ and $g_1$} is the number $\epsilon(g_0,g_1)$ which satisfies the following rules $$
\epsilon(g_0,g_1)=-\epsilon(g_1,g_0)=-\epsilon(\bar g_0,g_1)\ ,
$$
and verifying the following 
\begin{itemize}
	\item $\epsilon(g_0,g_1)=0$ if $g_0$ and $g_1$  do not intersect or $g_0=g_1$.
	\item $\epsilon(g_0,g_1)=1$ if $g_0$ and $g_1$  intersect and $(g_0(\infty),g_1(\infty),g_0(-\infty),g_1(-\infty)$ is oriented.
\item  $\epsilon(g_0,g_1)=1/2$ if $g_0(-\infty)=g_1(-\infty)$ and  $(g_0(\infty),g_1(\infty),g_1(-\infty))$ is oriented.
\end{itemize}
\end{definition} 

Observe that $\epsilon(g_0,g_1)\in\{-1,-1/2,0,1/2,1\}$ and that we have the {\em cocycle property}, if $g_0,g_1,g_2$ are the sides of an ideal triangle with the induced orientation, then for any geodesic $g$ we have
\begin{equation}
	\sum_{i=0}^2\epsilon(g,g_i)=0\ . \label{eq:triang-intersect}
\end{equation}
We need an extra convention for coherence  
\begin{definition}
	A {\em phantom geodesic} is a pair $g$ of identical points $(x,x)$ of $\partial_\infty\hh$. If $g$ is a phantom geodesic, $h$ any geodesic (phantom or not), we define $\epsilon(g,h)\defeq 0$.
\end{definition}

\subsubsection{Geodesic forms}
Let us denote by $\Omega^1(\hh)$ the space of 1-forms on the hyperbolic plane.  A form $\omega$ in  $\Omega^1(\hh)$ is bounded if $|\omega_x(u)|$ is bounded uniformly for all $(x,u)$ in  $\ms U\hh$ the unit tangent bundle of $\hh$. We let $\bLa^\infty$ the vector space of bounded forms.
\begin{proposition}\label{pro:def-omegag}
We have a $\psld$ equivariant mapping 
$$\mapping{\GG}{\Omega^1(\hh)\ ,}{g}{\omega_g\ , }$$ which satisfies the following properties
\begin{enumerate}
\item   $\omega_g$ is a closed 1-form in $\hh$ supported in the tubular neighbourhood  of $g$ at distance $1$, outside the tubular neighbourhood  of $g$ at distance $1/2$.\label{it:i}
	\item $\omega_g=-\omega_{\bar g}$ \label{it:ii}
	\item Let  $g_0$ be any geodesic $g$, then
	\begin{eqnarray}\int_{g_0}\omega_g& =&\epsilon(g_0,g)\ .\label{eq:duality0}
	\end{eqnarray}
\end{enumerate}	
\end{proposition}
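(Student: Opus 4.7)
The plan is to construct $\omega_g$ as the exterior derivative $df_g$ of a smooth, roughly Heaviside-like function $f_g$ built from the signed distance to $g$. First I would fix once and for all a smooth odd function $\phi\colon\mathbb R\to[-1/2,1/2]$ with $\phi(t)=1/2$ for $t\geq 1$, $\phi(t)=0$ for $|t|\leq 1/2$, and monotone in between. Then to each oriented geodesic $g$ attach its signed distance $d^s_g\colon\hh\to\mathbb R$, with the sign chosen so that $d^s_g$ is positive on one of the two components of $\hh\setminus g$ as determined by the orientations of $g$ and $\hh$. The assignment $g\mapsto d^s_g$ is tautologically $\psld$-equivariant, so setting $f_g\defeq\phi\circ d^s_g$ and $\omega_g\defeq df_g$ produces an exact, hence closed, form depending equivariantly on $g$.

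Property (i) is then immediate from the chain rule: $\omega_g=\phi'(d^s_g)\,d(d^s_g)$, and $\phi'$ is supported in $\{1/2\leq|t|\leq 1\}$, so $\omega_g$ is supported exactly in the annular region $\{x:1/2\leq d(x,g)\leq 1\}$, which sits between the tubular neighbourhoods of $g$ at radii $1/2$ and $1$. Property (ii) follows because reversing the orientation of $g$ swaps its two sides, whence $d^s_{\bar g}=-d^s_g$; combined with the oddness of $\phi$ this gives $f_{\bar g}=-f_g$ and so $\omega_{\bar g}=-\omega_g$.

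Property (iii) is the substantive content. Parametrising $g_0$ by arc length and using exactness yields
$$
\int_{g_0}\omega_g=\lim_{t\to+\infty}f_g(g_0(t))-\lim_{t\to-\infty}f_g(g_0(t))\ ,
$$
reducing the identity to the asymptotic behaviour of $d^s_g$ along $g_0$. I would split on the boundary configuration of the four ideal endpoints: if $g_0$ coincides with $g$ or $\bar g$, then $d^s_g$ vanishes along $g_0$ and both sides are $0$; if $g_0$ and $g$ are disjoint with four distinct ideal endpoints, then $g_0$ lies in one component of $\hh\setminus g$, the two limits agree, and the integral vanishes; if $g_0$ crosses $g$ transversally then $d^s_g\circ g_0$ changes sign once and tends to $\pm\infty$, so the two limits contribute $\mp 1/2$, giving $\pm 1$; finally, if $g_0$ and $g$ share exactly one ideal endpoint, the corresponding end of $g_0$ stays inside every tubular neighbourhood of $g$ so $f_g$ tends to $\phi(0)=0$ there, while the opposite end contributes $\pm 1/2$. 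In each case the resulting value matches the definition of $\epsilon(g_0,g)$.

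The main obstacle is sign bookkeeping: the convention for positive signed distance must be compatible with the cyclic order on $\bh$ underlying the definition of $\epsilon$. Rather than audit signs case by case, I would verify the formula in one normalised model (say $g$ the vertical geodesic from $0$ to $\infty$ in the upper half-plane, with $g_0$ a concrete semicircle), then use $\psld$-equivariance to transport every transverse configuration to that model, property (ii) to handle orientation reversal, and the cocycle relation \eqref{eq:triang-intersect} to propagate consistency to the asymptotic and disjoint configurations.
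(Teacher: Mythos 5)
Your construction is essentially the paper's: the paper also sets $\omega_g=\pm\,\d(f\circ R_g)$ for a cutoff $f$ of the signed distance $R_g$ supported in the annulus $1/2\leq|R_g|\leq 1$, with oddness of $f$ giving $\omega_{\bar g}=-\omega_g$, and it leaves the case check of \eqref{eq:duality0} to the reader, which you carry out correctly via the boundary values of the primitive (your profile, vanishing near $g$ rather than near infinity, merely makes the primitive continuous across $g$, an immaterial difference). The proposal is correct.
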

\begin{proof} The construction runs as follows. Let us fix a function $f$ from $\mathbb R^+$ to $[0,1]$ with support in $[0,1]$ which is constant and equal to $1/2$ on $[0,1/2]$  neighbourhood of $0$. We extend (non-continuously) $f$ to $\mathbb R$ as an odd function. Let finally $R_g$ be the ``signed distance" to $g$, so that $R_{\bar g}=-R_g$. We finally set $\omega_g=-\d (f\circ R_g)$. Then \eqref{it:i} and \eqref{it:ii} are obvious. We leave the reader check the last point in all possible cases.
\end{proof}

\rmka   We extend the above map to phantom geodesics by setting $\omega_g=0$ for a phantom geodesic and observe that the corresponding assignment still obey proposition \ref{pro:def-omegag}.

The form $\omega_g$ is called the {\em geodesic form} associated to $g$. Such an assignment is not unique, but we fix one, once  and for all. Then we have 

\begin{proposition}
	For any pair of geodesics $g_0$ and $g_1$, $\omega_{g_1}\wedge \omega_{g_0}=f\d \operatorname{area}_\hh$ with $f$ bounded and in $L^1$.
\end{proposition}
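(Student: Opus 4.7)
The plan is to separate boundedness of $f$ from its $L^{1}$ integrability.

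For boundedness, I would observe that each form $\omega_{g} = -f'(R_{g})\,\d R_{g}$ has bounded hyperbolic norm, since $f'$ is bounded by construction and $R_{g}$ is a signed distance function, so $|\d R_{g}|_{\hh} = 1$ almost everywhere. Consequently $\omega_{g_{1}} \wedge \omega_{g_{0}}$ is a bounded $2$-form, and its density $f$ against the area form is bounded.

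For $L^{1}$ integrability, I would analyze the support of $\omega_{g_{1}}\wedge\omega_{g_{0}}$, which is contained in the intersection of the tubes of radius $1$ around $g_{0}$ and $g_{1}$. When $g_{0}$ and $g_{1}$ share no endpoint at infinity, they either cross transversely once or admit a common perpendicular; in either case they diverge exponentially at both ends, so the intersection of the two tubes is a bounded region of $\hh$ with finite hyperbolic area, and boundedness of $f$ gives the conclusion immediately.

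The delicate case is when $g_{0}$ and $g_{1}$ share exactly one endpoint at infinity (if they share both, then up to orientation $g_{0}=g_{1}$ and the wedge vanishes identically). Using $\psld$-equivariance, I would conjugate the common endpoint to $\infty$ in the upper half-plane model, so both geodesics become vertical half-lines $\{x=0\}$ and $\{x=a\}$. A direct computation of the gradients of $R_{g_{0}}(x,y)=\operatorname{arcsinh}(x/y)$ and $R_{g_{1}}(x,y)=\operatorname{arcsinh}((x-a)/y)$ yields
\[
\d R_{g_{0}}\wedge \d R_{g_{1}} \;=\; \frac{a}{y\,\sqrt{x^{2}+y^{2}}\,\sqrt{(x-a)^{2}+y^{2}}}\;\d x\wedge \d y.
\]
Dividing by the hyperbolic area form $y^{-2}\,\d x\wedge\d y$ and using that in the support slice at height $y$ one has $x$ of order $y$ ranging in an interval of width of order $y$, the remaining integral over the cusp reduces to one of the form $\int^{\infty}\d y/y^{2}$, which converges.

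The main obstacle lies in this last case: a naive estimate using only boundedness of $f$ and the fact that the support slice at height $y$ has area of order $1$ in the hyperbolic metric would give a logarithmically divergent integral. The essential point is the extra factor $a$ appearing in the numerator above, which is a sine-of-angle estimate expressing that the two perpendicular directions to $g_{0}$ and $g_{1}$ become parallel at the shared endpoint; this is the cancellation that secures $L^{1}$.
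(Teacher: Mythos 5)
Your proposal is correct and follows essentially the same route as the paper: reduce to the shared-endpoint case, put both geodesics vertical in the upper half-plane model, and extract the crucial factor of $a$ from the wedge of the two signed-distance differentials, which turns the would-be logarithmic divergence into a convergent $\int^{\infty}\d y/y^{2}$. The paper phrases the computation via $\sinh(R_{g_{0}})=x/y$ and $\d(x/y)\wedge\d((x-a)/y)=a\,y^{-3}\,\d x\wedge\d y$, but this is the same cancellation you identify.
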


\begin{proof}
The only non-trivial case is if $g_0, g_1$ share an endpoint. In the upper half plane model let $g_0$ be the geodesic $x=0$, while $g_1$ is the geodesic $x=a$. Observe that the support of $\omega_{g_1}\wedge\omega_{g_0}$ is in the sector $V$ defined by the inequations $y>B>0$ and $\vert x/y\vert < C$ for some positive constants $A$ an $C$. Finally as the signed distance for $g_0$ satisfies $\sinh(R_{g_0}) = x/y$ then
$$\omega_{g_0}=f_0\ {\rm d} \left(\frac{x}{y}\right)\ , \ \omega_{g_1}=f\  {\rm d} \left(\frac{x-a}{y} \right)\ ,$$ where $f_0$ and $f_1$ are functions bounded by a constant $D$. An easy computation shows that 

$$
{\rm d}\left(\frac{x}{y}\right)\wedge {\rm d} \left(\frac{x-a}{y} \right)
=a \  \frac{{\rm d} x\wedge {\rm d y}}{y^3}\ .
$$
Oberve that $\vert f f_0 a\vert$ is bounded by $D^2a$, and 
$$
\int_V \frac{{\rm d} x\wedge {\rm d y}}{y^3}\leq 2C\int_B^\infty \frac{1}{y^2}{\rm d} y=\frac{1}{B}< \infty\ .
$$
This completes the proof. \end{proof}

 \rmka The above result is still true whenever $g$ or $h$ are phantom geodesics.

From that it follows that
\begin{proposition}\label{prop:inter}
For any pair of geodesics, phantom or not,    $g$ and $g_0$, we have 	\begin{eqnarray}\int_{g_0}\omega_g = \epsilon(g_0,g)=\int_{\hh}\omega_{g_0}\wedge \omega_{g}	\ .\label{eq:duality2}
 	\end{eqnarray}
 	Moreover for any  (possibly ideal) triangle $T$ in $\hh$
	\begin{equation}
		\int_{\partial T}\omega_g=0\ .\label{eq:triangle-vanish0}
	\end{equation}
\end{proposition}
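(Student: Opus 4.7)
The plan is to treat the three claims in sequence. The first equality $\int_{g_0}\omega_g=\epsilon(g_0,g)$ is immediate: for honest geodesics it is equation~\eqref{eq:duality0} in Proposition~\ref{pro:def-omegag}, and for phantom geodesics both sides vanish by the conventions introduced in the remark following that proposition.

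For the second equality $\int_\hh\omega_{g_0}\wedge\omega_g=\epsilon(g_0,g)$, I would pass to Fermi coordinates $(t,s)$ around $g_0$, with $t$ arc length on $g_0$ and $s=R_{g_0}$ the signed distance. A Jacobian check shows that the standard orientation of $\hh$ corresponds to $ds\wedge dt$ in these coordinates. Since $\omega_{g_0}=-f'(s)\,ds$, writing $\omega_g=A(t,s)\,dt+B(t,s)\,ds$ gives $\omega_{g_0}\wedge\omega_g=-f'(s)A(t,s)\,ds\wedge dt$, and Fubini (justified by the $L^1$ bound of the previous proposition) yields
$$
\int_\hh\omega_{g_0}\wedge\omega_g=-\int_{\mathbb R} f'(s)\,I(s)\,ds,\qquad I(s)\defeq\int_{\gamma_s}\omega_g,
$$
where $\gamma_s$ denotes the equidistant curve at signed distance $s$ from $g_0$, parametrized by $t$. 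The function $I(s)$ is computed explicitly from the primitive $f\circ R_g$: away from values of $s$ for which $\gamma_s$ crosses $g$, $I(s)$ equals the boundary value $-[f\circ R_g]^{g_0(+\infty)}_{g_0(-\infty)}$, and it picks up an additional contribution of $+1$ each time $\gamma_s$ is crossed by $g$. The remaining one-dimensional integral against $f'(s)\,ds$ then evaluates to $\epsilon(g_0,g)$ using the support properties of $f'$ (concentrated on $|s|\in[1/2,1]$) and the jump of $f$ of size $1$ at $s=0$.

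For the triangle vanishing~\eqref{eq:triangle-vanish0}, with $T$ bounded by $g_0,g_1,g_2$ carrying the induced orientation, combining the first equality of the proposition with the cocycle identity~\eqref{eq:triang-intersect} gives
$$
\int_{\partial T}\omega_g=\sum_{i=0}^2\int_{g_i}\omega_g=\sum_{i=0}^2\epsilon(g_i,g)=0.
$$

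The hardest step will be the shared-endpoint case in the second equality, where $g$ and $g_0$ meet at a point of $\bh$. There $I(s)$ is neither piecewise constant nor of compact support in $s$, but varies in a way that mixes boundary values of $f\circ R_g$ at the shared ideal endpoint with the contribution of the moving intersection point $\gamma_s\cap g$. I would handle this either by an explicit case analysis in the upper half-plane (placing $g_0=\{x=0\}$, $g=\{x=a\}$) and leveraging $\psld$-equivariance to reduce to this one configuration, or equivalently by splitting $\hh$ along $g_0$ into the two half-planes and applying Stokes on each side with careful control of the boundary terms at infinity, using the bounded-form estimate of the preceding proposition.
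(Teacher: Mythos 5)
The paper offers no written proof of this proposition (it is introduced with ``From that it follows that''), so you are supplying an argument the authors omit; your plan is sound and consistent with how they prove the closest analogue, namely item \eqref{it:integ-polyg} of the polygonal-arc proposition, where they split $\hh\setminus g_0$ into two half-planes and apply Stokes to $\d\bigl((f\circ R_{g_0})\,\alpha\bigr)$ to get $\tfrac12\int_{g_0}\alpha$ from each side. That Stokes split, which you relegate to a fallback for the shared-endpoint case, actually handles all cases of the second equality uniformly and reduces it at once to the first equality $\int_{g_0}\omega_g=\epsilon(g_0,g)$; your primary route via Fermi coordinates and the equidistant foliation is more laborious but also works, and has the merit of making the $L^1$ justification of Fubini explicit.

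Two small corrections. First, in your formula for $I(s)$ the contribution of a crossing of $\gamma_s$ with $g$ must be counted with sign: an equidistant curve to $g_0$ can meet a geodesic $g$ disjoint from $g_0$ in two points, and the two jumps of $f\circ R_g$ then cancel rather than add to $2$; it is the \emph{algebraic} crossing number of $\gamma_s$ with $g$ that enters, and this is what makes $I(s)$ reduce to boundary data. Second, your argument for \eqref{eq:triangle-vanish0} via the cocycle identity \eqref{eq:triang-intersect} only covers the ideal case, whereas the statement allows arbitrary (possibly ideal) triangles; for a compact or partially ideal triangle note simply that $\omega_g$ is a closed form on the simply connected $\hh$ admitting a bounded continuous primitive (the function $-f\circ R_g$ corrected by $\pm\tfrac12$ on the two sides of $g$), so the integral over any closed boundary contour telescopes to zero, with the ideal vertices handled by the existence of limits of that primitive at infinity.
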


\subsection{The generic set and barycentric construction}
For any oriented geodesic $g$ in $\GG$ we denote   by $\bar g$ the geodesic with opposite orientation, and we write $g\simeq h$, if either $g=h$ or $g=\bar h$. Let us also 
denote the extremities of $g$ by $(\partial^-g,\partial^+g)$ in $\bh\times\bh$.

For $n\geq 2$, let  us the define the 
{\em singular set} as
$$
\GG^n_{1}\defeq\{(g_1,\ldots,g_n)\mid \forall \ i,j, \ \ g_i\simeq g_j \}\ ,
$$
and the 
{\em generic set} to be
$$
\GG_\star^n\defeq \GG^n\setminus \GG^n_{1}\ .
$$

We define a {\em $\Gamma$-compact set} in $\GG_\star^n$  to be the preimage of a compact set in the quotient $\GG_\star^n/\Gamma$.

The {\em barycenter} of a family $G=(g_1,\ldots,g_n)$ of geodesics is the point $\bary(G)$ which attains the minimum of the sum of the distances to the geodesics $g_i$. 
Choosing a uniformization, the barycentric construction yields a  $\psld$-equivariant map from 
$$
\bary:\mapping
{\GG_\star^n}
{\hh\ ,}
{(g_1,\ldots,g_n)}
{\bary(y)\ .}
$$
It follows from the existence of the barycenter map that  the diagonal action of $\Gamma$ on $\GG_\star^n$ is proper. The {\em barycentric section} is then the section $\sigma$ of the following fibration restricted to $\GG_\star^n$ 
$$
F:(\USi)^n\to \GG^n\ ,
$$
given by 
$$
\sigma=(\sigma_1,\ldots,\sigma_n)\ ,
$$
where $\sigma_i(g_1,\ldots,g_n)$ is the orthogonal projection of $\bary(g_1,\ldots,g_n)$ on $g_i$. 
Obviously

\begin{proposition}
	The barycentric section is equivariant  under the diagonal  action of $\psld$ on $\GG_\star^n$ as well as the natural action of the symmetric group $\mathfrak S_n$.
\end{proposition}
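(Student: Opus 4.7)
The plan is to verify the two equivariance claims separately, each reducing to a standard naturality property of the barycenter and the orthogonal projection. Since both $\sigma$ and $\bary$ are built only from the hyperbolic metric and the orientations of the $g_i$, the result should follow by chasing definitions once these naturality properties are recorded.

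For the $\psld$-equivariance, I would fix $\phi \in \psld$ and $G = (g_1,\ldots,g_n) \in \GG_\star^n$ and use that $\phi$ is an orientation-preserving isometry of $\hh$. This gives $d(\phi(x), \phi(g_i)) = d(x, g_i)$ for every $x$ and $i$, hence the functional whose minimum defines the barycenter is invariant under $\phi$ up to the change of variables $x \mapsto \phi(x)$; therefore $\bary(\phi\cdot G) = \phi(\bary(G))$. Since the orthogonal projection onto a geodesic is also characterized metrically, $\phi$ intertwines the projection onto $g_i$ with the projection onto $\phi(g_i)$, and because $\phi$ preserves orientations it sends the positively oriented unit tangent vector of $g_i$ at a point to the positively oriented unit tangent vector of $\phi(g_i)$ at the image point. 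Combining these two facts gives $\sigma_i(\phi\cdot G) = \phi_\ast \sigma_i(G)$ for every $i$.

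For the $\mathfrak S_n$-equivariance, I would observe that the sum $x \mapsto \sum_{i=1}^n d(x, g_i)$ depends only on the unordered multiset $\{g_1,\ldots,g_n\}$, so $\bary(\tau\cdot G) = \bary(G)$ for every $\tau \in \mathfrak S_n$. Consequently, $\sigma_i(\tau\cdot G)$ is by definition the projection of $\bary(G)$ onto the $i$-th entry of $\tau\cdot G$, namely $g_{\tau(i)}$; this is exactly $\sigma_{\tau(i)}(G)$, which is the desired permutation equivariance $\sigma(\tau\cdot G) = \tau\cdot \sigma(G)$.

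I do not expect any real obstacle: the content is that $\bary$ and orthogonal projection are metric-and-orientation constructions, and that the defining functional of $\bary$ is symmetric in its arguments. The only point deserving a brief comment is the well-definedness of $\bary$ on $\GG_\star^n$ (existence and uniqueness of the minimizer), but this is implicit in the preceding paragraph where $\bary$ is introduced as a map, and in any case follows from strict convexity and properness of $x \mapsto \sum_i d(x, g_i)$ away from the singular locus $\GG_1^n$.
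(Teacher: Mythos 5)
Your argument is correct and is exactly the reasoning the paper has in mind: the paper offers no proof at all (the proposition is prefaced by ``Obviously''), and the intended justification is precisely that the barycenter and the orthogonal projections are defined purely in terms of the hyperbolic metric and orientation, hence commute with isometries, while the defining functional $x\mapsto\sum_i d(x,g_i)$ is symmetric in the $g_i$. Your closing remark on well-definedness is the only place where more care would be needed (the functional need not be proper on all of $\GG_\star^n$, e.g.\ when all the $g_i$ share an ideal endpoint without being pairwise equal up to orientation), but that concerns the paper's definition of $\bary$ rather than the equivariance statement itself.
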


\subsection{Geodesically bounded forms}

We abstract the properties of geodesic forms in the following definition:

\begin{definition}[{\sc Geodesically bounded forms}] \label{def:geod-bded} Let $\alpha$ be a closed 1-form on $\hh$. We say that $\alpha$ is {\em geodesically bounded} if \begin{enumerate}
\item $\alpha$ belongs to $\bLa^\infty$, $\nabla\alpha$ is bounded.
\item for any geodesic $g$, $\alpha(\dot g)$ is in $L^1(g, \d t)$, $\omega_g\wedge\alpha$ belongs to $L^1(\hh)$ and
	\begin{equation}
		\int_{g}\alpha =\int_{\hh}\omega_g\wedge \alpha\ .\label{eq:duality1}
	\end{equation}
\item Moreover for any  (possibly ideal) triangle $T$ in $\hh$
	\begin{equation}
		\int_{\partial T}\alpha =0\ .\label{eq:triangle-vanish}
	\end{equation}
\end{enumerate}
\end{definition}
We denote by $\bXi$ the vector space of geodesically bounded forms.
We observe that any geodesically bounded form is closed and that any geodesic form belongs to $\bXi$.

\subsection{Polygonal arcs form}\label{sec:polyg-arc}

We will have to consider {\em geodesic polygonal arcs} which are a finite union of oriented geodesic arcs
$$
\bgamma=\gamma_0\cup\cdots\cup\gamma_p\ ,
$$
such that $\gamma_i$ joins $\gamma_i^-$ to  $\gamma_i^+$ and we have $\gamma_i^-=\gamma_{i-1}^+$, while $\gamma_0^-$ and $\gamma_p^+$ are distinct points at infinity. 
We say that $\gamma_1,\ldots,\gamma_{p-1}$ are the {\em interior arcs}.

We have similar to above

\begin{proposition}[\sc Dual forms to polygonal arcs] Given a geodesic polygonal arc $\bgamma=\gamma_0\cup\cdots\cup\gamma_p$ there exists a closed 1-form $\omega_\bgamma$ so that
\begin{enumerate}
	\item the 1-form $\omega_\bgamma$ is supported on a 1-neighborhood of $\bgamma$,
	\item Let $B$ be a ball containing a 1-neighbourhood of the interior arcs, such that outside of $B$ the 1-neighbourhood $V_0$ of $\gamma_0$ and the 1-neighbourhood $V_1$ of $\gamma_p$ are disjoint then
	$$
	\left.\omega_{\bgamma}\right\vert_{V_0}=\left.\omega_{g_0}\right\vert_{V_0}\ \ \ ,\ \ \  \left.\omega_{\bgamma}\right\vert_{V_1}=\left.\omega_{g_p}\right\vert_{V_1}\ .
	$$
	where $g_0$ and $g_p$ are the complete geodesics containing the arcs $\gamma_0$ and $\gamma_p$. 
	\item For any element $\Phi$ of $\psld$,
	$\omega_{\Phi(\bgamma)}=\Phi^*(\omega_\bgamma)$.
	\item For any geodesic $g$,
	$
	\int_g\omega_{\bgamma}=\epsilon(g,[\gamma_0^-,\gamma_p^+])
	$.
	\item\label{it:integ-polyg}
	Let $\bgamma$   be a polygonal arc with extremities at infinity $x$ and $y$, then for any 1-form $\alpha$ in $\bXi$ we have
	$$
	\int_{\hh}\omega_{\bgamma}\wedge\alpha=\int_{[x,y]}\alpha \ .
	$$
\end{enumerate}
\end{proposition}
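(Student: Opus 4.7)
The plan is to mirror the construction in Proposition~\ref{pro:def-omegag}: define a smooth ``signed tubular coordinate'' $R_\bgamma$ on the $1$-neighborhood of $\bgamma$, agreeing with the signed distance $R_{g_i}$ on the perpendicular strip to each arc $\gamma_i$ (away from the interior vertices) and interpolated in a $\psld$-equivariant manner near each interior vertex. Set $F_\bgamma := f \circ R_\bgamma$, extend by $\pm 1/2$ to the two sides of $\bgamma$ outside the 1-neighborhood, and define $\omega_\bgamma := -dF_\bgamma$. Properties (1)--(3) are then immediate: the support condition is built in, agreement with $\omega_{g_0}$ on $V_0$ and with $\omega_{g_p}$ on $V_1$ follows because outside $B$ the signed coordinate $R_\bgamma$ reduces identically to $R_{g_0}$, resp. $R_{g_p}$, and the $\psld$-equivariance is inherited from the canonical construction.

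For property (4), since $F_\bgamma$ extends continuously to $\overline\hh$ with values $\pm 1/2$ on the two arcs of $\bh\setminus\{\gamma_0^-,\gamma_p^+\}$ cut out by $\bgamma$, integration of $\omega_\bgamma=-dF_\bgamma$ along a complete geodesic $g$ collapses to a boundary difference of $F_\bgamma$ at the ideal endpoints of $g$. Because $\bgamma$ and the complete geodesic $[\gamma_0^-,\gamma_p^+]$ induce the same partition of $\bh\setminus\{\gamma_0^-,\gamma_p^+\}$, these boundary values coincide with those coming from $\omega_{[\gamma_0^-,\gamma_p^+]}$, and the claim follows from Proposition~\ref{pro:def-omegag}, with the half-integer cases corresponding to a shared ideal endpoint.

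For property (5) I would split the argument into two steps. First, decompose the region bounded by $\bgamma\cup\overline{[x,y]}$ into triangles $T_0,\ldots,T_{p-1}$, where $T_i$ has vertices $\gamma_i^-$, $\gamma_i^+=\gamma_{i+1}^-$, and $y$. The interior sides $[\gamma_{i+1}^-,y]$ appear in consecutive triangles with opposite orientations, so summing $\int_{\partial T_i}\alpha=0$ from \eqref{eq:triangle-vanish} gives the telescoping identity $\int_\bgamma\alpha=\int_{[x,y]}\alpha$. Second, combined with \eqref{eq:duality1} applied to $[x,y]$, it remains to show $\int_\hh(\omega_\bgamma-\omega_{[x,y]})\wedge\alpha=0$. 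Writing $\omega_\bgamma-\omega_{[x,y]}=-dG$ with $G:=F_\bgamma-F_{[x,y]}$ and using $d\alpha=0$, we have $(\omega_\bgamma-\omega_{[x,y]})\wedge\alpha=-d(G\alpha)$; introducing a cutoff $\phi_R$ supported in $B_R$ and applying Stokes to $\phi_R G\alpha$ reduces the vanishing to controlling the error term $\int_\hh G\,d\phi_R\wedge\alpha$ as $R\to\infty$.

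The integrability of $\omega_\bgamma\wedge\alpha$ needed to make sense of the left-hand side should follow by splitting along $B$: outside $B$ it reduces to $\omega_{g_0}\wedge\alpha$ and $\omega_{g_p}\wedge\alpha$, which lie in $L^1$ by the definition of $\bXi$, while inside $B$ the form is bounded and supported on a compact set. The main obstacle is the Stokes truncation argument in the second step of (5): one must show that the error $\int_\hh G\,d\phi_R\wedge\alpha$ tends to zero. The key geometric input is that $G$ is bounded and supported in the region $\Omega$ between $\bgamma$ and $[x,y]$ together with a tubular neighborhood of its boundary, and $\Omega$ has \emph{finite} hyperbolic area because its only ideal vertices are $x$ and $y$, where the region pinches; combined with the boundedness of $\alpha$ from $\bLa^\infty$ and the $L^1$ behavior along $g_0$ and $g_p$, this controls the tail and yields the required identity.
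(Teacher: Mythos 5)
Your construction of $\omega_\bgamma=-\d(f\circ R_\bgamma)$ and your treatment of items (1)--(4) coincide with the paper's, which simply transplants the geodesic construction of Proposition \ref{pro:def-omegag} and declares those items obvious. For item (5) you reorganize the argument: the paper writes $\hh\setminus\bgamma=U\sqcup V$ and applies Stokes to $F_\bgamma\,\alpha$ on each component, getting $\tfrac12\int_{\bgamma}\alpha$ from each side and then (implicitly) using the cocycle identity \eqref{eq:triangle-vanish} to replace $\int_{\bgamma}\alpha$ by $\int_{[x,y]}\alpha$; you instead subtract the model form $\omega_{[x,y]}$, whose pairing with $\alpha$ is already prescribed by \eqref{eq:duality1}, write the difference as $-\d G$ off the jump locus, and let your telescoping triangle identity cancel the jump contributions along $\bgamma$ and $[x,y]$. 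These are the same Stokes computation packaged differently, and both hinge on the truncation estimate the paper compresses into ``applying carefully Stokes theorem''. One caveat on your justification of that estimate: the support of $G$ does not have finite area --- the $1$-neighbourhoods of $\bgamma$ and of $[x,y]$ are infinite strips --- so boundedness of $G$ and $\alpha$ together with the finite area of the bigon only makes the error term $\int G\,\d\phi_R\wedge\alpha$ bounded, not vanishing; to kill it near the ideal points $x$ and $y$ you must invoke the integrated decay encoded in the definition of $\bXi$ (the $L^1$ conditions and the ideal-triangle identity \eqref{eq:triangle-vanish}), exactly the point the paper also leaves implicit.
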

\begin{proof} The construction runs as the one for geodesics. Let us fix a function $f$ from $\mathbb R^+$ to $[0,1]$ with support in $[0,1]$ which is constant and equal to $1/2$ on $[0,1/2]$. We extend (non-continuously) $f$ to $\mathbb R$ as an odd function. Let finally $R_g$ be the ``signed distance" to $g$, so that $R_{\bar g}=-R_g$. We finally set $\omega_g=-\d (f\circ R_g)$. Then $(1)$, $(2)$, $(3)$ and $(4)$ are obvious.

Then writing $\hh\setminus\bgamma=U\sqcup V$ where $U$ and $V$ are open connected sets. We have that
$$
\int_U \omega_\bgamma\wedge\alpha=\int_U \d (f\circ R_g)\wedge \alpha=\frac{1}{2}\int_g \alpha\ ,
$$
by carefully applying  Stokes theorem. The same holds for the integral over $V$, giving us our desired result.
\end{proof}

The form $\omega_\bgamma$ is the {\em polygonal arc form}.

\subsection{The Goldman symplectic form}\label{sec:gold-sympl}
Let $S$ be a closed surface with  $\Sigma$ its universal cover that we identify with $\hh$ by choosing a complete hyperbolic structure on $S$. Given a representation $\rho:\pi_1(S) \rightarrow G$ we  let $E = \Sigma\times_\rho\mathfrak g$ be the bundle over $S$ by taking the quotient of the trivial bundle  $\Sigma\times\mathfrak g \rightarrow \Sigma$ by the  action of $\pi_1(S)$ given by  $\gamma(x,v) = (\gamma(x), Ad\rho(\gamma) (v))$. Let $\nabla$ be the associated flat connection on the bundle $E$  and denote by $\Omega^k(S)\otimes\End(E)$ the vector space of $k$-forms on $S$ with values in $\End(E)$. Recall that $\nabla$ gives rise to a differential 
$$\d^\nabla:  \Omega^k(S)\otimes\End(E)\to\Omega^{k+1}(S)\otimes\End(E)\ . $$ We say a  1-form $\alpha$ with values in  $\End(E)$ is {\em closed} if $\d^\nabla\alpha=0$ and {\em exact} if $\alpha=\d^\nabla\beta$.
 Let then consider
\begin{eqnarray*}
	C^1_\rho(S)&\defeq&\{\hbox{closed 1-forms with values in $\End(E)$}\}\ ,\crcr
	E^1_\rho(S)&\defeq&\{\hbox{exact 1-forms with values in $\End(E)$}\}\ ,\crcr
	H^1_\rho(S)&\defeq&C^1_\rho(S)/E^1_\rho(S)\ .
\end{eqnarray*}
\begin{definition}
	When $S$ is closed, the {\em Goldman symplectic form}  $\bOm$ on $H^1_\rho(S)$ is given by 
\begin{equation}
\bOm(\alpha,\beta)\defeq\int_S\tr(\alpha\wedge\beta)\ ,	
\end{equation}
where for $u$ and $v$ in $\T S$: $\tr(\alpha\wedge\beta)(u,v)\defeq\tr(\alpha(u)\beta(v))-\tr(\alpha(v)\beta(u))$.
\end{definition}

Observe that if we consider complex bundles, the Goldman symplectic form is complex valued, while it is real valued for real bundles. 
 
\part{On the hyperbolic plane} In this first part, we deal with higher rank analogues of geodesics, intersections  and hyperbolic metrics on the disk, which we recalled in  paragraph \ref{sec:hypplane}. We define these notions independent of the presence of a cocompact surface group and work on the hyperbolic plane and more precisely on the unit tangent bundle of the hyperbolic plane.

 One of our first goals is to define (non averaged) correlation functions $\T_G(\rho)$ associated to a configuration of geodesics $G$, and uniformly hyperbolic bundle $\rho$ (the non periodic generalisation of Anosov representations). We more precisely aim at computing the variation of $\T_G(\rho)$ when one varies $\rho$. This computation  is achieved through the introduction of {\em ghost polygons}, their {\em ghost dual form } and {\em intersections}, as well as  the {\em ghost integration}, generalizing step by step the classical framework of geodesics, intersection and dual forms explained in paragraph \ref{sec:geod-inter}
.
 \begin{enumerate}
 	\item  In section \ref{sec:unif-hyp-bund}, we answer to the question: how do we generalize the hyperbolic metric on the disk in higher rank?  In the periodic case, that is in the presence of a closed surface or alternatively a cocompact group acting on the disk, a good answer is given by Anosov representations. To deal with this non periodic case, we introduce {\em uniformly hyperbolic bundles} and their {\em fundamental projector} $p$. Such a fundamental projector is associated to a pair of distinct points in $\partial_\infty\hh$. Our main result is then given by proposition  \ref{pro:deriv-fund-proj} that computes the variation of the fundamental projectors for a variation of a uniformly hyperbolic bundle using the cohomological equation.
\item In section \ref{sec0:ghost-polygon}, we introduce some of the main players of this article: the {\em ghost polygons} which are associated to a cyclic configuration $G$ of geodesics. To such a ghost polygon $G$ and a uniformly hyperbolic bundle $\nabla$ we associate a correlation function $\T_G(\nabla)$ and prove  certain  analytic properties.
\item In section \ref{sec:ghost-integ}, we explain how to integrate on a ghost polygon certain 1-forms with values in the endormorphisms bundle, and among them the 1-form $\dot\rho$ associated to a variation of a uniformly hyperbolic bundle. This {\em ghost integration} is the non abelian analogue of integration along geodesics.  This allows us to obtain a formula and proposition \ref{pro:dT-integ} which fulfills our first goal: {\em computing the variation of correlation functions from the variation of a hyperbolic bundle}. In this same section we introduce the dual objects to ghost integration given by certain 1-forms with values in the endomorphisms bundles, playing the role of Poincaré duality in this context. 
\item Finally in section \ref{sec:ghost-inter}, we use the dual 1-forms to define the {\em ghost intersection} of two ghost polygons, a procedure dual to wedging. This intersection (with respect to a uniformly hyperbolic bundle) is actually purely combinatorial: it defines a bracket on the formal polynomial algebra generated by ghost polygons. We finally explain how this combinatorial structure is related in the projective case to swapping algebras.
 \end{enumerate}
At each of these steps, the special case of projective uniformly hyperbolic bundles gives simpler formulae.
 
\vskip 0.2 truecm
 In the next part, we use this first part to obtain results in the periodic case after averaging. But we note that the results of this first part do not depend on the assumption of periodicity.

\section{Uniformly hyperbolic  bundles and projectors}\label{sec:unif-hyp-bund}

We introduce the notion of {\em uniformly hyperbolic bundles} over the unit tangent bundle $\uh$  of $\hh$ -- see  definition \ref{def:unif-hyp-bund}.  This notion is a universal version of Anosov representations defined in \cite{Labourie:2006}.     Roughly speaking an Anosov representation is given by a flat bundle satisfying some expanding/contracting features. These expanding/contracting features are measured with respect to the choice of a Euclidean metric on the bundle. However, on a compact surface this choice is irrelevant: all Euclidean metrics are uniformly equivalent. On the contrary, if we work on the hyperbolic plane, the choice of the Euclidean metric is now meaningful and has to be specified in the definition.

More specifically, we explain in the projective case, that such an object is a Euclidean  vector bundle $E$ over $\mathsf U\hh^2$ satisfying some contracting properties,  the metric being only considered up uniform equivalence. Associated to these bundles are  \begin{enumerate}
 \item A special section $\pp$ of the endomorphism bundle $\End(E)$ given by projectors fibrewise and that we call the {\em fundamental projector}
 \item a flat connection $\nabla$ on the bundle $E$, such that 
\begin{equation}
	\nabla_\pt\pp=0\ ,\label{eq:nablap}
\end{equation}
where $\pt$ is the generator of the geodesic flow on $\mathsf U\hh$.
\item Special curves in flag manifolds that by extension of Anosov representations we also call {\em limit maps}. 
 \item As ancillary definitions, we have that the notions of a
\begin{enumerate}
 \item  {\em family of uniformly hyperbolic bundles} -- and the associated stability lemma -- 
 \item  {\em variation of a uniformly hyperbolic bundle}: we describe this as a 1-form $\dot\nabla$  with values in the endormorphism bundle of a uniformly hyperbolic bundle. 
 \item {\em  equivalent bundles} using either  gauge fixing or metric fixing.
 \end{enumerate}\end{enumerate}
Our main result -- proposition \ref{pro:deriv-fund-proj} -- is the  description of the variation  $\dot\pp$ of such a projector under a variation of the data defining the uniformly hyperbolic bundles. We present the outline briefly: differentiating $\pp = \pp^2$ we see that
\begin{equation*}
\dot\pp =\dot \pp \pp +\pp\dot\pp
\end{equation*}
and it follows that $\dot\pp$ is a section of the subbundle $F_0$ of $\End(E)$ where
\begin{equation*}
F_0 = \{ B \in \End(E)\ | \ B=B\pp+\pp B\}.
\end{equation*}
As a consequence of uniform hyperbolicity this bundle splits with $F_0 = F_0^+\oplus F_0^-$ where $F_0^-$ contracts under forward flow an $F_0^+$ contracts under negative flow. Using these contraction properties, we see that $\dot\pp$ is a solution of  the {\em cohomological equation} obtained by differentiating \eqref{eq:nablap}, 
$$
\nabla\dot\pp + [\dot\nabla,\pp] =0\ .
$$
We then solve this equation to obtain an integral formula  for $\dot\pp$  under a change in  connection $\dot\nabla$
$$		\dot\pp(x)=\int_{-\infty}^0 \left( \pp\cdot [\pp,\dot\nabla_{\ps}] \right)(x^s)\ \d s- \int_0^\infty \left([\pp,\dot\nabla_{\ps}]\cdot \pp \right)(x^s)\ \d s\ \ ,
$$
This rough outline has to be made rigorous, and has several avatars depending on whether we want to emphasize gauge fixing or metric fixing, a classical fixture in Higgs bundles.

\vskip 0.2 truecm
After defining uniformly hyperbolic bundles, we consider {\em $\Theta$-uniformly hyperbolic bundles}, which is just an extra decoration.

Finally, we recover Anosov representations as periodic cases of uniformly hyperbolic bundles.  Also the notion of a uniformly hyperbolic bundle  is the structure underlying the study of quasi-symmetric maps in \cite{Labourie:2020tv}.

This notion has a further  generalization to all hyperbolic groups $\Gamma$, replacing $\uh$ by a real line bundle $X$ over 
$$\partial_\infty\Gamma\times\partial_\infty\Gamma
\setminus\{(x,x)\mid x\in\partial_\infty\Gamma\}\ ,$$
equipped with a $\Gamma$-action so that 
$X/\Gamma$ is the geodesic flow of $\Gamma$.
We will not discuss it in this paper, since this will needlessly burden our notation.

\subsection{Uniformly hyperbolic bundles: definitions}
Let $\uh$ be the unit tangent bundle of $\hh$. We denote by $\ps$ the vector field on $\uh$ generating the geodesic flow $\flo{\phi}$. This first paragraph deals with basic definitions and properties.

\subsubsection{Metrics on Grassmannian }
We first recall that given a euclidian metric $g_0$ on a vector space $V$, we define the {\em angle} between vector subspaces $F$ and $G$ of $V$ as
$$
\angle_0(F,G)=\inf\left\{\angle_x(u,v)\mid u\in F, v\in G\right\}\ ,
$$
where for non zero vectors $u$ and $v$
$$ 
\angle_0(u,v)=\arccos\left(\frac{g_0(u,v)}{\sqrt{g_x(u,u)g_x(v,v)}}\right)\ .
$$
Similarly, we consider the  distance $d_x$  of diameter 1 in $\Gk(E)$, associated to $g_x$.
Finally $\angle_x(F,G)=0$ if and only if $\dim(F\cap G)\geq 1$.

In the next lemma and the sequel, we identify $\Gk(V^*)$ with $\Gnk(V)$ where $n=\dim(V)$.

\begin{lemma}[\sc Equivalent metrics]\label{lem:equi}
	Let $V$ be a vector space. Then for any positive $\epsilon$ there exists a constant $K$ such that for any euclidean metric $g_0$ in $V$, following holds:   Let  $P_0$ be an element of  $\Gk(V^*)$ respectively.  Let then 
		$$
	\mathcal U=\{\uu\in \Gk(V)\mid \angle(\uu,P_0)\geq \epsilon\}\ \ .$$
Le  $L_0$  be an element of  $\mathcal U$.
	 
Then if $B$ is an an element of $L_0^*\otimes P_0$ such that the graph $L_B$ of $B$ is $\mathcal U$, we have the following inequality.
	 \begin{align}
	 	 \frac{1}{K}\ d_0(L_0,L_B)\leq \Vert B\Vert \leq  K\  d_0(L_0,L_B)\ .
	 \end{align}
\end{lemma} 
\begin{proof} The easy proof relies on a compactness argument. Indeed,  
$$
K\defeq \{B\in L_0^*\otimes P_0\mid L_B\in \mathcal U_0\}
$$
is a compact set in $L^*\otimes P$. Now, on $K$ we have two Riemannian metrics,
\begin{enumerate}
	\item  the one induced from the Riemannian metric $d_0$ on $\Gk(V)$ by the map $B\mapsto L_B$,
	\item the norm coming from the metric $g_0$.
\end{enumerate}
It follows that these two Riemaniann metrics are equivalent by a constant $K_L=K(L,\epsilon)$ depending only on $L$ and $\epsilon$ and in particular
 \begin{align}
	 	 \frac{1}{K_{L_0}}\ d_0(L_0,L_B)\leq \Vert B\Vert \leq  K_{L_0}\  d_0(L,L_B)\ .
	 \end{align}
It follows that $$
K\defeq \sup_{L\in\mathcal U} K_L,
$$
works
\end{proof}

\subsubsection{Uniformly hyperbolic bundles: definitions}
We consider the  trivial bundle $E=V\times \uh$. For any flat connection $\nabla$ on $E$, we consider the lift $\flo{\Phi^\nabla}$ of $\flo{\phi}$ given by the parallel transport along the orbits of $\partial_t$. When $\rm D$ is the trivial connection on $E$, we just write $\flo{\Phi}\defeq\flo{\Phi^D}$ and observe that $\Phi_t(x,v)=(\phi_t(x),v)$ where $x$ is in $\uh$ and $v$ in $V$.
\begin{definition}[\sc Uniformly hyperbolic bundle]\label{def:unif-hyp-bund}
	A {\em rank $k$ uniformly hyperbolic projective bundle} is a pair $(\nabla,h)$ where $h$ is a section of the frame bundle on $E$, $\nabla$  a trivializable connection  on the bundle $E$, satisfying first  the (standard) {\em bounded cocyle hypothesis}: $\Vert\Phi_1^\nabla\Vert$ is uniformly bounded. 
	
Then we assume that we have a $\flo{\Phi^\nabla}$ invariant decomposition at every point $x$
$$
E_x=L_x\oplus P_x\ ,
$$
where $L_x$  and $P_x$ are subspaces  with $\dim(L_x)=k$, as well as a positive $\epsilon_0$  such  that \begin{enumerate}
\item \label{it:vol} There is a volume form on $E$, which is bounded with respect to $h$ and $\nabla$-parallel along orbits on the flow.
 \item The bundle $L\otimes P^*$ is contracting, that is  there exist positive constant $B$ and $b$  so that for all positive real $s$, for all $x$ in $\USi$ for all non-zero vector $u$ and $v$ in $L_x$ and $P_x$ respectively 
\begin{equation}
	\frac{\Vert \Phi^\nabla_s(u)\Vert_{\phi_s(x)}}{\Vert u\Vert_x}\leq B e^{-bs} \frac{
	\Vert \Phi^\nabla_s(v)\Vert_{\phi_s(x)}}
	{\Vert v\Vert_x}\ .\label{eq:def-proj}
\end{equation}
\item \label{hyp:3hp} Let $\mathcal L$ the closure of the image of $L$ in $\Gk(E)$ and $\mathcal P$ the closure of the image of $P$ in $\Gk^*(E)$, then for any  $x$ in $\uh$,  for any $\uu$ in $\mathcal L_{x}$ and $\vv$ in  $\mathcal P_{x}$, we have
$$
\angle_x(\uu,\vv)> \epsilon_0 .
$$
\item \label{hyp:tw} Let consider $E$ as a trivial bundle $E=V\times\uh$ over $\uh$, where $\nabla$ is the trivial connection. We finally assume that there exists  decomposition of $E$, $E=L^\infty\oplus P^\infty$ -- seen as a mapping in $\Gk(V)\times \Gk^*(V)$ -- such that for all $x$ in $\uh$, $\uu$ in $\mathcal L_x$ and $\vv$ in $\mathcal P_x$
$$
\angle_x(L^\infty_x,\vv)> \epsilon_0\ , \angle_x(L^\infty_x, P^\infty_x)>\epsilon_0 \hbox{ and } \ \ \angle_x(P^\infty_x,\uu)> \epsilon_0 .
$$
such that furthermore there exists constant $H$ and $\alpha$ such that for any $x$ and $y$ in $\uh$
$$
d_x(L^\infty_x,L^\infty_y)\leq H\  d(x,y)^\alpha\ ,
$$
and the same holds for $P^\infty$.\footnote{This inequality is satisfied for instance when $L^\infty$ and $P^\infty$ are uniformly $C^1$. It will only be used to prove a certain subset is nonempty; see footnote \ref{note:ne}.}
\end{enumerate}
The metric and scalar products considered are with respect to  the metric $g_x$ for which $h(x)$ is orthonormal.

The {\em fundamental projector} associated to a uniformly hyperbolic bundle is the section $\pp$ of $\End(E)$ given by the projection on $L$ parallel to $P$.
\end{definition}

We thank Tianqi Wang for pointing out to us that the first three hypothesis were not enough.

Observe that we do not require a priori any continuity on the bundles $L$ and $P$.  When the dimension of $L_x$ is 1, we talk of a {\em projective uniformly hyperbolic bundle}, when it is $k$, we talk of a {\em rank $k$ uniformly hyperbolic bundle}.

The hypothesis \eqref{it:vol} is for simplification purposes. Using that hypothesis, one sees that $\det(L)$ and $\det(P)$ are respectively contracting and expanding bundles.

The bounded cocycle assumption,  akin to a similar condition in Oseledets theorem, implies that there exists positive constants $A$, $B$ and $C$ so that 
\begin{equation}
	\Vert \Phi_s^\nabla\Vert\leq A+ Be^{Cs}\ .\label{eq:exp-bounded}
\end{equation}

Finally let us define a notion of equivalence for uniformly hyperbolic bundles:
\begin{definition}[\sc Equivalent bundles]
	Two uniformly hyperbolic bundles $(\nabla_0,h_0)$ and $(\nabla_1,h_1)$  are {\em equivalent} if there is a section  $B$ of $\mathsf{GL}(E)$ so that 
 \begin{enumerate}
	\item $\nabla_1=B^*\nabla_0$\ ,
	\item The metrics $g_{h_0}$ and $B^*g_{h_1}$ are uniformly equivalent.
\end{enumerate} 

\end{definition}
 
\subsubsection{Limit maps}
We start with a proposition.

\begin{proposition}\label{pro:pbd-unique}
Let $(\nabla,h)$ be a uniformly hyperbolic bundle $E=L\oplus P$. Let us choose a trivialisation so that $\nabla$ is the trivial connection. Then 
\begin{enumerate}
	\item The fundamental projector $\pp$ is parallel along the geodesic flow and a continuous bounded section of $\End(E)$: 
	 $$\sup_{x\in\uh}\Vert \pp_x\Vert_x <+\infty\ .
	 $$	
	\item the subdundle $L$ is constant along the strong stable foliation of the geodesic flow of $\uh$, and there exist positive constants $H$ and $\beta$, such that for all $x$
	$$
	d_x(L_x,L_y)\leq H\  d(x,y)^\beta\ .
	$$
	\item Finally $P$ is constant along the strong unstable foliation of $\uh$, and there exist positive constants $H$ and $\beta$, such that for all $x$
	$$
	d_x(P_x,P_y)\leq H\  d(x,y)^\beta\ .
	$$
\end{enumerate}
\end{proposition}

Before giving the proof of this proposition, observe that 
it  allows us to define the {\em limit maps} of the uniformly hyperbolic bundle $(\nabla,h)$ as we now show  

\begin{definition}[\sc Limit maps]
	Let us choose a trivialization  $E=V\times\uh$ so that $\nabla$ is trivial. 
	
	 The {\em limit map} of the uniformly hyperbolic bundle is the  continuous map
$$\xi: \partial_\infty\hh\to \Gk(V)\ ,$$   where $\xi(x)=L_y$, for any $y$ belongs to the strong stable foliation defined by $x$. 
Symmetrically, the  {\em  dual limit map} of the uniformly hyperbolic bundle is the  continuous map
$$\xi^*: \partial_\infty\hh\to \Gk(V^*)\ ,$$   where $\xi^*(x)=P_y$, for any $y$ belongs to the strong unstable foliation defined by $x$. 
\end{definition}
\vskip 0.2truecm

We also prove 
\begin{proposition}\label{pro:holder}
	The limit maps are H\"older.
\end{proposition}
\begin{proof} We use proposition \ref{pro:pbd-unique}.  Let $\mathsf{S}^1$ in $\uh$ be a fiber of the projection to $\hh$. We can see $\xi$ as a map from $\mathsf{S}^1$ to $\Gk(E)$. By proposition \ref{pro:pbd-unique}, for any $x$ in ${\mathsf S}^1$, 
$$
d_x(\xi(x),\xi(y))\leq H\  d(x,y)^\beta\ .
$$
Since $\mathsf{S}^1$ is compact, given any metric $d_0$ on $\Gk(V)$ coming from a euclidean metric on $V$, there is a constant $K$ such that for all $x$ in $\mathsf{S}^1$
$$
d_0\leq K\ d_x\ .
$$
It follows that for any $x$ and $y$ in $S^1$,
$$
d_0(\xi(x),\xi(y))\leq KH\  d(x,y)^\beta\ .
$$
Hence $\xi$ is Hölder. The same holds for $\xi^*$. 
\end{proof}

We start the proof of proposition \ref{pro:pbd-unique} by two lemmas. First let us write, if we have a projection $\pi$ from a set $F$ to $\uh$,  $F_x=\pi^{-1}(x)$ for $x$ in $\uh$. 
\begin{lemma}\label{lem:U-V}
	Let $(\nabla,h)$ be a uniformly hyperbolic bundle. Then there exists closed sets $\mathcal V$ and $\mathcal U$ of $\Gk(E)$ and $\Gk(E^*)$,  respectively, where $k$ is the dimension of $L_x$,  as well as a positive real $T$, so that
	\begin{enumerate}
	\item $L$, $P$, $L^\infty$ and $P^\infty$ are all sections of $\mathcal V$ and $\mathcal U$
	\item  For every $\uu$ in $\mathcal U_x$, $\uu$ is transverse to $P_x$, for every $\vv$ in $\mathcal U_x$, $\vv$ is transverse to $L_x$, 
		\item $\Phi_T$ sends $\mathcal U$ to $\mathcal U$ and is $1/2$-Lipschitz: for any $x$ in $\uh$, $\uu$ and $\vv$ in $\mathcal U_x$, then 
		$$
		d_{\phi_T(x)}(\Phi_T(\uu),\Phi_T(\vv))\leq \frac{1}{2} d_x(\uu,\vv)\ .
		$$
		\item 	$\Phi_{-T}$ sends $\mathcal V$ to $\mathcal V$ and is $1/2$-Lipschitz.
		 \end{enumerate}
\end{lemma}
\begin{proof} 
Observe first that $\mathcal L$ and $\mathcal P$ are also invariant by $\flo{\Phi}$. Thus the function on $\Gk(E)$ defined by 
	$$
	\uu\mapsto \angle(\uu,\mathcal P)\defeq \inf\left\{\angle_x(\uu,\vv)\mid \vv\in \mathcal P_x\right\} \ ,
	$$
	is continuous and symmetrically the function 
	$$
	\vv\mapsto\angle(\vv,\mathcal L)\defeq \inf\left\{\angle_x(\uu,\vv)\mid \uu\in \mathcal L_x\right\}\ ,
	$$
	is continuous as well.
Let then $\epsilon_0$ be as in hypothesis \eqref{hyp:3hp}, and 
	$$
	\mathcal U \defeq\{{\uu} \in \Gk(E)\mid \angle(\uu,\mathcal P)\geq \epsilon_0\} \ , \ 	{\mathcal V}\defeq\{{\vv}\in   \Gk(E^*) \mid \angle(\vv,\mathcal L) \geq \epsilon_0\}\ .
	$$
	By the above remark $\mathcal U$ and $\mathcal V$ are closed sets. By hypothesis \eqref{hyp:3hp}, $L$ and $P$ are sections of $\mathcal U$ and $\mathcal V$ and by hypothesis \eqref{hyp:tw} $L^\infty$ and $P^\infty$ as well. Thus $\mathcal U$ and $\mathcal V$ satisfy the first condition, as well as the second. 
	
The contraction property and the lemma \ref{lem:equi} now implies the  third and fourth properties. 	 

\end{proof}
Our second lemma is 
\begin{lemma}\label{lem:pbd-unique}
Let $(\nabla,h)$ be a uniformly hyperbolic bundle $E=L\oplus P$. Let us choose a trivialisation so that $\nabla$ is the trivial connection. Then 
\begin{enumerate}
	\item There exist positive constants $H$ and $\beta$, such that for all $x$
	$$
	d_x(L_x,L_y)\leq H\  d(x,y)^\beta\ .
	$$
	\item There exist positive constants $H$ and $\beta$, such that for all $x$
	$$
	d_x(P_x,P_y)\leq H\  d(x,y)^\beta\ .
	$$
\end{enumerate}
\end{lemma}
\begin{proof} We follow, and adapt, the classical arguments of Hirsch--Pugh--Shub \cite{Hirsch:bxQiQXnw}. Let $\Gamma^0(\Gk(E))$ be the space of continuous sections of $\Gk(E)$. Using the trivialisation of $\nabla$ we will see alternatively $\Gamma^0(\Gk(E))$ as $C^0(\uh, \Gk(V)))$ 
  For any $x$ in $\uh$, let $d_x$ be the associated distance on $\Gk(E_x)=\Gk(V)$ coming from the frame. For any section $\sigma_0$ and $\sigma_1$ of $\Gk(E)$, let
$$
d_\infty(\sigma_0,\sigma_1)\defeq\sup\{d_x(\sigma_0(x),\sigma_1(x))\mid x\in \uh\}\ .
$$
Let $\Psi$ be the map from  $\Gamma^0(\Gk(E))$ to itself defined by 
$$
[\Psi(\sigma)]x=\Phi_{T}(\sigma\circ\phi_{-T}(x))\ . 
$$
Let $\mathcal U$, $\mathcal V$ and  $T$ as in the previous lemma \ref{lem:U-V}.
Let $\Gamma^0(\mathcal U)$, with $\Gamma^0(\mathcal U)\subset \Gamma^0(\Gk(E))$ be the space of continuous sections of $\mathcal U$. Equipped with $d_\infty$, $\Gamma^0(\mathcal U)$ is a complete metric space. 
Lemma \ref{lem:U-V} implies that $\Psi$ sends $\Gamma^0(\mathcal U)$ to itself and that 
$$
d_\infty (\Psi(\sigma_0),\Psi(\sigma_1))\leq \frac{1}{2}d_\infty (\sigma_0,\sigma_1)\ .
$$

\vskip 0.2 truecm
\noindent{\em Step 1: A space of locally Hölder sections.}
Let $\beta$ with $0<\beta\leq\alpha$, where $\alpha$ is the Hölder constant appearing in hypothesis \eqref{hyp:tw}.  For any $\sigma$ a section of $\Gk(E)$,  let 
\begin{align*}
V_{\beta}(\sigma)\defeq\sup_{x,y\in \uh}\left\{
\frac{d_x(\sigma(x),\sigma(y))}{d(x,y)^\beta}\mid  0<  d(x,y)\leq 1
\right\}\ ,
\end{align*}
and finally 
\begin{align*}
 \Gamma^{\beta,H}
(\mathcal U)&\defeq\left\{\sigma\in \Gamma^0(\mathcal U)\mid V_\beta(\sigma)\leq H\right\}\ ,\end{align*}
where $H\geq 1$.
Observe that if $\sigma$ belongs to $\Gamma^{\beta,H}$, then for all $x$, $y$ with $x\not=y$, then -- since $d_x(\sigma(y),\sigma(x))\leq 1$ and $H\geq 1$-- we have 
\begin{align}
d_x(\sigma(y),\sigma(x))\leq H\  d(x,y)^\beta\ .	\label{ineq:sxy}
\end{align}
Observe that  hypothesis \eqref{hyp:tw} implies that $L^\infty$ belongs to $\Gamma^{\beta,H}(\mathcal U)$  which is therefore non empty \footnote{That is the only point where hypothesis \eqref{hyp:tw} is used \label{note:ne}}.

\vskip 0.2 truecm
\noindent{\em Step 2: Closedness of the space of sections.} We first prove that $\Gamma^{\beta,H}(\mathcal U)$ is  closed in  $\Gamma^0(\mathcal U)$ with respect to $d_\infty$. Let $\seq{\sigma}$ be a sequence of sections in $\Gamma^{\beta,H}(\mathcal U)$ converging to $\sigma$. Then for all $x$ and $y$ in $\uh$ such that  $0<d(x,y)\leq 1$, 
\begin{align*}
\frac{d_x(\sigma(x),\sigma(y))}{d(x,y)^\beta}=\lim_{m\to\infty}\frac{d_x(\sigma_m(x),\sigma_m(y))}{d(x,y)^\beta}\leq \lim_{m\to\infty}V_{\beta}(\sigma_m)\leq H . 
\end{align*}
Thus $\Gamma^{\beta,H}(\mathcal U)$ is closed in $\Gamma^{0}(\mathcal U)$ for $d_\infty$.
\vskip 0.2 truecm
\noindent{\em Step 3: Action of $\Psi$.} We now prove that there are positive constants $\beta_0$ and  $k_0$ with $k_0<1$ such that for all $\sigma$, \begin{equation}
V_{\beta_0}(\Psi(\sigma))\leq k_0 V_{\beta_0}(\sigma)\ .		
\label{ineq:vpsi2}	
\end{equation}
Let  $k$ be the Lipschitz constant of $\phi_{-T}$.
Let $x$ and $y$ be such that $0<d(x,y)\leq 1$. Then $d(X,Y)\leq k$, where $X\defeq \phi_{-T}(x)$ and $Y\defeq \phi_{-T}(y)$. Moreover
\begin{align*}
d_{x}([\Psi(\sigma)](x),[\Psi(\sigma)](y])\leq \frac{1}{2}d_X(\sigma(X),\sigma(Y)))&\leq \frac{1}{2} V_{\beta}(\sigma)\  d(X,Y)^{\beta}
\leq \frac{k^{\beta}}{2}  V_{\beta}(\sigma)\  d(x,y)^{\beta}\ .
\end{align*}
Thus
$$
V_{\beta}(\Psi(\sigma))\leq \frac{k^{\beta}}{2} V_{\beta}(\sigma)\ .
$$
We now choose $\beta_0$, less than $\alpha$,  such that furthermore
$
k_0\defeq \frac{1}{2}k^{\beta_0}<1
$.
This concludes the proof. 

\vskip 0.2 truecm
\noindent{\em Step 4: Conclusion.}  We apply Banach fixed point theorem  to the closed  $\Psi$-invariant set $\Gamma^{\beta_0,H}(\mathcal U)$. Then  there is a  $\Psi$-invariant section $\sigma_0$ in $\Gamma^{\beta_0,H}(\mathcal U)$. By the contraction property, $\sigma_0=L$. We have proven that $L$ belongs to $\Gamma^{\beta_0,H}(\mathcal U)$. The same arguments works for $P$.  This concludes the proof. \end{proof}
We can now proceed to the proof of the proposition
\begin{proof}[Proof of proposition \ref{pro:pbd-unique}]
Let $x$ and $y$ be on the same strong stable leaf. Then, for any positive $\epsilon$, there exists some  non negative $n$, such that 
$$
d(x_n,y_n)\leq\epsilon\ .
$$
where $x_n=\phi_{nT}(x)$ and $y_n=\phi_{nT}(y)$. Then by lemma \ref{lem:pbd-unique}
$$
d_{x_n}(L_{x_n},L_{y_n})\leq H \epsilon^\beta\ .
$$
It follows that 
$$
d_x(L_x,L_y)\leq \frac{1}{2^n}d_{x_n}(L_{x_n},L_{y_n})\leq\frac{1}{2^n} H \epsilon^\beta\leq H \epsilon^\beta\ .
$$
Since this is true for any $\epsilon$, we have that $L_x=L_y$ which proves the result. The same argument works for $P$.
\end{proof}

\subsubsection{Families of uniformly hyperbolic bundles and their variations}

In order to study families of uniformly hyperbolic bundles, we will adopt two different gauge-fixing points of view:
\begin{enumerate}
	\item The {\em fixed gauge point of view}: we allow the frame to vary but fix the connection
	\item The {\em fixed frame point of view}: we allow the connection to vary but fix the frame.
\end{enumerate}

A natural example comes from a projective Anosov representation of a cocompact surface group.
We call such an  example, where the frame and the connections are invariant   under the action of a cocompact surface group  a {\em periodic bundle}. We discuss periodic bundles in \ref{periodic_case}.

For a vector bundle $V$ over a topological space $X$, we denote by $V_x$ the fiber at a point $x$ in $X$.

\begin{definition}[\sc Bounded variation]\label{def:bd-var}
	A {\em $C^k$-bounded variation} of a uniformly hyperbolic bundle $(\nabla,h)$ is a family $(\nabla^t,h_t)_{t\in ]-\epsilon,\epsilon[}$ of connections and frames on $E_0$  so that 
	\begin{enumerate}
		\item $(\nabla_0,h_0)=(\nabla,h)$,
		\item for all $t$, $\nabla^t$ is trivializable
		\item for all $t$ close to $0$, the $C^k$-derivatives of  $t\mapsto \nabla_\ps\dot h_t$ are bounded with respect to $g_{h_t}$.
	\end{enumerate}
\end{definition}
We will see that  any smooth family of periodic bundles is of bounded variation.

Then we have the lemma: 

\begin{lemma}[\sc Stability Lemma]\label{lem:stab}
	Assume that $\fam{\nabla^t,h}$ is a $C^k$ bounded variation of a uniformly hyperbolic bundle where $k\in\mathbb N\cup\{\omega\}$. Then for $t$ in some neighbourhood of zero, the bundle  $(\nabla^t,h_t)$ is uniformly hyperbolic. Let $\pp_t$ be the  associated projector, then $\pp_t$ depends  $C^k$ on $t$. \end{lemma}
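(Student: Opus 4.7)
The plan is to run the classical Hadamard--Perron graph-transform argument in this non-compact setting, using Lemma \ref{lem:U-V} as the substitute for compactness. The three main steps are: (i) view the perturbed cocycle as a uniform perturbation of the original time-$T$ map, (ii) produce the perturbed invariant splittings $L_t, P_t$ as fixed points of contracting graph transforms on Banach spaces of bounded sections, and (iii) verify the exponential contraction \eqref{eq:def-proj} and the $C^k$ regularity via the parametric Banach fixed-point theorem.

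First, I would trivialize $\nabla$. The bounded variation hypothesis, together with the variation-of-parameters formula
\[
\Phi^{\nabla^t}_s - \Phi^\nabla_s = \int_0^s \Phi^\nabla_{s-u}\,(\nabla^t - \nabla)(\ps)\,\Phi^{\nabla^t}_u\,\d u ,
\]
and Gronwall combined with the bounded growth \eqref{eq:exp-bounded}, yields that $\Phi^{\nabla^t}_T$ tends to $\Phi^\nabla_T$ in the sup operator norm on $\uh$ as $t \to 0$, with $C^k$ dependence on $t$. Invoking Lemma \ref{lem:U-V}, for $|t| < \delta$ sufficiently small, $\Phi^{\nabla^t}_T$ still sends $\mathcal{U} \subset \Gk(E^*)$ into $\mathcal{U}$ and is, say, $3/4$-Lipschitz there, and symmetrically $\Phi^{\nabla^t}_{-T}$ is a $3/4$-contraction on $\mathcal{V} \subset \Gk(E)$.

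Consider the complete metric space $\mathcal{B}^{\mathcal{U}}$ of bounded continuous sections $\sigma : \uh \to \Gk(E^*)$ taking values in $\mathcal{U}$, endowed with the supremum distance. The graph transform
\[
\mathcal{T}_t(\sigma)(x) \defeq \Phi^{\nabla^t}_T\bigl(\sigma(\phi_{-T} x)\bigr)
\]
is a $3/4$-contraction, with unique fixed point $P_t \in \mathcal{B}^{\mathcal{U}}$, which is automatically $\Phi^{\nabla^t}$-invariant; the analogous construction on the $\mathcal{V}$-side produces $L_t$. Transversality $L_t \oplus P_t = E$ is item (1) of Lemma \ref{lem:U-V}, and the uniform transversality bound (item (2) of Definition \ref{def:unif-hyp-bund}) as well as the volume form condition are $C^0$-open and persist for $t$ small. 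The contraction inequality \eqref{eq:def-proj} is obtained by comparing the operator norm of $\Phi^{\nabla^t}_T$ on $L_t \otimes P_t^*$ with that of $\Phi^{\nabla}_T$ on $L \otimes P^*$: since $L_t, P_t$ are uniformly close to $L, P$ and the cocycles are $C^0$-close on $\uh$, choosing $T$ large enough so that the original rate is below $1/2$ forces the perturbed rate to stay below $1$, and iteration yields the exponential decay with uniform constants. The $C^k$ dependence of $\pp_t$ follows from the parametric Banach fixed-point theorem: the family $t \mapsto \mathcal{T}_t$ is $C^k$ into the space of uniformly $3/4$-Lipschitz self-maps of $\mathcal{B}^{\mathcal{U}}$ (in the case $k=\omega$, one uses the analytic implicit function theorem on the complexification), so $t \mapsto P_t$ is $C^k$ in sup norm, and similarly for $L_t$; since $\pp_t$ is an algebraic function of the pair $(L_t, P_t)$, the same regularity passes to $\pp_t$.

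The main obstacle is producing the \emph{uniform} closeness of $\Phi^{\nabla^t}_T$ to $\Phi^\nabla_T$ across the non-compact base $\uh$: on a compact base this would be automatic, but here it is precisely the role of the bounded variation hypothesis (item (3) of Definition \ref{def:bd-var}), which feeds into the Duhamel estimate above to produce a globally uniform bound. Without this uniform control, the graph transform would still be a contraction on compact subsets, but that would fail to yield a \emph{uniformly} hyperbolic perturbed bundle in the sense of Definition \ref{def:unif-hyp-bund}.
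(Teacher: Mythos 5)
Your proposal follows essentially the same route as the paper: uniform closeness of the perturbed time-$T$ holonomy via the bounded variation hypothesis, persistence of the invariant cones $\mathcal U$ and $\mathcal V$ from Lemma \ref{lem:U-V}, and the parametric contraction mapping principle on spaces of bounded sections to obtain both the perturbed splitting and its $C^k$ dependence. The paper's version is terser (it asserts the uniform estimate rather than writing the Duhamel/Gronwall argument, and treats the verification of the remaining axioms as immediate), but the mechanism is identical.
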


We prove this lemma in paragraph \ref{sec:stab-lemma}.

\subsection{The fundamental projector and its variation}

Our goal is to compute the variation of  the associated family of fundamental projectors of a bounded variation of  a uniformly hyperbolic bundle. More precisely, let assume we have a uniformly hyperbolic bundle  $(\nabla_0,h_0)$ with decomposition 
$$
E_0=L_0\oplus P_0\ .
$$
We prove in this paragraph the following proposition
\begin{proposition}[\sc Variation of the fundamental projector]\label{pro:deriv-fund-proj} Assume that we have a bounded variation  $\fam{\nabla_t,h}$ of the uniformly hyperbolic bundle $(\nabla_0,h_0)$ in the fixed connection point of view, that is $\nabla_t$ is the trivial connection $\rm D$.

	The derivative of the fundamental geodesic at a point $x$ in a geodesic $g$, is given by 
	\begin{equation}
\dot\pp_0=[\dot A,\pp_0] + \int_{g^+} [\d\dot A,\pp_0] \cdot\pp_0+ \int_{g^-} \pp_0\cdot [\d\dot A,\pp_0]\  .\label{eq:deriv-proj2}\
\end{equation}
where $g^+$ is the geodesic arc from $x$ to $g(+\infty)$ and $g^-$ is the arc from $x$ to $g(-\infty)$ (in other words with the opposite orientation to $g$), and $\dot A$ is the endormorphism so that 
$$
\dot A\cdotp h =\left.\frac{\partial}{\partial t}\right\vert_{t=0} h_t\ . 
$$
\end{proposition}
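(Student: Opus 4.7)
The strategy is to derive a cohomological equation for $\dot\pp_0$ by differentiating the two defining properties of the projector, and then to pin down its unique bounded solution using the hyperbolic contraction in Definition~\ref{def:unif-hyp-bund}.

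I would first differentiate at $t=0$ the two characterising equations for the projector in the fixed-frame picture obtained from the fixed-connection picture by a gauge transformation $G_t$ with $G_0 = \Id$ and $\dot G_0 = \dot A$ (which moreover contributes the leading commutator $[\dot A,\pp_0]$ in the stated formula, since projectors transform by conjugation as $\tilde\pp_t = G_t^{-1}\pp_t G_t$). The flow-invariance $(\tilde\nabla_t)_\ps \tilde\pp_t = 0$ yields the cohomological equation
$\D_\ps\dot{\tilde\pp}_0 = -[(\d\dot A)(\ps),\pp_0]$,
while the idempotency $\tilde\pp_t^2 = \tilde\pp_t$ yields the algebraic constraints $\pp_0 \dot{\tilde\pp}_0 \pp_0 = Q_0\dot{\tilde\pp}_0 Q_0 = 0$, where $Q_0 = \Id - \pp_0$. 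The latter forces the off-diagonal decomposition $\dot{\tilde\pp}_0 = \tilde X + \tilde Y$ with $\tilde X = \pp_0\dot{\tilde\pp}_0 Q_0 \in \operatorname{Hom}(P_0,L_0)$ and $\tilde Y = Q_0\dot{\tilde\pp}_0\pp_0 \in \operatorname{Hom}(L_0,P_0)$, and the cohomological equation splits into two uncoupled first-order ODEs along each flow orbit.

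The crux is to solve these ODEs uniquely. By Definition~\ref{def:unif-hyp-bund} the sub-bundle $\operatorname{Hom}(P_0,L_0)$ contracts uniformly under the forward flow while $\operatorname{Hom}(L_0,P_0)$ contracts under the backward flow, so the unique bounded solutions are obtained by integrating the respective forcings along the forward semi-orbit $g^+$ and the backward semi-orbit $g^-$, with the vanishing conditions at the endpoints at infinity that the hyperbolicity enforces. Applying the elementary identities $[\beta,\pp_0]\pp_0 = Q_0\beta\pp_0$ and $\pp_0[\beta,\pp_0] = -\pp_0\beta Q_0$ (valid for any $\operatorname{End}(E)$-valued $1$-form $\beta$), the resulting integrals of $\pp_0(\d\dot A)Q_0$ over $g^-$ and of $Q_0(\d\dot A)\pp_0$ over $g^+$ recombine into precisely the two integral terms of the stated formula, which combined with the gauge contribution $[\dot A,\pp_0]$ completes the proof.

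The main obstacle lies in the last step: the invocation of hyperbolicity to produce the unique bounded solutions. The contraction estimates of Definition~\ref{def:unif-hyp-bund} are formulated in the bundle norm determined by $g_h$ and must be carefully transferred to the concrete trivialized ODEs in order to guarantee absolute convergence of the semi-infinite integrals defining $\tilde X$ and $\tilde Y$ and their uniqueness among bounded solutions. A subsidiary technical issue is that $\pp_0$ is merely continuous by Proposition~\ref{pro:pbd-unique}, so the cohomological equation and its integrated form must be interpreted in a mild (Duhamel) sense; the Stability Lemma~\ref{lem:stab} supplies the $C^k$ regularity in $t$ required to differentiate at $t=0$ in the first place.
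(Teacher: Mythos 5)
Your proposal follows essentially the same route as the paper: differentiate the idempotency $\pp^2=\pp$ to see that $\dot\pp$ lies in the off-diagonal bundle $F_0=F_0^+\oplus F_0^-$, differentiate the flow-invariance to obtain the cohomological equation $\nabla_\ps\dot\pp=[\pp,\dot\nabla_\ps]$, solve it uniquely among bounded sections by integrating each component over the contracting semi-orbit (this is exactly Lemma~\ref{lem:Ano-subbun} and Proposition~\ref{pro:cohom}), and finally add the conjugation term $[\dot A,\pp_0]$ coming from the gauge change between the fixed-frame and fixed-connection viewpoints. The argument is correct and matches the paper's proof in structure and in all essential steps.
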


\subsubsection{Preliminary: subbundles of $\End(E_0)$}
We first adopt   the fixed frame point of view.
Let $\nabla$ be a flat connection on $E_0$, 
Then $\pp$ is parallel for the induced flat connection on $\End(E_0)$ along the flow. Let also $F_0$ be the subbundle of $\End(E_0)$ given by
$$
F_0\defeq\{B\in\End(E_0)\mid B{\pp}+{\pp} B=B\}\ .
$$
Observe that for any section  $C$ of $\End(E_0)$, $[C,\pp]$ is a section of $F_0$ and that for any element $A$ in $F_0$ we have $\tr(A)=0$. 
\begin{lemma}
	\label{lem:Ano-subbun}
	The bundle $F_0$ decomposes as two parallel subbundles
	\begin{equation}
		F_0=F_0^+\oplus F_0^-\ , \label{def:decomp1}
	\end{equation}
 where we have the identification
	\begin{eqnarray}
			F_0^+=P^*\otimes L \ & ,&  	F_0^-=L^*\otimes P\ .\label{eq:identifF0}
	\end{eqnarray}

	The projection of $F_0$ to $F_0^+$ parallel to $F_0^-$ is given by
	$B\mapsto \pp B$, 
	while the projection on 
	 $F_0^-$ parallel to $F_0^+$ is given by 
	 	$B\mapsto  B \pp$.

Finally there exists positive constants $A$ and $a$, so that for all positive  time $s$, endomorphisms $u^+$ in  $F_0^+$ and $u^-$ in  $F_0^-$, we have 
	\begin{eqnarray}
			\Vert \Phi_{-s}(u^-)\Vert \leq A e^{-as} \Vert u^-\Vert\ \ ,\ \  \Vert \Phi_{s}(u^+)\Vert \leq A e^{-as} \Vert u^+\Vert\ .\label{ineq:contractF0}
	\end{eqnarray}
\end{lemma}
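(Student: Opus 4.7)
The plan is to reduce the first three claims to a pointwise linear-algebra computation in block form with respect to the decomposition $E_0 = L \oplus P$, and to derive the contraction estimates directly from the defining inequality \eqref{eq:def-proj}.

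First, writing any endomorphism $B$ as a $2 \times 2$ block matrix in the splitting $E_0 = L \oplus P$, the projector $\pp$ becomes $\mathrm{diag}(\Id_L, 0)$. Expanding the condition $B \pp + \pp B = B$ block by block shows that it is equivalent to the vanishing of the two diagonal blocks of $B$. This identifies
\[
F_0 = \hom(P, L) \oplus \hom(L, P) = (P^* \otimes L) \oplus (L^* \otimes P),
\]
giving both the identification \eqref{eq:identifF0} and the decomposition \eqref{def:decomp1}. The same block calculation shows that $\pp B$ retains only the $\hom(P,L)$-component and $B \pp$ only the $\hom(L,P)$-component, so the stated formulas are indeed the projections onto the two factors. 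Because $L$ and $P$ are $\Phi^\nabla$-invariant by the definition of a uniformly hyperbolic bundle, the subbundles $P^* \otimes L$ and $L^* \otimes P$ are $\Phi^\nabla$-invariant as well, so the decomposition is parallel along the flow.

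For the contraction estimates \eqref{ineq:contractF0}, I would first argue in operator norm. Given $\phi \in F_0^+$ at $x$ and a unit vector $v \in P_x$, feeding the pair $(\phi(v), v) \in L_x \times P_x$ into \eqref{eq:def-proj} produces
\[
\|\Phi_s \phi(v)\| \leq B e^{-bs} \|\phi(v)\| \cdot \|\Phi_s v\| \leq B e^{-bs} \|\phi\|_{\mathrm{op}} \cdot \|\Phi_s v\|,
\]
and dividing by $\|\Phi_s v\|$ and taking the supremum over unit $v$ in $P_x$ yields $\|\Phi_s \phi\|_{\mathrm{op}} \leq B e^{-bs} \|\phi\|_{\mathrm{op}}$. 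The estimate for $F_0^-$ runs symmetrically once \eqref{eq:def-proj} is rewritten in the backward direction by substituting $u = \Phi_{-s} u'$, $v = \Phi_{-s} v'$, and then applied to the pair $(\psi(u'), u')$ for $\psi \in F_0^-$ and $u' \in L$ of unit norm.

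The step I expect to demand the most care is not the contraction estimate itself but the uniform comparison between the operator norm used above and the norm induced on $\End(E_0)$ by the frame $h$. The second condition of Definition \ref{def:unif-hyp-bund} uniformly bounds the angle between $L$ and $P$ away from zero, and hypothesis \eqref{it:vol} controls the volume form; together these guarantee that the two norms are globally comparable with a bounded constant, which can be absorbed into $A$, yielding the exponential contraction \eqref{ineq:contractF0} with respect to $g_h$.
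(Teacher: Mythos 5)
Your proposal is correct and follows essentially the same route as the paper: the same block decomposition of $\End(E_0)$ relative to $E_0=L\oplus P$ identifying $F_0$ with the off-diagonal blocks, the same identification of the projections $B\mapsto \pp B$ and $B\mapsto B\pp$, and the contraction deduced from the defining inequality \eqref{eq:def-proj}. The paper simply asserts that the contraction properties of the definition "give" \eqref{ineq:contractF0}; your derivation via operator norms and the uniform comparison of norms coming from the transversality hypothesis is a correct filling-in of that asserted step.
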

Consequently, for any section $f$ of $F_0$, we write $f=f^++f^-$ where $f^\pm$ are sections of $F_0^\pm$ according to  the decomposition \eqref{def:decomp1}.

\begin{proof} Let us write 
\begin{equation*}
	\End(E_0)=E_0^*\otimes E_0=(L^*\otimes L)\oplus (P^*\otimes P)\oplus (L^*\otimes P)\oplus (P^*\otimes L) , 
\end{equation*} 
In that decomposition, 
$
F_0=(P^*\otimes L)\oplus(L^*\otimes P)$.
Let 
$$F_0^+=P^*\otimes L\ \ , \ \ F_0^-=L^*\otimes P\ .$$ 
Thus, we can identify $F_0^+$ as the set of elements  whose image lie in $L$ and $F_0^-$ are those whose kernel is in $P$. Thus
\begin{eqnarray*}
F_0^+&=&\{B\in F_0\mid \pp B=B\}=
\{B\in F_0\mid B\pp=0\}\ ,\\
F_0^-&=&\{B\in F_0\mid \pp B=0\}=\{B\in F_0\mid  B\pp=B\}\ .	\end{eqnarray*}
Then the equation for any element $B$ of $F_0$,
\begin{equation}
	B=\pp B+ B\pp \ ,\label{eq:projpm1}
\end{equation}
corresponds to the decomposition $F_0=F_0^+\oplus F_0^-$. Thus the projection on $F_0^+$ is given by $B\mapsto \pp B$, while the projection on  $F_0^-$ is given by $B\mapsto  B\pp$. 

The definition of $F_0^+$ and $F_0^-$ and the corresponding contraction properties  of the definition of a uniformly hyperbolic bundles give the contraction properties on  $F_0^+$ and $F_0^-$.
\end{proof}

\subsubsection{The cohomological equation}

\begin{proposition}\label{pro:cohom}
	Let $\sigma$ be a bounded  section of $F_0$, then there exists a unique section $\eta$ of $F_0$ so that $\nabla_{\ps} \eta=\sigma$. This section $\eta$ is given by
	\begin{equation}
			\eta(x)=\int_{-\infty}^0 \pp\cdot\sigma(\phi_s(x))\ \d s-\int_0^\infty \sigma(\phi_{s}(x))\cdot\pp \ \d s\ .\label{eq:formeta}
	\end{equation}

\end{proposition}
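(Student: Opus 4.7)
The plan is to decompose the equation according to the parallel splitting $F_0 = F_0^+ \oplus F_0^-$ from Lemma~\ref{lem:Ano-subbun} and to solve each half independently. Since $\pp$ is $\nabla$-parallel along the flow, so is the splitting, and since $\sigma^+ \defeq \pp\,\sigma$ and $\sigma^- \defeq \sigma\,\pp$ are precisely the projections of $\sigma$ onto $F_0^\pm$, the equation $\nabla_\ps \eta = \sigma$ decouples into the two equations $\nabla_\ps \eta^\pm = \sigma^\pm$ on $F_0^\pm$ respectively. I fix once and for all a trivialization of $E_0$ in which $\nabla$ is the trivial connection, as provided by Proposition~\ref{pro:pbd-unique}: then $\pp$, and hence the subbundles $F_0^+$ and $F_0^-$, are literally constant along each orbit (as subspaces of $\End(V)$), while $\nabla_\ps$ becomes plain differentiation along the orbit.

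The natural primitives are
\begin{equation*}
\eta^+(x) \defeq \int_{-\infty}^0 \sigma^+(\phi_s x)\,\d s, \qquad \eta^-(x) \defeq -\int_0^\infty \sigma^-(\phi_s x)\,\d s,
\end{equation*}
whose sum recovers the asserted formula \eqref{eq:formeta} upon substituting $\sigma^+ = \pp\,\sigma$ and $\sigma^- = \sigma\,\pp$. The choice of integration direction is dictated by the contraction data of Lemma~\ref{lem:Ano-subbun}: the bundle $F_0^+$ contracts in forward time, so to keep the integrand under control at the basepoint $x$ one must integrate in the past; symmetrically for $F_0^-$. Differentiating under the integral is immediate in the trivialization, since the identity $\eta^+(\phi_t x) = \int_{-\infty}^t \sigma^+(\phi_u x)\,\d u$ gives $\nabla_\ps \eta^+ = \sigma^+$, and similarly for $\eta^-$.

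The core verification is convergence and boundedness. Fix $x$ and equip $F_0^+_x$ with the norm $\|\cdot\|_x$ induced by $h(x)$; because $\pp$ is constant along the orbit of $x$ in the trivialization, $F_0^+_{\phi_s x}$ and $F_0^+_x$ coincide as subspaces of $\End(V)$, so the comparison of norms at different basepoints is meaningful. For $s<0$, apply the contraction inequality \eqref{ineq:contractF0} with positive time $-s$ to $u^+ \defeq \sigma^+(\phi_s x)\in F_0^+_{\phi_s x}$: since parallel transport is the identity on fibers in our trivialization, this reads $\|\sigma^+(\phi_s x)\|_x \leq A\,e^{as}\,\|\sigma^+(\phi_s x)\|_{\phi_s x} \leq A\,\|\sigma\|_\infty\,e^{as}$, which is integrable over $(-\infty,0]$. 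Hence $\eta^+$ converges pointwise to a bounded section of $F_0^+$, and the analogous estimate produces a bounded $\eta^-$ on $F_0^-$.

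For uniqueness, any two bounded solutions differ by a bounded $\nabla_\ps$-parallel section $\tau$ of $F_0$, which splits as $\tau = \tau^+ + \tau^-$. In the trivialization, $\tau^+$ is constant along each orbit, so $\tau^+(x) = \tau^+(\phi_{-s}x)$; applying \eqref{ineq:contractF0} at time $s>0$ to $\tau^+(\phi_{-s}x)\in F_0^+_{\phi_{-s}x}$ yields $\|\tau^+(x)\|_x \leq A\,e^{-as}\,\|\tau^+(\phi_{-s}x)\|_{\phi_{-s}x}$, and the right-hand side tends to $0$ as $s\to\infty$ by boundedness of $\tau^+$, forcing $\tau^+\equiv 0$; the time-reversed argument handles $\tau^-$. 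The main obstacle throughout is the careful bookkeeping of norms at different basepoints along a single orbit, since the sections live in fibers whose metrics depend on $h(y)$; the exponential contraction of Lemma~\ref{lem:Ano-subbun} is precisely the tool that converts this variation into the absolute convergence required for the Banach-valued integrals.
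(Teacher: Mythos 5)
Your proposal is correct and follows essentially the same route as the paper: both use the parallel splitting $F_0=F_0^+\oplus F_0^-$ of Lemma \ref{lem:Ano-subbun}, identify $\pp\sigma$ and $\sigma\pp$ as the two components, invoke the contraction inequalities \eqref{ineq:contractF0} for convergence of the two integrals, differentiate along the orbit to verify the equation, and derive uniqueness from the absence of (bounded) parallel sections of $F_0^\pm$. Your uniqueness step merely spells out the contraction argument that the paper states in one line.
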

Classically, in dynamical systems, the equation $\nabla_{\ps} \eta=\sigma$ is called the {\em cohomological equation}.
\begin{proof}
	Since $\pp\sigma$ belongs to $F_0^+$ while $\sigma\pp$ belongs to  $F_0^-$, by lemma \ref{lem:Ano-subbun}, the right hand side of equation \eqref{eq:formeta} makes sense using the exponential contraction properties given in the inequalities \eqref{ineq:contractF0}. Indeed, for a positive $s$ by lemma \ref{lem:exp-decay} again,
\begin{eqnarray*}
	\Vert \Phi_{-s}\left(\sigma(\phi_s) \cdot \pp\right) \Vert&\leq& Ae^{-as} \Vert \sigma\Vert_\infty\ ,\\
		\Vert \Phi_{s}\left(\pp  \cdot \sigma(\phi_{-s})\right) \Vert&\leq& Ae^{-as} \Vert \sigma\Vert_\infty\ .
\end{eqnarray*}
 It follows that using the above equation as a definition for $\eta$ we have
	$$
	\eta(\phi_s(x))=\int_{-\infty}^t\pp\cdot\sigma(\phi_u(x)) \d u-\int_t^\infty \sigma(\phi_{u}(x))\cdot  \pp\ \ \d u\ .
	$$
	Thus
	$$
	\nabla_\ps \eta=\pp\sigma +\sigma\pp=\sigma\ ,
	$$
	since $\sigma$ is a section of $F_0$. Uniqueness follows from the fact that $F_0$ has no parallel section: indeed neither $F_0^+$ nor $F_0^-$ have a parallel section. 
\end{proof}

\subsubsection{Variation of the fundamental projector: metric gauge fixing}

We continue to adopt  the variation of connection point of view and consider after gauge fixing only hyperbolic bundles where the metric is fixed.

Let $\fam{\nabla^t,h}$ give rise to a bounded variation of the uniformly hyperbolic bundle $(\nabla_0,h)$, where $\nabla_0$ is the trivial connection ${\mathrm D}$. 

Our first result is   
\begin{lemma}\label{lem:var-proj0}

The variation of the fundamental projector $\pp_t$  associated to $(\nabla^t,h)$ is given by \begin{equation}
		\dot\pp(x)=\int_{-\infty}^0 \left( \pp\cdot [\pp,\dot\nabla_{\ps}] \right)(x^s)\ \d s- \int_0^\infty \left([\pp,\dot\nabla_{\ps}]\cdot \pp \right)(x^s)\ \d s\ \label{eq:ppdot}\ ,
	\end{equation}
where $x^s=\phi_s(x)$ and $\dot\nabla_\ps(u)=\left.\frac{\partial}{\partial s}\right\vert_{t=0}\nabla^t_\ps (u)$.
\end{lemma}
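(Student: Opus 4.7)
The plan is to differentiate at $t=0$ the invariance relation $\nabla^t_\ps \pp_t = 0$, which holds for all $t$ near $0$ by Proposition \ref{pro:pbd-unique} applied to the bundle $(\nabla^t,h)$ (itself uniformly hyperbolic for small $t$ by the Stability Lemma \ref{lem:stab}). This will produce a cohomological equation for $\dot\pp$ that can be solved by Proposition \ref{pro:cohom}.

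First I would set up the computation on $\End(E_0)$. Writing $\nabla^t = \D + A_t$ with $A_t$ a 1-form with values in $\End(E_0)$ and $A_0 = 0$, the induced connection on endomorphisms reads $\nabla^t B = \D B + [A_t, B]$. Applying this to $B = \pp_t$ and differentiating $\nabla^t_\ps \pp_t = 0$ at $t=0$ yields
\begin{equation*}
\nabla^0_\ps \dot\pp + [\dot\nabla_\ps, \pp] = 0, \qquad \text{equivalently} \qquad \nabla^0_\ps \dot\pp = [\pp, \dot\nabla_\ps]\ .
\end{equation*}

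Next, I would check that both $\dot\pp$ and $\sigma \defeq [\pp, \dot\nabla_\ps]$ are sections of the subbundle $F_0$. Differentiating the idempotency $\pp_t^2 = \pp_t$ at $t=0$ gives $\dot\pp\,\pp + \pp\,\dot\pp = \dot\pp$, so $\dot\pp$ is a section of $F_0$. That $\sigma$ lies in $F_0$ is the observation made just before Lemma \ref{lem:Ano-subbun} (for any endomorphism $C$, $[C, \pp]$ lies in $F_0$). Boundedness of $\dot\pp$ comes from the Stability Lemma, while boundedness of $\sigma$ comes from the bounded variation hypothesis of Definition \ref{def:bd-var}.

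Finally, I would apply Proposition \ref{pro:cohom} with source $\sigma$: by uniqueness, $\dot\pp$ coincides with the $\eta$ produced by formula \eqref{eq:formeta}, and substituting $\sigma = [\pp, \dot\nabla_\ps]$ gives precisely \eqref{eq:ppdot}. The main obstacle is not the algebraic manipulation---which is essentially automatic once one adopts the endomorphism connection viewpoint---but supplying the boundedness of $\dot\pp$ needed to invoke the uniqueness clause of Proposition \ref{pro:cohom}. This boundedness is exactly the content of the Stability Lemma \ref{lem:stab}, whose proof is deferred to paragraph \ref{sec:stab-lemma} and constitutes the real substantive input.
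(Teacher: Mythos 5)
Your proposal is correct and follows essentially the same route as the paper: differentiate $\nabla^t_\ps\pp_t=0$ to obtain the cohomological equation $\nabla^{\operatorname{End}}_\ps\dot\pp=[\pp,\dot\nabla_\ps]$, check via $\pp_t^2=\pp_t$ that $\dot\pp$ is a section of $F_0$, and invoke Proposition \ref{pro:cohom}. Your additional remark that the boundedness of $\dot\pp$ (needed for the uniqueness clause) is supplied by the Stability Lemma is a point the paper leaves implicit, and is a worthwhile precision rather than a deviation.
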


\begin{proof} Let us distinguish for the sake of this proof the following connections. Let $\nabla$ be the flat connection on $E_0$ and $\nabla^{\tiny\operatorname{End}}$ the associated flat connection on $\End(E_0)$. Then from the equation
$
\pp^2=\pp
$, 
we obtain after differentiating, 
$$
\dot\pp \pp +\pp\dot\pp =\dot\pp\ .
$$
Thus $\dot\pp$ is a section of $F_0$. Moreover taking the variation of the equation $\nabla^{\operatorname{End}}_\ps \pp=0$ yields
$$
\nabla^{\operatorname{End}}_\ps \dot\pp=-\dot\nabla^{\operatorname{End}}_\ps \pp=[\pp,\dot\nabla_\ps ]\ .
$$
In other words, the variation of the fundamental projector $\dot\pp$ is a solution of the cohomological equation $\nabla^{\operatorname{End}}_\ps \eta=\sigma$, where  $\sigma=[\pp,\nabla_{\ps}]$ and $\eta=\dot\pp$. Applying proposition \ref{pro:cohom}, yields the equation \eqref{eq:ppdot}.
\end{proof}

\subsubsection{The fixed connection point of view and the proof of proposition \ref{pro:deriv-fund-proj}}

 We can now compute the variation of the projector in the fixed frame point of view and prove proposition \ref{pro:deriv-fund-proj}. 
We first need to switch from the fixed frame point of view to the fixed connection point of view.

Let $(\nabla^t,h)$ be a variation in the fixed frame  point of view. Let $A^t$ be so that $\nabla^t=A_t^{-1}{\rm  D}A_t $ and $A_0={\mathrm Id}$. In particular, we have
\begin{eqnarray}
\dot\nabla_\ps={\rm D}_\ps \dot A=\d \dot A(\ps )\ .
\label{eq:dADa}
\end{eqnarray}

Then the corresponding variation in the fixed connection point of view is $({\rm D},h_s)$ where
$
h_t=A_t(h)$.
It follows that 
\begin{eqnarray}
\dot h= \dot A(h)\ \ \ ,\ \ \  \dot\nabla_\ps ={\d  \dot A}(\ps)={\mathrm D}_\ps\dot A \  .\label{eq:DA}	
\end{eqnarray}
Let now $\pp_0^t$ be the projector -- in the fixed connection point of view-- associated to $({\mathrm D}, h_t)$, while $\pp^t$ is the projector associated to $(\nabla^s, h)$. Obviously
$$
\pp_0^t=A_t\pp^t A_t^{-1}\ , \ \pp_0\eqdef \pp_0^0=\pp^0\defeq \pp_0\ .
$$
Thus
$$
\dot\pp_0=[\dot A,\pp]+\dot \pp\ .
$$
Using lemma \ref{lem:var-proj0} and equations \eqref{eq:DA}, we have
\begin{equation}
\dot\pp=\int_{-\infty}^0 \pp\cdot [\pp,\dot\nabla_{\ps}] \circ \phi_{s}\ \d s- \int_0^\infty [\pp,\dot\nabla_{\ps}]\cdot\pp \circ\phi(s)\ \d s\ \label{eq:dT3}\ ,
\end{equation}
which yields (using the fact that $\pp_0=\pp$):
\begin{equation}
\dot\pp_0=[\dot A,\pp_0] +\int_{-\infty}^0\pp_0\cdot  [\pp_0,\dot\nabla_{\ps}]\circ \phi_{s}\ \d s- \int_0^\infty [\pp_0,\dot\nabla_{\ps}]\cdot\pp_0 \circ\phi(s)\ \d s\ \label{eq:dT4}\ .
\end{equation}
From equation \eqref{eq:dADa}, we get that 
$$
\int_0^\infty [\pp_0,\dot\nabla_{\ps}]\cdotp \pp_0 \circ\phi(s)\ \d s=\int_{g^+}[\pp_0,\d\dot A]\cdot\pp_0=-  \int_{g^+}[\d\dot A,\pp_0]\cdot\pp_0\ , 
$$
while 
$$
\int_{-\infty}^0 \pp_0 \cdotp [\pp_0,\dot\nabla_{\ps}]\circ\phi(s)\ \d s=-\int_{g^-}\pp_0 \cdotp[\pp_0,\d\dot A]  =\int_{g^-}\pp_0\cdotp[\d\dot A,\pp_0]\ . 
$$
This concludes the proof of proposition \ref{pro:deriv-fund-proj}.
\subsection{Proof of the stability lemma \ref{lem:stab}}\label{sec:stab-lemma}

Let us first choose a continuous family of gauge transformations $\fam{g}$ so that $g_t^*h_t=h$. The bounded variation condition implies that  for a given $T$, for any $\alpha$, there exists $\beta$ so that $\vert s\vert\leq \beta$, implies that
$$\Vert \Phi_T-\Phi_T^s\Vert \leq \alpha\ , $$ where $\Phi_T^s$ is the parallel transport at time $T$ for $\nabla^s$ and the norm is computed with respect to $h$. Thus from lemma \ref{lem:U-V}, for $\alpha$ small enough, $\Phi_{T}^s$  preserves $\mathcal U$ and is $3/4$-Lipschitz, while the same holds for $\Phi_{-T}^s$ and $\mathcal V$. This implies that for $\vert s\vert\leq \beta$, $(\nabla_s,h)$ is a uniformly hyperbolic bundle.

By the $C^k$ bounded variation hypothesis, $\Phi_{-T}^s$ is a $C^k$-family of contracting maps, hence the fixed section is itself $C^k$ as a function of $s$. This proves that the fundamental projector varies $C^k$ in $s$.

\subsection{$\Theta$-Uniformly hyperbolic bundles}\label{sec:Theta}
We now generalize the situation described in the previous paragraphs, using the same notational convention. Let $V$ be a finite dimension vector space, let $\Theta=(K_1,\ldots,K_n)$ be a strictly increasing $n$-tuple so that 
$$
1\leq K_{1}<\ldots< K_{n} < \dim(V)\ .
$$
Then a {\em $\Theta$-uniformly hyperbolic bundle} over $\uh$ is given  by a pair $(\nabla,h)$  for which there exists a $\flo{\Phi^\nabla}$- invariant decomposition 
$$
E_0=E_1\oplus\ldots \oplus E_{n+1}\ , 
$$
so that $(\nabla,h)$  is uniformly of rank $K_\aa$ for all $\aa$ in $\{1,\ldots,n\}$ with invariant decomposition given by 
$$
E_0=F_\aa\oplus  F^\circ_\aa\ , \hbox{ with } F_\aa=E_1\oplus\ldots\oplus  E_\aa\ , F^\circ_{\aa}=E_{\aa+1}\oplus \ldots \oplus E_{n+1}\ .
$$
The flag $(F_1,\ldots,F_n)$ will be called a {\em $\Theta$-flag}.

In other words, we generalized the situation described before for Grassmannians to flag varieties.

\subsection{Projectors and notation} In this section, we will work in the context of a $\Theta$-uniformly hyperbolic bundle $\rho=(\nabla,h)$ associated to a decomposition of a trivializable bundle 
$$
E=E_1\oplus\cdots\oplus E_{n+1}\ .
$$
Let us denote $k_\aa\defeq\dim(E_\aa)$ and $K_\aa\defeq k_1+\ldots k_\aa$ so that $\Theta=(K_1,\ldots, K_n)$.

We then write for a geodesic $g$,
$$
\pp^\aa(g)\ ,
$$
the projection on $F_\aa=E_1\oplus\ldots\oplus E_\aa$ parallel to $F_\aa^\circ \defeq E_{\aa+1}\oplus\ldots\oplus E_{n+1}$.

  When $g$ is a phantom geodesic we set the convention that $\pp^\aa(g)\defeq\Id$. 

Observe that all $\pp^\aa(g)$ are well defined projectors in the finite dimensional vector space $V$ which is the space of $\nabla$-parallel sections of $E$. Or in other words the vector space so that in the trivialization given by $\nabla$, $E=V\times\uh$. 

Finally, we will consider a $\Theta$-geodesic $g$, given by a geodesic $g_0$ labelled by an element  $\aa$ of $\Theta$ and write
\begin{equation}
	\pp(g)\defeq\pp^\aa(g_0)\ , \ \Theta_g=\tr(\pp^\aa)=K_\aa\ .\label{def:projet-theta-geod}
\end{equation}

\subsection{The periodic case} \label{periodic_case} Let $\Sigma$ be the universal cover of a closed surface $S$.
We denote by $\pi$ the projection from $\Sigma$ to $S$ and $p$ the projection from $\USi$ to $\Sigma$.

Let $\Gamma$ be the fundamental group of $S$ and $\rho$ be a projective Anosov representation of $\Gamma$ on some vector space $\mathcal E$. Let $E$ be the associated flat bundle on $S$ with connection $\nabla$.

We will use in the sequel the associated trivialization of the bundle  $E_0=p^*\pi^* E$ on which $\nabla$ is trivial. Let us choose a  $\Gamma$-invariant euclidean metric $g$ on the bundle $E_0$. Let us finally choose a orthonormal frame $h$ for $g$ so that $g=g_h$.

It follows from the definition of projective Anosov representations that the corresponding bundle $(\nabla,h)$ is uniformly hyperbolic.  We call such a uniformly hyperbolic bundle {\em periodic}.

More generally, let $\mathsf P_\Theta$ be the parabolic group stabilizing a $\Theta$-flag. Then a $\mathsf P_\Theta$-Anosov representation defines a $\Theta$-uniformly hyperbolic bundle.

Finally we observe
\begin{itemize}
\item Given a representation $\rho$, a different choice of a $\Gamma$-invariant metric yields an equivalent uniformly hyperbolic bundle.
	\item Similarly, two conjugate representations give equivalent uniformly hyperbolic bundles.
\end{itemize}

\section{Ghost polygons}\label{sec0:ghost-polygon}
We introduce here our main tools, {\em ghost polygons}, and relate them to configurations of geodesics and correlation functions. This section is mainly concerned with definitions and notation.

We will consider the space $\GG$ of oriented geodesics of $\hh$, and an  oriented geodesic $g$ as a pair $(g^-,g^+)$ consisting of two distinct points in $\bh$.

This section consists mainly of definitions: {\em ghost polygons, $\Theta$-decorated ghost polygons}, and related useful notions notably in the presence of a uniformly hyperbolic bundle ({\em opposite endormorphisms, core diameter}). In the periodic (Anosov) case we finally show the analyticity of such correlation functions.

\subsection{Ghost polygons}\label{sec:ghost-polygon}
\vskip 0.2 truecm 
A {\em ghost polygon } is a cyclic collection of geodesics $\vartheta=(\theta_1,\ldots,\theta_{2p})$.  The {\em ghost edges} are the geodesics (possibly phantom) $\theta_{2i+1}$ , and the 
{\em visible  edges} are the even labelled edges $\theta_{2i}$, such that
$$
\theta_{2i+1}^+=\theta_{2i}^+\ \ , \ \ \theta_{2i-1}^-=\theta_{2i}^-\ .
$$

\rmks
\begin{enumerate}
	\item The geodesics are allowed to be phantom geodesics,
\item It will be convenient some time to relabel the ghost edges as $\zeta_i\eqdef\theta_{2i+1}$.
\item It follows from our definition that $(\bar\theta_1,\theta_2,\bar\theta_3,\ldots,\theta_{2p})$ is closed ideal polygon.
\end{enumerate}

We have an alternative point of view. 
A {\em configuration of geodesics of rank $p$} is just a finite cyclically ordered set  of $p$-geodesics. We denote the cyclically ordered set of geodesics $(g_1,\ldots,g_p)$ by $\lceil g_1,\ldots, g_p\rceil$. The cardinality of the configuration  is called the {\em rank} of the the configuration.  

We see that the data of a ghost polygon and a configuration of geodesics is equivalent (see figure \eqref{fig:Ghost2}):
\begin{itemize}
	\item we can remove the ghost edges to obtain a configuration of geodesics from a ghost polygon,
	\item conversely, given any configuration   $G=(g_1,\ldots,g_p)$, the associated ghost polygon $\vartheta=(\theta_1,\ldots,\theta_{2p})$ is given by $\theta_{2i}\defeq g_i$, $\theta_{2i+1}\defeq(g_{i+1}^-,g_i^+)$ 
	\end{itemize}
We finally say that two configurations are {\em non-intersecting} if their associated ghost polygons do not intersect.  

Let us add some convenient definitions.   Let $\vartheta=(\theta_1,\ldots,\theta_{2p})$ be a ghost polygon associated to the configuration  configuration $\lceil g_1,\ldots, g_p\rceil$, We then define the opposite configurations as follows.
\begin{itemize}
\item For  visible edge $g_1$  of $G$, the {\em opposite configuration} is tuple $g_1^*\defeq (g_1,g_2,\ldots,g_{p},g_1)$. 
\item  For ghost edge $\theta_1$ of $G$, the {\em ghost opposite configuration} is the tuple $\theta_1^*\defeq(g_2,\ldots,g_p,g_1)$.
\end{itemize}
Observe that both opposite configurations are not configurations per se but actually tuples -- or {\em ordered configurations}.

We finally define the {\em core diameter $r(G)$} of a ghost polygon  $G$ to be the minimum of those $R$ such that, if $B(R)$ is the ball of radius $R$ centered at the barycenter $\bary(G)$, then $B(R)$ intersects all visible  edges. We obviously have

\begin{proposition}\label{pro:core-cont}
	The map $G\mapsto r(G)$ is a continuous and proper map from $\GG_\star^n/\psld$ to $\mathbb R$.
\end{proposition}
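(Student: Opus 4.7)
The plan is to establish continuity and properness separately; both rely on the barycentric construction.

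\emph{Continuity.} The barycenter map $\bary:\GG_\star^n\to\hh$ is continuous and $\psld$-equivariant, and the hyperbolic distance $(x,g)\mapsto d(x,g)$ on $\hh\times\GG$ is continuous. Hence $r(G)=\max_{1\leq i\leq n} d(\bary(G),g_i)$ is continuous as a maximum of finitely many continuous functions, and is $\psld$-invariant because $\psld$ acts by isometries and $\bary$ is equivariant. So $r$ descends to a continuous function on $\GG_\star^n/\psld$.

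\emph{Properness.} Given a sequence $[G_k]$ in $\GG_\star^n/\psld$ with $r(G_k)\leq R$, I would use $\psld$-equivariance of $\bary$ to choose representatives $G_k$ with $\bary(G_k)=x_0$ for a fixed basepoint $x_0\in\hh$; the remaining freedom is the compact stabilizer of $x_0$ in $\psld$, which I would eliminate by passing to a subsequence. The bound $r(G_k)\leq R$ then gives $d(x_0,g_i^k)\leq R$ for every visible edge, so each $g_i^k$ lies in the compact subset $\{g\in\GG:d(x_0,g)\leq R\}$ of $\GG$. A diagonal extraction yields $g_i^k\to g_i^\infty$ in $\GG$ for each $i$, hence $G_k\to G^\infty=(g_1^\infty,\ldots,g_n^\infty)$ in $\GG^n$.

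The main obstacle is to verify $G^\infty\in\GG_\star^n$, i.e.\ not in the singular set $\GG_1^n$. Once that is established, continuity of $\bary$ will give $\bary(G^\infty)=x_0$ and $r(G^\infty)\leq R$, so $[G_k]\to[G^\infty]$ in $r^{-1}([0,R])$, completing the proof. To rule out the case where all $g_i^\infty$ are equivalent to a common geodesic $g^\infty$, I would argue as follows: in such a scenario the limiting functional $F_\infty(x)=\sum_i d(x,g_i^\infty)=n\,d(x,g^\infty)$ attains its minimum along the whole geodesic $g^\infty$ rather than at a single point, yet $F_k(x)=\sum_i d(x,g_i^k)$ is strictly convex with unique minimum $x_0$ for every finite $k$. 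Uniform convergence $F_k\to F_\infty$ on compacts forces that any small perturbation of $x_0$ along the limit geodesic $g^\infty$ is also a near-minimizer of $F_k$ for large $k$, so by strict convexity the actual minimizer of $F_k$ cannot remain pinned at $x_0$ — contradicting $\bary(G_k)=x_0$ for all $k$. Making this pin-drift argument precise (probably via first-order optimality conditions and a compactness argument for the minimizers) is the key technical step in the whole proof.
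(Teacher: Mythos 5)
Your continuity argument is fine, granting (as the paper does) that $\bary$ is continuous on $\GG_\star^n$, and the first half of your properness argument --- normalizing so that $\bary(G_k)=x_0$, observing that $\{g\in\GG \mid d(x_0,g)\leq R\}$ is compact in $\GG$, and extracting limits $g_i^\infty$ --- is also correct. The gap is exactly where you locate it, but it is worse than a missing technicality: the ``pin-drift'' principle you invoke is false, and the degenerate limit you are trying to exclude genuinely occurs. Take $n=2$, let $g$ be a fixed geodesic through $x_0$ and let $g_2^k$ be a geodesic crossing $g$ at $x_0$ with angle $\theta_k\to 0$. Then $F_k(x)=d(x,g)+d(x,g_2^k)$ is nonnegative and vanishes exactly at the intersection point, so $\bary(G_k)=x_0$ and $r(G_k)=0$ for every $k$; yet $g_2^k\to g$, so the normalized representatives converge to the singular configuration $(g,g)$, while $F_k$ converges locally uniformly to $2d(\cdot,g)$, whose minimum set is all of $g$. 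Nothing drifts and no contradiction appears: a sequence of strictly convex functions with minimizers pinned at $x_0$ can perfectly well converge to a convex function with a non-unique minimum set (compare $d(\cdot,g)+\tfrac{1}{k}\,d(\cdot,x_0)^2$ with $x_0\in g$). So the limit of the barycentrically normalized representatives can land in $\GG_1^n$, and no convexity argument will rule this out.

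Nor is this an artifact of a bad choice of representatives. Applying suitable $\phi_k\in\psld$ that push the intersection point off to infinity, one checks that the classes $[G_k]$ above converge in $\GG_\star^2/\psld$ to the class of two distinct geodesics sharing an endpoint at infinity --- a point of the generic set as the paper defines it --- but at such an asymptotic pair the infimum of $\sum_i d(\cdot,h_i)$ is $0$ and is not attained, so $\bary$, and hence $r$, is undefined there. Thus $r^{-1}(\{0\})$ contains a sequence with no subsequential limit at which $r$ is defined, and the argument cannot be completed without either strengthening the notion of genericity (e.g.\ excluding configurations whose geodesics share endpoints at infinity, so that $\sum_i d(\cdot,g_i)$ is proper, the barycenter exists, and it varies continuously) or modifying the definition of the barycenter. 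The paper offers no proof --- the proposition is introduced with ``we obviously have'' --- so there is nothing to compare your argument to; but your attempt correctly reveals that the delicate point is the behaviour at nearly coincident and asymptotic geodesics, and a complete proof must treat those configurations directly rather than hope to exclude them by convexity.
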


\subsubsection{$\Theta$-Ghost polygons} We now $\Theta$-decorate the situation. As in paragraph  \ref{sec:Theta}, let $\Theta=(K_1,\ldots,K_n)$ with 
$K_\aa<K_{\aa+1}$. Let $G$ be a ghost polygon, a $\Theta$-decoration is a map $\AA$ from the set of visible  edges to ${1,\ldots,n}$. 
 
 We again have the equivalent description in terms of configurations. A {\em $\Theta$-configuration of geodesics of rank $p$} is configuration $(g_1,\ldots,g_p)$   with a map $\AA$ -- the $\Theta$-decoration -- from the collection of geodesics to $\{1,\ldots,n\}$. We think of a {\em $\Theta$-decorated geodesic}, or in short a {\em $\Theta$-geodesic},  as a geodesic labelled with an element of $\Theta$.
 
\subsection{Ghost polygons and uniformly hyperbolic bundles} 
 
When $\rho$ is a $\Theta$-uniformly hyperbolic bundle and $\pp^\aa(g)$ a fundamental projector associated to a geodesic $g$, we will commonly use the following shorthand. 

Let $G$ be ghost polygon  $(\theta_1,\theta_2,\ldots,\theta_{2p})$ be given by configuration $\lceil g_1,\ldots, g_p\rceil$.   
 \begin{enumerate}
	\item For visible edge $g_i$ we write $\pp_i\defeq\pp^\AA_i\defeq\pp^{\AA(i)}(g_i)$.
 	\item For visible edge $g_i$,  the {\em opposite ghost endomorphism} is \begin{equation}\pp_G^\AA(g^*_j) \defeq\pp_{j}\cdot\pp_{j-1}\cdots\pp_{j+1}\cdot\pp_j\ .\end{equation}
	\item For ghost edge $\zeta_i$, the {\em opposite ghost endormorphism} is 
	 \begin{equation}
\pp_G^\AA(\zeta_i^*)\defeq\pp_{i}\cdot\pp_{i-1}\ldots\pp_{i+1}\ .
	\end{equation}	The reader should notice that in the product above, the indices are decreasing.
\end{enumerate}

The opposite ghost endomorphisms have a simple structure in the context of projective uniformly hyperbolic bundles (that is when $\Theta=\{1\}$).
\begin{lemma}[{\sc Opposite endormorphisms}]\label{lem:opp-endo}
When $\Theta=\{1\}$, $\pp_G(\theta^*_i)=\T_G(\rho)\ \pp(\theta_i)$. 	
\end{lemma}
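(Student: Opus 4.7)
The plan is to exploit the rank-one nature of the projectors $\pp_i$ in the projective setting $\Theta=\{1\}$. The key algebraic ingredient is the standard identity: for any rank-one projector $\pp \in \End(V)$ and any endomorphism $A$,
\begin{equation*}
\pp \cdot A \cdot \pp = \tr(A\pp)\, \pp.
\end{equation*}
This is verified directly by writing $\pp = v \otimes \varphi$ with $\varphi(v)=1$: one computes $\pp A \pp = \varphi(Av)\,\pp$, and $\tr(A\pp) = \varphi(Av)$.

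I would then plug this into the definition of the opposite ghost endomorphism on a visible edge $g_i$. By definition
\begin{equation*}
\pp_G(g_i^*) \;=\; \pp_i\cdot \pp_{i-1}\cdots \pp_{i+1}\cdot \pp_i \;=\; \pp_i \cdot A_i \cdot \pp_i,
\end{equation*}
where $A_i \defeq \pp_{i-1}\pp_{i-2}\cdots \pp_{i+1}$ is the cyclic product of all remaining visible-edge projectors (indices read mod $p$). Applying the identity gives immediately $\pp_G(g_i^*) = \tr(A_i \pp_i)\,\pp_i$.

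To finish, I would invoke cyclic invariance of the trace:
\begin{equation*}
\tr(A_i \pp_i) \;=\; \tr\bigl(\pp_{i-1}\pp_{i-2}\cdots \pp_{i+1}\pp_i\bigr) \;=\; \tr\bigl(\pp_p\pp_{p-1}\cdots \pp_1\bigr) \;=\; \T_G(\rho),
\end{equation*}
which is just the definition of the correlation function. Therefore $\pp_G(g_i^*) = \T_G(\rho)\,\pp_i = \T_G(\rho)\,\pp(g_i)$, giving the claim. There is no real obstacle: the entire content of the lemma is the rank-one collapse $\pp A \pp = \tr(A\pp)\pp$, and the trace computation only requires the cyclic invariance already built into the definition of $\T_G$. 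The statement is read as applying to visible edges $\theta_{2i} = g_i$, for which the outer $\pp_i$'s sandwich the inner cyclic product; the formula for ghost edges $\zeta_i$ has no such repeated factor and is instead covered by the direct observation $\tr(\pp_G(\zeta_i^*)) = \T_G(\rho)$.
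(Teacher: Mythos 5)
Your treatment of the visible edges $\theta_{2i}=g_i$ is correct and is exactly the paper's argument: the rank-one collapse $\pp A\pp=\tr(A\pp)\,\pp$ together with cyclic invariance of the trace. The gap is in your last sentence. The lemma is stated for \emph{every} edge $\theta_i$ of the ghost polygon, odd indices included, and the ghost-edge case is genuinely needed later (it is what turns Proposition \ref{pro:gintegr-alt} into the projective formula of Proposition \ref{pro:gintegr-alt2}, where the sum runs over all $2p$ edges). For a ghost edge $\zeta_i=\theta_{2i+1}=(g_{i+1}^-,g_i^+)$ the claim is the operator identity $\pp_i\pp_{i-1}\cdots\pp_{i+1}=\T_G(\rho)\,\pp(\zeta_i)$, and observing that $\tr\left(\pp_G(\zeta_i^*)\right)=\T_G(\rho)$ does not prove it: knowing the trace of an endomorphism does not determine the endomorphism, and in particular does not identify it as a multiple of the specific rank-one projector attached to the ghost geodesic $\zeta_i$.

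The missing half is short but has content. Writing $\pp_G(\zeta_i^*)=\pp_i B\pp_{i+1}$ with $B=\pp_{i-1}\cdots\pp_{i+2}$ and using rank one once more, one gets $\pp_iB\pp_{i+1}=\lambda\,\pp_i\pp_{i+1}$ where, taking traces, $\lambda\,\tr(\pp_i\pp_{i+1})=\T_G(\rho)$. Provided $\tr(\pp_i\pp_{i+1})\neq 0$, the operator $\qq\defeq\pp_i\pp_{i+1}/\tr(\pp_i\pp_{i+1})$ has trace $1$, image equal to the image of $\pp_i$ (the line $\xi(g_i^+)$) and kernel equal to the kernel of $\pp_{i+1}$ (the hyperplane $\xi^*(g_{i+1}^-)$); it is therefore precisely the fundamental projector $\pp(\zeta_i)$ of the geodesic $(g_{i+1}^-,g_i^+)$. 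This identification of the image and kernel, plus the degenerate case $g_i^+=g_{i+1}^-$ (phantom ghost edge, $\pp_{i+1}\pp_i=0$) which must be set aside first, is the part of the lemma your proposal omits. You should either supply this argument or state explicitly that you are proving only the even-index half of the statement.
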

\begin{proof}
 Let $G=(\theta_1,\ldots,\theta_{2p})$ be a ghost polygon with configuration $\lceil g_1,\ldots, g_p\rceil$.   If $g_{i}^+ = g_{i+1}^-$ then $p_{i+1}p_i = 0$ and the equality holds trivially with both sides zero. We thus can assume there is a ghost edge $\zeta_i = \theta_{2i+1}$ for each $i \in \{1,\ldots,p\}$. 
 
 When $\Theta=\{1\}$ all projectors have rank 1. Thus for visible edge $g_i$

\begin{eqnarray*}
\pp_G(g_i^*)=\pp_i\pp_{i-1}\ldots\pp_{i+1}\pp_i&=& \tr(\pp_i\ldots\pp_{i+1})\ \pp_i=	\T_G(\rho)\ \pp(g_i)\ .
\end{eqnarray*}
For a ghost edge $\zeta_i$ as $\tr(\pp_{i+1}\pp_i) \neq 0$
\begin{eqnarray*}
\pp_G(\zeta^*_{i})=\pp_{i}\pp_{i-1}\ldots\pp_{i+1}&=&\pp_{i}\pp_{i+1}\frac{\tr(\pp_n\ldots\pp_1)}{\tr(\pp_i\pp_{i+1})}= \T_G(\rho)\  \qq\ ,
\end{eqnarray*}
where $\qq\eqdef\frac{1}{\tr(\pp_i\pp_{i+1})} \pp_{i}\pp_{i+1}$. Then we see that $\qq$ has trace 1, its image is the image of $\pp_i$, and its kernel is the kernel of $\pp_{i+1}$. Thus $\qq$ is the rank 1 projector on the image of $\pp_i$, parallel to  the kernel of $\pp_{i+1}$. Hence  $\qq=\pp(\zeta_{i})$.  The result follows.
\end{proof}

\vskip 0.2 truecm

\subsection{Correlation function}\label{sec:cor-func}

Given a $\Theta$-configuration of geodesics $G=\lceil g_1,\ldots,g_p\rceil$ given by a $p$-uple of geodesics $(g^0_1,\ldots,g^0_p)$, with a $\Theta$-decoration $\AA$ the {\em correlation function} associated to $G$ is 
\begin{equation}
	\T_G: \rho\mapsto\T_{\lceil g_1,\ldots,g_p\rceil}(\rho)\defeq\tr\left(\pp^{\aa(p)}(g^0_p)\cdots \pp^{\aa(1)}(g^0_1)\right)=\tr\left(\pp(g_p)\cdots \pp(g_1)\right)\ ,
\end{equation}
where $\pp$ is the projector associated to the uniformly hyperbolic bundle $\rho$. The reader should notice (again) that the geodesics and projectors are ordered reversely.

\subsection{Analyticity in the periodic case}

In this subsection we will treat first the case of complex bundles, that is representation 
in $\mathsf{SL}(n,\mathbb C)$ of  the (complex) parabolic group $\mathsf P^\mathbb C_\Theta$ associated to $\Theta$. We now have, as a consequence of \cite[Theorem 6.1]{Bridgeman:2015ba}, the following

\begin{proposition}\label{pro:ana-TG} Let $G$ be a ghost polygon.
Let $\famD{\rho}$ be an analytic family of $\mathsf P^\mathbb C_\Theta$-Anosov representations parametrized by the unit disk $\D$. Then, the  
 function $u\mapsto \T_G(\rho_u)$ is analytic. Moreover the map $G\mapsto \T_G$ is a continuous function with values in the analytic functions. 
\end{proposition}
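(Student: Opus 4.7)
The plan is to unfold the definition of $\T_G$ and reduce the analyticity assertion to the analyticity of the fundamental projectors as functions of the representation, which is essentially the content of \cite[Theorem 6.1]{Bridgeman:2015ba}.

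First, I would write, for $G = \lceil g_1, \ldots, g_p \rceil$ with $\Theta$-decoration $\AA$,
$$
\T_G(\rho_u) = \tr\bigl(\pp^{\AA(p)}(g_p)(\rho_u) \cdots \pp^{\AA(1)}(g_1)(\rho_u)\bigr),
$$
and observe that for a non-phantom visible edge $g$ the projector $\pp^\aa(g)(\rho_u)$ is determined by the limit $\Theta$-flag of $\rho_u$ at $g^+$ and the transverse $\Theta$-flag at $g^-$, through the standard construction that sends a transverse pair of flags to the projection onto a subspace parallel to its complementary transverse subspace. This construction is a rational (in particular analytic) map on the open locus of transverse pairs. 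Phantom edges contribute $\Id$ by convention and are harmless.

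Second, I would invoke \cite[Theorem 6.1]{Bridgeman:2015ba} to conclude that for an analytic family of $\mathsf P^{\mathbb C}_\Theta$-Anosov representations, the limit flags at any fixed boundary point depend analytically on $u \in \D$. Combined with the Anosov property, which ensures that transversality of the flags at $g^-$ and $g^+$ persists along the whole family, this gives analyticity of each $u \mapsto \pp^\aa(g)(\rho_u)$ as a map from $\D$ into $\End(V)$. Since trace and matrix multiplication are polynomial operations, the composition $u \mapsto \T_G(\rho_u)$ is analytic.

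For the continuity of $G \mapsto \T_G$ into the space of analytic functions on $\D$ (with the topology of uniform convergence on compact subsets), I would again use the joint continuity of the limit maps in the boundary point and in the representation parameter, supplied by the same cited theorem. This yields joint continuity of $(G,u) \mapsto \T_G(\rho_u)$ on $\GG^p_\star \times \D$, and hence, by the usual uniform-continuity-on-compacts argument, continuity of $G \mapsto \T_G$ with values in analytic functions. The main obstacle is really just the bookkeeping needed to ensure that transversality of the flag data at $g^\pm$ is uniform on compact subsets both of parameter space and of $\GG^p_\star$; this is automatic from the Anosov hypothesis, so the proof is essentially a packaging of the cited analyticity result together with the rational algebraic dependence of the projector on a transverse pair of flags.
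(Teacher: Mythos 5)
Your proposal is correct and follows essentially the same route as the paper, which reduces the statement in one line to the analyticity of the limit curve from \cite[Theorem 6.1]{Bridgeman:2015ba}, since the correlation function depends on the representation only through the limit maps and the projector is an analytic (rational) function of a transverse pair of flags. Your version simply spells out the bookkeeping (transversality, joint continuity in $(G,u)$) that the paper leaves implicit.
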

\begin{proof}
Indeed the correlation functions only depends on the limit curve of the representation and thus the analyticity of the limit curve proved in  \cite[Theorem 6.1]{Bridgeman:2015ba} gives the result. \end{proof}

We deduce the general analyticity result from this proposition by complexifying the representation.

\section{Ghost integration}\label{sec:ghost-integ}  In this section, given a $\Theta$-uniformly hyperbolic bundle $\rho$, a $\Theta$-ghost polygon $G$ and  a $1$-form $\alpha$ on $\hh$ with values in the endormorphism bundle of a uniformly hyperbolic bundle , we produce a complex (or real) number denoted
$$\oint_{\rho(G)}\alpha\ .$$ This  procedure is called   {\em ghost integration}.
The construction is motivated by the following formula that allows us to compute the variation of a correlation function with respect to a variation of uniformly hyperbolic bundles:
$$
\d\T_G\left(\dot\nabla\right)=\oint_{\rho(G)}\dot\nabla\ .
$$
\begin{enumerate}
	\item In order to define ghost integration, we first have to consider which type of forms we wish to integrate. This is done in paragraph \ref{sec:bd-geodbdforms}.
	\item In paragraph \ref{sec:line integration} we define {\em line integration}, a procedure reminiscent of how one gets the solution of the cohomogical solution: integrating a ``contracting part" towards the future and a   ``dilating part" towards the past. We, in particular show some crucial convergence properties of the line integration in lemma \ref{lem:exp-decay}.  
	\item We then define  the  ghost integration  in paragraph \ref{sec:bd-geodbdforms}, using the line integration as a building block.
	\item In paragraph \ref{sec:build-ghibd}, we obtain other formulae depending on the type of forms considered, or whether we are in the projective case or not.
	\item In paragraph \ref{sec:dual-form}, we introduce the dual cohomology object $\Omega_{\rho(G)}$, the {\em ghost dual form  to a ghost polygon}, which is a  1-form with values in the endomorphism bundle so that
 $$
 \int_{\hh}\tr\left(\alpha\wedge \Omega_{\rho(G)}\right)=\oint_{\rho(G)}\alpha\ .
 $$
 \item We finally achieve one of our goals by relating ghost integration to the derivative of correlation functions in paragraph \ref{sec:der-cor}.
\end{enumerate}

\subsection{Bounded and geodesically bounded forms}\label{sec:bd-geodbdforms}
In this paragraph, we define a certain type of  1-forms with values in $\End(E)$, where $E$ is a uniformly hyperbolic bundle $(\nabla,h)$. All norms and metrics will be using the  Euclidean metric $g_h$ on $E$ associated to a framing $h$.

\begin{definition}[\sc Bounded forms] A {\em bounded 1-form} $\omega$ on $\hh$ with values in  $\End(E)$ is a form so that  $\Vert\omega_x(u)\Vert_x$ is bounded uniformly for all $(x,u)$ in  $\ms U\hh$. Let us denote $ \bLa^\infty(E)$ the vector spaces of those forms and $$
\Vert\omega\Vert_\infty=\sup_{(x,u)\in\ms U\hh} \Vert\omega_x(u)\Vert_x\ .
$$
\end{definition}\label{def:bded-form}
As an example of such forms, we have
\begin{enumerate}
	\item Given a $\Theta$-geodesic $g$, given by a (possibly phantom)   geodesic $g_0$, and an element $\aa$ of $\Theta$, the {\em projector form} is
\begin{equation}
	\beta_{\rho(g)}\defeq \omega_g\ \pp(g)=\omega_g\ \pp^\aa(g_0)\ .\label{def:beta}
\end{equation}	
where we used the notation \eqref{def:projet-theta-geod}.
\item Any $\Gamma$-equivariant continuous form in the case of a periodic bundle.
	\item Given $(A_t)_{t\in]-1,1[}$ a bounded variation of a uniformly hyperbolic bundle (see definition \ref{def:bd-var}, the form
	$$
	\dot{A}\defeq\left.\frac{\partial A_t}{\partial t}\right\vert_{t=0}\  ,
	$$
	is by definition a bounded 1-form.
\end{enumerate}

We do not require forms in $ \bLa^\infty(E)$ to be closed.

\begin{definition}[{\sc Geodesically bounded forms}]
A  form $\alpha$ is {\em geodesically bounded} if for any parallel section $A$ of  $\End(E)$, $\tr(\alpha A)$ is geodesically bounded as in definition \ref{def:geod-bded}. We denote by $ \bXi(E)$ the set of 1-forms which are geodesically bounded.
\end{definition}

Again for any geodesic, the projector form $\beta_{\rho(g)}$ is geodesically bounded. However $\Gamma$-equivariant forms are never geodesically bounded unless they vanish everywhere.

\subsection{Line integration}\label{sec:line integration}

Let $\omega$ be a 1-form in $ \bLa^\infty(E)$. Let $x$ be a point on the oriented  geodesic $g$ and $Q$ a parallel section of $\End(E)$ along $g$. The {\em line integration} of $\omega$ -- with respect to the uniformly hyperbolic bundle $\rho$ -- is given by 
\begin{eqnarray}
	\bS_{x,g,\QQ}(\omega)	& \defeq & 
		 \int_{g^+}\tr\left(\QQ\  [\omega,\pp] \ \pp\right) + \int_{g^-}\tr\left(\QQ\ \pp\ [\omega,\pp]\right)\label{eq:defJ}\ .
\end{eqnarray}
Observe that since for a projector $\pp$, we have $$\tr\left(A\  \pp\  [B,\pp]\right) =\tr\left([\pp,A]\  \pp\  B\right)\ , $$ 
we have the equivalent formulation
\begin{eqnarray}
	\bS_{x,g,\QQ}(\omega)	& = & 
		 \int_{g^+}\tr\left(\omega\ \pp\  [\pp,\QQ] \right) + \int_{g^-}\tr\left(\omega\ [\pp,\QQ]\ \pp\right)\label{eq:defJalt}\ .
\end{eqnarray}
Now let $\alpha$ be a section of $\End(E)$ so that $\d\alpha$ belongs to $ \bLa^\infty(E)$. We also define the {\em primitive line integration} of $\alpha$  by  
\begin{eqnarray*}
	\J_{x,g,\QQ}(\alpha)&\defeq & \tr\left(\alpha(x)\ [\pp,\QQ] \right)+ \bS_{x,g,\QQ}(\d \alpha)\crcr
	&=& \tr\left([\alpha(x),\pp]\  \QQ \right)+ \bS_{x,g,\QQ}(\d \alpha) \ .
\end{eqnarray*}

\subsubsection{Bounded linear forms and continuity}
\begin{proposition}[\sc Continuity]\label{pro:J-cont}
The line integration operator  $$
\omega\mapsto \bS_{x,g,Q}(\omega)\   ,$$ is  a continuous linear form on $ \bLa^\infty(E)$.
\end{proposition}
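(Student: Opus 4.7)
The plan is to show that both integrals in the definition of $\bS_{x,g,\QQ}(\omega)$ converge absolutely with bound proportional to $\|\omega\|_\infty$; linearity in $\omega$ is immediate from the formula, so this gives continuity. The key mechanism is that the integrands, which at first glance are only pointwise bounded and a priori not integrable on the half-lines $g^\pm$, decay exponentially once one uses (i) the block structure imposed by the projector $\pp$, and (ii) the contraction estimates on $F_0^\pm$ supplied by Lemma \ref{lem:Ano-subbun}.

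First I would observe the algebraic fact that $[\omega,\pp]\pp$ is a section of $F_0^-$ and $\pp[\omega,\pp]$ is a section of $F_0^+$: a direct check from $\pp^2=\pp$ gives $\pp\cdot([\omega,\pp]\pp)=0$ and $([\omega,\pp]\pp)\cdot\pp=[\omega,\pp]\pp$, and symmetrically for $\pp[\omega,\pp]$, which are exactly the defining equations of $F_0^\mp$. Next I would decompose $\QQ$ along the parallel splitting $\End(E)=(L^*\otimes L)\oplus(P^*\otimes P)\oplus F_0^+\oplus F_0^-$ as $\QQ=\QQ^{LL}+\QQ^{PP}+\QQ^++\QQ^-$. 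The trace pairing between these four parallel blocks is nondegenerate only within $L^*\otimes L$, within $P^*\otimes P$, and between $F_0^+$ and $F_0^-$, which gives
$$\tr\!\left(\QQ\,[\omega,\pp]\,\pp\right)=\tr\!\left(\QQ^+\,[\omega,\pp]\,\pp\right),\qquad \tr\!\left(\QQ\,\pp\,[\omega,\pp]\right)=\tr\!\left(\QQ^-\,\pp\,[\omega,\pp]\right).$$
Since $\QQ$ and the subbundles $F_0^\pm$ are all parallel along $g$, the components $\QQ^\pm$ are parallel sections of $F_0^\pm$ along $g$.

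At this point Lemma \ref{lem:Ano-subbun} gives, measured in the metric $g_h$ at the current basepoint, the estimates $\|\QQ^+(g(s))\|\leq A e^{-as}\|\QQ^+(x)\|$ for $s\geq 0$ and $\|\QQ^-(g(-s))\|\leq A e^{-as}\|\QQ^-(x)\|$ for $s\geq 0$. On the other hand, $\pp$ is uniformly bounded by Proposition \ref{pro:pbd-unique} and $\omega\in\bLa^\infty(E)$ gives $\|\omega(\dot g(\cdot))\|\leq\|\omega\|_\infty$ pointwise, so that $\|[\omega,\pp]\pp\|$ and $\|\pp[\omega,\pp]\|$ are bounded at every point of $g$ by a constant multiple of $\|\omega\|_\infty$. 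A pointwise finite-dimensional trace inequality combines these to make both integrands decay like $e^{-as}$, which yields
$$\left|\bS_{x,g,\QQ}(\omega)\right|\leq\frac{C(\QQ)}{a}\,\|\omega\|_\infty,$$
the desired continuity. The main delicate point is the bookkeeping between parallelism, which fixes $\QQ^\pm$ algebraically along $g$, and the non-parallel metric $g_h$ in which the contraction of $F_0^\pm$ and the boundedness of $\omega$ are formulated; projecting $\QQ$ onto its $F_0^+$ and $F_0^-$ parts is precisely what converts the a priori possibly exponentially growing norm of a generic parallel section into the exponential decay that makes the integral absolutely convergent.
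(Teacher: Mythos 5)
Your argument is correct and rests on exactly the same ingredients as the paper's proof (Lemma \ref{lem:exp-decay}): the observation that $[\omega,\pp]\pp$ and $\pp[\omega,\pp]$ are sections of $F_0^-$ and $F_0^+$ respectively, the contraction estimates of Lemma \ref{lem:Ano-subbun}, the uniform boundedness of $\pp$, and the Cauchy--Schwarz inequality for the trace. The only difference is dual bookkeeping: you place the exponential decay on the parallel components $\QQ^\pm$ of $\QQ$ (after projecting via trace-orthogonality of the blocks), whereas the paper keeps $\QQ$ intact and instead measures the $F_0^\mp$-valued integrand at the basepoint $x$, where backward (resp.\ forward) transport contracts it; the resulting bound $\vert\bS_{x,g,\QQ}(\omega)\vert\leq C\Vert\omega\Vert_\infty$ is the same.
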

This proposition is an immediate consequence of the following lemma
\begin{lemma}[\sc Exponential decay] \label{lem:exp-decay} There exist positive constants $B$ and $b$, only depending on $\QQ$ and $x$, so that for any  $\omega$ in  $ \bLa^\infty(E)$
	 if   $y$  is  a point in $g^+$, $z$ a point in $g^-$  and denoting $\pt$ the tangent vector to the geodesic $g$, then
	 \begin{eqnarray}
	 		\left\vert\tr\left(\QQ\  [\omega_y(\pt),\pp]\ \pp\right)\right\vert &\leq& Be^{-bd(x,y)} \Vert\omega\Vert_\infty\ ,\label{ineq:expdecay1}\\
	 		\left\Vert [\QQ,\pp]\ \pp\right\Vert_z &\leq& Be^{-bd(x,z)} \ ,\label{ineq:expdecay2}\\
	 		\left\Vert \pp\  [\QQ,\pp]\right\Vert_y &\leq& Be^{-bd(x,y)} \ .\label{ineq:expdecay3}
	 		\end{eqnarray}

\end{lemma}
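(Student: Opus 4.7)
The plan is to rewrite each of the three quantities so that the dominant factor becomes a parallel section of $F_0^+$ or $F_0^-$, and then invoke the contraction estimates \eqref{ineq:contractF0} from Lemma \ref{lem:Ano-subbun} in the appropriate time direction.

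First I would record the elementary algebraic identities that place the various commutators into the $F_0^\pm$-decomposition. A direct computation using $\pp^2=\pp$ gives
\begin{equation*}
[\omega,\pp]\,\pp \;=\; (\Id-\pp)\,\omega\,\pp, \qquad \pp\,[\omega,\pp] \;=\; -\pp\,\omega\,(\Id-\pp),
\end{equation*}
and similarly for commutators with $\QQ$. By the characterisation of $F_0^\pm$ in the proof of Lemma \ref{lem:Ano-subbun}, the endomorphism $(\Id-\pp)\,A\,\pp$ lies in $F_0^-$ (kills $P$, image in $P$) while $\pp\,A\,(\Id-\pp)$ lies in $F_0^+$ (kills $L$, image in $L$), for any endomorphism $A$.

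Next I would use cyclicity of the trace to shift the burden of decay onto $\QQ$. For the first inequality \eqref{ineq:expdecay1}, I would write
\begin{equation*}
\tr\bigl(\QQ\,[\omega_y(\ps),\pp]\,\pp\bigr) \;=\; \tr\bigl(\QQ\,(\Id-\pp)\,\omega_y(\ps)\,\pp\bigr) \;=\; \tr\bigl(\pp\,\QQ\,(\Id-\pp)\cdot\omega_y(\ps)\bigr).
\end{equation*}
Since $\QQ$ and $\pp$ are parallel along $g$, the section $\pp\,\QQ\,(\Id-\pp)$ is parallel along $g$ and lies in $F_0^+$. Applying the second estimate in \eqref{ineq:contractF0} with $s=d(x,y)\geq 0$,
\begin{equation*}
\bigl\|\pp\,\QQ\,(\Id-\pp)\bigr\|_y \;\leq\; A\,e^{-a\,d(x,y)}\,\bigl\|\pp\,\QQ\,(\Id-\pp)\bigr\|_x,
\end{equation*}
and combining with $\|\omega_y(\ps)\|\leq \|\omega\|_\infty$ yields \eqref{ineq:expdecay1} with $B$ depending on $\|\QQ\|_x$.

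For \eqref{ineq:expdecay2} I would observe that $[\QQ,\pp]\,\pp=(\Id-\pp)\,\QQ\,\pp$ is a parallel section of $F_0^-$, so the first estimate in \eqref{ineq:contractF0} with $s=d(x,z)\geq 0$ applied to $z=\phi_{-s}(x)\in g^-$ gives the desired bound. For \eqref{ineq:expdecay3} I would observe dually that $\pp\,[\QQ,\pp]=-\pp\,\QQ\,(\Id-\pp)$ is a parallel section of $F_0^+$ and use the $F_0^+$ contraction estimate in positive time. In all three cases the constants $B,b$ depend only on $\QQ$ (through its value at $x$) and on the uniform constants $A,a$ of Lemma \ref{lem:Ano-subbun}.

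The only genuine point to check carefully is the sign of the time in each case: one must match the side of the geodesic ($g^+$ versus $g^-$) with the bundle ($F_0^+$ versus $F_0^-$) that contracts in that time direction. Once the algebraic rewriting above is in place this is purely bookkeeping, so I do not anticipate any real obstacle beyond the care needed to track which commutator projects onto which summand.
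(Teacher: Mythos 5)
Your proposal is correct and follows essentially the same route as the paper: both proofs reduce each quantity to the contraction estimates \eqref{ineq:contractF0} for the parallel subbundles $F_0^\pm$ of Lemma \ref{lem:Ano-subbun}, combined with the Cauchy--Schwarz bound $\vert\tr(UV)\vert\leq\Vert U\Vert\,\Vert V\Vert$; for \eqref{ineq:expdecay2} and \eqref{ineq:expdecay3} the arguments are identical. The only (harmless) variation is in \eqref{ineq:expdecay1}, where you cycle the trace so that the decay is carried by the parallel section $\pp\,\QQ\,(\Id-\pp)$ of $F_0^+$ evaluated at $y$, whereas the paper keeps the decay on the bounded (non-parallel) section $[\omega_y(\pt),\pp]\,\pp$ of $F_0^-$ transported back to $x$ -- two dual bookkeepings of the same estimate.
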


\begin{proof} Let us  choose a trivialization of $E$ so that $\nabla$ is trivial. By hypothesis $\omega$ is in $\bLa^\infty(E)$ and thus
\begin{eqnarray}
	\Vert \omega_y(\pt)\Vert_y\leq \Vert\omega\Vert_\infty\ .\label{eq:ombd}
\end{eqnarray}
Then 
$$\sigma: y\mapsto\sigma(y)\defeq [\omega_y(\pt),\pp]\ \pp\ ,$$
is a section of $F_0^-$. Since  $\pp$ is bounded -- see proposition \ref{pro:pbd-unique} -- there exists $k_1$ such that for all $y$
$$
\Vert \sigma(y)\Vert_y\leq k_1 \Vert\omega\Vert_\infty\ .
$$
By lemma \ref{lem:Ano-subbun}, $F_0^-$ is a contracting bundle in the negative direction, which means there exists positive constants $A$ and $a$ so   that if $y=\phi_t(x)$ with $t>0$, then
$$
\Vert \Phi_{-t}^\nabla( \sigma(y))\Vert_x\leq A e^{-at}\Vert \sigma(y)\Vert_y\  ,
$$
where $\nabla$ is the connection.
However in our context, since we have trivialized the bundle,  $\Phi_{-t}^\nabla$ is the identity fiberwise, and thus combining the previous remarks we get that if $y$ is in $g^+$, then
\begin{eqnarray}
	\left\Vert [\omega_y(\pt),\pp]\  \pp \right\Vert_x&\leq &A e^{-a(d(y,x)}\Vert \omega\Vert_\infty\ .\label{eq:exp-decay2}
\end{eqnarray}
By Cauchy--Schwarz, for all endomorphisms $U$ and $V$, we have
\begin{equation}
	\vert \tr(U\  V)\vert\leq \Vert U\Vert_x\Vert V\Vert_x\ .\label{eq:exp-decay1}
\end{equation}
Thus combining equations \eqref{eq:exp-decay1} and \eqref{eq:exp-decay2} we obtain
\begin{eqnarray*}
	\left\vert\tr\left( [\omega_y(\pt),\pp]\   \pp\  \QQ\right)\right\vert
	\leq  \Vert \QQ\Vert_x \ \Vert [\omega_y(\pt),\pp]\   \pp \Vert_x
	\leq  A e^{-a(d(y,x)} \Vert \QQ\Vert_x\ \ \Vert \omega\Vert_\infty\ ,
\end{eqnarray*}  
and the inequality \eqref{ineq:expdecay1} follows.  Similarly, $[\QQ,\pp]\pp$ is a parallel section of $F_0^-$, thus the inequality \eqref{ineq:expdecay1} is an immediate consequence of inequality \ref{ineq:contractF0}.\end{proof}

\subsubsection{Properties of the primitive line integration}
We explain now two properties of the primitive line integration that will be useful in the definition of the ghost intergration;
\begin{proposition}
The primitive line integration $\J_{x,g,Q}(\alpha)$ does not depend on the choice of $x$ on $g$.

\end{proposition}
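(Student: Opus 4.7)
The plan is to differentiate $\J_{x,g,\QQ}(\alpha)$ with respect to $x$ moving along $g$ and show the derivative vanishes. Parametrize $x=\phi_t(x_0)$ with tangent vector $\ps=\pt$ to $g$, and compute the derivative of each of the two terms in
$$\J_{x,g,\QQ}(\alpha) = \tr\bigl(\alpha(x)\,[\pp,\QQ]\bigr) + \bS_{x,g,\QQ}(\d\alpha)$$
at $t=0$.

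For the first term, by Proposition \ref{pro:pbd-unique} the fundamental projector $\pp$ is parallel along the geodesic flow, and $\QQ$ is parallel along $g$ by hypothesis, so $[\pp,\QQ]$ is $\nabla$-parallel along $g$. Hence the $t$-derivative of $\tr(\alpha(\phi_t(x_0))\,[\pp,\QQ])$ is simply $\tr(\d\alpha(\pt)\,[\pp,\QQ])$, evaluated at $x$.

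For the boundary term coming from $\bS$, moving the base point forward shrinks $g^+$ and extends $g^-$; by the fundamental theorem of calculus (taking care of the orientation of $g^-$, which points from $x$ toward $g(-\infty)$), the $t$-derivative at $t=0$ is
$$-\tr\bigl(\QQ\,[\d\alpha(\pt),\pp]\,\pp\bigr) - \tr\bigl(\QQ\,\pp\,[\d\alpha(\pt),\pp]\bigr) = -\tr\bigl(\QQ\,[\d\alpha(\pt),\pp]\bigr),$$
where the collapse uses the elementary projector identity $[B,\pp]\pp + \pp[B,\pp] = [B,\pp]$ (a direct check from $\pp^2=\pp$). Adding the two contributions and expanding both traces as $\tr(\d\alpha(\pt)\pp\QQ)-\tr(\d\alpha(\pt)\QQ\pp)$ and its negative, one sees the sum is zero.

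The only subtlety is justifying that the boundary contributions in the fundamental theorem of calculus are legitimate, i.e.\ that the integrands are actually continuous on $g$ and that the integrals converge so that differentiation under the limit is valid; both follow from Proposition \ref{pro:J-cont} and the exponential decay Lemma \ref{lem:exp-decay} applied to $\omega=\d\alpha\in\bLa^\infty(E)$. Since the derivative along $g$ is identically zero and $g$ is connected, $\J_{x,g,\QQ}(\alpha)$ is independent of the choice of $x\in g$.
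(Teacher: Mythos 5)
Your proof is correct and is essentially the paper's argument in infinitesimal form: the paper computes the finite difference $\J_{y,g,\QQ}(\alpha)-\J_{x,g,\QQ}(\alpha)$ directly and reduces it to $\int_x^y\tr\left(\d\alpha(\dot g)\left([\pp,\QQ]-\pp\,[\pp,\QQ]-[\pp,\QQ]\,\pp\right)\right)$, which vanishes by the same projector identity you invoke. The only difference is that the finite-difference version needs just additivity of the (convergent) integrals over subintervals rather than pointwise differentiation of the semi-infinite integrals, so it sidesteps the continuity/Lebesgue-point subtlety you flag at the end; otherwise the two computations coincide.
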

\begin{proof}
Let us write for the sake of this proof
$\J_x\defeq\J_{x,g,Q}(\alpha)$.
	Let  $\mu$ be the geodesic arc from $y$ to $x$. Let us consider a parametrization of $g$ so that $x=g(s_0)$ and $y=g(t_0)$. Then letting $\omega = \d\alpha$
		\begin{eqnarray*}
		\J_{y}-\J_{x}
		&=&\tr\left((\alpha(y)-\alpha(x))\ [\pp,\QQ]\right)\crcr
			&+&\int_{t_0}^{\infty} \tr\left(\omega(\dot g)\ \pp\ [\pp,Q]\right)\d t +\int_{t_0}^{-\infty} \tr\left(\omega(\dot g)\   [\pp,Q]\ \pp \right)\d t \crcr
		&-&\int_{s_0}^{\infty} \tr\left(\omega(\dot g)\ \pp\  [\pp,Q]\right)\d t-\int_{s_0}^{-\infty} \tr\left(\omega(\dot g)\ [\pp,Q]\ \pp \right)\d t \crcr
	&=&\int_{s_0}^{t_0}\tr\left(\omega(\dot g)\ 
		\left(  [\pp,Q]-\pp\ [\pp, Q]-[\pp,Q]\  \pp  \right)\right)\ \d s =0\ ,
	\end{eqnarray*}
where the last equality comes form the fact that, since $\pp$ is a projector
$$
[\pp,Q]\  \pp +\pp\ [\pp,  Q]=[\pp,Q]\ .\qedhere
$$
\end{proof}
Finally we have, 
\begin{proposition}\label{pro:closed}
	Assume that  $\beta$ is bounded.
	Then $
	\J_{m,Q}(\beta)= 0$.
\end{proposition}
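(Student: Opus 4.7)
The plan is to reduce $\J_{x,g,\QQ}(\beta)$ to a single boundary term at $x$ via the fundamental theorem of calculus along $g$, show that the boundary terms at infinity vanish thanks to the boundedness of $\beta$ together with Lemma \ref{lem:exp-decay}, and then observe a clean cancellation using the projector identity $\pp[\pp,\QQ]+[\pp,\QQ]\pp=[\pp,\QQ]$.

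More precisely, I would work with the alternative form \eqref{eq:defJalt} of $\bS_{x,g,\QQ}$, so that $\d\beta$ appears ``bare'' and the companion factors are parallel along $g$. Both $\pp[\pp,\QQ]$ and $[\pp,\QQ]\pp$ are parallel sections of $\End(E)$ along $g$ (since $\pp$ and $\QQ$ are), so along the ray $g^+$ one has $\tr(\d\beta\cdot \pp[\pp,\QQ])=\d[\tr(\beta\cdot \pp[\pp,\QQ])]$, and similarly on $g^-$. By the fundamental theorem of calculus,
\begin{align*}
\int_{g^+}\tr(\d\beta\cdot \pp[\pp,\QQ]) &= \lim_{y\to g(+\infty)}\tr(\beta(y)\,\pp[\pp,\QQ])-\tr(\beta(x)\,\pp[\pp,\QQ]), \\
\int_{g^-}\tr(\d\beta\cdot [\pp,\QQ]\pp) &= \lim_{z\to g(-\infty)}\tr(\beta(z)\,[\pp,\QQ]\pp)-\tr(\beta(x)\,[\pp,\QQ]\pp).
\end{align*}

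The key step, which I view as the main obstacle, is handling the terms at infinity. Here the boundedness hypothesis on $\beta$ is essential: combining Cauchy--Schwarz with the exponential decay estimates \eqref{ineq:expdecay2} and \eqref{ineq:expdecay3} (applied respectively on $g^-$ and $g^+$), one gets
\[
\bigl|\tr(\beta(y)\,\pp[\pp,\QQ])\bigr|\leq C\Vert\beta\Vert_\infty e^{-bd(x,y)},
\]
and similarly for the $g^-$ term, so both limits vanish. This leaves
\[
\bS_{x,g,\QQ}(\d\beta)=-\tr\bigl(\beta(x)\,\pp[\pp,\QQ]\bigr)-\tr\bigl(\beta(x)\,[\pp,\QQ]\pp\bigr).
\]

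Finally, since $\pp$ is a projector, the identity $\pp[\pp,\QQ]+[\pp,\QQ]\pp=[\pp,\QQ]$ holds (a direct expansion using $\pp^2=\pp$, equivalent to saying that $[\pp,\QQ]$ is a section of $F_0$, as in Lemma \ref{lem:Ano-subbun}). Therefore $\bS_{x,g,\QQ}(\d\beta)=-\tr(\beta(x)[\pp,\QQ])$, which exactly cancels the term $\tr(\beta(x)[\pp,\QQ])$ in the definition of $\J_{x,g,\QQ}(\beta)$, completing the proof.
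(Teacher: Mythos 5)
Your proof is correct and follows essentially the same route as the paper's: rewrite the integrand as a total derivative of $\tr(\beta\,\pp[\pp,\QQ])$ (resp.\ $\tr(\beta\,[\pp,\QQ]\pp)$) along the geodesic, kill the boundary terms at infinity using the boundedness of $\beta$ together with the exponential decay of $\pp[\pp,\QQ]$ and $[\pp,\QQ]\pp$ from Lemma \ref{lem:exp-decay}, and combine the two terms at $x$ via $\pp[\pp,\QQ]+[\pp,\QQ]\pp=[\pp,\QQ]$ to cancel the first term in the definition of $\J_{x,g,\QQ}$. No gaps.
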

\begin{proof}
	Let $\varpi=\d\beta$. It follows that 
	$$
	\tr\left(\varpi(\pt)\ \pp \  [\pp,\QQ]\right)=\frac{\partial}{\partial t}\tr\left(\beta\ \pp \ [\pp,\QQ]\right)\ .
	$$
	Thus by the exponential decay lemma \ref{lem:exp-decay}, we have
	$$
	\int_{g^+} \tr(\varpi\ \pp\ [\pp,\QQ])=-\tr(\beta(x)\ \pp\   [\pp,\QQ])\ .
	$$
	Similarly 
	$$
	\int_{g^-} \tr(\varpi\  \pp\  [\pp,\QQ])=-\tr(\beta(x)\    [\pp,\QQ]\ \pp )\ .
	$$
	It follows that 
	$$
	\bS_{x,g_0,Q}(\varpi)= -\tr(\beta(x)\  \pp\  [\pp,\QQ])-\tr(\beta(x)\  [\pp,\QQ]\  \pp)=-\tr(\beta(x)\ [\pp,\QQ])\ .$$
	This concludes the proof.
\end{proof}
\subsection{Ghost integration: the construction}\label{sec:build-ghi}
Let now  $G$ be a configuration of geodesics with a $\Theta$-decoration $\AA$. Let $\rho$ be a $\Theta$-uniformly hyperbolic bundle, where $G=\lceil g_1,\ldots, g_p\rceil$. Let $\pp_i=\pp^{\AA(i)}(g_i)$ and 
\begin{eqnarray*}
\PP_i=\pp_{i-1}\ldots\pp_{i+1}\ .	
\end{eqnarray*}
Let  $\alpha$ be a closed 1-form with values in $\End(E)$. Assume that $\alpha$ belongs to   $ \bLa^\infty(E)$. Let   $\beta$ be a primitive of $\alpha$ -- that is  a section of $\End(E)$ so that $\d\beta=\alpha$ -- let 
\begin{eqnarray*}
	\J_{\rho(G)}(\beta)\defeq \sum_{i=1}^n \J_{g_i,\PP_i}(\beta)\ ,
\end{eqnarray*}
\begin{proposition}
	The quantity $\J_{\rho(G)}(\beta)$ only depends on the choice of $\alpha$ and not of its primitive.
\end{proposition}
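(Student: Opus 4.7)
The plan is to reduce the claim to the assertion that $\J_{\rho(G)}(\gamma)=0$ for any $\nabla$-parallel section $\gamma$ of $\End(E)$. Indeed, if $\beta_1$ and $\beta_2$ are two primitives of $\alpha$, then $\gamma \defeq \beta_1-\beta_2$ satisfies $\d\gamma=0$, so $\gamma$ is $\nabla$-parallel; by linearity of $\J_{\rho(G)}$ in its argument (each $\J_{x,g,Q}$ is visibly linear in $\beta$), the identity $\J_{\rho(G)}(\gamma)=0$ implies $\J_{\rho(G)}(\beta_1)=\J_{\rho(G)}(\beta_2)$.

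So I focus on parallel $\gamma$. Working in the trivialisation in which $\nabla$ is trivial, $\gamma$ is a constant endomorphism of the fiber $V$. Because $\d\gamma=0$, the second term in the definition of the primitive line integration vanishes: $\bS_{x_i,g_i,\PP_i}(\d\gamma)=0$. Hence for any choice of point $x_i\in g_i$,
$$
\J_{g_i,\PP_i}(\gamma)=\tr\bigl(\gamma(x_i)\,[\pp_i,\PP_i]\bigr)=\tr\bigl(\gamma\,[\pp_i,\PP_i]\bigr)\ ,
$$
the last equality since $\gamma$ is constant. Summing and using linearity of the trace,
$$
\J_{\rho(G)}(\gamma)=\tr\!\left(\gamma\cdot\sum_{i=1}^{p}[\pp_i,\PP_i]\right)\ .
$$

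It now suffices to prove the purely algebraic identity $\sum_{i=1}^{p}[\pp_i,\PP_i]=0$ in $\End(V)$. The key observation is a telescoping one: unfolding $\PP_i=\pp_{i-1}\pp_{i-2}\cdots\pp_{i+1}$ cyclically, one has
$$
\pp_i\,\PP_i=\pp_i\,\pp_{i-1}\cdots\pp_{i+1}=\PP_{i+1}\,\pp_{i+1}\ ,
$$
since both expressions are the same cyclic product starting at $\pp_i$ and running backwards once around ending at $\pp_{i+1}$. Consequently, after reindexing $j=i+1$ in the first sum,
$$
\sum_{i=1}^{p}\pp_i\PP_i\;=\;\sum_{i=1}^{p}\PP_{i+1}\pp_{i+1}\;=\;\sum_{j=1}^{p}\PP_j\pp_j\ ,
$$
so $\sum_i[\pp_i,\PP_i]=0$, giving $\J_{\rho(G)}(\gamma)=0$ and concluding the proof.

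The whole argument is very short and the only real content is spotting the telescoping relation $\pp_i\PP_i=\PP_{i+1}\pp_{i+1}$; I expect no analytic obstacle since the exponential decay arguments needed for convergence are subsumed in the fact that the cohomological contribution $\bS(\d\gamma)$ vanishes outright when $\d\gamma=0$.
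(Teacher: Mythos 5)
Your argument is correct and is essentially the paper's own proof: both reduce to the observation that the difference of two primitives is a constant (parallel) section, so only the $\tr(\gamma\,[\pp_i,\PP_i])$ terms survive, and both conclude via the cyclic telescoping identity ($\pp_i\PP_i=\PP_{i+1}\pp_{i+1}$, equivalently the paper's $\PP_i\pp_i=\pp_{i-1}\PP_{i-1}$). The only cosmetic difference is that you establish the operator identity $\sum_i[\pp_i,\PP_i]=0$ before tracing, while the paper telescopes inside the trace.
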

\begin{proof} Let $\beta_0$ and $\beta_1$ two primitives of $\alpha$. Observe that $B\defeq \beta_1-\beta_0$ is constant, then 
\begin{eqnarray*}
\J_G(\beta_1)-\J_G(\beta_0)=\sum_{i=1}^p \tr\left(B[\pp_i,\PP_i]\right)=\sum_{i=1}^p \tr\left(B\pp_i\PP_i\right)-\sum_{i=1}^p \tr\left(B\PP_i\pp_i\right)=0\ ,
\end{eqnarray*}
since $\PP_i\pp_i=\pp_{i-1}\PP_{i-1}$.
\end{proof}

\begin{definition}[\sc Ghost integration]
	We define the {\em ghost integration} of a 1-form $\alpha$ in $ \bLa^\infty(E)$ with respect to a  $\Theta$-ghost polygon $G$ and a uniformly hyperbolic bundle $\rho$  to be the quantity 

\begin{eqnarray*}
	\oint_{\rho(G)}\alpha\defeq\J_{\rho(G)}(\beta)\ ,
\end{eqnarray*}
where $\beta$ is a primitive of $\alpha$. 

\end{definition}
Gathering our previous results, we summarize the important properties of ghost integration: 

\begin{proposition}\label{pro:oint-cont}
The ghost integration enjoys the following properties:
	\begin{enumerate}
\item The map $\alpha\mapsto \oint_{\rho(G)}\alpha$ is a continuous linear form on $ \bLa^\infty(E)$.
		\item Assume $\alpha=\d\beta$, where $\beta$ is a bounded  section of $\End(E)$. Then
			\begin{eqnarray*}
			\oint_{\rho(G)}\alpha=0 \ .
		\end{eqnarray*}
			\end{enumerate}
\end{proposition}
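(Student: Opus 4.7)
The plan is to unpack the definition $\oint_{\rho(G)}\alpha\defeq\J_{\rho(G)}(\beta)=\sum_i\J_{x_i,g_i,\PP_i}(\beta)$ and reduce both parts to the corresponding facts about the single-geodesic operators proved earlier in the section. The only subtlety is in (1), because the primitive $\beta$ is not an intrinsic function of $\alpha$. I would handle this by fixing, once and for all, a base point $p_0\in\hh$ and defining the primitive by path integration
$$
\beta_\alpha(x)\defeq\int_{p_0}^{x}\alpha
$$
along the geodesic from $p_0$ to $x$; since $\hh$ is simply connected and $\alpha$ is closed, $\beta_\alpha$ is well defined, depends linearly on $\alpha$, and satisfies the pointwise bound $\Vert\beta_\alpha(x)\Vert\leq d(p_0,x)\,\Vert\alpha\Vert_\infty$. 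I also fix a base point $x_i$ on each visible edge $g_i$, for instance the foot of the perpendicular from $\bary(G)$.

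For (1), expanding the definition via $\J_{x,g,\QQ}(\beta)=\tr(\beta(x)[\pp,\QQ])+\bS_{x,g,\QQ}(\d\beta)$ gives
$$
\oint_{\rho(G)}\alpha \;=\; \sum_{i=1}^{p}\tr\bigl(\beta_\alpha(x_i)\,[\pp_i,\PP_i]\bigr)\;+\;\sum_{i=1}^{p}\bS_{x_i,g_i,\PP_i}(\alpha),
$$
which is manifestly linear in $\alpha$. For continuity, each summand of the second sum is bounded by $C_i\Vert\alpha\Vert_\infty$ by Proposition \ref{pro:J-cont}, and each summand of the first sum is bounded by $d(p_0,x_i)\,\Vert[\pp_i,\PP_i](x_i)\Vert\,\Vert\alpha\Vert_\infty$, with $\Vert[\pp_i,\PP_i](x_i)\Vert$ finite because the fundamental projectors are bounded by Proposition \ref{pro:pbd-unique} and $\PP_i$ is a product of bounded projectors. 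Summing the $2p$ estimates yields $|\oint_{\rho(G)}\alpha|\leq C(G,\rho,p_0)\,\Vert\alpha\Vert_\infty$.

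For (2), the already established independence of $\oint_{\rho(G)}\alpha$ on the choice of primitive lets me use the given bounded section $\beta$ directly. By hypothesis $\d\beta=\alpha\in\bLa^\infty(E)$, so Proposition \ref{pro:closed} applies on each edge with $\QQ=\PP_i$ and yields $\J_{x_i,g_i,\PP_i}(\beta)=0$ for every $i$; summing over $i$ gives $\oint_{\rho(G)}\alpha=0$. The main obstacle is really the one addressed in (1): a generic primitive is neither bounded nor a linear functional of $\alpha$, so one must select it in an $\alpha$-linear manner with controlled pointwise values at the $x_i$, which is exactly what the path-integral construction from $p_0$ achieves; once that choice is made, both conclusions follow mechanically from the single-geodesic results \ref{pro:J-cont} and \ref{pro:closed}.
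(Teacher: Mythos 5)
Your proof is correct and follows the same route as the paper, which simply reduces both assertions to the single-geodesic facts: Proposition \ref{pro:J-cont} for continuity of $\bS_{x,g,\QQ}$ and Proposition \ref{pro:closed} for the vanishing on exact forms with bounded primitive. Your extra care in choosing the primitive linearly via path integration from a base point (so that $\alpha\mapsto\J_{\rho(G)}(\beta_\alpha)$ is manifestly a linear form, with the boundary terms $\tr(\beta_\alpha(x_i)[\pp_i,\PP_i])$ controlled by $d(p_0,x_i)\,\Vert\alpha\Vert_\infty$ and the boundedness of the fundamental projectors) is a detail the paper leaves implicit, but it is not a different argument.
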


We remark that the second item implies that ghost integration is naturally an element of the dual of the first bounded cohomology with coefficients associated to  the bundle.

\begin{proof}
	These are consequences of the corresponding properties for $J_{x,g,Q}$ proved respectively in propositions \ref{pro:J-cont}, \ref{pro:closed} and  \ref{pro:exchan-int1}.
	\end{proof}

\subsection{Ghost integration of geodesic forms}\label{sec:build-ghibd}

Recall that we denoted by $ \bXi(E)$ the space  of geodesically bounded forms, and observe that for any geodesic $g$, the projector form $\beta_{\rho(g)}$ belongs to $ \bXi(E)$.
\begin{proposition}[\sc Alternative formula]\label{pro:gintegr-alt} Let $\rho$ be a $\Theta$-uniformly hyperbolic bundle.   Let $G$ be configuration of geodesics of rank $p$ associated to  a ghost polygon $\vartheta\defeq(\theta_1,\ldots\theta_{2p})$ and a $\Theta$-decoration. Assume that $\alpha$ is in $ \bXi(E)$. Then 
	\begin{eqnarray*}
	\oint_{\rho(G)}\alpha&=& - \left(\sum_{i=1}^{2p}(-1)^{i}\int_{\theta_i}\tr\left(\alpha\ \pp_G(\theta_i^*)\right)\right)\ ,
\end{eqnarray*}
where $\pp_G^\AA(\theta_i^*)$ denotes the opposite ghost endomorphism to $\theta_i$. \end{proposition}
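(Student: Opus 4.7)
My plan is to evaluate both sides using a common primitive $\beta$ of $\alpha$ (i.e., $\d\beta = \alpha$, which exists since $\hh$ is simply connected); working in the trivialisation where $\nabla$ is trivial, $\beta$ is a map $\hh \to \End(V)$ and each $\pp_j = \pp^{\AA(j)}(g_j)$ is a constant projector on $V$.

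First I would extend $\beta$ to boundary values $\beta(v) \in \End(V)$ for $v \in \bh$. For any constant endomorphism $Q$, the hypothesis $\alpha \in \bXi(E)$ gives $\tr(\alpha Q)(\dot g) \in L^1(g, \d t)$ for every complete geodesic, so $\lim_{s\to\pm\infty}\tr(\beta(g(s)) Q)$ exists. Since $\tr(\alpha Q) \in \bLa^\infty$, two rays $g, g'$ asymptotic to the same boundary point $v$ satisfy $\tr(\beta(g(s)) Q) - \tr(\beta(g'(s)) Q) = \int_{\gamma_s}\tr(\alpha Q)$ along a geodesic segment $\gamma_s$ whose length tends to zero (asymptotic geodesics approach exponentially), so the limit is independent of the ray. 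This gives a well-defined $\beta(v)$ and, by the fundamental theorem of calculus, $\int_{g^\pm}\tr(\alpha Q) = \tr((\beta(g^\pm) - \beta(x)) Q)$ for any base point $x$ on $g$.

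Next I would evaluate the right-hand side by applying this FTC edge by edge and collecting vertex contributions. Setting $Q_i := \pp_G(\theta_i^*)$, so that $Q_{2j} = \pp_j \PP_j \pp_j$ and $Q_{2j+1} = \pp_j \PP_j$, the ghost identities $\theta_{2j+1}^+ = \theta_{2j}^+$ and $\theta_{2j-1}^- = \theta_{2j}^-$ group the endpoints pairwise. The coefficient of $\beta(g_j^+)$ reduces to the jump $Q_{2j+1} - Q_{2j} = \pp_j \PP_j \pp_j^\perp =: X_j$, and that of $\beta(g_j^-)$ to $Q_{2j} - Q_{2j-1} = \pp_j \PP_j \pp_j - \pp_{j-1} \PP_{j-1}$. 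Using the elementary cyclic identity $\pp_{j-1} \PP_{j-1} = \PP_j \pp_j$ (an immediate check from the definition of $\PP_i$), this simplifies to $-\pp_j^\perp \PP_j \pp_j =: -Y_j$, whence the right-hand side equals $\sum_j[\tr(\beta(g_j^+) X_j) - \tr(\beta(g_j^-) Y_j)]$.

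For the left-hand side I would expand $\bS_{x_j, g_j, \PP_j}(\alpha)$ algebraically: by cyclicity of the trace and $\pp_j^2 = \pp_j$ one obtains $\tr(\PP_j[\alpha, \pp_j]\pp_j) = \tr(\alpha X_j)$ and $\tr(\PP_j \pp_j [\alpha, \pp_j]) = -\tr(\alpha Y_j)$. Applying the FTC on each half-geodesic then yields $\bS_{x_j, g_j, \PP_j}(\alpha) = \tr(\beta(g_j^+) X_j) - \tr(\beta(g_j^-) Y_j) - \tr(\beta(x_j)(X_j - Y_j))$; the identity $X_j - Y_j = [\pp_j, \PP_j]$ (from the decompositions $\pp_j \PP_j = K_j + X_j$ and $\PP_j \pp_j = K_j + Y_j$ with $K_j := \pp_j \PP_j \pp_j$) cancels the $\beta(x_j)$-term exactly against $\tr(\beta(x_j)[\pp_j, \PP_j])$ in $\J_{x_j, g_j, \PP_j}(\beta)$. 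Summing over $j$ reproduces the expression from the right-hand side. The principal difficulty is the boundary extension of $\beta$, which uses crucially both pieces of the geodesically bounded hypothesis (the $L^1$-integrability for existence of the ray limits, and the $\bLa^\infty$-boundedness for their agreement at shared vertices); thereafter the identity reduces to the two algebraic facts $X_j - Y_j = [\pp_j, \PP_j]$ and $\pp_{j-1} \PP_{j-1} = \PP_j \pp_j$ combined with telescoping at the ghost polygon vertices.
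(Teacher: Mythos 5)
Your proof is correct, but it takes a different route from the paper's. The paper first proves an intermediate statement (its Proposition \ref{pro:2-altcons}) rewriting $\oint_{\rho(G)}\alpha$ as a sum of integrals over rays $\gamma_i^{\pm}$ emanating from a single auxiliary point $x$, using the cocycle identity $\int_{\partial T}\alpha=0$ of Definition \ref{def:geod-bded} to transfer integrals between rays with different basepoints; it then recombines those rays pairwise into the visible and ghost edges, again via the cocycle identity, and finishes with the same two algebraic facts you use, namely $\pp_i[\pp_i,\PP_i]=\pp_i\PP_i-\pp_i\PP_i\pp_i$, $[\pp_i,\PP_i]\pp_i=\pp_i\PP_i\pp_i-\PP_i\pp_i$ and $\PP_i\pp_i=\pp_{i-1}\PP_{i-1}$. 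You instead extend the scalar primitives $\tr(\beta\,Q)$ to $\bh$ and telescope at the vertices of the ghost polygon; the cancellation of the $\beta(x_j)$-terms via $X_j-Y_j=[\pp_j,\PP_j]$ is exactly the bookkeeping the paper performs when it absorbs the $\tr(\beta(x_i)[\pp_i,\PP_i])$ contributions. Your route dispenses with the auxiliary point and the intermediate proposition, and it has the side benefit of showing that, for a closed form, condition (3) of Definition \ref{def:geod-bded} (vanishing on ideal triangles) is actually a consequence of conditions (1) and (2), since the boundary value $\beta(v)$ is single-valued; the price is that you must justify the boundary extension, which you do correctly (existence of the ray limits from the $L^1$ hypothesis, independence of the ray from the $\bLa^\infty$ bound and the exponential convergence of asymptotic geodesics). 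The paper's detour through rays based at a common point is not wasted in its economy, however: that intermediate formula is what feeds the construction of the dual form $\Omega_{\rho(G)}$ via polygonal arcs, so the two proofs serve slightly different downstream purposes even though they establish the same identity.
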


In the context of projective uniformly hyperbolic bundle, that is $\Theta=\{1\}$, then the previous formula is much simpler as an immediate consequence of lemma \ref{lem:opp-endo}.
\begin{proposition}[\sc Projective  formula] \label{pro:gintegr-alt2}   Let $G$ be configuration of geodesics of rank $p$ associated to  a ghost polygon $\vartheta\defeq(\theta_1,\ldots\theta_{2p})$ and a $\Theta$-decoration.  Let $\rho$ be a projective uniformly hyperbolic bundle. Assume that $\alpha$ is in $ \bXi(E)$. Then
	\begin{eqnarray*}
	\oint_{\rho(G)}\alpha&=& - \T_G(\rho)\left(\sum_{i=1}^{2p}(-1)^{i}\int_{\theta_i}\tr\left(\alpha\ \pp(\theta_i)\right)\right)\ .
\end{eqnarray*}

\end{proposition}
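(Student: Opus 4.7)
The plan is to derive this projective formula as a direct specialization of the Alternative formula in proposition \ref{pro:gintegr-alt}. The starting point is
$$
\oint_{\rho(G)}\alpha = -\sum_{i=1}^{2p}(-1)^{i}\int_{\theta_i}\tr\!\left(\alpha\,\pp_G(\theta_i^*)\right),
$$
which holds for any $\Theta$-uniformly hyperbolic bundle and any $\alpha\in\bXi(E)$. The only thing to do is to replace the opposite ghost endomorphism $\pp_G(\theta_i^*)$ by a scalar multiple of the single projector $\pp(\theta_i)$ when $\Theta=\{1\}$.

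The replacement is exactly the content of lemma \ref{lem:opp-endo}: in the projective case $\Theta=\{1\}$ all projectors $\pp_i$ have rank one, so for a visible edge $g_i$ the product telescopes through the trace, $\pp_i\pp_{i-1}\cdots\pp_{i+1}\pp_i=\tr(\pp_i\cdots\pp_{i+1})\,\pp_i=\T_G(\rho)\,\pp(g_i)$, and for a ghost edge $\zeta_i$ the product $\pp_i\pp_{i-1}\cdots\pp_{i+1}$ is a rank-one map with the correct image and kernel to identify it with $\T_G(\rho)\,\pp(\zeta_i)$. Hence the identity $\pp_G(\theta_i^*)=\T_G(\rho)\,\pp(\theta_i)$ holds uniformly for all indices $i\in\{1,\ldots,2p\}$, without having to separate visible and ghost edges.

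Substituting this identity into the Alternative formula, pulling the scalar $\T_G(\rho)$ outside the integral and the sum by linearity of the trace and of integration, yields
$$
\oint_{\rho(G)}\alpha = -\T_G(\rho)\sum_{i=1}^{2p}(-1)^{i}\int_{\theta_i}\tr\!\left(\alpha\,\pp(\theta_i)\right),
$$
which is precisely the claimed projective formula. There is no genuine obstacle: once lemma \ref{lem:opp-endo} has been established, the result is a one-line substitution, which is exactly why the proposition is stated as an immediate consequence.
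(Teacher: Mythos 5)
Your proposal is correct and is exactly the paper's argument: the authors also obtain the projective formula by substituting the identity $\pp_G(\theta_i^*)=\T_G(\rho)\,\pp(\theta_i)$ from lemma \ref{lem:opp-endo} into the Alternative formula of proposition \ref{pro:gintegr-alt} and factoring out the scalar $\T_G(\rho)$. Nothing is missing.
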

Observe that both formulae above do not make sense for a general bounded form. 
Observe also that 
\begin{proposition}\label{pro:notrace}
	Let $G$ be a ghost polygon, and $\alpha$  a 1-form with values in the center of $\End(E)$ then
	$$
	\oint_{\rho(G)}\alpha=0\ .
	$$
\end{proposition}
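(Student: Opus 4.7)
The plan is to reduce this to the observation that anything in the center of $\End(E)$ commutes with $\pp$, so every commutator $[\cdot,\pp]$ that appears in the definition of line integration vanishes.

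First, I would exploit the freedom in the choice of primitive. Since $\alpha$ takes values in the center (i.e.\ in scalar multiples of the identity on each fiber) and is closed, I can write $\alpha = f\cdot\Id$ with $f$ a closed scalar-valued $1$-form on $\hh$. Because $\hh$ is simply connected, there is a scalar primitive $g$ with $\d g = f$, and then $\beta \defeq g\cdot \Id$ is a section of $\End(E)$ satisfying $\d\beta = \alpha$ whose values lie in the center of $\End(E)$. By the proposition asserting that $\J_{\rho(G)}(\beta)$ depends only on $\alpha$ and not on the chosen primitive, I am free to compute $\oint_{\rho(G)}\alpha$ using this particular $\beta$.

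Second, I would plug this $\beta$ into the defining formula
\[
\J_{x,g,\QQ}(\beta) = \tr\bigl([\beta(x),\pp]\,\QQ\bigr) + \bS_{x,g,\QQ}(\d\beta).
\]
For the first term, $\beta(x)$ is central, so $[\beta(x),\pp] = 0$. For the second term, I would use the definition
\[
\bS_{x,g,\QQ}(\omega) = \int_{g^+}\tr\!\bigl(\QQ\,[\omega,\pp]\,\pp\bigr) + \int_{g^-}\tr\!\bigl(\QQ\,\pp\,[\omega,\pp]\bigr),
\]
applied to $\omega = \d\beta = \alpha$, which is central at every point, so $[\omega,\pp] = 0$ pointwise and both integrals vanish identically (convergence is not even an issue since the integrands are $0$). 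Hence $\J_{x,g,\QQ}(\beta)=0$ for every edge $g$ of $G$ and every choice of $\QQ=\PP_i$.

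Finally, summing over the $p$ visible edges of $G$ gives
\[
\oint_{\rho(G)}\alpha = \J_{\rho(G)}(\beta) = \sum_{i=1}^{p}\J_{g_i,\PP_i}(\beta) = 0,
\]
as required. There is no substantive obstacle: the whole argument is a bookkeeping check that centrality kills every commutator in the definition, once one notices that the primitive can be chosen inside the center.
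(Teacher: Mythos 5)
Your argument is correct and is exactly the observation the paper intends (the proposition is stated without proof): centrality kills the commutators $[\beta(x),\pp]$ and $[\alpha,\pp]$ in every term of $\J_{g_i,\PP_i}(\beta)$, and your use of the independence-of-primitive proposition to pick a central primitive legitimately handles the only term where the primitive itself appears.
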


\subsubsection{An alternative construction: a first step}

\begin{proposition}\label{pro:2-altcons}
Let $x$ be a point in $\hh$, $\gamma_i^\pm$ the geodesic from $x$ to $g_i^\pm$. Assume that $\alpha$ is in $ \bXi(E)$ then 

\begin{eqnarray*}
	\oint_{\rho(G)}\alpha &=&  \sum_{i=1}^p\left(\int_{\gamma_i^+}\tr\left(\alpha \ \pp_i\ [\pp_i,\PP_i]\right)+ \int_{\gamma_i^-}\tr\left(\alpha\ [\pp_i,\PP_i]\ \pp_i\ \right)\right)\ .
\end{eqnarray*}
\end{proposition}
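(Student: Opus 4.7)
The plan is to start from the definition
\[
\oint_{\rho(G)}\alpha \;=\; \sum_{i=1}^p \J_{x_i,g_i,\PP_i}(\beta)
\]
for a chosen point $x_i \in g_i$ and primitive $\beta$ of $\alpha$, and to transport each of the two line integrals along $g_i^\pm$ (the arcs of $g_i$ emanating from $x_i$ toward its endpoints at infinity) onto the geodesic arcs $\gamma_i^\pm$ joining the base point $x$ to $g_i^\pm$. The key geometric tool is the ideal triangle: for each $i$, one considers the two ideal triangles $T_i^+$ (with vertices $x$, $x_i$, $g_i^+$) and $T_i^-$ (with vertices $x$, $x_i$, $g_i^-$), whose boundaries decompose as $\gamma_i^\pm$, the arc $[x_i,g_i^\pm)$ along $g_i$, and the finite segment $[x,x_i]$.

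Working in a trivialization where $\nabla$ is trivial, both $\pp_i$ and $\PP_i$ are constant endomorphisms of $V$ (they depend only on the endpoints of the $g_j$ via the limit maps), so $A_i^+ \defeq \pp_i[\pp_i,\PP_i]$ and $A_i^- \defeq [\pp_i,\PP_i]\pp_i$ are parallel sections of $\End(E)$. Hence the scalar $1$-forms $\tr(\alpha\, A_i^\pm)$ lie in $\bXi$, and the vanishing relation \eqref{eq:triangle-vanish} applied to $\partial T_i^\pm$ yields
\[
\int_{[x_i,g_i^+)} \tr(\alpha A_i^+) \;=\; \int_{\gamma_i^+}\tr(\alpha A_i^+) - \int_{[x,x_i]}\tr(\alpha A_i^+),
\]
and similarly for the minus version. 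Substituting into the alternative expression \eqref{eq:defJalt} for $\bS_{x_i,g_i,\PP_i}(\d\beta)$ and using the projector identity $A_i^++A_i^- = \pp_i[\pp_i,\PP_i]+[\pp_i,\PP_i]\pp_i = [\pp_i,\PP_i]$ (a direct consequence of $\pp_i^2=\pp_i$), one obtains
\[
\J_{x_i,g_i,\PP_i}(\beta) \;=\; \tr(\beta(x_i)[\pp_i,\PP_i]) + \int_{\gamma_i^+}\tr(\alpha A_i^+) + \int_{\gamma_i^-}\tr(\alpha A_i^-) - \int_{[x,x_i]}\tr(\alpha[\pp_i,\PP_i]).
\]
Because $[\pp_i,\PP_i]$ is constant, the last integral equals $\tr(\beta(x_i)[\pp_i,\PP_i]) - \tr(\beta(x)[\pp_i,\PP_i])$, so the term $\tr(\beta(x_i)[\pp_i,\PP_i])$ cancels and a single boundary contribution $\tr(\beta(x)[\pp_i,\PP_i])$ survives for each $i$.

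The final step, and the only mildly subtle one, is to check that summing these leftover boundary terms over $i$ produces zero. This is a purely algebraic telescoping: writing $M_i \defeq \pp_i\PP_i$ and $N_i \defeq \PP_i\pp_i$, the cyclic structure $\PP_i = \pp_{i-1}\cdots\pp_{i+1}$ gives $N_{i+1} = \pp_i\PP_i = M_i$, so $\sum_i[\pp_i,\PP_i] = \sum_i M_i - \sum_i N_i = 0$. Thus the $\beta(x)$-contributions cancel after summation, leaving exactly the claimed formula. The main conceptual obstacle is simply recognizing that, although $\pp_i$ and $\PP_i$ are not parallel sections of $\End(E)$ as sections varying with the geodesic label $i$, each individual $\pp_i$, $\PP_i$, and their commutator \emph{is} a $\nabla$-parallel (constant) endomorphism, which is what legitimates both the application of $\bXi$-triangle vanishing and the telescoping cancellation.
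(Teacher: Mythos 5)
Your proof is correct and follows essentially the same route as the paper's: both decompose the line integrals over $g_i^\pm$ onto $\gamma_i^\pm$ via the (ideal) triangle with vertices $x$, $x_i$, $g_i^\pm$, and both use the projector identity $\pp_i[\pp_i,\PP_i]+[\pp_i,\PP_i]\pp_i=[\pp_i,\PP_i]$ to recombine the two corrections along $[x,x_i]$. The only cosmetic difference is that the paper normalizes the primitive by $\beta(x)=0$ so each residual boundary term vanishes individually, whereas you keep a general primitive and cancel the leftover $\tr(\beta(x)[\pp_i,\PP_i])$ terms by the telescoping $\sum_i[\pp_i,\PP_i]=0$ — which is exactly the computation the paper already uses to show $\J_{\rho(G)}$ is independent of the choice of primitive.
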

\begin{proof}
	Fix a point $x_i$ in each of the $g_i$. Let $\beta$ be a primitive of $\alpha$ so that $\beta(x)=0$. Let $\eta_i$ be the geodesic from $x$ to $x_i$. It follows that, since $\alpha$ is geodesically bounded,  we have by the cocycle formula \eqref{eq:triangle-vanish}
	$$
	\int_{\gamma_i^+}\tr(\PP_i\ [\alpha,\pp_i]\ \pp_i)= \int_{\eta_i}\tr(\PP_i\ [\alpha,\pp_i]\ \pp_i)+\int_{g_i^+}\tr(\PP_i\ [\alpha,\pp_i]\ \pp_i)\ .
	$$
	Similarly 
$$
	\int_{\gamma_i^-}\tr(\PP_i\ \pp_i\ [\alpha,\pp_i])= \int_{\eta_i}\tr(\PP_i\ \pp_i\ [\alpha,\pp_i]+\int_{g_i^-}\tr(\PP_i\ \pp_i\ [\alpha,\pp_i])\  .
	$$
\begin{figure}[h] 
  \begin{center} \includegraphics[width=0.3\textwidth]{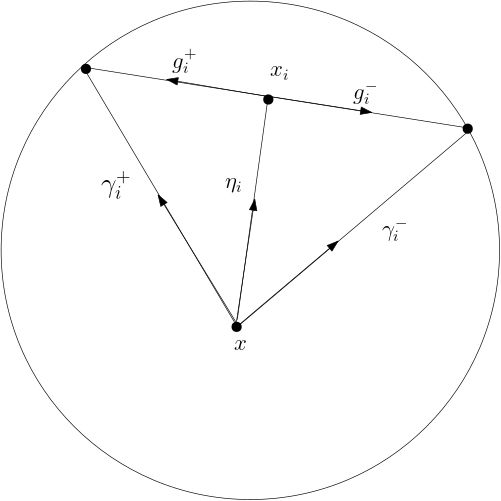} 
  \end{center}	
   \caption{Curves $\gamma_i^{\pm}$ and $\eta_i$}
   \label{fig:gammas}
\end{figure}	
	Observe now that, using the relation  $
[\pp,Q]\  \pp +\pp\ [\pp,  Q]=[\pp,Q]$, we have \begin{eqnarray*}
		\int_{\eta_i}\tr(\PP_i\ [\alpha,\pp_i]\ \pp_i)+\int_{\eta_i}\tr(\PP_i\ \pp_i\ [\alpha,\pp_i])
		=\int_{\eta_i}\tr(\PP_i\ [\alpha,\pp_i])=\tr\left(\PP_i\ [\beta(x_{i}),\pp_i]\right)\ .
\end{eqnarray*}
Thus, we can now conclude the proof:
\begin{eqnarray*}\J_{\rho(G)}(\beta)&=&
			 \sum_{i=1}^p\left(\int_{\gamma_i^+}\tr\left(\PP_i\ [\alpha,\pp_i]\ \pp_i\right)+ \int_{\gamma_i^-}\tr\left(\PP_i\ \pp_i\ [\alpha,\pp_i]\right)\right)\crcr
	&=&  \sum_{i=1}^p\left(\int_{\gamma_i^+}\tr(\pp_i\ [\pp_i,\PP_i]\ \alpha)+ \int_{\gamma_i^-}\tr([\pp_i,\PP_i]\ \pp_i\  \alpha)\right)\ .
		    \end{eqnarray*}
		     \end{proof}

\begin{proof}[Proof of proposition \ref{pro:gintegr-alt}]
Let us assume we have a ghost polygon   $\vartheta = (\theta_1,\ldots,\theta_{2p})$ given by a configuration of geodesics  $G=\lceil g_1,\ldots, g_p\rceil$. Let $\pp_i=\pp(g_i)$ and $\alpha$ an element of $ \bXi(E)$. We have 
\begin{eqnarray*}\pp_i\ [\pp_i,\PP_i]&=& \pp_i\PP_i- \pp_i\PP_i\pp_i\ ,	\\ \ 
[\pp_i,\PP_i]\ \pp_i&=& \pp_i\PP_i\pp_i- \PP_i\pp_i=\pp_i\PP_i\pp_i- \pp_{i-1}\PP_{i-1} \ .\end{eqnarray*}
Since $\alpha$ is geodesically bounded we have
\begin{eqnarray*}
	\oint_{\rho(G)}\alpha
	=\sum_{i=1}^p\left(\int_{\gamma_{i}^+}\tr(\alpha\  \pp_i\ \PP_i\ )- \int_{\gamma_{i+1}^-}\tr(\alpha\ \pp_i\ \PP_i)\right)	-\sum_{i=1}^p\left(\int_{\gamma_i^+}\tr(\alpha\ \pp_i\ \PP_i\ \pp_i)-\int_{\gamma_{i}^-}\tr(\alpha\ \pp_i\ \PP_i\ \pp_i) \right)\ .
\end{eqnarray*}
 For $i\in \{1,\ldots,p\}$, let $\zeta_i$ be the ghost edge joining $g_{i+1}^-$ to $g_{i}^+$, that is $\zeta_i=\theta_{2i+1}$. For a closed form  $\beta$ which is geodesically bounded the cocycle formula \eqref{eq:triangle-vanish} yields
\begin{eqnarray*}
	\int_{\gamma_i^+}\beta-\int_{\gamma_i^-}\beta=\int_{g_i}\beta\ \ \ ,\ \ \ 
	\int_{\gamma_{i+1}^-}\beta-\int_{\gamma_i^+}\beta=-\int_{\zeta_i}\beta\ .
\end{eqnarray*}
Thus 
\begin{eqnarray*}
	\J_{\rho(G)}(\alpha)
	&=&\sum_{i=1}^p\left(\int_{\zeta_i}\tr(\alpha\pp_i\PP_{i}) -\int_{g_i}\tr(\alpha\pp_i\PP_i\pp_i)\right)\ .
\end{eqnarray*}
\begin{figure}[htbp]   \centering
   \includegraphics[width=0.3\textwidth]{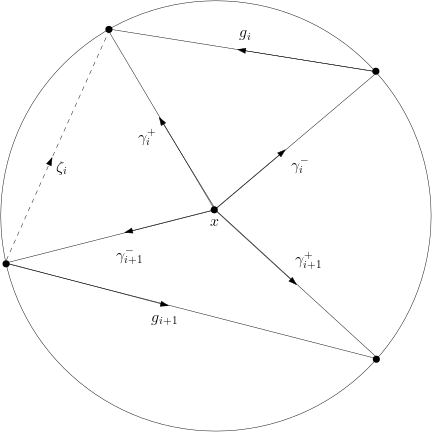} 
   \caption{Appearance of ghosts}
   \label{fig:gammas2}
\end{figure}
To conclude we need first to observe that as  $g_i$ is a visible geodesic then  $\pp_i\PP_i\pp_i$ is the opposite ghost endomorphism $\pp_G(g_i^*)$. On the other hand as $\zeta_j$ is a ghost edge then $\pp_j\PP_j$ is the opposite ghost endomorphism $\pp _G(\zeta_j^*)$. Thus
\begin{eqnarray*}
	\J_{\rho(G)}(\alpha)
	&=&- \left(\sum_{i=1}^{2p}(-1)^{i}\int_{\theta_i}\tr\left(\alpha\ \pp_G(\theta_i^*)\right)\right)\  .
\end{eqnarray*}
\end{proof}

\subsubsection{Another altenative form with polygonal arcs} Let $G = (\theta_1,\ldots,\theta_{2p})$ be a $\Theta$  ghost polygon given by configuration $\lceil g_1,\ldots,g_p\rceil$ with $g_i = \theta_{2i}$. Let $x$ be the barycenter of $G$.  Let $x_i$ be the projection of $x$ on $g_i$. For a ghost edge $\zeta_i = \theta_{2i+1}$, let us consider the polygonal arc $\bzeta_i$ given by 
$$
\bzeta_i=a_i\cup b_i\cup c_i\cup d_i\ ,
$$  
where 
\begin{itemize}
\item the geodesic arc $a_i$ is  the arc (along $g_{i+1}$) from $g_{i+1}^-$ to $x_{i+1}$, 
\item the geodesic arc $b_i$ joins $x_{i+1}$ to $x$,
\item  the geodesic arc $c_i$ joins $x$ to $x_i$,
\item the geodesic arc $d_i$ joins $x_i$ to $g_{i}^+$.
\end{itemize}

\begin{figure}[htbp]
   \centering
   \includegraphics[width=0.3\textwidth]{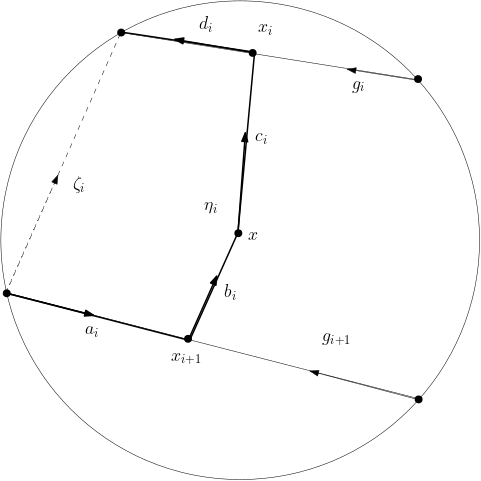} 
   \caption{Polygonal arc $\bzeta_i$ for  ghost edge $\zeta_i$}
   \label{fig:poly_ghost}
\end{figure}
We then have, using the same notation as in proposition \ref{pro:gintegr-alt} 

\begin{proposition}[\sc Alternative formula II]\label{pro:gintegr-alt-II} We have for $\alpha$ in $ \bXi(E)$	\begin{eqnarray}
	\oint_{\rho(G)}\alpha&=& -\sum_{i}
	 \int_{g_i}\tr\left(\alpha\ \pp_G(\theta_i^*)\right)\ 
	 + 
 \int_{\bzeta_i}\tr\left(\alpha\ \pp_G(\theta_i^*)\right)\ . \end{eqnarray}
\end{proposition}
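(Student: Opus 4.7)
The plan is to derive the formula directly from Proposition \ref{pro:gintegr-alt}, whose right--hand side differs from the target only in that each integral over a ghost edge $\zeta_i$ must be replaced by the integral over the polygonal arc $\bzeta_i$ (the contributions over visible edges $g_i$ are already in the desired form, so nothing needs to be done there).

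First I will fix a trivialisation in which $\nabla$ is the trivial connection. In this trivialisation the opposite ghost endomorphism $\pp_G(\zeta_i^*)=\pp_i\pp_{i-1}\cdots\pp_{i+1}$ is a fixed element of $\End(V)$, so the scalar $1$-form
$$\beta_i \defeq \tr\bigl(\alpha\cdot\pp_G(\zeta_i^*)\bigr)$$
is closed (since $\alpha$ is and $\pp_G(\zeta_i^*)$ is constant) and geodesically bounded in the sense of Definition \ref{def:geod-bded}; the latter is exactly the meaning of $\alpha\in\bXi(E)$ applied to the parallel endomorphism $\pp_G(\zeta_i^*)$.

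The core step is then a planar homotopy argument. Since $\bzeta_i$ and $\zeta_i$ share the ideal endpoints $g_{i+1}^-$ and $g_i^+$, they jointly bound a pentagonal region with vertices $g_{i+1}^-,x_{i+1},x,x_i,g_i^+$ (two at infinity, three in $\hh$). I triangulate this pentagon by adjoining the two geodesic rays from the barycenter $x$ to the ideal vertices $g_{i+1}^-$ and $g_i^+$, producing three geodesic triangles: $T_1=(g_{i+1}^-,x_{i+1},x)$ with edges $a_i,b_i,[x,g_{i+1}^-]$; an ideal-edged $T_2=(g_{i+1}^-,x,g_i^+)$ with edges $[g_{i+1}^-,x],[x,g_i^+],\zeta_i$; and $T_3=(x,x_i,g_i^+)$ with edges $c_i,d_i,[g_i^+,x]$. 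Applying the ``possibly ideal'' triangle-vanishing property of Definition \ref{def:geod-bded}(3) to each $T_j$ and summing, the contributions of the two interior diagonals cancel by orientation, which yields $\int_{\bzeta_i}\beta_i=\int_{\zeta_i}\beta_i$. Substituting this identity back into Proposition \ref{pro:gintegr-alt} produces the claimed formula.

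The step I expect to require the most care is the triangulation argument for $T_2$, which has two ideal vertices: the integrability of $\beta_i$ along the geodesic rays $[x,g_{i+1}^-]$ and $[x,g_i^+]$ has to be confirmed from the geodesic boundedness of $\beta_i$ (so that each boundary integral converges), and the vanishing of $\int_{\partial T_2}\beta_i$ invokes the ``possibly ideal'' clause of Definition \ref{def:geod-bded}(3), which was crafted precisely to accommodate this situation. Once these points are in place, the rest of the argument is a bookkeeping exercise.
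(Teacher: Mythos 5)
Your proof is correct and follows exactly the paper's route: reduce to Proposition \ref{pro:gintegr-alt} and then use the identity $\int_{\bzeta_i}\alpha=\int_{\zeta_i}\alpha$ for geodesically bounded forms. The paper simply asserts this last identity; your explicit triangulation of the region between $\bzeta_i$ and $\zeta_i$ via the cocycle property \eqref{eq:triangle-vanish} is a valid justification of the step the paper leaves implicit.
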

\begin{proof}
	The proof relies on the fact that for $\alpha$ in $ \bXi(E)$,  and $\zeta_i$ a ghost  edge we have
	$$
	\int_{\bzeta_i}\alpha=\int_{\zeta_i}\alpha\ .
	$$
	Then the formula follows from proposition \ref{pro:gintegr-alt}.
\end{proof}

\rmka{\sc Ghost integration and Rhombus integration.}
The process described for the ghost integration is a generalization of the Rhombus integration described in \cite{Labourie:2018fj}.

\subsection{A dual cohomology class}\label{sec:dual-form}
Let $\rho$ be a $\Theta$-uniformly hyperbolic bundle. Now let $G$ be a $\Theta$-ghost polygon with configuration $\lceil g_1,\ldots,g_p\rceil$ and $\Theta$-decoration $\AA$. Let $\vartheta=(\theta_1,\ldots,\theta_{2p})$ be the associated ghost polygon and denote by $\zeta_i=\theta_{2i+1}$ the ghost edges. Let  $\bzeta_i$ be the associated polygonal arc associated to the ghost edge $\zeta_i$ as in paragraph \ref{sec:polyg-arc}.

\begin{definition}
	The {\em ghost dual form   to $\rho(G)$}  is 
\begin{eqnarray*}
	\Omega_{\rho(G)}\defeq\sum_{i=1}^{p}\left(\omega_{g_i}\pp_G(g^*_i) -  \omega_{\bzeta_i}\pp_G(\zeta^*_i)\right)\ . 
\end{eqnarray*}
\end{definition}
Observe that $\rho(G)$ incorporates a $\Theta$-decoration and so $\Omega_{\rho(G)}$ depends on the $\Theta$-decoration.   Actually ${\Omega_\rho(G)}$ depends on a choice of the mapping $g\mapsto\omega_g$.  
\begin{proposition}[\sc ghost dual form ]\label{pro:dual-ghost-form} We have the following properties
\begin{enumerate}
\item 	The ghost dual form  belongs to $ \bXi(E)$.
	\item Assume that  $\alpha$ belongs to $ \bXi(E)$.	Then
	\begin{equation}
\oint_{\rho(G)}\alpha=\int_\hh\tr\left(\alpha\wedge \Omega_{\rho(G)}\right)\ .\label{eq:Omega-int}
	\end{equation}
	\item  {\sc(exponential decay inequality)} Finally, 
		there exist positive  constants $K$ and $a$ only depending on $\rho$  and $R_0$ so that if the core diameter of $G$ is less than $R_0$, then
	\begin{eqnarray} 
		\Vert\Omega_{\rho(G)}(y)\Vert_y\leq K e^{-a\  d(y,\bary(G))}\ ,\label{eq:bd-Omega}
	\end{eqnarray}
	and, moreover, $\Omega_{\rho(G)}(y)$ vanishes when $d(y,\bary(G))\geq R_0+2$ and $d(y,g)>2$ for all visible edges $g$ of $G$.
\end{enumerate}
 	\end{proposition}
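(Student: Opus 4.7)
The three items are proved in sequence: (1) and (2) follow fairly mechanically from already-assembled ingredients, whereas (3) is the essential place where uniform hyperbolicity is invoked.

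For (1), in the trivialization where $\nabla=D$, every projector product $\pp_G(g_i^*)$ and $\pp_G(\zeta_i^*)$ is a constant element of $\End(V)$; hence $\Omega_{\rho(G)}$ is a finite linear combination of the scalar forms $\omega_{g_i}$ and $\omega_{\bzeta_i}$ with constant endomorphism coefficients. To verify $\Omega_{\rho(G)}\in\bXi(E)$, I pair with an arbitrary parallel section $A$ of $\End(E)$; this produces a scalar linear combination of geodesic and polygonal-arc forms. Each such scalar form lies in $\bXi$: the geodesic case is Proposition \ref{prop:inter}, while for $\omega_{\bgamma}$ the duality \eqref{eq:duality1} is item (5) of the polygonal-arc proposition and the triangle-vanishing \eqref{eq:triangle-vanish} follows from item (4) combined with the cocycle \eqref{eq:triang-intersect}.

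Item (2) is a rewriting of Proposition \ref{pro:gintegr-alt-II}, which reads
$$\oint_{\rho(G)}\alpha = \sum_i\Bigl[-\int_{g_i}\tr(\alpha\,\pp_G(g_i^*)) + \int_{\bzeta_i}\tr(\alpha\,\pp_G(\zeta_i^*))\Bigr].$$
Since the scalar forms $\tr(\alpha\,\pp_G(g_i^*))$ and $\tr(\alpha\,\pp_G(\zeta_i^*))$ lie in $\bXi$ by the computation of (1), the duality \eqref{eq:duality1} and item (5) convert each line integral into an integral over $\hh$. The identity $\tr(\alpha\wedge\omega P) = -\omega\wedge\tr(\alpha P)$ for a scalar 1-form $\omega$ and a constant endomorphism $P$ then assembles the sum into $\int_\hh\tr(\alpha\wedge\Omega_{\rho(G)})$.

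For (3), I first pin down the support. The interior arcs $b_i,c_i$ of each polygonal arc $\bzeta_i$ lie inside the ball $B(R_0)$ around $\bary(G)$, while $a_i\subset g_{i+1}$ and $d_i\subset g_i$. Consequently, any point $y$ with $d(y,\bary(G))\geq R_0+2$ and $d(y,g)>2$ for every visible edge $g$ sits outside the 1-neighborhoods of all $g_i$ and of all $\bzeta_i$, which gives the vanishing statement. For $y$ within $1$ of, say, the forward end of $g_i$ and at distance $\geq R_0+2$ from $\bary(G)$, only $\omega_{g_i}\pp_G(g_i^*)$ and $-\omega_{\bzeta_i}\pp_G(\zeta_i^*)$ survive, and item (2) of the polygonal-arc proposition ensures $\omega_{\bzeta_i}(y)=\omega_{g_i}(y)$. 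The factorization $\pp_G(g_i^*) = \pp_G(\zeta_i^*)\pp_i$ then gives
$$\pp_G(g_i^*)-\pp_G(\zeta_i^*) = -\pp_G(\zeta_i^*)(\Id-\pp_i),$$
an endomorphism vanishing on $L_i = F_{\AA(i)}(g_i)$ and landing in $L_i$, hence lying in the stable subbundle $F_0^+$ for the $g_i$ decomposition (Lemma \ref{lem:Ano-subbun}). Its norm at $y$ consequently decays like $Ae^{-a\,d(y,x_i)}$, where $x_i$ is the projection of $\bary(G)$ on $g_i$, and $d(y,x_i)\geq d(y,\bary(G))-R_0$ yields the bound \eqref{eq:bd-Omega} with $K$ depending only on $\rho$ and $R_0$ (absorbing $e^{aR_0}$, the sup norm of $\omega_{g_i}$, and the norm of $\pp_G(\zeta_i^*)(\Id-\pp_i)$ at $x_i$). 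The backward end $g_i^-$ is symmetric, using the analogous factorization $\pp_G(g_i^*)-\pp_G(\zeta_{i-1}^*) = -(\Id-\pp_i)\pp_G(\zeta_{i-1}^*)\in F_0^-$, which contracts in the negative direction along $g_i$.

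The main obstacle is the algebraic observation in (3): recognizing that the two-term combinations of opposite ghost endomorphisms that survive at the two far ends of each visible edge lie precisely in the contracting subbundles $F_0^\pm$ of Lemma \ref{lem:Ano-subbun}. Once that is in hand, uniform hyperbolicity of $\rho$ directly supplies the exponential decay, and the remainder is bookkeeping of constants.
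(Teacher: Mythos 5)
Your proposal is correct and follows essentially the same route as the paper: items (1) and (2) via pairing with parallel sections, the scalar duality \eqref{eq:duality1}, and Proposition \ref{pro:gintegr-alt-II}; item (3) via the support analysis of the polygonal arcs and the identification of the surviving per-edge combinations $\pp_G(g_i^*)-\pp_G(\zeta_i^*)=\pp_i[\PP_i,\pp_i]\in F_0^+$ and $\pp_G(g_i^*)-\pp_G(\zeta_{i-1}^*)=-[\PP_i,\pp_i]\pp_i\in F_0^-$, whose exponential contraction is exactly inequalities \eqref{ineq:expdecay2}--\eqref{ineq:expdecay3}. The only cosmetic caveat is that far from the barycenter a point may lie near several visible edges at once, so rather than "only two terms survive" one should say that $\Omega_{\rho(G)}$ decomposes there as the finite sum over $i$ of these $F_0^\pm$-valued terms, each exponentially small — which is how the paper phrases it and does not affect the bound.
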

 	  Observe that even though $\Omega_{\rho(G)}$ depends on some choice, the left-hand side of the formula \eqref{eq:Omega-int} does not depend on any choice. Later we will need the following corollary which we prove right after we give the proof of the proposition. 
 	\begin{corollary}\label{coro:bd-Om} 
 	We have the following bounds:
 The map
	\begin{eqnarray*}
		\phi_G\ :\ y&\mapsto&\Vert\Omega_{\rho(G)}(y)\Vert_y\ \ , 
		\end{eqnarray*}
	 belongs   to $L^1(\hh)$, and $\Vert\phi_G\Vert_{L^1(\hh)}$ is bounded by a continuous function of the  core diameter of $G$.	The map \begin{eqnarray*}
		\psi_{G,y}\ : \ \gamma&\mapsto& \Vert\Omega_{\rho(G)}(\gamma y)\Vert_y\  ,
	\end{eqnarray*} belongs  to $\ell^1(\Gamma)$, and $\Vert\psi_{G,y}\Vert_{\ell^1(\Gamma)}$ is bounded by a continuous function of the  core diameter of $G$.	 Finally the map 
	\begin{eqnarray*}
		 \phi\ : H&\mapsto&
		 \Vert\Omega_{\rho(H)}\Vert_\infty=
		 \sup_{y\in\hh}\Vert\Omega_{\rho(H)}(y)\Vert_y\  , 
			\end{eqnarray*} 
			is bounded on every compact set of ${\GT}^p_\star$
 	\end{corollary}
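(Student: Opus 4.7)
The plan is to derive each of the three bounds directly from the exponential decay inequality \eqref{eq:bd-Omega} together with the support constraint (vanishing outside $B(\bary(G), R_0+2) \cup \bigcup_i N_2(g_i)$, where $N_2(g_i)$ is the $2$-tubular neighbourhood of the visible edge $g_i$) given in proposition \ref{pro:dual-ghost-form}.

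For the $L^1(\hh)$-bound, first I would choose $R_0 = r(G)+1$ so that the support is contained in the ball $B \defeq B(\bary(G), R_0+2)$ together with the tubular strips $N_2(g_i)$ of the (finitely many) visible edges. On $B$ the integral of $\phi_G$ is bounded by $K(R_0)\cdot \operatorname{area}(B)$, which is a continuous function of $r(G)$. On each $N_2(g_i)\setminus B$ I would parametrize by Fermi coordinates $(s,r)$ along $g_i$, with $|r|\leq 2$, and use the lower bound $d(y,\bary(G))\geq |s|-C$ for some constant $C=C(R_0)$, together with the hyperbolic area element $\cosh(r)\,ds\wedge dr$, to reduce the integral of $e^{-a d(y,\bary(G))}$ to a convergent one-dimensional integral in $s$. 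Summing over the finitely many visible edges yields the claim with bound continuous in $r(G)$.

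For the $\ell^1(\Gamma)$-bound the key point is to estimate how many $\Gamma$-translates of $y$ land in the support of $\Omega_{\rho(H)}$ at a given distance from $\bary(H)$. Translates in the ball $B$ are finite in number by discreteness of $\Gamma$. Inside each tubular strip $N_2(g_i)$ a quasi-isometry with $\mathbb R$ and the positive systole of $\Gamma$ force the orbit to meet any slab $\{|s|\in[n,n+1]\}$ in only $O(1)$ points, giving polynomial (in fact linear) growth with distance to the barycenter; combined with the exponential decay $K e^{-a d(\gamma y,\bary(H))}$, this makes $\sum_\gamma \|\Omega_{\rho(H)}(\gamma y)\|_y$ converge, with bound continuous in $r(H)$. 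The switch between $\|\cdot\|_y$ and the true norm at $\gamma y$ only produces a bounded multiplicative factor, absorbed by the frame being uniformly controlled, and does not affect the summability.

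For the uniform sup bound on compact sets of $\GT^p_\star$, I would invoke the continuity and properness of $r$ proved in proposition \ref{pro:core-cont}: on any compact $\mathcal K\subset\GT^p_\star$, the core diameter attains a maximum $R_0$, so \eqref{eq:bd-Omega} immediately yields $\|\Omega_{\rho(H)}\|_\infty\leq K(R_0)$ uniformly on $\mathcal K$. The main difficulty in the plan is the $\ell^1$-estimate, since it requires separating the contribution of $B$ (trivial) from the strips $N_2(g_i)$ (where one must quantify the polynomial orbital growth and carefully compare the two norms) rather than just invoking a bound on a compact set; the $L^1$- and sup-bounds are both essentially direct translations of \eqref{eq:bd-Omega}.
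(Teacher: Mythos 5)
Your proposal is correct and follows essentially the same route as the paper: the exponential decay inequality \eqref{eq:bd-Omega} combined with the linear growth (as a function of the distance to the barycenter) of the volume, respectively of the orbit count, of the support region, which is a ball of radius controlled by the core diameter together with the $2$-neighbourhoods of the finitely many visible edges. The only divergence is in the sup bound, where the paper splits into a near region handled via the bounded cocycle hypothesis \eqref{eq:exp-bounded} and a far region handled by the decay, while you read the uniform bound directly off \eqref{eq:bd-Omega}; since that inequality is stated for all $y$ with constants depending only on $\rho$ and $R_0$, this shortcut is legitimate.
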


\begin{proof}[Proof of proposition \ref{pro:dual-ghost-form}]  We first prove the exponential decay inequality \eqref{eq:bd-Omega} which implies in particular that $\Omega_\rho(G)$ belongs to $\bLa^\infty(E)$. 

Let $r(G)$ be the core diameter of $G$. As usual, let $g_i$ be a visible edges, $x$ be the barycenter of all $g_i$ and $x_i$ be the projection of $x$ on $g_i$. By the construction of the polygonal arc $\bzeta_i$, it follows that outside of the ball of radius $r(G)+2$ centered at $x$, that
$$
\Omega_{\rho(G)}= \sum_{i}\omega^-_{g_i} [\PP_i,\pp_i]\pp_i  + \sum_{i}\omega^+_{g_i} \pp_i [\PP_i,\pp_i]\ ,
$$

where $\omega^\pm_{g_i}=f^\pm_i\omega_{g_i}$ where $f^\pm_i$ is a function with values in $[0,1]$ with support in the 2-neighbourhood of the arc $[x_i,g_i^\pm]$. Then the decay given in equation \eqref{eq:bd-Omega} is an immediate consequence of the exponential decay given in inequality  \eqref{ineq:expdecay2}.

Observe now that $\Omega_{\rho(G)}$ is closed. 
Let $A$ be a parallel section of $\End(E)$,  then it is easily seen that $\tr(\Omega_{\rho(G)}A)$ is geodesically bounded. It follows that $\Omega_{\rho(G)}$ is in   $\bXi(E)$.

Then the result follows from the alternative formula for ghost integration in  proposition \ref{pro:gintegr-alt-II}. 
\end{proof}
\begin{proof}[Proof of corollary \ref{coro:bd-Om}]  Given a ghost polygon $H$ whose set of visible edges is $g_H$, and core diameter less than $R_0$. Let
$$
V_H\leq\{y\in\hh\mid d(y,\bary(H))\leq R_0+2\hbox{ or $d(y,g)\leq 2$ for some $g \in g_H$}\}
$$
Observe that the volume of $V_H(R)\defeq V_H\cap B(\bary(H),R)$ has some linear growth as a function of $R$, and moreover this growth is  controlled as a function of $R_0$. This, and  the exponential decay  inequality \eqref{eq:bd-Omega}, implies that $\phi_G$, whose support is in $V_H$, 
is in $L^1(\hh)$ and that its norm is bounded by a constant that only depends on $R_0$. Similarly consider
\begin{eqnarray*}
	F_{H,y}\defeq \{\gamma\in\Gamma\mid d(\gamma(y),\bary(H))\leq R_0+2 \hbox{ or } d(\gamma(y),g)\leq 2  \hbox{ for  $g$ in $g_H$} \}\ .
\end{eqnarray*}
and 
\begin{eqnarray*}
F_{H,y}(R)\defeq \{\gamma\in F_{H,y} \mid d(\gamma (y),\bary(H))\leq R\} \ .
\end{eqnarray*}
Then the cardinality of the subset $F_{H,y}(R)$ has linear growth depending only on $R_0$. Hence, for every $y$, 
$$
 \gamma\mapsto \sum_{\gamma\in F_{H,y}}K_0e^{-a(d(\gamma y,\bary(H))}\ ,
$$
seen as a function of $H$ is in $\ell^1(\Gamma)$ and its $\ell^1$ norm is bounded as a function of $R_0$.

Hence -- as a consequence of the exponential decay inequality \eqref{eq:bd-Omega} -- for every $y$, the map
$$
\gamma\mapsto\Vert\Omega_{\rho(G)}(\gamma y)\Vert_y\ , 
$$
is in $\ell^1(\Gamma)$ and its $\ell^1$ norm is bounded by  a function of $R_0$.

Finally  from inequality \eqref{eq:bd-Omega}, we have obtained that there is a constant $R_1$ only depending on $R_0$ such that
$$
\sup_{y\in\hh}\Vert\Omega_{\rho(H)}(y)\Vert_y\leq \sup_{y\in B(\bary(H), R_1)}\Vert\Omega_{\rho(H)}(y)\Vert_y +1 \  . \  
$$
The bounded cocycle hypothesis, equation \eqref{eq:exp-bounded}, implies that $\sup_{y\in B(\bary(H), R_1)}\Vert\Omega_{\rho(H)}(y)\Vert_y$ is bounded by a function only depending on $R_1$, 
and thus $\sup_{y\in\hh}\Vert\Omega_{\rho(H)}(y)\Vert_y$ is bounded by a function of $R_0$. This completes the proof of the corollary.
\end{proof}
\subsection{Derivative of correlation functions}\label{sec:der-cor}
In this paragraph, as a conclusion of this section, we relate the process of ghost integration with the derivative of correlation functions.

\begin{proposition}\label{pro:dT-integ} Let $\fam{\nabla_t,h}$ be a  bounded variation of a uniformly hyperbolic bundle $\rho=(\nabla,h)$. Assume that $G$ is a $\Theta$-ghost polygon, then \begin{eqnarray}
	\d\T_G(\dot\nabla)=\oint_{\rho(G)}\dot\nabla\ . \  \label{eq:dT-integ}
\end{eqnarray}
\end{proposition}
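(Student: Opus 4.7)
The plan is to differentiate $\T_G(\rho_t) = \tr(\pp_p^t\cdots \pp_1^t)$ directly and recognize the output, term by term, as $\J_{\rho(G)}(\dot A)$ for a canonical primitive $\dot A$ of $\dot\nabla$. Since every $\nabla^t$ is trivializable, choose $A_t$ with $\nabla^t = A_t^{-1}DA_t$ and $A_0 = \Id$; equation \eqref{eq:DA} then gives $\dot\nabla = \d\dot A$, so $\dot A$ is a bounded section of $\End(E)$ serving as a primitive of $\dot\nabla$. By the very definition of ghost integration,
$$\oint_{\rho(G)}\dot\nabla \;=\; \J_{\rho(G)}(\dot A) \;=\; \sum_{i=1}^p \J_{x_i, g_i, \PP_i}(\dot A),$$
so the goal reduces to showing $\tr(\dot\pp_i\,\PP_i) = \J_{x_i, g_i, \PP_i}(\dot A)$ for every $i$, where $x_i$ is an arbitrary point of $g_i$ (legitimate since $\J_{x,g,\QQ}$ does not depend on the base point).

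Next, apply the Leibniz rule together with cyclic invariance of the trace: this is the only way the variation enters the correlation function, giving
$$\d\T_G(\dot\nabla) \;=\; \sum_{i=1}^p \tr\bigl(\pp_p\cdots\dot\pp_i\cdots\pp_1\bigr) \;=\; \sum_{i=1}^p \tr(\dot\pp_i\,\PP_i).$$
Work in the fixed-connection point of view so that proposition \ref{pro:deriv-fund-proj} gives
$$\dot\pp_i(x_i) \;=\; [\dot A(x_i), \pp_i] + \int_{g_i^+}[\d\dot A, \pp_i]\pp_i + \int_{g_i^-}\pp_i[\d\dot A, \pp_i].$$
Multiply by $\PP_i$ and take the trace. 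The cyclic identity $\tr(\PP_i[\dot A, \pp_i]) = \tr(\dot A\,[\pp_i, \PP_i])$ turns the boundary term into the first summand of $\J_{x_i, g_i, \PP_i}(\dot A)$; using $\d\dot A = \dot\nabla$, the two integral contributions assemble exactly into the line-integration operator $\bS_{x_i, g_i, \PP_i}(\dot\nabla)$ defined in \eqref{eq:defJ}. Thus
$$\tr(\dot\pp_i\,\PP_i) \;=\; \tr(\dot A(x_i)[\pp_i, \PP_i]) + \bS_{x_i, g_i, \PP_i}(\d\dot A) \;=\; \J_{x_i, g_i, \PP_i}(\dot A).$$
Summing over $i$ yields the claimed identity $\d\T_G(\dot\nabla) = \oint_{\rho(G)}\dot\nabla$.

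The main hurdle I expect is purely bookkeeping: the signs attached to the arcs $g_i^\pm$ under the parametrization $s\mapsto \phi_s(x_i)$ must be tracked with care so that the integrals produced by proposition \ref{pro:deriv-fund-proj} line up exactly with the definition \eqref{eq:defJ} of $\bS$ (in particular reconciling the bracket $[\pp, \dot\nabla_\ps]$ appearing in lemma \ref{lem:var-proj0} with $[\dot\nabla, \pp]$ in $\bS$ through a sign flip). A secondary sanity check is to verify that the answer is independent of the ambiguity in $A_t$ (replacing $A_t$ by $C_t A_t$ for a constant $C_t$ shifts $\dot A$ by a constant $B$), but this is immediate because $\sum_i \tr(B[\pp_i, \PP_i]) = 0$ by the telescoping identity $\PP_i\pp_i = \pp_{i-1}\PP_{i-1}$ already used to prove that $\oint_{\rho(G)}$ is well-defined.
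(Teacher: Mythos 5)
Your proof is correct and follows essentially the same route as the paper: the paper reduces the statement to the identity $\tr\left(\dot\pp_0(g)\cdot Q\right)=\J_{\rho(g),Q}\left(\dot\nabla\right)$ (lemma \ref{pro:var-projJ}), declared an immediate consequence of the definition of line integration and the variation formula for the fundamental projector, and then sums over the edges exactly as you do. Your write-up simply makes explicit the Leibniz/cyclicity step and the term-by-term matching with $\J_{x_i,g_i,\PP_i}(\dot A)$ that the paper leaves to the reader.
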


This proposition is an immediate consequence of the following lemma, which is itself an immediate consequence of the definition of the line integration in paragraph \ref{sec:line integration} and lemma \ref{lem:var-proj0}:
\begin{lemma}\label{pro:var-projJ} 
	Let $(\nabla_t,h_t)$ be a family of uniformly hyperbolic bundles with bounded variation -- see definition \ref{def:bd-var} -- associated  to a  family of fundamental projectors $\fam{{\pp}}$. 
	Then for a decorated geodesic $g$,
	\begin{eqnarray}
			\tr\left(\dot\pp_0(g)\cdot Q\right)=\J_{\rho(g),Q}\left(\dot\nabla\right)\ .\label{eq:varprojJ}
	\end{eqnarray}
\end{lemma}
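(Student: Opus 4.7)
My plan is to obtain the identity by direct algebraic manipulation starting from the explicit integral formula for $\dot{\pp}_0$ supplied by Proposition~\ref{pro:deriv-fund-proj}. Working in the gauge where $\nabla_0={\rm D}$ is trivial, with $\nabla^t=A_t^{-1}{\rm D}A_t$ and $\dot\nabla={\rm D}\dot A$ as in equation~\eqref{eq:DA}, that formula reads, at any point $x\in g$,
$$
\dot{\pp}_0(x) = [\dot A(x),\pp_0] + \int_{g^+}[\dot\nabla,\pp_0]\,\pp_0 + \int_{g^-}\pp_0\,[\dot\nabla,\pp_0].
$$

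The first step I would carry out is to verify that the right-hand side does not depend on the choice of $x\in g$, so that $\dot{\pp}_0(g)$ is a well-defined endomorphism of the canonical fibre $V$. Moving the base point from $x$ to $y=\phi_{t_0}(x)$ changes each of the two outward-oriented integrals by $-\int_x^y$ and shifts the boundary term by $[\dot A(y)-\dot A(x),\pp_0]=[\int_x^y\dot\nabla_{\ps}\,ds,\pp_0]$; the identity $A\,\pp_0+\pp_0\,A=A$ for $A\in F_0$ from lemma~\ref{lem:Ano-subbun}, applied to $A=[\dot\nabla_{\ps},\pp_0]$, makes the total variation vanish. Next I would pair with the parallel section $Q$ and take the trace: cyclicity, together with $\tr([\dot A,\pp_0]\,Q)=\tr(\dot A\,[\pp_0,Q])$ for the boundary term, gives
$$
\tr(\dot{\pp}_0(g)\cdot Q)=\tr(\dot A(x)\,[\pp_0,Q])+\int_{g^+}\tr(Q\,[\dot\nabla,\pp_0]\,\pp_0)+\int_{g^-}\tr(Q\,\pp_0\,[\dot\nabla,\pp_0]),
$$
and the two integrals are, by the defining formula~\eqref{eq:defJ}, exactly $\bS_{x,g,Q}(\dot\nabla)$.

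Comparing now with the definition of the primitive line integration $\J_{x,g,Q}(\dot A)=\tr(\dot A(x)\,[\pp_0,Q])+\bS_{x,g,Q}(\dot\nabla)$ yields $\tr(\dot{\pp}_0(g)\cdot Q)=\J_{x,g,Q}(\dot A)$. This last quantity is independent of the base point by the proposition proved immediately after the definition of $\J$, and under the (harmless) abuse of notation $\J_{\rho(g),Q}(\dot\nabla):=\J_{x,g,Q}(\dot A)$ this is precisely the claimed formula~\eqref{eq:varprojJ}. The proof is pure algebraic bookkeeping with no real obstacle; the only point where care is needed is the orientation convention making both $g^+$ and $g^-$ outward from $x$, so that their integrals contribute with the same sign under a change of base point. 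Everything else reduces to the commutator identity on $F_0$ recalled above and to cyclicity of the trace.
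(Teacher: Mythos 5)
Your argument is correct and follows the same route the paper intends: the paper dismisses this lemma as an immediate consequence of Proposition \ref{pro:deriv-fund-proj} (equivalently lemma \ref{lem:var-proj0} after the gauge change $\dot\nabla=\d\dot A$) together with the definitions of $\bS_{x,g,Q}$ and $\J_{x,g,Q}$, and your computation simply writes out that pairing with $Q$ and the trace cyclicity explicitly. The base-point-independence check you include is the content of the paper's separate proposition on $\J_{x,g,Q}$ and rests on the same identity $[\pp,Q]\pp+\pp[\pp,Q]=[\pp,Q]$, so nothing is missing.
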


\subsection{Integration along geodesics}
For completeness, let us introduce {\em ghost integration for geodesics}: we define for any geodesically bounded  1-form $\alpha$ in $\Xi(E)$ and a $\Theta$-geodesic $g$, 
\begin{eqnarray}
	\oint_{\rho(g)}\alpha\defeq \int_g\tr\left(\alpha\pp_g\right)\ .\label{def:ointg}
\end{eqnarray}
\rmka It is important to observe that, contrary to a general ghost polygon, we only integrate geodesically bounded forms, not bounded ones. In particular, we cannot integrate variations of uniformly hyperbolic bundles.

\section{Ghost intersection and the ghost algebra}\label{sec:ghost-inter}

In this section we will effectively  define  and compute the {\em ghost intersections} of ghost polygons or geodesics.  This is the objective of propositions \ref{pro:gh-intersect1} and \ref{pro:gh-intersect2}.

The {\em ghost intersection} is a generalization of the intersection of geodesics. However this procedure is non abelian in nature: it requires the choice of a uniformly hyperbolic bundle $\rho$.

The relation between ghost integration and ghost intersection is akin to the relation between integrating along a geodesic and intersections of geodesic. Thanks to the ghost dual form  $\Omega_g$ of a ghost polygon, we will define the ghost intersection with respect to a uniformly hyperbolic bundle $\rho$ as 
$$
\bI_\rho(G,H)=\oint_{\rho(G)}\Omega_H=-\oint_{\rho(H)}\Omega_G=\int_\hh \tr\left(\Omega_H\wedge\Omega_G\right)\ .
$$
Observe that the above formulae mimic the formulae of the first section about intersection of geodesics in  the hyperbolic plane. We then compute effectively the value of the ghost intersection, and as a consequence show that it only depends on $G$ and $H$.

We then move to a more formal point of view  in order to perform some computations more efficiently in the future.

We define the associated {\em ghost algebra} in paragraph \ref{par:theta-ghost-bra}, the ghost algebra is a vector space generated by ghost polygons equipped with a bracket,
$$
(G,H)\mapsto [G,H]\ ,
$$ 
we  naturally extend the correlation function to the ghost algebra and reinterpret the intersection in the crucial proposition \ref{pro:I-T} as
$$
I_\rho(G,H)=\T_{[G,H]}(\rho)\ .
$$
In paragraph \ref{sec:bracket-ghost}, we relate the corresponding ghost bracket for the projective case to the swapping bracket defined in \cite{Labourie:2012vka} by the second author. 

In the  somewhat independent paragraph \ref{sec:natural}, we define and study {\em natural maps} from the ghost algebra to itself.   The point of this construction is that the swapping bracket is a Lie bracket, while this is not true of the ghost bracket, and we will use the Jacobi identity later on in our applications. 

We will  freely use the definitions given in section \ref{sec0:ghost-polygon} for ghost polygons.

\subsection{Ghost intersection: definitions and computations}   The intersection will be complex (or real) valued and always with respect to a uniformly hyperbolic bundle.  
We proceed step by step with the definitions:   intersections of two geodesics, of one geodesic with a ghost polygons and finally of two polygons. In all cases, the intersection is with respect to a representation.  
\subsubsection{Intersecting two geodesics} Let $g$ and $h$ two $\Theta$-geodesics (in other words, geodesics labelled with an element of $\Theta$). Let us define 
\begin{equation}
\bI_\rho(g,h)\defeq \oint_{\rho(g)} \beta^0_h\ ,
\end{equation}
where $\beta^0_h\defeq\beta_h-\frac{\Theta_h}{\dim(E)}\Id$ is the trace free part of $\beta_h$ and $\Theta_h$ is defined in equation \eqref{def:projet-theta-geod}.
A straightforward computation using equation \eqref{eq:duality1} and \eqref{def:ointg}
 then gives
	\begin{equation}
\bI_\rho(g,h)=\epsilon(h,g)\left(\T_{\lceil g,h\rceil} (\rho) - \frac{1}{\dim(E)}\Theta_g\Theta_h\right)  \ .	\end{equation}

By convention,  the quantity  $\epsilon(g,h)$ for two $\Theta$-decorated geodesics $g$ and $h$ is the same as the intersection of the underlying geodesics.
\subsubsection{Intersecting a ghost polygon with a geodesic}

Let $\rho$ be a $\Theta$-uniformly hyperbolic bundle. Let $G$ be a $\Theta$-ghost polygon and 
$h$ a $\Theta$-geodesic. The {\em ghost intersection of $G$ and $g$} 
is 
\begin{equation}
	\bI_\rho(G,h)\defeq -\oint_{\rho(G)}\beta_{\rho(h)}=-\int_\hh\tr\left(\beta_{\rho(h)}\wedge \Omega_{\rho(G)}\right)=-\int_h\tr\left(\Omega_{\rho(G)}\ \pp(h)\right)\eqdef-\bI_\rho(h,G)\ .\label{defeq:oint}
\end{equation}

By convention we set $\bI_\rho(h,G)\defeq -\bI_\rho(G,h)$.
We will prove that we can effectively compute the ghost intersection,   and in particular show that the ghost intersection only depends on $G$, $h$ and $\rho$.   Then we have

\begin{proposition}[\sc Computation of ghost intersection I]\label{pro:gh-intersect1}
  Let $G$ be a configuration of geodesics, associated to a ghost polygon $\vartheta=(\theta_1,\ldots,\theta_{2p})$. 
The ghost intersection of $h$ and $G$  satisfies
\begin{equation}
			\bI_\rho(G,h)=\sum_{i=1}^{2p}(-1)^{i+1}\epsilon(h,\theta_j)\ \T_{\lceil h, \theta^*_i\rceil}(\rho)\ \label{eqdef:ghost-inters},
\end{equation}
where $\theta_i^*$ is the opposite configuration as in paragraph \ref{sec:ghost-polygon}. In the projective case, that is $\Theta=\{1\}$ we have
\begin{equation}
		\bI_\rho(G,h)=\T_G(\rho)\left(\sum_{i=1}^{2p}(-1)^{i+1}\epsilon(h,\theta_j)\ \T_{\lceil h, \theta_i\rceil}(\rho)\right)\ . \label{eqdef:ghost-inters-proj}
\end{equation}
\end{proposition}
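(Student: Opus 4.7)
The plan is to derive both formulas directly from the alternative expression for ghost integration in Proposition \ref{pro:gintegr-alt}, applied to the projector form $\beta_{\rho(h)} = \omega_h\,\pp(h)$, which is geodesically bounded and therefore an admissible test form.

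First, I would start from the definition $\bI_\rho(G,h) = -\oint_{\rho(G)}\beta_{\rho(h)}$ and invoke Proposition \ref{pro:gintegr-alt} to write
\begin{equation*}
	\bI_\rho(G,h) = \sum_{i=1}^{2p}(-1)^{i}\int_{\theta_i}\tr\!\left(\beta_{\rho(h)}\,\pp_G(\theta_i^*)\right).
\end{equation*}
Next, I would work in a trivialization where $\nabla$ is the trivial connection $\mathrm D$. In that trivialization, both $\pp(h)$ and $\pp_G(\theta_i^*)$ are constant endomorphisms of the fixed vector space $V$, so $\tr(\pp(h)\,\pp_G(\theta_i^*))$ is a scalar constant along $\theta_i$ and can be pulled out of the line integral. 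This reduces each term to
\begin{equation*}
	\int_{\theta_i}\omega_h\;\tr\!\left(\pp(h)\,\pp_G(\theta_i^*)\right).
\end{equation*}

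Then I would evaluate the remaining integral with the duality identity \eqref{eq:duality0}, giving $\int_{\theta_i}\omega_h = \epsilon(\theta_i,h)$ (and the integral vanishes if $\theta_i$ is a phantom ghost edge, consistent with the convention $\epsilon(\cdot,\cdot)=0$ in that case). Applying the antisymmetry $\epsilon(\theta_i,h) = -\epsilon(h,\theta_i)$ flips the sign $(-1)^i$ to $(-1)^{i+1}$. Finally, by the very definition of the correlation function in paragraph \ref{sec:cor-func} together with the definition of the opposite ghost endomorphism, one has $\tr(\pp(h)\,\pp_G(\theta_i^*)) = \T_{\lceil h,\theta_i^*\rceil}(\rho)$, since $\pp_G(\theta_i^*)$ is exactly the (reverse-ordered) product of projectors indexed by the opposite configuration $\theta_i^*$, and cyclicity of the trace identifies this with the correlation of the cyclic configuration $\lceil h,\theta_i^*\rceil$. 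This yields \eqref{eqdef:ghost-inters}.

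For the projective specialization \eqref{eqdef:ghost-inters-proj}, I would simply plug in Lemma \ref{lem:opp-endo}, which asserts $\pp_G(\theta_i^*) = \T_G(\rho)\,\pp(\theta_i)$ when $\Theta=\{1\}$, so that $\tr(\pp(h)\,\pp_G(\theta_i^*)) = \T_G(\rho)\,\T_{\lceil h,\theta_i\rceil}(\rho)$ and the factor $\T_G(\rho)$ pulls out of the sum. There is no real obstacle here: the work has already been done in Proposition \ref{pro:gintegr-alt} and Lemma \ref{lem:opp-endo}; the only mild care needed is keeping track of the sign from $\epsilon(\theta_i,h) = -\epsilon(h,\theta_i)$ and observing that the parallel (hence constant-in-trivialization) nature of the projectors makes the line integration reduce cleanly to a multiple of the scalar intersection number.
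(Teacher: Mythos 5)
Your argument is correct and is precisely the computation the paper intends: apply Proposition \ref{pro:gintegr-alt} to the geodesically bounded projector form $\beta_{\rho(h)}=\omega_h\,\pp(h)$, pull the constant scalar $\tr(\pp(h)\,\pp_G(\theta_i^*))$ out of each line integral, evaluate $\int_{\theta_i}\omega_h=\epsilon(\theta_i,h)$ by the duality formula \eqref{eq:duality0}, and use the antisymmetry of $\epsilon$ for the sign, with Lemma \ref{lem:opp-endo} giving the projective specialization. The bookkeeping of signs, the phantom-edge convention, and the identification $\tr(\pp(h)\,\pp_G(\theta_i^*))=\T_{\lceil h,\theta_i^*\rceil}(\rho)$ are all handled correctly.
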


\subsubsection{Intersecting two ghost polygons}

We define the {\em ghost intersection of two ghost polygons}  or equivalently of two configuration of geodesics $G$ and $H$  to be 
\begin{equation}
	\bI_\rho(G,H)\defeq \oint_{\rho(G)}\Omega_{\rho(H)}=\int_\hh\tr\left(\Omega_{\rho(H)}\wedge \Omega_{\rho(G)}\right)\ .\label{eq:def-I}
\end{equation}

We can again compute this intersection effectively   and in particular show that the ghost intersection only depends on $G$, $H$ and $\rho$:

\begin{proposition}[\sc Computation of ghost intersection II]\label{pro:gh-intersect2}   The ghost intersection  of the two configuration $G$ and $H$, associated respectively to the ghost polygons $\vartheta=(\theta_i)_{i\in I}$, with $I=[1,2p]$, and $\varsigma=(\sigma_j)_{j\in J}$, with $J=[1,2m]$,  respectively,  is given by 
\begin{eqnarray*}
			\bI_\rho(G,H)&=&\sum_{i\in I,j\in J}(-1)^{i+j}
		\epsilon(\sigma_j,\theta_i)\ \T_{\lceil \sigma^*_j,\theta^*_i\rceil}(\rho)\ .
\end{eqnarray*}
In the projective case, this simplifies as
\begin{eqnarray*}
			\bI_\rho(G,H)&=&\T_G(\rho)\T_H(\rho)\left(\sum_{i\in I,j\in J}(-1)^{i+j}
		\epsilon(\sigma_j,\theta_i)\ \T_{\lceil \sigma_j,\theta_i\rceil}(\rho)\right)\ .
\end{eqnarray*}
\end{proposition}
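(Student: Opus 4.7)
The plan is to reduce the claim to a direct computation built on the alternative ghost-integration formula already established. Starting from the definition $\bI_\rho(G,H)=\oint_{\rho(G)}\Omega_{\rho(H)}$, the first step is to observe that $\Omega_{\rho(H)}$ lies in $\bXi(E)$ by Proposition \ref{pro:dual-ghost-form}, so Proposition \ref{pro:gintegr-alt} applies and gives
\[
\bI_\rho(G,H)\;=\;-\sum_{i\in I}(-1)^{i}\int_{\theta_i}\tr\bigl(\Omega_{\rho(H)}\,\pp_G(\theta_i^{*})\bigr).
\]

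Next I would substitute the defining expression for $\Omega_{\rho(H)}$ as a sum over the edges $\sigma_j$ of $H$, split into visible and ghost contributions. Since each $\pp_H(\sigma_j^{*})$ and each $\pp_G(\theta_i^{*})$ is a fixed endomorphism, they pull out of the $\theta_i$-integral, leaving only scalar one-form integrals. These reduce to intersection numbers: $\int_{\theta_i}\omega_{\sigma_j}=\epsilon(\theta_i,\sigma_j)$ for visible $\sigma_j$ by Proposition \ref{prop:inter}, and the polygonal-arc form attached to a ghost $\sigma_j$ yields the \emph{same} value, because by statement (4) of the polygonal-arc proposition in Section \ref{sec:polyg-arc} it integrates along a geodesic to the intersection with the straight chord, and the polygonal arc shares endpoints at infinity with its ghost edge. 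The $\pm$ signs in the defining expression for $\Omega_{\rho(H)}$ package neatly as $(-1)^{j}$ under the convention that visible edges sit at even indices and ghost edges at odd indices.

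Assembling these ingredients produces
\[
\bI_\rho(G,H)\;=\;-\sum_{i,j}(-1)^{i+j}\,\epsilon(\theta_i,\sigma_j)\,\tr\bigl(\pp_H(\sigma_j^{*})\,\pp_G(\theta_i^{*})\bigr),
\]
and the antisymmetry $\epsilon(\sigma_j,\theta_i)=-\epsilon(\theta_i,\sigma_j)$ absorbs the outer minus sign. To finish, I identify the trace with a correlation function: unfolding the definitions, both $\pp_G(\theta_i^{*})$ and $\pp_H(\sigma_j^{*})$ are reverse-ordered products of projectors along the geodesics listed in the respective opposite configurations, so cyclic invariance of the trace gives $\tr(\pp_H(\sigma_j^{*})\pp_G(\theta_i^{*}))=\T_{\lceil \sigma_j^{*},\theta_i^{*}\rceil}(\rho)$, which is the first claimed formula. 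For the projective specialization, Lemma \ref{lem:opp-endo} substitutes $\pp_G(\theta_i^{*})=\T_G(\rho)\,\pp(\theta_i)$ and $\pp_H(\sigma_j^{*})=\T_H(\rho)\,\pp(\sigma_j)$; pulling these scalars outside reduces the trace to $\T_{\lceil\sigma_j,\theta_i\rceil}(\rho)$, yielding the second formula. The one real point of care is the triple sign bookkeeping — the $-(-1)^{i}$ from the alternative ghost-integration formula, the $(-1)^{j}$ encoding the visible/ghost asymmetry inside $\Omega_{\rho(H)}$, and the antisymmetry swap in $\epsilon$ — which must conspire into exactly $(-1)^{i+j}\epsilon(\sigma_j,\theta_i)$ as claimed.
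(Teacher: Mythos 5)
Your argument is correct and is exactly the route the paper intends (the paper leaves this verification implicit): apply the alternative ghost-integration formula of Proposition \ref{pro:gintegr-alt} to $\alpha=\Omega_{\rho(H)}\in\bXi(E)$, expand the dual form edge by edge, use the duality $\int_{g}\omega_{h}=\epsilon(g,h)$ (and its polygonal-arc analogue) to extract the intersection numbers, and identify the remaining traces of opposite ghost endomorphisms with correlation functions, with Lemma \ref{lem:opp-endo} giving the projective simplification. The sign bookkeeping you describe is right.
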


\subsection{$\Theta$-Ghost bracket and the ghost space}\label{par:theta-ghost-bra}
We develop a more formal point of view. Our goal is proposition \ref{pro:I-T} that identifies the intersection as a correlation function. Let $\mathcal A$ be the vector space generated by $\Theta$-ghost polygons  (or equivalently configurations of $\Theta$-geodesics) and $\Theta$-geodesics. We add as a generator the element $\casper$, and call it the {\em Casimir element}. By definition, we say $\casper$ has rank $0$. We will see that the Casimir element will generate the center.

Recall also that we can reverse the orientation on geodesics. The corresponding {\em reverse orientation} on configuration is given by 
$\overline{\lceil g_1,\ldots ,g_p\rceil}\defeq \lceil \bar g_p,\ldots ,\bar g_1\rceil$.

\begin{definition}[\sc $\Theta$-ghost bracket on $\mathcal A$] We define the bracket on the basis of $\mathcal A$ and extend it by linearity.

\begin{enumerate}
	\item The bracket of $\casper$ with all elements is 0.
	\item Let $G$ and $H$ be two configurations of $\Theta$-geodesics,  associated respectively to the ghost polygons $\vartheta=(\theta_i)_{i\in I}$, with $I=[1, 2p]$ and $\varsigma=(\sigma_j)_{j\in J}$, with $J=[1,2m]$  respectively.  Their {\em $\Theta$-ghost bracket} is given  by 
 \begin{eqnarray*}
 	[G,H]&\defeq& \sum_{i\in I,j\in J}
		\epsilon(\sigma_j,\theta_i)(-1)^{i+j}{\lceil \theta^*_i,\sigma^*_j\rceil}\ ,\  
\end{eqnarray*}
where we recall that $\theta^*_j$ is the opposite ghost configuration defined in paragraph \ref{sec:ghost-polygon}.

\item Let $g$ and $h$ be two $\Theta$-geodesics and $G$ a ghost polygon as above.  Then we define \begin{eqnarray*}		
		 [g,h]&\defeq&  \epsilon(h,g)\left(\lceil h,g\rceil\ -\Theta_h\Theta_g\cdot \casper\right)\ ,\\
		 \lbrack G,h]&\defeq&   \sum_{j\in J}
		(-1)^{j+1}\epsilon(h,\theta_j)\ \lceil h,\theta^*_j\rceil\eqdef -[h,G]\  ,
		 \end{eqnarray*}
	\end{enumerate}
	
Finally $\mathcal A$ equipped with the ghost bracket is called the {\em ghost algebra}.	
\end{definition} 
 We observe that the ghost bracket is antisymmetric.  However, the $\Theta$-ghost bracket does not always satisfy the Jacobi identity: there are some singular cases. We actually prove in the Appendix \ref{app:Jac}, as Theorem \ref{theo:jac} the following result 
\begin{theorem}\label{theo:jac1}
	Assume $A$, $B$, and $C$ are ghost polygons and that $$
	V_A\cap V_B\cap V_C=\emptyset\ ,
	$$ where $V_A$, $V_B$ and $V_C$ are the set of vertices of $A$, $B$ and $C$ respectively, then
	$$
	[A,[B,C]]+[B,[C,A]]+[C,[A,B]]=0\ .
	$$
\end{theorem}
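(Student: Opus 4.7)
\emph{Proof proposal.} The plan is to verify the Jacobi identity by a direct expansion of the triple bracket, reorganizing the sum so that the cocycle property \eqref{eq:triang-intersect} for the intersection number $\epsilon$ produces the desired cancellation. The empty triple intersection hypothesis $V_A\cap V_B\cap V_C=\emptyset$ is exactly what ensures that each contribution comes from a triple of edges $(\theta_A,\theta_B,\theta_C)$ which share no common point at infinity, so that the combinatorics of the opposite configurations $\theta^*$ behaves uniformly.

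First I would expand $[A,[B,C]]$ by applying the definition twice. Writing $A=(\theta_i)$, $B=(\sigma_j)$, $C=(\tau_k)$ for the edges of the associated ghost polygons, the inner bracket $[B,C]$ is a linear combination of configurations indexed by pairs $(\sigma_j,\tau_k)$ with coefficients $(-1)^{j+k}\epsilon(\tau_k,\sigma_j)$, and whose edges are precisely the edges of $B$ and $C$ together with new ghost edges joining $\sigma_j^-$ to $\tau_k^+$ and vice versa. Taking the outer bracket with $A$ then yields a triple sum indexed by $(\theta_i,\sigma_j,\tau_k)$, where the coefficient is a product of two $\epsilon$-values, and the resulting configuration is obtained from the opposite configurations $\theta_i^*$, $\sigma_j^*$, $\tau_k^*$ by concatenation. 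The analogous expansion applies to $[B,[C,A]]$ and $[C,[A,B]]$.

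Second, I would fix an ordered triple of edges $(\theta,\sigma,\tau)$ with $\theta\in A$, $\sigma\in B$, $\tau\in C$ and collect all the contributions of $[A,[B,C]]+[B,[C,A]]+[C,[A,B]]$ involving this triple. Under the hypothesis $V_A\cap V_B\cap V_C=\emptyset$, the three edges $\theta,\sigma,\tau$ share no common endpoint in $\bh$, and a careful matching of the iterated opposite configurations shows that the underlying configurations $\lceil\theta^*,\sigma^*,\tau^*\rceil$ appearing in each of the three summands agree up to cyclic relabeling. The total coefficient becomes a signed sum of three products of intersection numbers of the form $\epsilon(\sigma,\theta)\epsilon(\tau,\sigma)+\epsilon(\tau,\sigma)\epsilon(\theta,\tau)+\epsilon(\theta,\tau)\epsilon(\sigma,\theta)$, weighted by the sign parities $(-1)^{i+j+k}$. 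Applying the cocycle identity \eqref{eq:triang-intersect} to the ideal triangle spanned by the boundary points of $\theta,\sigma,\tau$ (and the phantom geodesic convention when needed) then produces the cancellation on each triple. The terms involving $\casper$, arising when two of the edges coincide up to orientation, are handled by the auxiliary formulas for $[g,h]$ and $[G,h]$, and the empty triple intersection hypothesis prevents the simultaneous coincidence that would obstruct the argument.

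The main obstacle will be the sign and index bookkeeping: when an edge of $[B,C]$ is fed back into the outer bracket with $A$, its position in the resulting ghost polygon shifts, so its parity sign $(-1)^{\text{index}}$ has to be recomputed, and the opposite configuration operation $(\cdot)^*$ has to be iterated consistently. Verifying that after this reshuffling the three cyclic terms line up with the same underlying configuration (up to cyclic equivalence) and with coefficients in the precise form required by \eqref{eq:triang-intersect} is where the real work lies; the genericity hypothesis $V_A\cap V_B\cap V_C=\emptyset$ is used at this point to rule out the degenerate cases where a single point at infinity would be shared by edges from all three polygons and the opposite configuration construction would break the symmetry that drives the cancellation.
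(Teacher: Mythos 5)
Your high-level plan (expand the triple bracket, group terms, cancel) is the right general shape, and you are right that the vertex-disjointness hypothesis enters at the stage where edges from all three polygons interact. But the cancellation mechanism you propose does not match what actually happens, and I think it would fail if carried out. The central point you are missing is that the configuration $\lceil \sigma^*,\tau^*\rceil$ produced by the inner bracket $[B,C]$ contains, besides the edges of $B$ and $C$, two \emph{new} ghost edges of the form $u_{\sigma,\tau}=(\tau_-^-,\sigma_+^+)$ and $u_{\tau,\sigma}=(\sigma_-^-,\tau_+^+)$, whose endpoints are mixed between the two polygons. When the outer bracket with $A$ is taken, the edges of $A$ are paired against \emph{all} edges of $\lceil\sigma^*,\tau^*\rceil$, including these mixed ones, so the expansion is not indexed by triples $(\theta,\sigma,\tau)$ of original edges: most terms involve a quadruple of edges, two of which lie in the same polygon (the paper's $P_1,P_2,Q_i,R_i$ terms), and those cancel \emph{pairwise} between two of the three cyclic summands by mere antisymmetry of $\epsilon$, with no triangle identity needed. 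Only the terms coming from the mixed ghost edges (the paper's $S_1,S_2$) cancel in threes, and the identity that kills them is not the cocycle property \eqref{eq:triang-intersect} applied to a triangle spanned by $\theta,\sigma,\tau$: it is the quadratic \emph{hexagonal relation}
$$
\epsilon(Xx,Yy)\epsilon(Xy,Zz)+\epsilon(Zz,Xx)\epsilon(Zx,Yy)+\epsilon(Yy,Zz)\epsilon(Yz,Xx)=0\ ,
$$
valid only when $\{X,x\}\cap\{Y,y\}\cap\{Z,z\}=\emptyset$, which involves exactly those recombined geodesics $Xy$, $Zx$, $Yz$ with endpoints borrowed from two different edges. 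Your proposed coefficient sum $\epsilon(\sigma,\theta)\epsilon(\tau,\sigma)+\epsilon(\tau,\sigma)\epsilon(\theta,\tau)+\epsilon(\theta,\tau)\epsilon(\sigma,\theta)$ is not what arises and does not vanish in general; the linear cocycle identity cannot produce a quadratic cancellation of this type.

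A second, related gap: you assert that the configurations appearing in the three cyclic summands "agree up to cyclic relabeling" with $\lceil\theta^*,\sigma^*,\tau^*\rceil$. They do not. The configurations that occur are of several distinct shapes --- concatenations such as $\lceil a^*\bullet G(\phi,b)\bullet c^*\bullet G(b,\phi)\rceil$, $\lceil a^*\bullet b\bullet c^*\bullet b^*\rceil$, and $\lceil a^*\bullet c^*\bullet b^*\rceil$, depending on whether the edge of $[B,C]$ hit by $A$ is a visible edge, a ghost edge of $B$ or $C$, or one of the two mixed ghost edges --- and identifying precisely which terms of which cyclic summand carry the \emph{same} configuration (so that their coefficients can be added) is the real combinatorial content of the proof; it requires an explicit description of the iterated opposite $\phi^*$ inside $\lceil\sigma^*,\tau^*\rceil$ in each of these cases, plus a separate case analysis according to which of $A$, $B$, $C$ have rank one (where the bracket formula degenerates and some of the eight cases collapse or disappear). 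Without that matching, "collect the contributions of a fixed triple and apply a triangle identity" does not close the argument.
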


Finally we now extend the map $\T$ by linearity to $\mathcal A$ so as to define $\T_G(\rho)$ for $G$ an element of $\mathcal A$, while defining
$$\T_{\casper}(\rho) \defeq \frac{1}{\dim(E)}\ .$$

The purpose of this formal point of view is to  obtain  the simple formula:
\begin{proposition}\label{pro:I-T} We have for $G, H$ ghost polygons then
	\begin{equation}
\bI_\rho(G,H)=\T_{[G,H]}(\rho)\ .	
\end{equation}
\end{proposition}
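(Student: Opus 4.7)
The plan is to derive Proposition \ref{pro:I-T} as a bookkeeping identity that compares the explicit combinatorial formula for the ghost intersection (Proposition \ref{pro:gh-intersect2}) against the definition of the ghost bracket on $\mathcal A$, evaluated via the linear extension of $\T_\cdot(\rho)$. Essentially all the analytic work — the contraction estimates for the cohomological equation, the construction of $\Omega_{\rho(\cdot)}$ and its geodesic boundedness — has been carried out before; what remains is a matching of indices and signs, plus a single appeal to cyclicity of trace.

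First, I would unpack both sides. By Proposition \ref{pro:gh-intersect2}, writing $\vartheta = (\theta_i)_{i\in I}$ and $\varsigma = (\sigma_j)_{j\in J}$ for the ghost polygons of $G$ and $H$,
$$
\bI_\rho(G,H) = \sum_{i\in I, j\in J} (-1)^{i+j}\, \epsilon(\sigma_j,\theta_i)\, \T_{\lceil \sigma_j^*,\theta_i^*\rceil}(\rho).
$$
On the other hand, the definition of the $\Theta$-ghost bracket reads
$$
[G,H] = \sum_{i,j} \epsilon(\sigma_j,\theta_i)(-1)^{i+j}\, \lceil \theta_i^*, \sigma_j^*\rceil,
$$
so that by the linear extension of $\T_\cdot(\rho)$ to $\mathcal A$ we get
$$
\T_{[G,H]}(\rho) = \sum_{i,j} \epsilon(\sigma_j,\theta_i)(-1)^{i+j}\, \T_{\lceil \theta_i^*, \sigma_j^*\rceil}(\rho).
$$

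The key observation that bridges the two expressions is the cyclic invariance of the correlation function. Since $\T_{\lceil a_1,\dots,a_n\rceil}(\rho) = \tr\bigl(\pp(a_n)\cdots \pp(a_1)\bigr)$, the cyclicity of the trace gives, for any two tuples $U$ and $V$ (and in particular the opposite configurations $\theta_i^*$ and $\sigma_j^*$),
$$
\T_{\lceil U,V\rceil}(\rho) = \T_{\lceil V,U\rceil}(\rho).
$$
Applying this with $U = \theta_i^*$ and $V = \sigma_j^*$ matches the two sums above term by term, yielding $\bI_\rho(G,H) = \T_{[G,H]}(\rho)$.

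The only real obstacle is already absorbed into Proposition \ref{pro:gh-intersect2}; if one wants to reprove that step as part of the argument, the natural route is to apply the alternative formula for ghost integration (Proposition \ref{pro:gintegr-alt}) with $\alpha = \Omega_{\rho(H)}$, expand $\Omega_{\rho(H)}$ into its $\omega_{g}$ and $\omega_{\bzeta}$ pieces weighted by the opposite ghost endomorphisms $\pp_H(\sigma_j^*)$, and then use the duality $\int_\theta \omega_\sigma = \epsilon(\theta,\sigma)$ (and its analogue for polygonal arcs) to collapse every integral into an intersection number. The delicate book-keeping there is making sure that the $(-1)^i$ indexing of ghost versus visible edges in $\vartheta$ matches the sign conventions implicit in $\Omega_{\rho(G)}$; once the signs are checked, the identity $\T_{[G,H]}(\rho) = \bI_\rho(G,H)$ falls out immediately from cyclicity of trace.
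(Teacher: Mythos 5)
Your proposal is correct and is essentially the paper's own argument: the paper introduces Proposition \ref{pro:I-T} explicitly as a rewriting of Propositions \ref{pro:gh-intersect1} and \ref{pro:gh-intersect2}, so the whole content is the term-by-term comparison with the definition of the ghost bracket, using that $\lceil \sigma_j^*,\theta_i^*\rceil$ and $\lceil \theta_i^*,\sigma_j^*\rceil$ are the same cyclic configuration (equivalently, cyclicity of the trace). Your closing remark on how one would reprove Proposition \ref{pro:gh-intersect2} via Proposition \ref{pro:gintegr-alt} and the duality formulas also matches the paper's route.
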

This formula will allow us to compute recursively Poisson brackets of correlation functions.
\begin{proof}
	This is a simple rewriting of Propositions \ref{pro:gh-intersect1} and \ref{pro:gh-intersect2}.
\end{proof}

\subsection{The projective case:   swapping  and ghost algebras}\label{sec:bracket-ghost}
Throughout this section, we will restrict ourselves to the projective case, that is $\Theta=\{1\}$.

\subsubsection{Ghost polygons and multifractions}
 In \cite{Labourie:2012vka}, the second author introduced the {\em swapping algebra} $\mathcal L$ consisting of polynomials in variables $(X,x)$, where $(X,x)$ are points in $S^1$, together with the relation $(x,x)=0$. We introduced the {\em swapping bracket} defined on the generators  by
$$
[(X,x),(Y,y)]=\epsilon\left((Y,y),(X,x)\right)\ \big((X,y)\cdot (Y,x)\big )\ .
$$ 
We proved that the swapping bracket gives to the swapping algebra the structure of a Poisson algebra. 

We also introduced the {\em multifraction algebra} $\mathcal B$ which is the vector space in the fraction algebra of $\mathcal L$ generated by the {\em multifractions} which are elements defined, when $X$ and $x$ are a $n$ tuples of points in the circle and $\sigma$ an element of the symmetric group $\mathfrak S(n)$  by
$$
[X,x;\sigma]\defeq \frac{\prod_{i=1}^n (X_i,x_{\sigma(i)})}{\prod_{i=1}^n(X_i,x_{i})}\ .
$$
We proved that the multifraction algebra is stable by the Poisson bracket, while it is obviously stable by multiplication.

\begin{definition}[\sc Extended swapping algebra]Let us consider the algebra ${\mathcal B}_0$ which is generated as a vector space  by the multifraction algebra to which we add extra generators denoted  $\ell_g$ for any geodesic $g$ \footnote{The generator $\ell_g$ is  formally a logarithm  $\log(g)$ of the geodesic $g$.} as well as a central element $\casper$; we  finally extend 
 the  swapping bracket to  ${\mathcal B}_0$ by adding 
 \begin{eqnarray}
		\ [\ell_g,\ell_h]\defeq \frac{1}{g\ h}[g,h]+\epsilon(g,h)\casper \ \ ,\ \ \ 
	[G,\ell_h] \defeq \frac{1}{h}[G,h] \eqdef-[\ell_h,G] \label{eq:ext-swap} \  \ . 
	 \end{eqnarray}
	 We call ${\mathcal B}_0$ with the extended swapping bracket, the {\em extended swapping algebra}.
	 
	 The reversing orientation is defined on generators by $\overline{\ell_g}=\ell_{\overline{g}}$, 
\end{definition}

We then have
\begin{proposition}\label{pro:ext-swapp}
	The extended swapping algebra is a Poisson algebra. The reversing isomorphisms antipreserves the Poisson structure:
	$
	\left[\overline{G},\overline{H}\right]=-\overline{\left[ G, H\right]}
	$.
\end{proposition}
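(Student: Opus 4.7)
The plan is to establish antisymmetry, the Leibniz rule, the Jacobi identity, and compatibility with reversing orientation, reducing everything to the known Poisson algebra structure of $\mathcal B$ from \cite{Labourie:2012vka}. Antisymmetry on generators is immediate from the definitions in \eqref{eq:ext-swap}, and the Leibniz rule is imposed by extending the bracket to $\mathcal B_0$ as a biderivation, which is well-defined since no new polynomial relations are introduced when passing from $\mathcal B$ to $\mathcal B_0$.

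The substance is the Jacobi identity. Since $\casper$ is central, any triple involving $\casper$ vanishes, so the only nontrivial cases involve one or more of the new generators $\ell_g$. For a triple $(\ell_g, X, Y)$ with $X, Y \in \mathcal B$, I would expand $[X, \ell_g] = \tfrac{1}{g}[X, g]$ and apply Leibniz: the correction terms involving $[X, \tfrac{1}{g}] = -\tfrac{1}{g^2}[X, g]$ that arise inside $[X, [Y, \ell_g]]$ and $[Y, [\ell_g, X]]$ are opposite in sign and cancel, and the surviving terms assemble to $\tfrac{1}{g}$ times the Jacobi identity for $(g, X, Y)$ in $\mathcal B$, which vanishes. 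For $(\ell_g, \ell_h, X)$, the $\casper$-part of $[\ell_g, \ell_h]$ contributes nothing to $[X, [\ell_g, \ell_h]]$ by centrality, the Leibniz corrections cancel in symmetric pairs, and one obtains $\tfrac{1}{gh}$ times the $\mathcal B$-Jacobi for $(g, h, X)$. The triple $(\ell_g, \ell_h, \ell_k)$ is handled the same way: every Casimir part drops out and one reduces to $\tfrac{1}{ghk}$ times the $\mathcal B$-Jacobi for $(g, h, k)$.

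For the reversing statement, I would extend the reversing isomorphism from $\mathcal B$ to $\mathcal B_0$ by $\overline{\ell_g} = \ell_{\bar g}$ and $\overline{\casper} = -\casper$; the sign on $\casper$ is forced by the computation below. Using $[\bar g, \bar h] = -\overline{[g, h]}$ (valid in $\mathcal B$) and the identity $\epsilon(\bar g, \bar h) = \epsilon(g, h)$ deduced from the two rules $\epsilon(g_0, g_1) = -\epsilon(\bar g_0, g_1) = -\epsilon(g_1, g_0)$, a direct verification on the generators in \eqref{eq:ext-swap} yields both $[\overline{\ell_g}, \overline{\ell_h}] = -\overline{[\ell_g, \ell_h]}$ and $[\overline{G}, \overline{\ell_h}] = -\overline{[G, \ell_h]}$; the property then extends to all of $\mathcal B_0$ by linearity and the Leibniz rule for biderivations.

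The main obstacle is the bookkeeping in the Jacobi verifications for $(\ell_g, \ell_h, X)$ and $(\ell_g, \ell_h, \ell_k)$: one must patiently list the Leibniz correction terms arising from differentiating each denominator $\tfrac{1}{g}$, $\tfrac{1}{h}$, $\tfrac{1}{k}$ and verify that they cancel in symmetric pairs, leaving exactly the $\mathcal B$-Jacobi, while checking at each stage that the central $\casper$-contributions do not reappear.
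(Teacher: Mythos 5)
Your proposal is correct and follows essentially the same route as the paper: the authors also prove the result as "a standard check that adding logarithmic derivatives to a Poisson algebra still gives a Poisson algebra," verifying that $\partial_g\colon z\mapsto[\ell_g,z]=\tfrac{1}{g}[g,z]$ is a derivation of the swapping bracket and that $[\partial_g,\partial_h]=[[\ell_g,\ell_h],\cdot]$, which is exactly your case analysis with the Leibniz corrections from differentiating the denominators cancelling and the Casimir part dropping out by centrality. Your treatment is in fact slightly more complete, since you also spell out the triple $(\ell_g,\ell_h,\ell_k)$ and the orientation-reversal compatibility (including the forced convention $\overline{\casper}=-\casper$), which the paper's proof leaves implicit.
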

\begin{proof}  This is just a standard check that adding ``logarithmic derivatives'' to a Poisson algebra still gives a Poisson algebra.
	We first see that that
	$$
	\partial_g: z\mapsto [\ell_g,z]=\frac{1}{g}[g,z]\ ,
	$$
	is a derivation on the fraction algebra of the swapping bracket.
	Indeed, 
		\begin{eqnarray*}
	\partial_g([z,w])&=&\frac{1}{g}[g,[z,w]] 
		=\frac{1}{g}\left([z,[g,w]]-[w,[g,z]]\right)\\
		&=&\left(\left[z,\frac{[g,w]}{g}\right]-\left[w,\frac{[z,w]}{g}\right]\right)=[z,\partial_g(w)]+[\partial_g(z),w]\ .
	\end{eqnarray*}
Moreover,  the bracket of derivation gives
	$
	[\partial_g,\partial_h](z)=[[\ell_g,\ell_h],z]$ 	Let us check this last point:
	$$
\partial_g \left(\partial_h (z)\right)= \frac{1}{g}\left[g,\frac{1}{h}[h,z]\right]=-\frac{1}{gh^2}[g,h][h,z] + \frac{1}{g h}[g,[h,z]]\  .
	$$
	Thus, we complete the proof of the proposition
	\begin{eqnarray*}
		[\partial_g,\partial_h](z)=[g,h]\left(-\frac{[h,z]}{gh^2}- \frac{[g,z]}{hg^2}\right)  + \frac{1}{gh}[[g,h],z]]=\left[\frac{[g,h]}{gh},z\right]\ .\qedhere
	\end{eqnarray*}
\end{proof}
\subsubsection{Ghost algebra and the extended swapping algebra}

In the projective case, it is convenient to consider the free polynomial algebra $\mathcal A_P$ generated by the ghost polygons, and extend the {\em ghost bracket}  by the Leibnitz rule to $\mathcal A_P$.

In this paragraph, we will relate the algebras $\mathcal A_P$ and $\mathcal B_0$, more precisely we will show:
 
 \begin{theorem}
 	\label{theo:ext-ghost}
 	There exists a homomorphisms of commutative algebra map $$\pi:\mathcal A_P\to\mathcal B_0\ ,$$
which is surjective, preserves the bracket and and the reversing the orientation isomorphism:
 	$$
 	[\pi(A),\pi(B)]=\pi[A,B]\ \ \ ,\ \ \ \overline{\pi(A)}=\pi\left(\overline{A}\right)\ .
 	$$
 Finally if $A$ belongs to the kernel of $\pi$, then for any uniformly hyperbolic bundle $\rho$, $\T_A(\rho)=0$.
\end{theorem}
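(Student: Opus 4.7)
Define $\pi$ on the generators of $\mathcal A_P$ by setting $\pi(\casper)=\casper$, $\pi(g)=\ell_g$ for a stand-alone geodesic $g$, and
\[
\pi\bigl(\lceil g_1,\dots,g_p\rceil\bigr)\;\defeq\;\frac{\prod_{i=1}^p (g_i^+,g_{i+1}^-)}{\prod_{i=1}^p (g_i^+,g_i^-)}\;=\;[X,x;\sigma]\ ,
\]
where $X_i=g_i^+$, $x_i=g_i^-$, and $\sigma$ is the cyclic shift $i\mapsto i+1$ mod $p$. Extend $\pi$ as an algebra homomorphism. Observe that the $p$ factors of the denominator are the letters attached to the visible edges of the ghost polygon, while the $p$ factors of the numerator are the letters attached to the ghost edges; this bookkeeping is crucial for bracket preservation. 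Surjectivity follows because every multifraction $[X,x;\tau]$ factors along the cycle decomposition of $\tau$ into cyclic multifractions of the above shape, each realised by a ghost polygon; together with the generators $\ell_g$ and $\casper$, these fill $\mathcal B_0$. Compatibility with orientation reversal is immediate, since $g\mapsto\bar g$ swaps $g^+\leftrightarrow g^-$, matching the capital/lowercase involution of $\mathcal B_0$.

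\textbf{Bracket preservation.} Both brackets are antisymmetric biderivations, so it suffices to verify $\pi[A,B]=[\pi(A),\pi(B)]$ on pairs of generators. For two stand-alone geodesics $g,h$, the swapping bracket evaluates to $[g,h] = \epsilon(h,g)(g^+,h^-)(h^+,g^-)$, and dividing by $g\cdot h = (g^+,g^-)(h^+,h^-)$ according to \eqref{eq:ext-swap} gives
\[
[\ell_g,\ell_h]\;=\;\epsilon(h,g)\,\pi(\lceil h,g\rceil)\,+\,\epsilon(g,h)\,\casper\;=\;\epsilon(h,g)\bigl(\pi(\lceil h,g\rceil)-\casper\bigr)\;=\;\pi[g,h]\ .
\]
The polygon-vs-geodesic case reduces similarly via the second relation in \eqref{eq:ext-swap}. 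The decisive case is $[G,H]$: expanding $[\pi(G),\pi(H)]$ by Leibniz over all letters produces a double sum over pairs $(i,j)\in I\times J$; each elementary bracket $[(u_i,v_i),(u'_j,v'_j)]=\epsilon\bigl((u'_j,v'_j),(u_i,v_i)\bigr)\,(u_i,v'_j)(u'_j,v_i)$ performs a cycle surgery that splices the two cyclic factors of $\pi(G)\pi(H)$ into a single cyclic factor corresponding exactly to the opposite configuration $\lceil\theta_i^*,\sigma_j^*\rceil$. The denominator-vs-numerator position of each letter (visible vs ghost edge) produces the sign $(-1)^{i+j}$, and the swapping coefficient matches $\epsilon(\sigma_j,\theta_i)$.

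\textbf{Kernel and vanishing of $\T_A$.} Fix a projective Anosov representation $\rho$ with limit maps $\xi\colon\partial_\infty\Gamma\to\mathbb P(V)$ and $\xi^*\colon\partial_\infty\Gamma\to\mathbb P(V^*)$. Assigning to $(g^+,h^-)$ the scalar $\tilde\xi^*(h^-)(\tilde\xi(g^+))$ for arbitrary lifts extends to an algebra morphism $\mathrm{ev}_\rho$ from $\mathcal B_0$ to analytic functions on the deformation space, since the lift-scaling ambiguity cancels inside any multifraction. Because the fundamental projector $\pp_\rho(g)$ is the rank-one operator onto $\xi(g^+)$ parallel to $\ker\xi^*(g^-)$, a direct rank-one trace calculation yields
\[
\T_G(\rho)\;=\;\tr\bigl(\pp_\rho(g_p)\cdots\pp_\rho(g_1)\bigr)\;=\;\frac{\prod_i\tilde\xi^*(g_{i+1}^-)(\tilde\xi(g_i^+))}{\prod_i\tilde\xi^*(g_i^-)(\tilde\xi(g_i^+))}\;=\;\mathrm{ev}_\rho\bigl(\pi(G)\bigr)\ ,
\]
and similarly $\T_\casper(\rho)=1/\dim(V)=\mathrm{ev}_\rho(\casper)$, while $\T_g(\rho)=1$ is consistent with the trivial exponentiation of $\ell_g$. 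Hence $\T_\bullet(\rho)=\mathrm{ev}_\rho\circ\pi$, and $\pi(A)=0$ forces $\T_A(\rho)=0$.

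\textbf{Main obstacle.} The decisive technical step is the polygon-vs-polygon bracket identity: one must track carefully how a single elementary swap of two letters reorganises the two cyclic products in $\pi(G)\pi(H)$ into one cyclic product, verify that the resulting cycle is literally the ordering $\lceil\theta_i^*,\sigma_j^*\rceil$ of opposite configurations, and pin down the sign $(-1)^{i+j}$ via the alternating role of numerator (ghost) and denominator (visible) letters in the cyclic multifraction.
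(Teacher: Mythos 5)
Your proposal follows essentially the same route as the paper: the same definition of $\pi$ on generators, surjectivity via the cycle decomposition of permutations (the paper's polygonal decomposition, corollary \ref{cor:pi-surj}), the kernel statement via the factorization $\T_G(\rho)=\T^P_{\pi(G)}(\rho)$ through rank-one trace identities (lemma \ref{lem:kerp}), and bracket preservation by matching the ghost-bracket formula against the Leibniz expansion of the swapping bracket of two cyclic multifractions. The one step you leave as a plan --- the polygon-versus-polygon surgery producing $\lceil\theta_i^*,\sigma_j^*\rceil=G\,H\,\lceil\theta_i,\sigma_j\rceil$ with sign $(-1)^{i+j}$ from the ghost/visible (numerator/denominator) dichotomy --- is exactly the content of the paper's proposition \ref{pro:gh-swap}, carried out there via the logarithmic-derivative identity $\partial_g=\tfrac1g[g,\cdot]$.
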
 
 
 Thus, $\mathcal A_P/\ker(\pi)$ is identified as an algebra with  bracket with $\mathcal B_0$; in particular $\mathcal A_P/\ker(\pi)$ is a Poisson algebra.
 
 This will allow in the applications to reduce our computations to calculations in the extended swapping algebra, making use of the fact that  the extended swapping algebra is a Poisson algebra by proposition \ref{pro:ext-swapp}.
 
 Unfortunately,  we do not have the analogue of the swapping bracket in the general $\Theta$-case, although the construction and result above suggests to  find a combinatorially defined ideal $\mathcal I$ in the kernel of $\T(\rho)$ for any $\rho$, so that $\mathcal A/\mathcal I$ satisfies the Jacobi identity.
 
\subsubsection{From the ghost algebra to  the extended swapping algebra}  
In this paragraph, we define the map $\pi$ of Theorem \ref{theo:ext-ghost}. The map $\pi$  is defined  on the generators by 
\begin{eqnarray*}
	 g&\longmapsto& \pi(g)\defeq \ell_g\ , \\
	 G=\lceil g_1,\ldots ,g_p\rceil &\longmapsto&\pi(G)\defeq [X,x;\sigma]=\frac{\prod_{i=1}^n (g^+_i,g^-_{i+1})}{\prod_{i=1}^n(g^+_i,g^-_{i})}\ .
	 \end{eqnarray*}
where $X=(g^+_i)$, $x=(g^-_i)$, $\sigma(i)=i+1$.
Cyclicity is reflected by 
\begin{eqnarray}
\pi(\lceil g_1,\ldots ,g_p\rceil) &=&\pi(\lceil g_2,\ldots ,g_p, g_1 \rceil)\ \label{eq:cyclic}.
\end{eqnarray}
Conversely, we then have the following easy construction.

Let $X=(X_1,\ldots,X_k)$, $x=(x_1,\ldots, x_k)$, and $g_i$ the geodesic $(X_i,x_i)$. Let $\sigma$ be  a permutation of $\{1,\ldots,k\}$ and let us write $\sigma=\sigma_1,\ldots,\sigma_q$ be the decomposition of $\sigma$ into commuting cycles $\sigma_i$ or order $k_i$ with support $I_i$. For every $i$, let $m_i$ be in $I_i$ and let us define 
$$h^i_j= g_{\sigma_i^{j-1}(m_i)}\ \ , \ \ 
	 G_i=\lceil h^i_1, \ldots h^i_{k_i}\rceil\ . $$ 
We then have with the above notation
	\begin{equation}
		[X,x;\sigma]=\pi(G_1\ldots G_q)\ .\label{eq:cyc-dexomp-multi}	
	\end{equation}

\begin{corollary}\label{cor:pi-surj}
		The map $\pi$ is surjective.
\end{corollary}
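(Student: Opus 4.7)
The surjectivity of $\pi$ is essentially a direct consequence of the construction preceding the corollary, in particular of equation \eqref{eq:cyc-dexomp-multi}. My plan is to check that every algebra generator of $\mathcal{B}_0$ lies in the image of $\pi$, and then invoke the fact that $\pi$ is a homomorphism of commutative algebras.

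First, I would recall that $\mathcal{B}_0$ is generated as an algebra by three families of elements: the multifractions $[X,x;\sigma]$, the logarithmic generators $\ell_g$ for $g$ a geodesic, and the central element $\casper$. The generators of the second and third types are immediate: by definition $\pi(g)=\ell_g$ for a geodesic $g$ (viewed as a generator of $\mathcal{A}_P$), and the Casimir maps to the Casimir, so both $\ell_g$ and $\casper$ lie in the image.

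For the multifractions, the work has already been done just above the corollary. Given any $[X,x;\sigma]$ with $X=(X_1,\ldots,X_k)$, $x=(x_1,\ldots,x_k)$ and $\sigma\in\mathfrak{S}_k$, I would decompose $\sigma$ as a product of disjoint cycles $\sigma=\sigma_1\cdots\sigma_q$ with supports $I_i$ of size $k_i$, form the cyclically ordered configurations $G_i=\lceil h^i_1,\ldots,h^i_{k_i}\rceil$ with $h^i_j=g_{\sigma_i^{j-1}(m_i)}$ as prescribed, and apply formula \eqref{eq:cyc-dexomp-multi} to write $[X,x;\sigma]=\pi(G_1\cdots G_q)$. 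Since the product $G_1\cdots G_q$ is an element of the polynomial algebra $\mathcal{A}_P$ and $\pi$ is multiplicative, the multifraction lies in the image.

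Finally, since the image of an algebra homomorphism is a subalgebra and we have shown it contains every generator of $\mathcal{B}_0$, surjectivity follows. There is no real obstacle here; the only mildly non-trivial point is the cycle-decomposition identity \eqref{eq:cyc-dexomp-multi}, which reduces the case of an arbitrary permutation $\sigma$ to the case of a single cycle (i.e.\ a single configuration $G_i$) that manifestly corresponds to a ghost polygon under $\pi$.
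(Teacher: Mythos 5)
Your proof is correct and follows exactly the paper's route: the corollary is stated as an immediate consequence of the polygonal decomposition $[X,x;\sigma]=\pi(G_1\cdots G_q)$ of equation \eqref{eq:cyc-dexomp-multi}, together with $\pi(g)=\ell_g$ and multiplicativity of $\pi$. Your write-up just makes explicit the generator-by-generator check that the paper leaves implicit.
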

In the sequel, the decomposition \eqref{eq:cyc-dexomp-multi} will be referred as the {\em polygonal decomposition} of the multifraction 	$[X,x;\sigma]$. We also obviously have
\begin{lemma}
Any tuples of ghost polygons is the polygonal decomposition of a multifraction.
\end{lemma}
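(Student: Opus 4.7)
The plan is to give a direct constructive argument: starting from any tuple $(G_1,\ldots,G_q)$ of ghost polygons, I will exhibit explicit data $(X,x,\sigma)$ whose polygonal decomposition, as defined just before \eqref{eq:cyc-dexomp-multi}, returns exactly this tuple.

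Write each $G_i = \lceil h^i_1,\ldots,h^i_{k_i}\rceil$ with $h^i_j=(X^i_j,x^i_j)$, and set $k\defeq k_1+\cdots+k_q$. First I would concatenate the labels into a single ordered list of length $k$: for $l=k_1+\cdots+k_{i-1}+j$ with $1\leq j\leq k_i$, define $X_l\defeq X^i_j$ and $x_l\defeq x^i_j$, so that $g_l=(X_l,x_l)$ is the $j$-th visible edge of the $i$-th polygon. Next, I would define $\sigma\in\mathfrak S(k)$ blockwise on the consecutive blocks $I_i\defeq\{k_1+\cdots+k_{i-1}+1,\ldots,k_1+\cdots+k_i\}$, letting $\sigma$ act on each $I_i$ as the cyclic shift $l\mapsto l+1$ (with wraparound inside $I_i$). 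By construction, the $I_i$'s are pairwise disjoint and jointly exhaust $\{1,\ldots,k\}$, and the restrictions of $\sigma$ to the $I_i$'s are exactly its commuting cycles $\sigma_1,\ldots,\sigma_q$, of respective lengths $k_i$.

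Finally, I would check that the recipe producing the polygonal decomposition recovers the $G_i$. Choose the base points $m_i\defeq k_1+\cdots+k_{i-1}+1\in I_i$. Then unfolding the definitions, $h^i_j=g_{\sigma_i^{j-1}(m_i)}=(X^i_j,x^i_j)$, so $\lceil h^i_1,\ldots,h^i_{k_i}\rceil=G_i$, and \eqref{eq:cyc-dexomp-multi} gives $[X,x;\sigma]=\pi(G_1\cdots G_q)$ with polygonal decomposition the original tuple.

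This is a pure bookkeeping argument and I do not anticipate any genuine obstacle. The only mild point worth noting is that the polygonal decomposition in \eqref{eq:cyc-dexomp-multi} depends on the choices of base points $m_i$, but cyclicity \eqref{eq:cyclic} makes this indeterminacy inessential, so the construction above is well defined up to the natural cyclic relabelling of each $G_i$.
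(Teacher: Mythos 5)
Your construction is correct and is exactly the inverse of the paper's own recipe \eqref{eq:cyc-dexomp-multi}: the paper states the lemma as obvious, and your blockwise cyclic permutation is the only natural way to spell out that observation. No issues.
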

\subsubsection{The map $\pi$ and the evaluation $\T$}

For any multifraction $B=[X,x;\sigma]$ and uniformly hyperbolic bundle $\rho$ associated to limit curves $\xi$ and dual limit curves $\xi^*$, we define
$$
\T^P_B(\rho)\defeq \frac{\prod_i \braket{V_i,v_{\sigma(i)}}}{\prod_i \braket{V_i,v_i}}\ ,
$$
where $V_i$ is a non-zero vector in $\xi^*(X_i)$ while $v_i$ is a non-zero vector in $\xi(x^i)$.

Given $\rho$, we now extend $G\mapsto\T_G(\rho)$ and $\T^P_B(\rho)$ to homomorphisms of commutative free algebras to $\mathcal A_p$ and $\mathcal B_0$.
We then have the following result which follows at once since we are only considering rank 1 projectors.
\begin{lemma}\label{lem:kerp}
	We have, for all uniformly hyperbolic bundles $\rho$ \begin{eqnarray*}
	\T^P_{\pi(G)}(\rho)=\T_G(\rho)\ \ , \ \ \T_G(\rho)=\T_{\bar G}(\rho^*)\ ,
\end{eqnarray*}
\end{lemma}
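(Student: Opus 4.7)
My plan is a direct computation for each of the two identities, as the author's parenthetical (``the result follows at once since we are only considering rank 1 projectors'') suggests.

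\textbf{For the first identity} $\T^P_{\pi(G)}(\rho)=\T_G(\rho)$, I would write each fundamental projector in its rank-one form $\pp(g_i)=v_i\otimes V_i/\langle V_i,v_i\rangle$ and compute $\tr\bigl(\pp(g_p)\cdots\pp(g_1)\bigr)$ by iterated application of the composition rule $(v\otimes V)(w\otimes W)=\langle V,w\rangle(v\otimes W)$. After $p-1$ steps the product collapses to a single rank-one operator $v_p\otimes V_1$ times the scalar $\prod_{i=1}^{p-1}\langle V_{i+1},v_i\rangle/\prod_i\langle V_i,v_i\rangle$, and taking the trace brings in the final factor $\langle V_1,v_p\rangle$. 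I would then verify that, after matching the chosen representatives $v_i,V_i$ with the multifraction variables in $\xi(x_i)$ and $\xi^*(X_i)$ appearing in $\pi(G)=[X,x;\sigma]$ with $X_i=g_i^+$, $x_i=g_i^-$, $\sigma(i)=i+1$, the resulting cyclic product is exactly $\prod_i\langle V_i,v_{\sigma(i)}\rangle/\prod_i\langle V_i,v_i\rangle=\T^P_{\pi(G)}(\rho)$.

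\textbf{For the second identity} $\T_G(\rho)=\T_{\bar G}(\rho^*)$, I would invoke duality. The contragredient $\rho^*$ on $V^*$ is itself projective Anosov with limit maps $\xi_{\rho^*}=\xi^*_\rho$ and $\xi^*_{\rho^*}=\xi_\rho$ (via $V^{**}\cong V$); consequently $\pp_{\rho^*}(\bar g)=\pp_\rho(g)^T$, since both have image equal to the line $\xi^*_\rho(g^-)\subset V^*$ and kernel equal to the annihilator of $\xi_\rho(g^+)\subset V$. Writing $\bar G=\lceil\bar g_p,\dots,\bar g_1\rceil$, the definition of $\T$ yields
\[\T_{\bar G}(\rho^*)=\tr\bigl(\pp_{\rho^*}(\bar g_1)\cdots\pp_{\rho^*}(\bar g_p)\bigr)=\tr\bigl(\pp_\rho(g_1)^T\cdots\pp_\rho(g_p)^T\bigr)=\tr\bigl((\pp_\rho(g_p)\cdots\pp_\rho(g_1))^T\bigr)=\T_G(\rho),\]
using $(AB)^T=B^TA^T$ and trace--transpose invariance.

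\textbf{The main obstacle} I expect is the convention-matching in the first identity: the natural rank-one decomposition of $\pp_\rho(g_i)$ puts its image in $\xi(g_i^+)$ and its kernel-annihilator in $\xi^*(g_i^-)$, whereas the multifraction variables $V_i,v_i$ of $\pi(G)$ live in the ``opposite'' limit lines $\xi^*(g_i^+),\xi(g_i^-)$. Reconciling these two conventions and verifying that the telescoping trace reproduces exactly the cyclic product defining $\pi(G)$ with $\sigma(i)=i+1$ is where the argument requires care; once this dictionary is in place, the rank-one composition identity does all the algebraic work.
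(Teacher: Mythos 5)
Your strategy is necessarily the intended one, since the paper offers no argument beyond the remark that the projectors have rank one, and your proof of the second identity is complete and correct: $\pp_{\rho^*}(\bar g)=\pp_\rho(g)^T$ (same image $\xi^*_\rho(g^-)$, same kernel, both idempotent) together with $\tr(A^T)=\tr(A)$ does the job, and in fact does not even use rank one.

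The problem sits exactly at the step you defer in the first identity, and it is not a harmless relabelling. With the paper's conventions (image of $\pp(g)$ is $\xi(g^+)$, kernel the annihilator of $\xi^*(g^-)$ --- this is forced by the proof of Lemma \ref{lem:opp-endo}), write $\pp(g_i)=w_i\otimes W_i/\braket{W_i,w_i}$ with $w_i\in\xi(g_i^+)$, $W_i\in\xi^*(g_i^-)$. The telescoping you describe gives
\[
\T_G(\rho)=\frac{\prod_{i}\braket{W_{i+1},w_i}}{\prod_i\braket{W_i,w_i}}\ ,
\]
a cyclic product of pairings of $\xi^*$ at \emph{backward} endpoints against $\xi$ at \emph{forward} endpoints (numerator indexed by the ghost edges $(g_{i+1}^-,g_i^+)$, denominator by the visible edges). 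The printed definition of $\T^P_{[X,x;\sigma]}$ with $X_i=g_i^+$, $x_i=g_i^-$, $\sigma(i)=i+1$ instead pairs $V_i\in\xi^*(g_i^+)$ against $v_{i+1}\in\xi(g_{i+1}^-)$, i.e.\ $\xi^*$ at forward endpoints against $\xi$ at backward endpoints. These are not the same number for a general projective Anosov representation: the second quantity equals $\T_G(\rho^*)=\T_{\bar G}(\rho)$, which differs from $\T_G(\rho)$ unless $\rho$ is self-dual. So the ``dictionary'' you postpone does not exist as an identity; what is needed is a convention fix, namely reading the generator $(X,x)$ as evaluating to $\braket{V,v}$ with $V\in\xi^*(x)$ and $v\in\xi(X)$ (equivalently, exchanging the roles of $X_i$ and $x_i$ in the definition of $\T^P$). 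Once that is made explicit, your telescoping closes the argument; as written, the assertion that the verification ``goes through'' is the gap.
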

This proposition implies that for every $G$ in the kernel of $\pi$, for every $\rho$, $\T_G(\rho)=0$.

\subsubsection{Swapping bracket}

We now compute the brackets of multifractions. We shall use the notation of paragraph \ref{sec:ghost-polygon} where the opposite configuration $g^*$ of a ghost or visible  edge $g$ is defined. Observe that $g^*$ is an ordered configuration. Then we have 
\begin{proposition}[\sc Computation of swapping brackets]\label{pro:gh-swap} Let $G$ and $H$ be two multifractions that are images of ghost polygons: 
 $G = \pi(\theta_1,\ldots,\theta_{2p})$ and $H = \pi(\zeta_1,\ldots,\zeta_{2q})$. Then their swapping bracket is given by 
	\begin{eqnarray}
	[G  ,H  ] &=&
		 \left(G\  H \ \left( \sum_{i,j}	\epsilon(\zeta_j,\theta_i)(-1)^{i+j}\pi(\lceil \theta_i,\zeta_j \rceil)\right) \right)  \label{def:ghost-bracketGH}\ . \end{eqnarray}
		Moreover, for $g=(X,x)$ and $h=(Y,y)$ geodesics, we have in the fraction algebra of the swapping algebra.
		\begin{eqnarray}
		\ [\ell_h,\ell_g] &=& \left( \epsilon(g,h) \  \pi(\lceil g,h\rceil)\right)   \ .\label{def:swap-bracketgh}\\ \ 
	[G  , \ell_h]&=& \left(G\  \ \left( \sum_{i}
		\epsilon(h,\theta_i)(-1)^{i+1}\pi(\lceil \theta_i,h\rceil)\right)\right)  \  \label{def:swap-bracketgH}\label{def:ghost-bracketgH}\ . 
	 \end{eqnarray}
	 Using the notation $\theta^*_i$ for the opposite edge, we have, for every $i$ and $j$
	 \begin{eqnarray}
	 	\pi(\lceil \theta_i^*,\zeta_j^*\rceil )=G\ H \ \pi(\lceil \theta_i,\zeta_j\rceil)\label{eq:brack-opp}\ .
	 \end{eqnarray}
\end{proposition}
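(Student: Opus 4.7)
The plan is to reduce all four bracket computations to the swapping bracket of two generators via the Leibniz (biderivation) rule. Identifying the oriented geodesic $\theta_i$ with the swapping generator $(\theta_i^+,\theta_i^-)$, the definition of $\pi$ gives the compact monomial expression
\begin{equation*}
\pi(G)=\prod_{i=1}^{2p}\theta_i^{(-1)^{i+1}}\ ,
\end{equation*}
with ghost edges (odd $i$) in the numerator and visible edges (even $i$) in the denominator, and analogously for $\pi(H)$.

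For \eqref{def:swap-bracketgh}, substitute $g=(g^+,g^-),\ h=(h^+,h^-)$ into the basic swapping rule to get $[g,h]=\epsilon(h,g)(g^+,h^-)(h^+,g^-)=\epsilon(h,g)\,gh\,\pi(\lceil g,h\rceil)$, and then plug into the extension \eqref{eq:ext-swap}. For \eqref{def:ghost-bracketGH}, I would apply Leibniz in both arguments: since the ``logarithmic bracket'' $[{-},{-}]/({-}\,{-})$ is additive on factors, one gets
\begin{equation*}
\frac{[\pi(G),\pi(H)]}{\pi(G)\pi(H)}=\sum_{i,j}(-1)^{i+1}(-1)^{j+1}\frac{[\theta_i,\zeta_j]}{\theta_i\zeta_j}=\sum_{i,j}(-1)^{i+j}\frac{[\theta_i,\zeta_j]}{\theta_i\zeta_j}\ .
\end{equation*}
The sign $(-1)^{i+j}$ of \eqref{def:ghost-bracketGH} is thus simply the product of the two exponents of $\theta_i$ and $\zeta_j$. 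Substituting $[\theta_i,\zeta_j]=\epsilon(\zeta_j,\theta_i)(\theta_i^+,\zeta_j^-)(\zeta_j^+,\theta_i^-)$ and recognizing $(\theta_i^+,\zeta_j^-)(\zeta_j^+,\theta_i^-)/(\theta_i\zeta_j)=\pi(\lceil\theta_i,\zeta_j\rceil)$ produces the claimed formula. Formula \eqref{def:ghost-bracketgH} follows by the same Leibniz argument applied only to $\pi(G)$, using $[{-},\ell_h]=(1/h)[{-},h]$, which is a derivation by the computation in the proof of proposition~\ref{pro:ext-swapp}.

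The identity \eqref{eq:brack-opp} is a bookkeeping check on the definition of $\pi$. Concatenating the two opposite configurations into the cyclic configuration $\lceil\theta_i^*,\zeta_j^*\rceil$, the numerator of $\pi$ decomposes into three groups: the ghost edges of $G$ (save $\theta_i$ itself when $\theta_i$ is a ghost edge), the ghost edges of $H$ (similarly for $\zeta_j$), and the two ``mixed'' generators $(\theta_i^+,\zeta_j^-)$ and $(\zeta_j^+,\theta_i^-)$; the denominator collects the visible edges of both polygons with a suitable double count. Direct cancellation then extracts the factor $GH$ and leaves $\pi(\lceil\theta_i,\zeta_j\rceil)$. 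I expect this case analysis to be the most tedious part of the proof, since one must separately verify each of the four cases of ghost/visible for $\theta_i$ and $\zeta_j$: the length of the concatenated tuple and precisely which generators survive in the numerator depend on these choices, although the end result is uniform.
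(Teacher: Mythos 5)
Your proposal is correct and follows essentially the same route as the paper's proof: write $\pi(G)$ and $\pi(H)$ as ratios of products of ghost and visible edges, apply the Leibniz rule (logarithmic derivatives) to reduce to pairwise swapping brackets of generators, read off the signs $(-1)^{i+j}$ from the exponents, and check \eqref{eq:brack-opp} directly from the definition of $\pi$. No substantive difference from the argument in the text.
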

\begin{proof} In this proof, we will omit to write $\pi$ and confuse a ghost polygon and its image under $\pi$. Equation \eqref{def:swap-bracketgh} follows at once from the definition. Now let $G=\lceil g_1,\ldots, g_p\rceil$, let $\eta_i$ be the ghost edges joining $g_{i+1}^-$ to $g_i^+$. Then we may write in the fraction algebra of the swapping algebra
$$
\lceil g_1,\ldots, g_p\rceil  =\frac{\prod_{i=1}^p\eta_i}{\prod_{i=1}^p g_i}\ .
$$
Using logarithmic derivatives we then have
\begin{eqnarray*}
	\frac{1}{G  }[G  ,\ell_h] =\sum_{i=1}^p \left(\frac{1}{h\  \eta_i}[\eta_i,h] -\frac{1}{h \ g_i}[g_i,h] \right)=\sum_{i=1}^p \left(\epsilon(h,\eta_i)\lceil \eta_i,h\rceil  -\epsilon(h,g_i)\lceil g_i,h\rceil   \right)\ ,
\end{eqnarray*}
which gives equation \eqref{def:swap-bracketgH}. 
Writing now 
$$
G  =\lceil g_1,\ldots, g_p\rceil  =\frac{\prod_{i=1}^p\eta_i}{\prod_{i=1}^q g_i}\ \ , \ \ 
H  =\lceil h_1,\ldots, h_q\rceil  =\frac{\prod_{i=1}^q\nu_i}{\prod_{i=1}^q h_i}\ ,
$$
where $\eta_i$ and $\nu_i$ are ghost edges of $G$ and $H$ respectively, we get
\begin{eqnarray*}
\frac{[G  ,H  ] }{G  \  H  }	&=&\sum_{(i,j)}\left(\frac{1}{g_i \ h_j}[g_i,h_j]  -\frac{1}{g_i\  \nu_j}[g_i,\nu_j] +\frac{1}{\eta_i \ \nu_j}[\eta_i,\nu_j]  - \frac{1}{\eta_i \ h_j}[\eta_i,h_j] \right) \crcr
	&=&\sum_{(i,j)}\left(\epsilon(h_j,g_i)\lceil g_i,h_j\rceil    - \epsilon(\nu_j,g_i)\lceil g_i,\nu_j\rceil   +\epsilon(\nu_j,\eta_i)\lceil \eta_i,\nu_j\rceil    - \epsilon(h_j,\eta_i)\lceil \eta_i,h_j\rceil   \right)\ ,\end{eqnarray*}
which is what we wanted to prove. The equation \eqref{eq:brack-opp} follows from the definition of the map $\pi$. 
\end{proof} 
As a corollary we obtain 
	 \begin{corollary}\label{cor:pres-brak}
	  The map $\pi$ preserves the bracket.
	 \end{corollary}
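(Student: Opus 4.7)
The plan is to verify the identity $\pi([A,B]) = [\pi(A),\pi(B)]$ on pairs of generators of $\mathcal{A}_P$; since both brackets satisfy the Leibniz rule and $\pi$ is a homomorphism of commutative algebras, the identity then extends to products automatically. The generators are ghost polygons, $\Theta$-geodesics, and the Casimir $\casper$, so there are four cases to consider.

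For two ghost polygons $G=\lceil g_1,\ldots,g_p\rceil$ and $H=\lceil h_1,\ldots,h_q\rceil$, associated to ghost polygons $(\theta_i)$ and $(\sigma_j)$ respectively, I would apply $\pi$ directly to the defining formula
$$[G,H] = \sum_{i,j} \epsilon(\sigma_j,\theta_i)(-1)^{i+j}\,\lceil \theta_i^*, \sigma_j^*\rceil$$
of the ghost bracket and use the linearity of $\pi$ to write the result as $\sum \epsilon(\sigma_j,\theta_i)(-1)^{i+j}\pi(\lceil \theta_i^*,\sigma_j^*\rceil)$. The key identity \eqref{eq:brack-opp} of Proposition \ref{pro:gh-swap}, namely $\pi(\lceil \theta_i^*,\sigma_j^*\rceil) = \pi(G)\pi(H)\,\pi(\lceil \theta_i,\sigma_j\rceil)$, then lets one factor out $\pi(G)\pi(H)$ and recognize the remaining sum as the right-hand side of \eqref{def:ghost-bracketGH}, which is precisely $[\pi(G),\pi(H)]$ in the extended swapping algebra.

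For the mixed case of a ghost polygon $G$ and a geodesic $h$, the same strategy applies using formula \eqref{def:ghost-bracketgH} from Proposition \ref{pro:gh-swap} together with the geodesic analogue $\pi(\lceil h,\theta_j^*\rceil) = \pi(G)\,\pi(\lceil h,\theta_j\rceil)$, whose proof is parallel to that of \eqref{eq:brack-opp}, combined with the cyclicity $\pi(\lceil h,\theta_j\rceil) = \pi(\lceil \theta_j,h\rceil)$ (automatic for two-element cyclic tuples). For two geodesics, upon setting $\pi(\casper)=\casper$ and unwinding the extended-swap definition \eqref{eq:ext-swap} together with \eqref{def:swap-bracketgh}, one reads off
$$[\ell_g,\ell_h] = \epsilon(h,g)\,\pi(\lceil h,g\rceil) - \epsilon(h,g)\,\casper,$$
which matches $\pi\bigl(\epsilon(h,g)(\lceil h,g\rceil - \casper)\bigr) = \pi([g,h])$ in the projective case. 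The bracket with $\casper$ vanishes on both sides by centrality.

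The conceptual substance is contained in the already-proved Proposition \ref{pro:gh-swap}; the corollary is essentially a reinterpretation of that proposition from the ghost-algebra side rather than the swapping-algebra side. The only genuine bookkeeping hurdle is aligning the combinatorics of the opposite configurations $\theta_i^*$ uniformly across the cases (visible--visible, visible--ghost, and ghost--ghost), and correctly accommodating the $\casper$-term that arises when the swapping bracket is extended to logarithms via \eqref{eq:ext-swap}.
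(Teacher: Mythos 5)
Your proposal is correct and follows essentially the same route as the paper: the corollary is read off by matching, term by term, the defining formula of the ghost bracket against the swapping-bracket computation of Proposition \ref{pro:gh-swap}, with equation \eqref{eq:brack-opp} supplying the factor $\pi(G)\pi(H)$ that converts opposite configurations into the product form. If anything your write-up is more direct than the paper's one-line proof (which also cites the ghost-intersection Propositions \ref{pro:gh-intersect1}--\ref{pro:gh-intersect2}, not strictly needed for this purely algebraic statement), and your explicit handling of the $\casper$-term in the geodesic--geodesic case and of $\pi(\casper)=\casper$ is a useful bit of bookkeeping the paper leaves implicit.
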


\begin{proof} The proof follows at once from proposition \ref{pro:gh-intersect1} and \ref{pro:gh-intersect2} which computes the ghost intersection and recognizing each term as the correlation functions of a term obtained in the corresponding ghost bracket in proposition \ref{pro:gh-swap}.
\end{proof}
\subsubsection{Proof of Theorem \ref{theo:ext-ghost}} We have proved all that we needed to prove: 
the theorem follows from corollary \ref{cor:pres-brak} and \ref{cor:pi-surj}, as well as lemma \ref{lem:kerp}.

\subsection{Natural maps into the ghost algebra}\label{sec:natural}

\begin{definition}[\sc Natural maps]
	Let $w$ be a $p$-multilinear map from the ghost algebra to itself. We say $w$ is {\em natural}, if for tuples of integers $(n_1,\ldots,n_p)$ there exists an integer $q$, a real number $A$ such that given a tuple of ghost polygons $G=(G_1,\ldots, G_p)$ with $G_i$ in ${\GT}^{n_i}$, then 
$$
w(G_1,\ldots,G_p)=\sum_{i=1}^q \lambda_i H_i\ , 
$$
where $H_i$ are ghost polygons, $\lambda_i$ are real numbers less than $A$ and, moreover, every visible edge of $H_i$ is a visible edge of one of the  $G_i$.\footnote{The existence of $q$ is actually a consequence of the definition: there only finitely many polygons with a given set of visible edges}\end{definition}
We will extend the definition of the core diameter to any element of the ghost algebra
by writing, whenever $H_i$ are  distinct ghost polygons ghost polygons
$$
r\left(\sum_{i=1}^q \lambda_i H_i\right)\defeq \sup_{i=1,\ldots,q} \left(\vert\lambda_i\vert r(H_i)\right)\ , 
$$
We also recall that the core diameter of a ghost polygon, only depends on the set of its visible edges. We then define the core diameter of a tuple of polygons $G=(G_1,\ldots,G_n)$, as the core diameter of the union of the set of edges of the $G_i$'s.

We then have the following inequality of core diameters for a natural map $w$, $G=(G_1,\ldots,G_p)$ and $q$ and $A$ as in the definition
\begin{eqnarray}
	 r(w(G))\leq A \ r(G)\  .\label{ineq:natural-core}
\end{eqnarray}
We now give an example of a natural map
\begin{proposition}\label{pro:bracket-nat}
	The map $
	(G_1,\ldots,G_n)\mapsto[G_1,[G_2,[\ldots [G_{n-1},G_n]\ldots]]]
	$
	is a natural map.
\end{proposition}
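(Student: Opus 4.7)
The plan is to prove naturality by induction on $n$, the base case $n=1$ being trivial since the identity map is $1$-multilinear and natural with $q=1$, $A=1$.

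I would first establish the naturality of the elementary bracket $[G,H]$ of two ghost polygons of ranks $p$ and $q$ directly from its defining formula
$$[G,H] = \sum_{i,j} \epsilon(\zeta_j,\theta_i)(-1)^{i+j}\lceil \theta_i^*, \zeta_j^*\rceil.$$
This expansion has at most $4pq$ terms, each with coefficient of absolute value at most one since $|\epsilon|\leq 1$. The crucial point is that, by the definition of opposite configurations in paragraph \ref{sec:ghost-polygon}, $\theta_i^*$ consists of visible edges of $G$ and $\zeta_j^*$ of visible edges of $H$; hence every output $\lceil \theta_i^*,\zeta_j^*\rceil$ is a ghost polygon whose visible edges lie among those of $G$ and $H$, with rank bounded by $p+q+2$.

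For the inductive step, I would strengthen the inductive hypothesis to carry along an auxiliary bound $N'$ on the ranks of the ghost polygons appearing in the expansion. Assuming the map $(G_2,\ldots,G_n)\mapsto[G_2,[\ldots[G_{n-1},G_n]\ldots]]$ satisfies this strengthened form of naturality with constants $q'$, $A'$, $N'$ depending only on $(n_2,\ldots,n_n)$, one writes $[G_2,[\ldots]]=\sum_i\lambda_i H_i$ with $|\lambda_i|\leq A'$ and each rank of $H_i$ bounded by $N'$. Multilinearity yields $[G_1,\sum_i\lambda_i H_i]=\sum_i\lambda_i[G_1,H_i]$, and applying the elementary case to each $[G_1,H_i]$ gives an expansion with at most $q'\cdot 4 n_1 N'$ terms, coefficients bounded by $A'\cdot 1 = A'$, ranks bounded by $n_1+N'+2$, and visible edges in the union of the visible edges of $G_1$ and of $H_i$, hence of some $G_j$ by the inductive hypothesis.

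The argument is essentially bookkeeping, guided by the explicit formula for the ghost bracket. The only substantive point is the defining property of opposite configurations: for any edge $\theta$ (visible or ghost) of the ghost polygon of a configuration $G$, the tuple $\theta^*$ contains only visible edges of $G$. Once that is noted, the induction runs immediately, yielding the desired uniform bounds on coefficients, number of terms, and edges of the output.
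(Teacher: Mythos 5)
Your proof is correct and takes essentially the same route as the paper, whose entire argument is the sentence ``this follows at once from the definition of the ghost bracket and a simple induction argument''; your write-up supplies exactly that bookkeeping, with the key observation being that every opposite configuration $\theta_i^*$ consists only of visible edges of $G$. The one case left implicit is when an argument has rank one, where the bracket of two geodesics also produces the Casimir term $-\epsilon(h,g)\,\Theta_h\Theta_g\,\casper$; since $\casper$ has no visible edges and its coefficient is bounded by $\dim(E)^2$, this does not disturb the induction.
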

\begin{proof}
	This follows at once from the definition of the ghost bracket and a simple induction argument.
\end{proof}

\part{On closed surfaces}

We now move to closed surfaces, or in other words we only consider periodic uniformly hyperbolic bundles which are exactly Anosov representations. In an analogue of the way that one can define length for currents, we define an {\em average correlation functions}  using {\em cyclic currents} and a careful study of integrals allows us to prove the main results announced in  the introduction.
\begin{enumerate}
	\item In section \ref{sec:current}, we introduce {\em cyclic currents} which are certain types of measures on the space of ghost polygons. This allows us to define what are averaged correlations function $\T_\mu$ when $\mu$ is a cyclic current. 
	\item In section \ref{sec:exch-int}, we deal with the following crucial technical and dry issue: how to exchange ghost integration and integration with respect to a cyclic current? The results are necessary to proceed.
	\item In section \ref{sec:average}, using the combinatorial work of the first part and the previous sections,  we are finally able to prove the main result of the article a {\em combinatorial formula to compute the Poisson bracket of correlation functions}. This is obtained by a see-saw argument: we first compute the derivatives of length functions and correlations functions, then this allows us to recognize hamiltonian functions using the ghost dual forms we defined for ghost polygons, and finally we can compute the Poisson bracket of correlations and length functions.
\end{enumerate}

\section{Geodesic and cyclic currents}
\label{sec:current}
In this section, building on the classical notion of geodesic currents introduced by Bonahon in \cite{Bonahon:1988}, we define the notion of higher order geodesic currents, called {\em cyclic  currents}. Among them we identify {\em integrable currents}, show how they can average correlation functions and produce examples of them.

Recall that $\GG$ is the set of oriented geodesics in $\hh$. The set of $\Theta$-geodesics is then denoted ${\GT}\defeq\GG\times\Theta$.

\subsection{Cyclic current} First recall that a {\em signed measure} is a linear combination of finitely many {\em positive measures}. Any signed measure is the difference of two positive measures. A {\em cyclic current} is a $\Gamma$-invariant signed measure on $\GG^n$ invariant under cyclic permutation.
As a first  example let us consider for $\mu$ and $\nu$  geodesic current, the signed measure  $\mu\wedge\nu$ given by 
	\begin{equation}
	\mu\wedge\nu\defeq\frac{1}{2}\epsilon\ (\mu\otimes \nu -\nu\otimes\mu)\ ,\label{eq:def-bra-geod}
	\end{equation}
	where we recall that $\epsilon(g,h)$ is the intersection number of the two geodesics $g$ and $h$.
\begin{lemma}
	The signed measure $\mu\wedge\nu$ is a cyclic current supported on intersecting geodesics. Moreover $\mu\wedge\nu=-\nu\wedge\mu$.
\end{lemma}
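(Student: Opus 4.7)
The plan is to verify each clause of the statement directly from the definition
$$
\mu\wedge\nu=\tfrac12\epsilon\,(\mu\otimes\nu-\nu\otimes\mu).
$$
There are no analytic subtleties: $\epsilon$ is bounded (with values in $\{-1,-\tfrac12,0,\tfrac12,1\}$) and $\psld$-invariant, and $\mu\otimes\nu$, $\nu\otimes\mu$ are well-defined positive Radon measures on $\GG^2$, so the right-hand side is a bona fide signed Radon measure. The content is therefore to check four things: (i) $\Gamma$-invariance, (ii) invariance under the cyclic permutation, which for $n=2$ is the swap $\sigma:(g,h)\mapsto(h,g)$, (iii) support on intersecting pairs, and (iv) antisymmetry in $(\mu,\nu)$.

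For (i), I would combine the $\Gamma$-invariance of $\mu$ and $\nu$ with the $\psld$-invariance of $\epsilon$: both $\mu\otimes\nu$ and $\nu\otimes\mu$ are $\Gamma$-invariant for the diagonal action, and multiplying by the $\Gamma$-invariant function $\epsilon$ preserves invariance. For (ii), the antisymmetry rule $\epsilon(g_0,g_1)=-\epsilon(g_1,g_0)$ from the definition of the intersection number gives $\sigma^{*}\epsilon=-\epsilon$, while $\sigma_*(\mu\otimes\nu)=\nu\otimes\mu$ by definition of the product measure. Hence
$$
\sigma_*(\mu\wedge\nu)=\tfrac12(-\epsilon)(\nu\otimes\mu-\mu\otimes\nu)=\mu\wedge\nu.
$$

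Assertion (iii) is immediate: $\epsilon(g_0,g_1)=0$ whenever $g_0$ and $g_1$ are disjoint or equal, so the product $\epsilon\cdot(\mu\otimes\nu-\nu\otimes\mu)$ is concentrated on the locus of intersecting pairs. Assertion (iv), the antisymmetry $\mu\wedge\nu=-\nu\wedge\mu$, is a one-line algebraic identity obtained by swapping the roles of $\mu$ and $\nu$ in the defining formula. I do not foresee a genuine obstacle; the only mild nuance is the half-integer values of $\epsilon$ on pairs of geodesics that share exactly one endpoint at infinity, but since $\epsilon$ remains bounded this has no bearing on the argument.
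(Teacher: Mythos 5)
Your proposal is correct and follows essentially the same route as the paper: the paper verifies cyclicity by the change of variables $g\leftrightarrow h$ in the integral against a test function on $\GG^2/\Gamma$ combined with the antisymmetry $\epsilon(h,g)=-\epsilon(g,h)$, which is exactly your pushforward computation $\sigma_*(\mu\wedge\nu)=\tfrac12(-\epsilon)(\nu\otimes\mu-\mu\otimes\nu)=\mu\wedge\nu$, and it dismisses the support statement and the antisymmetry $\mu\wedge\nu=-\nu\wedge\mu$ as obvious, just as you treat them. Your explicit mention of $\Gamma$-invariance (via $\psld$-invariance of $\epsilon$) is a point the paper leaves implicit, but there is no substantive difference in method.
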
	
	\begin{proof}
		We have
		\begin{eqnarray*}
				2\int_{{\GT}^2/\Gamma} f(g,h)\ \d \mu\wedge\nu(g,h)=
		\int_{{\GT}^2/\Gamma} f(g,h)\ \epsilon(g,h)\left(\d \mu(g) \d\nu(h)-\d \nu(g) \d\mu(h) \right)\crcr
		=
		\int_{{\GT}^2/\Gamma} f(h,g)\ \epsilon(h,g)\left(\d \mu(h) \d\nu(g)-\d \nu(h) \d\mu(g) \right)\crcr
			=
		\int_{{\GT}^2/\Gamma} f(h,g)\ \epsilon(g,h)\left(\d \nu(h) \d\mu(g)-\d \mu(h) \d\nu(g) \right)\crcr
		=2\int_{{\GT}^2/\Gamma} f(h,g)\ \d \mu\wedge\nu(g,h)\  . \ \ \ \ \ \ \ \ \ \ \ \ \ \ \ \  \ \ \ \ \ \ \ \ \ \ \ \ \ \ \ \ \ \ 
		\end{eqnarray*}
	Hence $\mu\wedge\nu$ is cyclic. The last assertions are obvious. 
\end{proof}

Our main definition is the following, let $\rho$ be a $\Theta$-Anosov representation of $\Gamma$, the fundamental group of a closed surface.

\begin{definition}[\sc Integrable currents]\label{def:integrable-current} We give several definitions, let $w$ be a natural map from $\GG^{p_1}\times\cdots\times\GG^{p_q}$ to $\GG^m$
\begin{enumerate}
\item a {\em $w$-cyclic current} is a $\Gamma$-invariant measure  $\mu=\mu_1\otimes\cdots\otimes\mu_q$ where $\mu_i$ are $\Gamma$-invariant cyclic currents on $\GG^{n_i}$,
\item the $w$-cyclic current  $\mu$  is a {\em $(\rho,w)$-integrable} current if there exists a neighborhood $U$ of $\rho$ in the moduli space of (complexified)  $\Theta$-Anosov representations of $\Gamma$, and a positive function $F$ in $L^1({\GT}_\star^k/\Gamma,\eta)$ so that for all $\sigma$ in $U$,  and $G$ in ${\GT}_\star^k$; 
$$\vert\T_{w(G)}(\sigma)\vert \leq F(G)\ ,$$
where $F_0$ is the lift of  $F$ to $\GG_\star^k$.
	\item When $w$ is the identity map $\Id$, we just say a current is {\em $\rho$-integrable}, instead of $(\rho,\Id)$-integrable. 
	\item  A current of order $k$, is  {\em $w$-integrable} or {\em integrable} if it is $(\rho,w)$-integrable or $\rho$-integrable for all  representations $\rho$.
\end{enumerate}

\end{definition}

\subsubsection{$\Gamma$-compact currents}

\begin{definition}
A  $\Gamma$-invariant $w$-cyclic current  $\mu$ is {\em $\Gamma$-compact} if it is supported on a $\Gamma$-compact set of ${\GT}_\star^p$. Obviously a $\Gamma$-compact cyclic current is integrable for any natural map $w$. 
\end{definition}

Here is an important example of a $\Gamma$-compact cyclic current: 
Let $\mathcal L$ be a geodesic lamination on $S$  with a component of its complement $C$ being a geodesic triangle. Let $\pi:\hh\mapsto S$ be the universal covering of $S$ and $x$ a point in $C$

Then $$
\pi^{-1}C=\bigsqcup_{i\in\pi^{-1}(x)} C_i	\ .
$$
The closure of each $C_i$ is an ideal triangle with cyclically ordered edges $(g_i^1,g_i^2,g_i^3)$. We consider the opposite cyclic ordering  $(g_i^3,g_i^2,g_i^1)$. The notation $\delta_x$ denotes the Dirac measure on $X$ supported on a point $x$ of $X$.  Then we obviously have
\begin{proposition}
	The measure defined on ${\GT}^p$ by
	$$
	\mu^*_C=\frac{1}{3}\sum_{i\in\pi^{-1}(x)}\left(\delta_{(g_i^1,g_i^3,g_i^2)}+\delta_{(g_i^2,g_i^1,g_i^3)}+\delta_{(g_i^3,g_i^2,g_i^1)}\right)\ ,	$$
	is a $\Gamma$-compact cyclic current.
\end{proposition}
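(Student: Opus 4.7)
The plan is to verify the three defining properties in turn: local finiteness (so $\mu^*_C$ is a well-defined Radon signed measure), $\Gamma$-invariance, and cyclic invariance, and then to verify $\Gamma$-compactness of the support. The construction is specifically engineered so that each of these is essentially built into the definition, so the proof will be short and mostly bookkeeping.

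First, I would argue local finiteness. The set $\pi^{-1}(x)$ is a discrete $\Gamma$-orbit in $\hh$, and the decomposition $\pi^{-1}(C) = \bigsqcup C_i$ is locally finite in $\hh$. Since each $C_i$ determines its three edges $(g_i^1, g_i^2, g_i^3) \in \GG^3$ continuously in $i$, the resulting countable sum of Dirac masses defines a locally finite positive measure on $\GT^3$, and its support clearly avoids the singular set (the three edges of a geodesic triangle are pairwise distinct), so it lives on $\GT^3_\star$.

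Next, cyclic invariance is immediate from the definition: writing $\tau$ for the cyclic shift $(a,b,c)\mapsto(c,a,b)$, the three points $(g_i^1,g_i^3,g_i^2)$, $(g_i^2,g_i^1,g_i^3)$, $(g_i^3,g_i^2,g_i^1)$ are precisely one $\tau$-orbit, so pushing forward $\mu^*_C$ by $\tau$ simply permutes the three Dirac summands for each $i$ and leaves the sum unchanged. For $\Gamma$-invariance, observe that each $\gamma \in \Gamma$ acts on $\pi^{-1}(x)$ by a bijection $i \mapsto \gamma\cdot i$ and sends $C_i$ to $C_{\gamma\cdot i}$; hence it sends the triple $(g_i^1,g_i^2,g_i^3)$ to some cyclic rotation of $(g_{\gamma\cdot i}^1,g_{\gamma\cdot i}^2,g_{\gamma\cdot i}^3)$ (depending on a choice of labelling of the edges). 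Because the measure is already a sum over all cyclic rotations, this relabelling is absorbed and $\gamma_*\mu^*_C = \mu^*_C$.

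Finally, for $\Gamma$-compactness of the support, observe that $\Gamma$ acts transitively on $\pi^{-1}(x)$, so in $\GT^3_\star/\Gamma$ the support of $\mu^*_C$ reduces to the three points coming from the cyclic rotations of the edges of the single triangle $C$; this is a finite, hence compact, set, whose preimage in $\GT^3_\star$ is by definition a $\Gamma$-compact set containing $\mathrm{supp}(\mu^*_C)$. The main (and only) subtlety is making sure that the cyclic relabelling ambiguity in how $\gamma$ permutes the $g_i^j$'s is correctly accounted for in the $\Gamma$-invariance check, but the symmetrization over cyclic rotations in the definition is precisely what takes care of this.
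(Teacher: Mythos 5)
Your proof is correct; the paper offers no argument at all (the proposition is prefaced with ``we obviously have''), and your verification of local finiteness, cyclic invariance, $\Gamma$-invariance via the orientation-preserving (hence cyclic-order-preserving) relabelling of edges, and $\Gamma$-compactness of the support is exactly the routine bookkeeping the authors leave implicit.
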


\subsubsection{Intersecting geodesics}
Let us give an example of an integrable current. 

\begin{proposition}\label{pro:mu-intersect}
	Let $\mu$ be $\Gamma$-invariant cyclic current supported on pairs of intersecting geodesics. Assume furthermore that $\mu({\GT}^2/\Gamma)$ is finite. Then $\mu$ is integrable.
\end{proposition}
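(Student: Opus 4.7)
The plan is to exploit the fact that a pair of distinct intersecting oriented geodesics $(g_1, g_2)$ admits a canonical barycenter --- their intersection point. The map sending $(g_1, g_2)$ to $(p, v_1, v_2)$, where $p$ is the intersection point and $v_i$ is the unit tangent to $g_i$ at $p$, yields a $\Gamma$-equivariant homeomorphism between the open set $X \subset \GG_\star^2$ of intersecting pairs and the complement of the $\pm$-diagonal $\{(v_1, v_2) : v_2 = \pm v_1\}$ inside the fibered product $\uh \times_{\hh} \uh$. Passing to the $\Gamma$-quotient, $X/\Gamma$ becomes an open subset of the compact space $(\uh \times_{\hh} \uh)/\Gamma$, which is a fibre bundle over the closed surface $\Sigma$ with compact fibre $S^1 \times S^1$. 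Thus, although $X/\Gamma$ is itself not compact, its closure in the ambient space is.

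I then claim that for any $\Theta$-Anosov representation $\sigma$ in a sufficiently small compact neighborhood $U$ of $\rho$, the correlation function $\T_{(g_1, g_2)}(\sigma) = \tr(\pp(g_2) \pp(g_1))$ is uniformly bounded on $X$. Indeed, the projectors $\pp^\alpha(g)$ depend continuously on the endpoints of $g$ by continuity of the limit curves $\xi_\sigma$ and $\xi^*_\sigma$; these endpoints in turn depend continuously on $(p, v)$, so $\T_G(\sigma)$ is $\Gamma$-invariant and continuous on $X$, descending to a continuous function on $X/\Gamma$. Crucially, this function extends continuously to the compactification $(\uh \times_{\hh} \uh)/\Gamma$: the limits across the forbidden diagonals give the finite expressions $\tr(\pp^\alpha(g)^2) = K_\alpha$ when $v_1 = v_2$, and $\tr(\pp^\alpha(g) \pp^\beta(\bar g))$ when $v_1 = -v_2$. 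By the Stability Lemma \ref{lem:stab} together with the continuous dependence of limit maps on the representation, the correlation is jointly continuous in $(G, \sigma) \in X/\Gamma \times U$ and extends continuously to $\overline{X/\Gamma} \times U$, yielding a constant $C = C(\rho, U)$ such that $|\T_G(\sigma)| \leq C$ for all $G \in X$ and $\sigma \in U$. Since $\Theta$ is finite, taking the max over the finite set of $\Theta$-decorations preserves this bound.

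It now suffices to take $F \equiv C$ as a constant function on $\GT_\star^2/\Gamma$. By the finiteness hypothesis $\mu(\GT^2/\Gamma) < \infty$, this $F$ belongs to $L^1(\GT_\star^2/\Gamma, \mu)$, and the uniform bound ensures $|\T_G(\sigma)| \leq F(G)$ for all $\sigma \in U$ and $G$ in the support of $\mu$. This establishes $\rho$-integrability of $\mu$, and since $\rho$ was an arbitrary $\Theta$-Anosov representation, $\mu$ is integrable in the sense of Definition \ref{def:integrable-current}. The main subtlety lies in verifying the continuous extension of $\T_G(\sigma)$ across the excluded $\pm$-diagonals; this hinges on the continuity of the Anosov limit maps on the boundary circle, combined with the cocompact fibred structure established in the first paragraph that converts a global cocompactness statement into a finite uniform bound.
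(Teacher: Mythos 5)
Your proof is correct and follows essentially the same route as the paper's: reduce, via the $\Gamma$-action on the intersection point, to pairs of geodesics passing through a compact region, then invoke compactness together with continuity of the fundamental projectors in both the geodesic and the representation to obtain a uniform constant, which is $\mu$-integrable by the finite-mass hypothesis. The "subtlety" you flag about extending across the $\pm$-diagonal is in fact automatic, since $(g_1,g_2)\mapsto\tr(\pp(g_2)\pp(g_1))$ is already defined and continuous there.
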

This follows at once from the following 
 lemma.
\begin{lemma}\label{lem:Krho}
	Let $\rho_0$ be a $\Theta$-Anosov representation. Then there exists a constant $K_\rho$ in an neighborhood $U$ of $\rho_0$ in the moduli space of Anosov representations, such that for any $\rho$ in $U$ and any pair of intersecting geodesics
	$$
	\vert\T_{\lceil g, h\rceil}(\rho)\vert\leq K_\rho \ .
	$$
\end{lemma}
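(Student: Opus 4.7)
The plan is to exploit cocompactness of $\Gamma = \pi_1(S)$ together with joint continuity of the correlation function in $(\rho, g, h)$. Fix a compact fundamental domain $F \subset \hh$ for the $\Gamma$-action. For any pair of intersecting geodesics $(g, h)$ with intersection point $p = g \cap h$, there exists $\gamma \in \Gamma$ with $\gamma p \in F$. Since the limit maps of a $\Theta$-Anosov representation are $\rho$-equivariant, one has $\pp(\gamma g) = \rho(\gamma)\, \pp(g)\, \rho(\gamma)^{-1}$ and hence $\T_{\lceil g, h\rceil}(\rho) = \T_{\lceil \gamma g, \gamma h\rceil}(\rho)$; it therefore suffices to bound the correlation on the set of pairs whose intersection point lies in $F$.

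I would identify this set with an open subset of the fiber bundle over $F$ whose fiber at $p$ is the space of ordered pairs of \emph{distinct} unit tangent vectors at $p$. Its closure $\mathcal K$ in the ambient bundle --- obtained by allowing $v_g = v_h$ (the degenerate case $g = h$) or $v_g = -v_h$ (the case $h = \bar g$) --- is compact. Crucially, every pair $(g,h) \in \mathcal K$, even on these degenerate strata, still consists of genuine oriented geodesics whose endpoints on $\partial_\infty \hh$ are distinct within each geodesic; thus each projector $\pp_\rho(g)$ remains well defined via the Anosov transversality $\xi(g^+) \oplus \xi^*(g^-) = V$ at $g^- \neq g^+$, and depends continuously on $g$ and on $\rho$.

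It follows that $(\rho, g, h) \mapsto \T_{\lceil g, h\rceil}(\rho) = \tr(\pp_\rho(h)\, \pp_\rho(g))$ is jointly continuous on $\overline{U} \times \mathcal K$, where $\overline{U}$ is any compact neighborhood of $\rho_0$ in the space of $\Theta$-Anosov representations: continuity in $\rho$ comes from the continuity of limit curves in Anosov families (underlying Proposition~\ref{pro:ana-TG}), and continuity in $(g,h)$ from continuity of the limit maps on $\partial_\infty \hh$. Compactness of $\overline{U} \times \mathcal K$ then yields a finite maximum $K_\rho$, the desired uniform bound.

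The main obstacle is verifying the continuous extension of $\T_{\lceil g, h\rceil}(\rho)$ across the degenerate strata $\{g = h\}$ and $\{h = \bar g\}$ of $\mathcal K$, which lie in the singular set $\GG^2_1$. In the projective case the trace rewrites as a ratio whose denominator is a product of pairings $\langle \xi^*(x), \xi(y)\rangle$ with $x \neq y$ --- namely with $(x,y) \in \{(g^-, g^+), (h^-, h^+)\}$ --- and Anosov transversality forces these to be bounded away from zero on $\mathcal K$, while the numerators remain finite continuous functions (reaching the value $1$ on $\{g=h\}$); the $\Theta$-decorated case is analogous, using the transversality of the flag limit maps. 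Hence the trace extends continuously to all of $\mathcal K$, and the compactness argument closes.
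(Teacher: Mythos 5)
Your proposal is correct and follows essentially the same route as the paper: translate the intersection point into a compact fundamental domain by an element of $\Gamma$, use equivariance of the projectors, observe that the resulting set of pairs of geodesics is relatively compact, and conclude by continuity of the fundamental projector in both the geodesic and the representation. Your extra care about the degenerate strata $g=h$ and $h=\bar g$ is harmless but unnecessary: $\tr(\pp_\rho(h)\pp_\rho(g))$ is directly continuous wherever the two projectors are, so no cross-ratio rewriting or transversality of denominators is needed.
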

\begin{proof} 
 Given any pair of  geodesics $(g_1,g_0)$ intersecting at a point $x$, then we can find an element $\gamma$ in $\Gamma$, so that $\gamma x$ belongs to a fundamental domain $V$ of $\Gamma$.
  In particular, there exists a pair of geodesics $h_0$ and $h_1$ passing though $V$ so that 
$$
\T_{\lceil g_0,g_1\rceil}(\eta)=\T_{\lceil h_0,h_1\rceil}(\eta)=\tr\left(\pp_\eta(h_0)\ \pp_\eta(h_1)\right)\ ,
$$
where $\pp_\eta$ is the fundamental projector for $\eta$. Since the set of geodesics passing through  $V$ is relatively compact, the result follows by the continuity of the fundamental projector $\pp_\eta(h)$ on $h$ and $\eta$.  
\end{proof}

\begin{corollary}\label{cor_int_wedge}
	Given $\mu$ and $\nu$, then $\mu\wedge\nu$ is integrable.
	\end{corollary}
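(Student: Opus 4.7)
The plan is to reduce the statement to Proposition \ref{pro:mu-intersect} applied to the positive and negative parts of the signed measure $\mu\wedge\nu$. Two observations need to be checked: that $\mu\wedge\nu$ is supported on pairs of intersecting geodesics, and that its total variation on $\GT^2/\Gamma$ is finite.

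The first point is immediate from the definition \eqref{eq:def-bra-geod}: the density $\epsilon(g,h)$ of $\mu\wedge\nu$ with respect to the symmetric combination $\mu\otimes\nu - \nu\otimes\mu$ vanishes as soon as $g$ and $h$ are disjoint or equal (or one is a phantom geodesic), so the support of $\mu\wedge\nu$ is contained in the set of pairs of intersecting geodesics, which in particular lies in the generic set $\GT^2_\star$.

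For the second point, I would use the Hahn--Jordan decomposition $\mu\wedge\nu = \sigma_+ - \sigma_-$ into $\Gamma$-invariant positive measures. Since $|\epsilon|\leq 1$, one has the pointwise bound
$$
\sigma_+ + \sigma_- \leq \tfrac{1}{2}|\epsilon|\,\bigl(\mu\otimes\nu + \nu\otimes\mu\bigr)\ .
$$
The total mass of the right hand side on $\GT^2/\Gamma$ is exactly the classical geometric intersection number $i(\mu,\nu)$ of the two geodesic currents $\mu$ and $\nu$ on the closed surface $S$, which is finite. Both $\sigma_\pm$ are $\Gamma$-invariant cyclic currents supported on intersecting geodesics, and each has finite mass on $\GT^2/\Gamma$. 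Cyclic invariance of $\sigma_\pm$ follows from the uniqueness of the Hahn decomposition together with the fact that the cyclic flip $(g,h)\mapsto(h,g)$ preserves $\mu\wedge\nu$ up to sign changes that are already captured in $\epsilon$.

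I would then apply Proposition \ref{pro:mu-intersect} separately to $\sigma_+$ and $\sigma_-$: each yields a neighborhood $U_\pm$ of $\rho$ in the moduli space and a positive function $F_\pm\in L^1(\GT^2_\star/\Gamma, \sigma_\pm)$ dominating $|\T_{\lceil g_1,g_2\rceil}(\sigma)|$ uniformly on $U_\pm$. Taking $U\defeq U_+\cap U_-$ and the common dominating function $F\defeq F_+ + F_-$, one obtains an $L^1$ dominator for $|\T_{\lceil g_1,g_2\rceil}(\sigma)|$ with respect to $|\mu\wedge\nu|= \sigma_+ + \sigma_-$, which is exactly the integrability condition of Definition \ref{def:integrable-current}. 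The main minor subtlety is to confirm that the Hahn decomposition of the cyclic-symmetric signed measure $\mu\wedge\nu$ preserves cyclic invariance; this is formal, but it is the only place beyond a direct application of Proposition \ref{pro:mu-intersect} and Lemma \ref{lem:Krho}.
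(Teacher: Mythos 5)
Your overall strategy --- reduce to Proposition \ref{pro:mu-intersect} by checking that $\mu\wedge\nu$ is supported on intersecting pairs and has finite total variation on ${\GT}^2/\Gamma$ --- is the right one, and the Hahn--Jordan bookkeeping, while heavier than necessary, is harmless. The gap is in the finiteness step. You assert that the total mass of $\tfrac12\vert\epsilon\vert\,(\mu\otimes\nu+\nu\otimes\mu)$ on ${\GT}^2/\Gamma$ is \emph{exactly} the Bonahon intersection number $i(\mu,\nu)$. This is false in general: the intersection number of geodesic currents only accounts for the set $A$ of pairs of geodesics meeting transversally, i.e.\ where $\epsilon(g,h)=\pm 1$. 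The set $B$ where $\epsilon(g,h)=\pm\tfrac12$ consists of pairs of geodesics sharing exactly one endpoint at infinity, and this set can carry positive $\mu\otimes\nu$-mass (for instance for currents supported on laminations with asymptotic leaves, which are exactly the currents used later in the paper for the triangle functions). Bonahon's finiteness theorem says nothing about the mass of $B$, so your bound does not close the argument. Note also that since $\mu,\nu$ are oriented currents one should compare with $i(\bar\mu,\bar\nu)$ for the symmetrized currents, a small but real point.

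The paper's proof fills exactly this hole: it splits the support into $A$ and $B$, bounds $\vert\mu\wedge\nu\vert(A/\Gamma)$ by $i(\bar\mu,\bar\nu)<\infty$ using Bonahon, and then disposes of $B$ by a separate argument --- a pair of geodesics sharing an endpoint is determined by a triple of distinct points of $\partial_\infty\hh$, and $\Gamma$ acts cocompactly on such triples, so any $\Gamma$-invariant Radon measure gives $B$ finite mass in the quotient. You need to add this second argument (or something equivalent) for the set where $\epsilon=\pm\tfrac12$; once that is done, your application of Proposition \ref{pro:mu-intersect} and Lemma \ref{lem:Krho} goes through as you describe.
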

\begin{proof}
Let \begin{eqnarray*}
	A\defeq\left\{(g,h)\mid \epsilon(g,h) = \pm 1\right\}\ , \ 	B\defeq\left\{(g,h)\mid \epsilon(g,h) = \pm\frac{1}{2}\right\}\ .
	\end{eqnarray*}
Observe first that denoting  the Bonahon intersection by $i$, we have
$$
\left\vert\mu\wedge\nu\ \left(A/\Gamma\right)\right\vert\leq i(\bar\mu,\bar\nu)<\infty\ , $$ where the last inequality is due to Bonahon \cite{Bonahon:1988}, and
 $\bar\lambda$ is the symmetrised current of $\lambda$. 
 
As $\Gamma$ acts with compact quotient on the set of triples of points on $\partial\hh$, it follows that  $\Gamma$ acts on $B$ with compact quotient and therefore $\mu\wedge\nu(B)$ is finite. Therefore taking the sum we have that $\mu\wedge\nu(\GG^2/\Gamma)$ is finite.
\end{proof}

\subsubsection{A side remark} Here is an example of $(\rho,w)$-integrable current. First we have the following inequality: given a representation $\rho_0$, there is a constant $K_0$, a neighborhood $U$ of $\rho_0$, such that for every $k$-configuration $G$ of geodesics and $\rho$ in $U$ then
$$
\vert \T_G(\rho)\vert\leq e^{kK_0\ r(G)}\ .
$$
Since this is just a pedagogical remark that we shall not use, we do not fill the details of the proof.
From that inequality we see that if $G\mapsto e^{kK_0\ r(G)}$ is in $L^1({\GT}_\star^k/\Gamma,\mu)$ then $\mu$ is $(\rho,w)$-integrable. 
\section{Exchanging integrals} \label{sec:exch-int}
To  use ghost integration to compute the Hamiltonian of  the average of correlation functions with respect to an integrable  current, we will need to exchange integrals.

This section is concerned with proving the two Fubini-type exchange theorems we will need. Recall that the form $\beta_{\rho(g)}$ is defined in equation \ref{def:beta}.
\begin{theorem}[\sc Exchanging integrals I]\label{theo:oint-inter1}
	Let $\mu$ be a $\Gamma$-invariant geodesic current. Let $G$ be a $\Theta$-ghost polygon. Then \begin{enumerate} 
			\item $\int_{{\GT}} \beta_g\d \mu(g)$ --- defined pointwise --- is an element of $ \bLa^\infty(E)$,
			\item the map $g\mapsto \oint_{\rho(G)}\beta_g$ is in $L^1({\GT},\mu)$,
\item  finally, we have the exchange formula
	\begin{eqnarray}
	\oint_{\rho(G)}\left(\int_{\GT} \beta_{\rho(g)}\ \d\mu(g)\right)=\int_{\GT} \left(\oint_{\rho(G)}\beta_{\rho(g)}\right)\d\mu(g)\ .\label{eq:oint-inter}
	\end{eqnarray}
	\end{enumerate}
\end{theorem}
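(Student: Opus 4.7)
I would prove the three items in order, using throughout the dual form representation $\oint_{\rho(G)}\alpha = \int_\hh \tr(\alpha \wedge \Omega_{\rho(G)})$ of Proposition \ref{pro:dual-ghost-form}, combined with the exponential decay bounds of Corollary \ref{coro:bd-Om}.

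For item (1), the projector form $\beta_g = \omega_g\,\pp(g)$ is supported in the 1-tube $V_1(g)$ around $g$, with $\|\omega_g\|_\infty$ and $\|\pp(g)\|$ uniformly bounded (by Proposition \ref{pro:def-omegag} and Proposition \ref{pro:pbd-unique}). At each $x$, the integrand vanishes outside $\{g : d(x,g)\leq 1\}$, a set of finite $\mu$-measure by the defining local finiteness of a geodesic current. Uniform boundedness in $x$ follows from the fact that $x\mapsto \mu(\{g: d(x,g)\le 1\})$ is $\Gamma$-invariant and locally bounded, hence bounded on all of $\hh$ via a compact fundamental domain. For item (2), Proposition \ref{pro:dual-ghost-form} yields $|\oint_{\rho(G)}\beta_g| \leq C\int_{V_1(g)}\|\Omega_{\rho(G)}(y)\|\,dy$; swapping integrals by Fubini--Tonelli for positive integrands gives
$$
\int_\GT \Bigl|\oint_{\rho(G)}\beta_g\Bigr|\,d\mu(g) \leq C\int_\hh \|\Omega_{\rho(G)}(y)\|\,\mu\bigl(\{g : d(y,g)\leq 1\}\bigr)\,dy,
$$
and partitioning $\hh = \bigsqcup_\gamma \gamma F$ for a compact fundamental domain $F$ together with $\Gamma$-invariance of $\mu$ rewrites this as $C\int_F \mu(\{g:d(z,g)\leq 1\})\sum_\gamma\|\Omega_{\rho(G)}(\gamma z)\|\,dz$, which is finite by the uniform $\ell^1$-bound of Corollary \ref{coro:bd-Om}.

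For item (3), the $L^1$-control on $\tr(\beta_g\wedge\Omega_{\rho(G)})$ over $\hh\times\GT$ established in (2) licenses Fubini, giving
$$
\int_\GT \oint_{\rho(G)}\beta_g\,d\mu(g) = \int_\hh \tr\Bigl(\bigl(\textstyle\int_\GT \beta_g\,d\mu(g)\bigr)\wedge \Omega_{\rho(G)}\Bigr).
$$
To identify the right-hand side with $\oint_{\rho(G)}\bigl(\int_\GT \beta_g\,d\mu\bigr)$ via Proposition \ref{pro:dual-ghost-form}, I need $\alpha \defeq \int_\GT \beta_g\,d\mu$ to lie in $\bXi(E)$. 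Closedness, the triangle vanishing $\int_{\partial T}\alpha = 0$, and the geodesic duality $\int_{g_0}\alpha = \int_\hh \omega_{g_0}\wedge\alpha$ transfer from the corresponding properties of each $\beta_g$ by linearity and Fubini exchanges, each of which reduces to the same $L^1$-bookkeeping as in part (2), applied with a test geodesic $g_0$ in place of $G$.

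The main obstacle is this last verification that $\alpha\in\bXi(E)$: in particular, checking that $\nabla\alpha$ is bounded and that the Fubini exchanges for the geodesic duality hold uniformly in the auxiliary geodesic $g_0$. Everything else is either an immediate consequence of the support and boundedness properties of $\beta_g$, or a direct application of the exponential decay and $\ell^1$-summability results already recorded in Corollary \ref{coro:bd-Om}.
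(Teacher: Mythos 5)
Your items (1) and (2) are essentially sound. Item (1) matches the paper's argument (support of $\beta_{\rho(g)}$ in a $1$-tube, uniform bound on $\mu(\{g: d(y,g)\leq 2\})$ by $\Gamma$-invariance, boundedness of $\omega_g$ and $\pp$). Item (2) is a genuinely different and arguably cleaner route: the paper instead decomposes $\GT$ into the pieces $U_0\cup\bigcup_n V_n^\pm$ along the rays of each visible edge and dominates the line integrals $\bS_{x,g_0,Q}(\beta_{\rho(g)})$ by an explicit control function $F_0(g)=e^{-n(g)}$ in $L^1(\GT,\mu)$, whereas you use $\oint_{\rho(G)}\beta_{\rho(g)}=\int_\hh\tr(\beta_{\rho(g)}\wedge\Omega_{\rho(G)})$ --- legitimate here, since each individual $\beta_{\rho(g)}$ \emph{is} geodesically bounded --- together with Tonelli against $\Vert\Omega_{\rho(G)}\Vert\in L^1(\hh)$ from corollary \ref{coro:bd-Om}.

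The genuine gap is in item (3), and it is not the technical verification you flag at the end but a structural obstruction. Your plan identifies $\oint_{\rho(G)}\alpha$ with $\int_\hh\tr(\alpha\wedge\Omega_{\rho(G)})$ for $\alpha\defeq\int_{\GT}\beta_{\rho(g)}\,\d\mu(g)$ via proposition \ref{pro:dual-ghost-form}, whose hypothesis is $\alpha\in\bXi(E)$. But $\alpha$ is $\Gamma$-equivariant (because $\mu$ is $\Gamma$-invariant), and, as the paper remarks immediately after defining $\bXi(E)$, a $\Gamma$-equivariant form is never geodesically bounded unless it vanishes: the condition $\tr(\alpha A)(\dot g)\in L^1(g,\d t)$ fails along geodesics meeting the support of $\mu$ recurrently, so the duality $\int_{g_0}\alpha=\int_\hh\omega_{g_0}\wedge\alpha$ that you propose to ``transfer by Fubini'' does not even have a convergent left-hand side, and propositions \ref{pro:gintegr-alt-II} and \ref{pro:dual-ghost-form} are inapplicable. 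Note moreover that after your Fubini step the remaining claim $\oint_{\rho(G)}\alpha=\int_\hh\tr(\alpha\wedge\Omega_{\rho(G)})$ for this $\Gamma$-equivariant $\alpha$ is exactly equivalent to the exchange formula itself, so the argument is circular at the decisive point. The left-hand side must instead be handled through the line-integral definition of $\oint_{\rho(G)}$, which is what the paper does: it writes the difference of the two sides of \eqref{eq:exchan-int1} as $A(R)+B(R)+C(R)$ using cut-off functions $\psi_R$, kills $A(R)$ by the exponential decay of lemma \ref{lem:exp-decay} applied to the bounded form $\alpha$, kills $B(R)$ by the continuity of $\bS_{x,g_0,Q}$ on $\bLa^\infty(E)$ (proposition \ref{pro:J-cont}) applied to the compactly truncated integrand, and kills $C(R)$ by dominated convergence using the control function from item (2). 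To repair your proof you would need either this cut-off scheme or an independent proof that the dual-form identity extends from $\bXi(E)$ to all closed forms in $\bLa^\infty(E)$; the latter is not supplied by the results you cite, whose proofs proceed edge by edge through integrals that diverge for $\Gamma$-equivariant forms.
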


Similarly, we have  a result concerning ghost intersection forms.  We have to state it independently in order to clarify the statement. Let us first extend the assigement $G\mapsto \Omega_G$ by linearity to the whole ghost algebra, and observe that if  we have distinct  ghost polygons $G_i$ and 
$$
H=\sum_{i=1}^q \lambda_iG_i\ ,\hbox{ with } \sup_{i\in\{1,\ldots,q}\vert\lambda_i\vert=A\ ,
$$
Then 
$$
\Vert \Omega_H(y)\Vert\leq qA\ \sup_{i\in\{1,\ldots,q}\Vert\Omega_{G_i}(y)\Vert\ .
$$

\begin{theorem}[\sc Exchanging integrals II]
	\label{theo:oint-exchang2}
	Let $\mu$ be a $w$-cyclic and $\Gamma$-compact current of rank $p$. Let $G$ be a ghost polygon. Let $w$ be a  natural map. Then \begin{enumerate} 
			\item $\int_{{\GT}^p} \Omega_{\rho(w(H))}\d \mu(H)$ --- defined pointwise --- is an element of $ \bLa^\infty(E)$,\item the map $H\mapsto \oint_{\rho(G)}\Omega_{\rho(w(H))}$ is in $L^1({\GT}^p,\mu)$,
\item  finally, we have the exchange formula
	\begin{eqnarray}
	\oint_{\rho(G)}\left(\int_{{\GT}^p} \Omega_{\rho(w(H))}\ \d\mu(H)\right)=\int_{{\GT}^p} \left(\oint_{\rho(G)}\Omega_{\rho(w(H))}\right)\d\mu(H)\ .\label{eq:oint-inter2}
	\end{eqnarray}
	\end{enumerate}
\end{theorem}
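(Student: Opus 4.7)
The plan is to reduce the three claims to Fubini's theorem, all the required integrability bounds being supplied by Corollary \ref{coro:bd-Om} once we observe that the core diameters are uniformly bounded on the support of $\mu$.

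The preliminary step is this uniform control. Since $w$ is natural, inequality \eqref{ineq:natural-core} gives $r(w(H)) \leq A\, r(H)$. Since $\mu$ is $\Gamma$-compact, its support projects to a compact subset of ${\GT}^p/\Gamma$, and by Proposition \ref{pro:core-cont} the continuous proper map $r$ is bounded there by some constant $R_0$. Thus $r(w(H)) \leq A R_0$ uniformly on $\mathrm{supp}(\mu)$. Corollary \ref{coro:bd-Om} then supplies simultaneously: a uniform bound on $\Vert\Omega_{\rho(w(H))}\Vert_\infty$ (independent of $H$), a uniform bound on $\Vert\phi_{w(H)}\Vert_{L^1(\hh)}$, and a uniform bound on $\Vert\psi_{w(H),y}\Vert_{\ell^1(\Gamma)}$ (independent of both $H$ and $y$).

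For item (1), I would fix $y\in\hh$ and a fundamental domain $D\subset{\GT}^p$ for the $\Gamma$-action, noting that $|\mu|(D)$ is finite since the projection of $\mathrm{supp}(\mu)\cap D$ to ${\GT}^p/\Gamma$ is compact. Working in the $\rho$-trivialization of $E$ with a $\Gamma$-invariant metric, the equivariance of $H\mapsto\Omega_{\rho(H)}$ and the $\Gamma$-invariance of $\mu$ allow me to rewrite
\[
\int_{{\GT}^p} \bigl\Vert\Omega_{\rho(w(H))}(y)\bigr\Vert_y \, d|\mu|(H) \;=\; \int_D \sum_{\gamma\in\Gamma}\bigl\Vert\Omega_{\rho(w(H))}(\gamma^{-1}y)\bigr\Vert_{\gamma^{-1}y} \, d|\mu|(H).
\]
The $\ell^1(\Gamma)$-bound of Corollary \ref{coro:bd-Om} dominates the inner sum by a constant independent of both $y$ and of $H\in D$, so the pointwise integral defining (1) is uniformly bounded in $y$, placing it in $\bLa^\infty(E)$.

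For (2) and (3), I would use Proposition \ref{pro:dual-ghost-form} to rewrite
\[
\oint_{\rho(G)}\Omega_{\rho(w(H))} \;=\; \int_{\hh}\tr\bigl(\Omega_{\rho(G)}\wedge\Omega_{\rho(w(H))}\bigr),
\]
and then bound the integrand pointwise in $y$ by a constant times $\Vert\Omega_{\rho(G)}(y)\Vert_y\cdot\Vert\Omega_{\rho(w(H))}(y)\Vert_y$. The double integral in $(y,H)\in\hh\times{\GT}^p$ is absolutely convergent: after integrating in $H$ first, the inner integral is uniformly bounded in $y$ by (1), and what remains is controlled by $\Vert\phi_G\Vert_{L^1(\hh)}$. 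Fubini then simultaneously yields the $L^1(\mu)$-membership asserted in (2) and the exchange formula asserted in (3).

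The main obstacle is (1): carefully handling the non-compactness of ${\GT}^p$ against the $\Gamma$-invariance of $\mu$, and arranging the trivialization and $\Gamma$-invariant metric on $E$ so that the $\ell^1(\Gamma)$-bound of Corollary \ref{coro:bd-Om} actually dominates the integral over the whole of ${\GT}^p$ rather than merely over a fundamental domain. Once (1) is in hand, parts (2) and (3) are essentially a routine application of Fubini together with the $L^1$-bounds on $\Omega_{\rho(G)}$.
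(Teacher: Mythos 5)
Your treatment of items (1) and (2) is essentially the paper's own argument: unfold the integral over a fundamental domain, dominate the resulting sum over $\Gamma$ by the $\ell^1(\Gamma)$-bound of Corollary \ref{coro:bd-Om}, use naturality of $w$ and $\Gamma$-compactness of $\mu$ to make the core diameters (hence all the constants) uniform on the support, and then for (2) combine the pointwise bound $\Vert\Omega_{\rho(w(H))}\wedge\Omega_{\rho(G)}\Vert\leq\Vert\Omega_{\rho(w(H))}\Vert\,\Vert\Omega_{\rho(G)}\Vert$ with item (1) and the $L^1(\hh)$-bound on $\Vert\Omega_{\rho(G)}\Vert$. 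That part is fine.

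There is, however, a genuine gap in your item (3). Fubini applied to the double integral of $\tr\left(\Omega_{\rho(w(H))}\wedge\Omega_{\rho(G)}\right)$ over $\hh\times{\GT}^p$ identifies the right-hand side of the exchange formula with
$\int_\hh\tr\left(\overline\Omega\wedge\Omega_{\rho(G)}\right)$, where $\overline\Omega=\int_{{\GT}^p}\Omega_{\rho(w(H))}\,\d\mu(H)$. But the left-hand side of the exchange formula is $\oint_{\rho(G)}\overline\Omega$, and the identity $\oint_{\rho(G)}\alpha=\int_\hh\tr(\alpha\wedge\Omega_{\rho(G)})$ of Proposition \ref{pro:dual-ghost-form} is only proved for $\alpha$ in $\bXi(E)$, i.e.\ geodesically bounded. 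The averaged form $\overline\Omega$ is $\Gamma$-equivariant (since $\mu$ is $\Gamma$-invariant and $H\mapsto\Omega_{\rho(H)}$ is equivariant), and as the paper points out, $\Gamma$-equivariant forms are never geodesically bounded unless they vanish: the integral of $\tr(\overline\Omega\,A)$ along a geodesic will in general diverge. So $\oint_{\rho(G)}\overline\Omega$ must be evaluated through its actual definition as the continuous linear functional $\J_{\rho(G)}$ on $\bLa^\infty(E)$ (choice of primitive and line integrals along the visible edges), and your Fubini step does not connect it to the wedge integral. The paper bridges exactly this gap by a cutoff argument: it introduces compactly supported cutoffs $\psi_n$ on ${\GT}^p$, for which the exchange holds, shows $\int\Omega_{\rho(w(H))}\psi_n\,\d\mu\to\overline\Omega$ in $\bLa^\infty(E)$ using the uniform bound on $\Vert\Omega_{\rho(w(H))}\Vert_\infty$ from Corollary \ref{coro:bd-Om}, and then invokes the continuity of $\alpha\mapsto\oint_{\rho(G)}\alpha$ on $\bLa^\infty(E)$ (Proposition \ref{pro:oint-cont}) on one side and dominated convergence (via your item (2)) on the other. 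You need some version of this approximation step; without it the two sides of \eqref{eq:oint-inter2} have not been shown to be the same number.
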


We first concentrate on Theorem \ref{theo:oint-inter1}, then prove Theorem \ref{theo:oint-exchang2} in paragraph \ref{sec:proo-ointexchang2}.

\subsection{Exchanging line integrals}

Theorem \ref{theo:oint-inter1} is  an immediate consequence of a similar result involving line integrals. 
\begin{proposition}\label{pro:exchan-int1}
	Let $\mu$ be a $\Gamma$-invariant geodesic current  on ${\GT}$,  then \begin{enumerate} 
			\item $\int_{{\GT}} \beta_{g}\d \mu(g)$ --- defined pointiwise --- is an element of $ \bLa^\infty(E)$,
			\item Let $g_0$ be a geodesic, $x$ a point on $g_0$ and $Q$ a parallel section of $\End(E)$ along $g_0$, then the map $$
		 g\mapsto \bS_{x,g_0,Q}\left(\beta_{g}\right)\ ,$$ is in $L^1({\GT},\mu)$.
\item  We have the exchange formula
	\begin{eqnarray}
		\bS_{x,g_0,Q}\left(\int_{{\GT}} \beta_{\rho(g)}\ \d\mu(g)\right)=\int_{{\GT}} \bS_{x,g_0,Q}\left(\beta_{\rho(g)}\right)\ \d\mu(g)\ .\label{eq:exchan-int1}
	\end{eqnarray}
	\end{enumerate}
\end{proposition}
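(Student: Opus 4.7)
The plan is to establish (1) and (2) in sequence---giving absolute integrability---and then apply Fubini to obtain (3). The exponential decay of Lemma \ref{lem:exp-decay} is the crucial analytic input.

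For (1), fix $y\in\hh$. By Proposition \ref{pro:def-omegag}, $\omega_g$ is supported in the 1-neighbourhood of $g$, so $\beta_g(y)=0$ unless $d(y,g)\leq 1$; for such $g$, $\|\beta_g(y)\|_y\leq C$ with $C$ independent of $g$, using the uniform sup-norm bound on $\omega_g$ and the boundedness of $\pp$ from Proposition \ref{pro:pbd-unique}. Set $N(y)\defeq \mu(\{g\in\GT : d(y,g)\leq 1\})$. Local finiteness of geodesic currents makes $N(y)$ finite; $\Gamma$-invariance makes $N$ descend to a function $\bar N$ on the compact quotient $S$; and covering $S$ by finitely many balls of radius $1/2$ yields a uniform bound $\bar N\leq M$, since $\{g : g\cap B(\bar z,3/2)\neq\emptyset\}$ is compact in $\GT/\Gamma$ and therefore of finite $\mu$-measure. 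Hence the pointwise integral $\alpha\defeq\int_\GT\beta_g\,d\mu(g)$ satisfies $\|\alpha(y)\|_y\leq CM$, proving $\alpha\in\bLa^\infty(E)$.

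For (2), Lemma \ref{lem:exp-decay} supplies constants $B,b>0$ depending only on $Q$ and $x$ with $\bigl|\tr(Q[\beta_g(\pt)|_y,\pp]\pp)\bigr|\leq Be^{-b\,d(x,y)}\|\beta_g\|_\infty$ for $y\in g_0^+$, and the analogous bound on $g_0^-$. Since $\|\beta_g\|_\infty$ is uniformly bounded in $g$ and the integrand vanishes off the 1-neighbourhood of $g$, Tonelli's theorem for the positive integrand gives $\int_\GT\bigl|\bS_{x,g_0,Q}(\beta_g)\bigr|\,d\mu(g)\leq B'\int_{g_0}e^{-b\,d(x,y)}N(y)\,dy<\infty$, using the uniform bound on $N$ from Step~1.

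For (3), the absolute integrability from Step~2 justifies Fubini. At each $y\in g_0$, the map $\omega\mapsto\tr(Q[\omega(\pt)|_y,\pp_y]\pp_y)$ is linear and continuous in the value of the 1-form, so $\tr(Q[\alpha(\pt)|_y,\pp]\pp)=\int_\GT\tr(Q[\beta_g(\pt)|_y,\pp]\pp)\,d\mu(g)$, and analogously for the second term in $\bS_{x,g_0,Q}$. Exchanging the order of integration between $g\in\GT$ and $y\in g_0^\pm$ produces the desired formula.

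The main obstacle is the measure-theoretic bookkeeping in Step~1, specifically showing that the $\Gamma$-invariant function $N$ descends to a uniformly bounded function on the compact quotient $S$; this hinges on local finiteness of geodesic currents together with the compactness of geodesics intersecting a fixed compact set modulo $\Gamma$. Once this uniform bound is in place, the exponential decay from Lemma \ref{lem:exp-decay} drives the integrability required for Tonelli and Fubini, and the remaining verifications are routine.
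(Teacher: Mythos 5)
Your proof is correct, and for items (2) and (3) it takes a genuinely different and more direct route than the paper. Item (1) is essentially the paper's argument (their Proposition \ref{pro:bd-muB} together with Lemma \ref{lem:mub-bLa}). For item (2), the paper decomposes $\GT$ into the pieces $U_0\cup\bigcup_n V_n^{\pm}$, introduces the control function $F_0(g)=e^{-n(g)}$, invokes the hyperbolic-geometry Lemma \ref{lem:hyp} to compare $n(g)$ with $d(x,g)$, and proves a dominating lemma; you instead integrate the pointwise bound $\bigl|\tr(Q[\beta_g(\pt)|_y,\pp]\pp)\bigr|\leq Be^{-b\,d(x,y)}\,\mathbf{1}_{d(y,g)\leq 1}\,\|\beta_g\|_\infty$ over $g$ first (using the uniform mass bound $\mu(G(y,1))\leq M$) and then over $y\in g_0$ (using the exponential decay), which is shorter and isolates the real point: the double integral over $(y,g)\in g_0\times\GT$ is absolutely convergent. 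For item (3), the paper uses a family of cut-off functions $\psi_R$, a three-term error estimate $A(R)+B(R)+C(R)$, the continuity of $\bS_{x,g_0,Q}$ on $\bLa^\infty(E)$, and dominated convergence; you instead commute the (continuous, linear, fibrewise) functional $\omega\mapsto\tr(Q[\omega(\pt),\pp]\pp)$ with the Bochner integral defining $\alpha(y)$ over the finite-measure set $G(y,2)$, and then apply Fubini directly, which is legitimate precisely because of the absolute integrability you established in Step~2. What the paper's longer route buys is an explicit $L^1$ majorant of $g\mapsto\bS_{x,g_0,Q}(\psi\beta_g)$ that is uniform in the cut-off $\psi$ --- but that uniformity is only needed to run their own cut-off strategy, so nothing is lost by your argument. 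The one point worth making explicit is the measurability of $g\mapsto\bS_{x,g_0,Q}(\beta_g)$ (joint measurability of the integrand in $(y,g)$ suffices, and is what Tonelli requires anyway); this is routine.
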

We prove the first item in proposition \ref{sec:proof-1item-exchg}, the second item in \ref{sec:proof-2item-exchange} and the third in \ref{sec:proof-thirditem-exchange}.
 \subsection{Average of geodesic forms and the first item}\label{sec:proof-1item-exchg} 
Let $\mu$ be a $\Gamma$-invariant measure on ${\GT}$.  Let $y$ be a point in $\hh$, and $$
G(y,R)\defeq\{g\in{\GT}\mid d(g,y)\leq R\}\ .
$$ 
As an immediate consequence of the $\Gamma$-invariance we have
\begin{proposition}\label{pro:bd-muB} For every positive $R$, there is a constant $K(R)$ so that for every $y$ in $\hh$
	\begin{equation}
		\mu(G(y,R))\leq K(R)\ . \label{ineq:bd-muB}
	\end{equation}
\end{proposition}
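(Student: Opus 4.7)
The plan is to exploit the $\Gamma$-equivariance together with cocompactness of the $\Gamma$-action on $\hh$, and then conclude from the fact that $\mu$, as a Radon measure, is finite on relatively compact subsets of ${\GT}$ (which is built into the definition of a geodesic current).

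First I would fix a compact fundamental domain $D\subset\hh$ for $\Gamma$. Given any $y\in\hh$, pick $\gamma\in\Gamma$ with $\gamma y\in D$. Because $\Gamma$ acts on $\hh$ by isometries, $d(\gamma g,\gamma y)=d(g,y)$ for every $g\in{\GT}$, so $\gamma\cdot G(y,R)=G(\gamma y,R)$. The $\Gamma$-invariance of $\mu$ then yields
$$\mu(G(y,R))=\mu(G(\gamma y,R))\leq \mu(G(D,R)),$$
where $G(D,R)\defeq\{g\in{\GT}\mid d(g,D)\leq R\}$. This single inequality is the whole content of the reduction: it replaces the $y$-dependent quantity by one depending only on $R$ and on the fixed data $(D,\mu)$.

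Next I would verify that $G(D,R)$ is relatively compact in ${\GT}$. A geodesic $g$ lies in $G(D,R)$ precisely when it meets the compact set $B(D,R)=\{x\in\hh\mid d(x,D)\leq R\}$. Identifying $\GG$ with $\partial_\infty\hh\times\partial_\infty\hh$ minus the diagonal, the endpoint pairs of such geodesics form a closed subset that stays bounded away from the diagonal—otherwise the geodesic would recede to infinity and miss $B(D,R)$. Hence the closure of $G(D,R)$ in $\GG$ is compact, and since $\Theta$ is finite the same holds in ${\GT}$.

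Setting $K(R)\defeq\mu(\overline{G(D,R)})$ then completes the proof: this quantity is finite because $\mu$ is locally finite, and is independent of $y$ by construction. The only non-routine step is the relative compactness of the set of geodesics meeting a compact region of $\hh$, but this is standard for the visual topology on $\GG$; everything else is a direct application of $\Gamma$-invariance, which is why the authors can call the result an ``immediate consequence''.
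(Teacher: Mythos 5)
Your proof is correct and is exactly the argument the authors leave implicit when they call the bound an ``immediate consequence of $\Gamma$-invariance'': cocompactness of the surface group on $\hh$ reduces the estimate to the single compact set $\overline{G(D,R)}$, whose finite $\mu$-mass comes from the Radon property of a geodesic current. The only point worth making explicit is the one you already flag, namely the relative compactness in $\GG$ of the set of geodesics meeting a compact region of $\hh$, which holds because such geodesics have endpoint pairs bounded away from the diagonal of $\bh\times\bh$.
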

Observe now that if $g$ is not in $G(y)\defeq G(y,2)$, then $y$ is not in the support of $\omega_g$ and thus $\beta_{\rho(g)}(y)=0$. We then define

\begin{definition}
	The {\em $\mu$-integral of geodesic forms} is the form $\alpha$ so that at a point $y$ in $\hh$
	$$
	\alpha_y\defeq\int_{G(y)} \beta_g(y)\ \d\mu(g)=\int_{\GT} \beta_g(y)\ \d\mu(g)\ .
	$$
We use some abuse of language and write  
$$
\alpha\eqdef\int_{\GT}\beta_{\rho(g)}\d\mu(g)\ .
$$
\end{definition}
The form $\alpha_y$ is well defined since $G(y)$ is compact. Moreover, the next lemma gives the proof of the first item of proposition \ref{pro:exchan-int1}
\begin{lemma}\label{lem:mub-bLa}
	The $\mu$-integral of geodesic forms belongs to $ \bLa^\infty(E)$ and we have a constant $K_5$ only depending on $\rho$ and $\mu$ so that 
	\begin{equation}
		\left\Vert\int_{\GT}\beta_{\rho(g)}\d\mu(g)\right\Vert_\infty\leq K_5 
		\ .
	\end{equation}
\end{lemma}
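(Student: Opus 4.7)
The plan is to reduce the claim to three uniform estimates and assemble them by a pointwise triangle inequality; none of the steps require any further dynamical input beyond what has already been established.

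First I would fix the uniform bound on the integrand. The form $\omega_g$ is built by the $\psld$-equivariant prescription $\omega_g = -\d(f\circ R_g)$ in the proof of proposition \ref{pro:def-omegag}; since $f'$ and $|\nabla R_g|$ are universally bounded, there is a constant $C_\omega$, depending only on the fixed auxiliary function $f$, with $\vert\omega_g(u)\vert \leq C_\omega \Vert u\Vert$ for every geodesic $g$ and every tangent vector $u$. By proposition \ref{pro:pbd-unique}, the fundamental projector $\pp$ is a bounded continuous section of $\End(E)$; since each $\pp^\aa(g)$ is just the restriction of the flow-invariant $\pp$ to tangent vectors along $g$, the norm $\Vert\pp^\aa(g)\Vert$ is uniformly bounded by some $C_\pp$ as $g$ varies over $\GT$ and $\aa$ varies over the finite set $\Theta$. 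Combining, $\Vert\beta_{\rho(g)}(y)(u)\Vert_y\leq C_\omega C_\pp \Vert u\Vert$ pointwise, with a constant independent of $g$, $y$, and $u$.

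Next I would use the support condition to localize the integral. By construction $\omega_g$ is supported in the $1$-neighbourhood of $g$, so $\beta_{\rho(g)}(y)$ vanishes whenever $d(y,g)>1$. Hence for each fixed $y$ the integrand is supported in $G(y)=\{g\in\GT\mid d(g,y)\leq 2\}$, and the pointwise integral is actually
\begin{equation*}
\alpha_y(u) \;=\; \int_{G(y)}\beta_{\rho(g)}(y)(u)\,\d\mu(g)\ .
\end{equation*}
Measurability of the integrand is clear, as $g\mapsto \omega_g(y)(u)$ and $g\mapsto \pp(g)$ are both continuous in $g$ (the latter depending only on the endpoints, via the continuous limit maps of $\rho$).

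Finally I would apply proposition \ref{pro:bd-muB}, which gives $\mu(G(y))\leq K(2)$ with $K(2)$ independent of $y$. The triangle inequality then yields
\begin{equation*}
\Vert\alpha_y(u)\Vert_y \;\leq\; \int_{G(y)} \Vert\beta_{\rho(g)}(y)(u)\Vert_y\,\d\mu(g)\;\leq\; C_\omega C_\pp K(2)\,\Vert u\Vert\ ,
\end{equation*}
uniformly in $(y,u)\in\ms U\hh$. Setting $K_5\defeq C_\omega C_\pp K(2)$ gives both $\alpha\in\bLa^\infty(E)$ and the claimed norm bound. There is no genuine obstacle here: all of the ingredients (uniform boundedness of $\omega_g$, of $\pp$, and of $\mu$ on $2$-balls of geodesics) are already in place, and the only mild care is to confirm that $K(2)$ really depends on $\mu$ and not on $y$, which is immediate from $\Gamma$-invariance and cocompactness.
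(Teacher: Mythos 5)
Your proposal is correct and follows essentially the same route as the paper's proof: localize the integral to $G(y)=G(y,2)$ using the support of $\omega_g$, bound $\mu(G(y))$ uniformly in $y$ via proposition \ref{pro:bd-muB}, and bound the integrand uniformly using the $\psld$-equivariance of $g\mapsto\omega_g$ together with the boundedness of the fundamental projector from proposition \ref{pro:pbd-unique}. No differences of substance.
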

\begin{proof}  We have 
\begin{equation}
		\left\vert\int_{\GT}\beta_{\rho(g)}(y)\ \d\mu(g)\right\vert=\left\vert\int_{G(y)}\beta_{\rho(g)}(y)\ \d\mu(g)\right\vert \leq \mu(G(y))\ \ \sup_{g\in G(y)}\left\Vert \beta_{\rho(g)}\right\Vert_\infty \ .
	\end{equation}
Then by proposition \ref{pro:bd-muB}, there is a constant $k_1$ so that $\mu(G(y))\leq k_1$. Recall that $\beta_{\rho(g)}=\omega_g\pp(g)$. Then by the $\psld$ equivariance, $\omega_g$ is bounded independently of $g$, while by lemma \ref{pro:pbd-unique}, $\pp$ is a bounded section of $\End(E)$. The result follows.
\end{proof}

\subsection{Decay of line integrals}
We now recall the following definition.
\begin{eqnarray*}
	\bS_{x,g,\QQ}(\omega)	& = & 
		 \int_{g^+}\tr\left(\omega\ \pp\  [\pp,\QQ] \right) + \int_{g^-}\tr\left(\omega\ [\pp,\QQ]\ \pp\right)\ .
\end{eqnarray*}
We prove in this paragraph the following two lemmas.
\begin{lemma}[\sc Vanishing]\label{lem:vanishing} Let $g_0$ be a geodesic and $x$ a point in $g_0$, 
	Let $g$ be a geodesic such that $d(g,g_0)>1$, then for any function $\psi$ on $\hh$ with values in $[0,1]$:
	\begin{equation}
		\bS_{x,g_0,Q}(\psi\beta_{\rho(g)})=0\ .
	\end{equation}
\end{lemma}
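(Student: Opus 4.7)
The proof is essentially a support argument, so the plan is short. The key observation is that the line integration $\bS_{x,g_0,\QQ}$ only sees values of its argument along the geodesic $g_0$ itself (the integrals in its defining formula \eqref{eq:defJ} are taken over $g_0^+$ and $g_0^-$, which are subarcs of $g_0$). Thus, to show the vanishing, it suffices to verify that $\psi\beta_{\rho(g)}$ vanishes identically when restricted to $g_0$.

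First I would recall from \eqref{def:beta} that $\beta_{\rho(g)} = \omega_g\,\pp(g)$, so that it is enough to show $\omega_g$ vanishes on $g_0$. By proposition \ref{pro:def-omegag}\eqref{it:i}, the form $\omega_g$ is supported in the tubular neighbourhood of $g$ at distance $1$, i.e.\ in the set $\{y\in\hh \mid d(y,g)\leq 1\}$. The hypothesis $d(g,g_0)>1$ means that every point $y$ on $g_0$ satisfies $d(y,g)\geq d(g_0,g)>1$, so $y$ lies outside the support of $\omega_g$. Consequently $\omega_g|_{g_0}\equiv 0$, hence $\beta_{\rho(g)}|_{g_0}\equiv 0$, and a fortiori $(\psi\beta_{\rho(g)})|_{g_0}\equiv 0$ regardless of the cutoff $\psi$.

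Plugging this into the defining formula
\[
\bS_{x,g_0,\QQ}(\psi\beta_{\rho(g)}) = \int_{g_0^+}\tr\!\left(\psi\beta_{\rho(g)}\ \pp\ [\pp,\QQ]\right) + \int_{g_0^-}\tr\!\left(\psi\beta_{\rho(g)}\ [\pp,\QQ]\ \pp\right),
\]
both integrands are identically zero on their respective domains of integration, so both integrals vanish. There is no real obstacle here; the only thing to be careful about is that the support statement in proposition \ref{pro:def-omegag} is phrased for the ambient hyperbolic plane, but of course this immediately restricts to the statement along $g_0$ used above.
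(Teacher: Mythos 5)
Your proof is correct and is exactly the argument the paper gives (the paper's proof is the one-line observation that under the hypothesis the support of $\omega_g$ does not intersect $g_0$); you have merely spelled out the support estimate and the restriction to the integration arcs $g_0^{\pm}$ explicitly. No issues.
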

\begin{proof} This follows at once from the fact that under the stated hypothesis, the support of $\omega_g$ does not intersect $g_0$. \end{proof}

\begin{lemma}[\sc Decay]
	\label{lem:expdecay-mu}
	For any endomorphism $\QQ$ and representation $\rho$, there exist positive constants $K$ and $k$, so that for all $g$ so that $d(g,x)>R$, for any function $\psi$ on $\hh$ with values in $[0,1]$:
	\begin{eqnarray*}
		\left\vert \bS_{x,g_0,Q}(\psi\beta_{\rho(g)})\right\vert\leq K e^{-kR} .
	\end{eqnarray*}
\end{lemma}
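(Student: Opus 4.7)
The plan is to combine the Vanishing lemma \ref{lem:vanishing} with the exponential decay estimate \eqref{ineq:expdecay1} of lemma \ref{lem:exp-decay}. We may assume throughout that $d(g,g_0)\leq 1$; otherwise $\bS_{x,g_0,\QQ}(\psi\beta_{\rho(g)})$ already vanishes by the Vanishing lemma and there is nothing to prove.

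First, observe that $\Vert\beta_{\rho(g)}\Vert_\infty$ admits a bound $C$ independent of $g$. Indeed, the $\psld$-equivariant construction in proposition \ref{pro:def-omegag} produces $\omega_g$ as a uniformly bounded form, while proposition \ref{pro:pbd-unique} asserts that the fundamental projector $\pp$ is a bounded section of $\End(E)$. Multiplying by $\psi$, which takes values in $[0,1]$, can only decrease the sup norm, so $\Vert\psi\beta_{\rho(g)}\Vert_\infty\leq C$.

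Next, parameterize $g_0^+$ by arc length $t\geq 0$ starting from $x$. The integrand $\tr(\QQ\,[\psi\beta_{\rho(g)}(\dot g_0),\pp]\,\pp)$ of the first term of $\bS_{x,g_0,\QQ}(\psi\beta_{\rho(g)})$ is supported where $\beta_{\rho(g)}$ is, namely in the $1$-neighborhood of $g$. At any such point $y=g_0(t)$, the triangle inequality combined with the hypothesis $d(g,x)>R$ gives $d(y,x)\geq d(g,x)-d(y,g)>R-1$. The exponential decay estimate \eqref{ineq:expdecay1} then bounds the integrand by $Be^{-b\,d(x,y)}\,\Vert\psi\beta_{\rho(g)}\Vert_\infty\leq BCe^{-bt}$.

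Integrating and performing the same analysis on the $g_0^-$ half, we conclude
$$
\left|\bS_{x,g_0,\QQ}(\psi\beta_{\rho(g)})\right|\ \leq\ 2\int_{R-1}^{\infty} BC\,e^{-bt}\,\d t\ =\ \frac{2BCe^{b}}{b}\,e^{-bR}\ ,
$$
with constants depending only on $\rho$, $\QQ$ and $x$, as required. The only mildly delicate point is that the support region along $g_0$ of $\omega_g$ need not be bounded (for instance when $g$ and $g_0$ share an endpoint at infinity), but the exponential decay of the integrand suffices to control the tail uniformly in $g$.
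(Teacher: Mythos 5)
Your proof is correct and follows essentially the same route as the paper's: restrict the integral to the portion of $g_0^\pm$ at distance at least $R-1$ from $x$ using the support of $\omega_g$ (the paper phrases this via lemma \ref{lem:vanishing}), then apply the exponential decay estimate of lemma \ref{lem:exp-decay} and integrate the tail. Your explicit uniform bound on $\Vert\psi\beta_{\rho(g)}\Vert_\infty$ and the closing remark about unbounded support along $g_0$ are details the paper leaves implicit, but the argument is the same.
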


\begin{proof}We assume $x$ and $g$ are so that $d(g,x)> R$. 
It is enough (using a symmetric argument for $g_0^-$) to show that 	
\begin{eqnarray*}
		\left\vert\int_{g_0^+}\tr(\psi\beta_{\rho(g)}\cdot  \pp\cdot[\pp,\QQ])\right\vert\leq K e^{-kR}\ ,
	\end{eqnarray*}
	where $g_0^+$ is the arc on $g_0$ from $x$ to $+\infty$.
Let us denote by $g_0^{+}(R)$ the set of points of $g_0^+$ at distance at least $R$ from $x$:
$$
g_0^{+}(R)\defeq\{y\in g_0^+\mid d(y,x)\geq R\}\ .
$$ 
Then if $y$  belongs to $g_0^+$ and does not belong to  $g_0^{+}(R-1)$, then $d(y,x)<R-1$. Thus $d(y,g)>1$. Thus, by lemma \ref{lem:vanishing}, $\beta_{\rho(g)}(y)$ vanishes for $y$ in $g_0^+$ and not in $g_0^{+}(R-1)$. Thus 
\begin{eqnarray*}
		\left\vert\int_{g_0^+}\tr(\psi\beta_{\rho(g)}\cdot \pp\cdot [\pp,\QQ])\right\vert\leq \int_{g_0^+(R-1)}\left\vert\tr(\beta_{\rho(g)}(\pt)\cdot \pp\cdot [\pp,\QQ])\right\vert\ \d t\ .
	\end{eqnarray*}
Then the result follows from the exponential decay lemma \ref{lem:exp-decay}.
\end{proof}

\subsection{Cutting in pieces and dominating: the second item}\label{sec:proof-2item-exchange}
We need to decompose ${\GT}$ into pieces. Let $g_0$ be an element of 
${\GT}$ and $x$ a point on $g_0$. Let $x^+(n)$ -- respectively $x^-(n)$ -- the point in $g_0^+$ -- respectively $g^-_0$ -- at distance $n$ from $x$.
Let us consider \begin{eqnarray*}
	U_0&\defeq& \{g\in{\GT}\mid d(g,g_0)>1\}\ ,\crcr
	V^+_n&\defeq&\{g\in{\GT}\mid d(g,x^+(n))< 2 \hbox{ and for all $0\leq p<n$ ,}\ \  d(g,x^+(n))\geq  2  \} \ , \cr 
	V^-_n&\defeq&\{g\in{\GT}\mid d(g,x^-(n))< 2 \hbox{ and for all $0\leq p<n$, \ \ } d(g,x^-(n))\geq 2 \} \ .
 \end{eqnarray*}
	 This gives a covering of ${\GT}$:		 
\begin{lemma}\label{lem:GG-decomp}
We have the decomposition
	$$
{\GT}=U_0\cup \bigcup_{n\in\mathbb N} V^\pm_n\ , 
$$
\end{lemma}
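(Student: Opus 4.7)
The plan is straightforward: since $\GT \supseteq U_0 \cup \bigcup_n V_n^\pm$ is immediate from the definitions, I only need to establish the reverse inclusion. So I take an arbitrary $g \in \GT \setminus U_0$ and show it belongs to some $V_n^+$ or $V_n^-$.

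By definition of $U_0$, the assumption $g \notin U_0$ means $d(g, g_0) \leq 1$. Consequently there is a point $y \in g_0$ (take the foot of the perpendicular from $g$ to $g_0$) with $d(g, y) \leq 1$. Since $g_0 = g_0^+ \cup g_0^-$, up to swapping the roles of $+$ and $-$ I may assume $y \in g_0^+$, so $y = x^+(t)$ for some $t \geq 0$.

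Now I pick any integer $n \in \mathbb N$ with $|n - t| < 1$; such an $n$ exists (take $n = \lfloor t \rfloor$ if $t$ is not a positive integer, else $n = t$). By the triangle inequality,
\[
d(g, x^+(n)) \;\leq\; d(g, y) + d(y, x^+(n)) \;=\; d(g, y) + |n - t| \;<\; 1 + 1 = 2.
\]
Hence the set $S \defeq \{n \in \mathbb N \mid d(g, x^+(n)) < 2\}$ is nonempty. Let $n_0 \defeq \min S$. Then $d(g, x^+(n_0)) < 2$, and by minimality $d(g, x^+(p)) \geq 2$ for every $0 \leq p < n_0$, which is exactly the defining condition for $g \in V^+_{n_0}$.

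There is no real obstacle here; the only point to watch is the strictness of the inequality $|n - t| < 1$, which is why I argue case-by-case on whether $t$ is an integer. The symmetric argument with $g_0^-$ in place of $g_0^+$ handles the case $y \in g_0^-$, completing the proof.
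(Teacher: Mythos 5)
Your proof is correct and follows the same route as the paper's: from $g\notin U_0$ one extracts a point $y\in g_0$ with $d(g,y)\leq 1$, deduces via the triangle inequality that some integer $n$ satisfies $d(g,x^{\pm}(n))<2$, and then takes the minimal such $n$ to land in $V^{\pm}_{n}$. Your write-up is in fact slightly more explicit than the paper's (which merely asserts the existence of such an $n$), but there is no difference in substance.
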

\begin{proof} When $g$ does not belong to $U_0$, there is some $y$ in $g_0$ so that $d(g,y)\leq 1$, hence some $n$ so that either $d(y,g^+(n))\leq 2$, while for all $0\leq p<n$ we have  $d(y,g^+(p))> 2$,  or $d(y,g^-(n))\leq 2$, while for all $0\leq p<n$ we have  $d(y,g^-(p))> 2$.
\end{proof}	
Let now 
$$
n(g)=\sup\{m\in\mathbb N\mid g\in V^+_m\cup V^-_m\}\ .
$$
By convention, we write  $n(g)=+\infty$, whenever $g$ does not belong to $\bigcup_{n\in\mathbb N} V^\pm_n$.

The non-negative  {\em control function} $F_0$ on ${\GT}$ is defined by  $F_0(g)=e^{-n(g)}$.
We now prove
\begin{lemma}[\sc Dominating lemma]\label{lem:domination}
	For any positive $k$, the function $(F_0)^k$ is in $L^1({\GT},\mu)$. 
	
	Moreover, there exist  positive constants $K_9$ and $k_9$ so that for all functions $\psi$ on $\hh$ with values in $[0,1]$ we have 
	\begin{equation}
		\left\vert \bS_{x,g_0,Q}(\psi\beta_g)\right\vert\leq K_9(F_0(g))^{k_9}\ .
	\end{equation}
\end{lemma}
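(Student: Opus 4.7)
The plan is to exploit the decomposition of $\GT$ given by Lemma \ref{lem:GG-decomp} and reduce both claims to two basic ingredients: the uniform volume bound \eqref{ineq:bd-muB} from Proposition \ref{pro:bd-muB}, and the Decay Lemma \ref{lem:expdecay-mu} together with the Vanishing Lemma \ref{lem:vanishing}.

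For the $L^1$ claim, I first note that the sets $\{V^\pm_n\}_{n\in\mathbb N}$ are pairwise disjoint: if $g$ lay in $V^+_n \cap V^+_m$ with $n<m$, then $d(g,x^+(n))<2$ contradicting the second defining condition of $V^+_m$ (similarly for the $V^-$ family, and across signs for $n\neq 0$). Since $F_0\equiv 0$ on $U_0$ by our convention $n(g)=+\infty$, and $F_0(g)=e^{-n}$ on $V^\pm_n$, I compute
\begin{equation*}
	\int_{\GT} F_0^k\,\d\mu = \sum_{n\in\mathbb N}e^{-kn}\bigl(\mu(V^+_n)+\mu(V^-_n)\bigr).
\end{equation*}
Each $V^\pm_n$ sits inside the ball $G(x^\pm(n),2)$, so by Proposition \ref{pro:bd-muB} its $\mu$-mass is bounded by $K(2)$ uniformly in $n$. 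The geometric series then gives $\int F_0^k\d\mu \leq 2K(2)/(1-e^{-k})<\infty$.

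For the pointwise bound on $\bS_{x,g_0,Q}(\psi\beta_g)$, I split cases according to which piece of the decomposition $g$ belongs to. If $g\in U_0$, Lemma \ref{lem:vanishing} gives $\bS_{x,g_0,Q}(\psi\beta_g)=0=F_0(g)^{k_9}$, so any choice of $K_9,k_9$ works. If $g\in V^\pm_n$, the triangle inequality yields $d(g,x)\geq d(x,x^\pm(n))-d(g,x^\pm(n))> n-2$. For $n$ sufficiently large (say $n\geq 3$), I apply the Decay Lemma \ref{lem:expdecay-mu} with $R=n-2$ to obtain
\begin{equation*}
	\bigl|\bS_{x,g_0,Q}(\psi\beta_g)\bigr|\leq K e^{-k(n-2)} = \bigl(Ke^{2k}\bigr)\,F_0(g)^{k}.
\end{equation*}
For the finitely many remaining values $n\in\{0,1,2\}$, I use the continuity of $\bS_{x,g_0,Q}$ as a linear form on $\bLa^\infty(E)$ (Proposition \ref{pro:J-cont}) together with the uniform bound $\Vert\beta_g\Vert_\infty\leq C_0$ (from $\psld$-equivariance of $\omega_g$ and boundedness of $\pp$) to produce a uniform constant $C'$ with $|\bS_{x,g_0,Q}(\psi\beta_g)|\leq C'$; absorbing the factor $e^{3k}$ gives a bound of the form $C'e^{3k}\,F_0(g)^k$. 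Taking $k_9=k$ and $K_9=\max(Ke^{2k},C'e^{3k})$ yields the desired inequality.

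I do not anticipate a serious obstacle here: the whole mechanism is designed so that the sets $V^\pm_n$ isolate the ``rate at which $g$ escapes $x$ along $g_0$,'' the Decay Lemma converts this escape rate into an exponentially small line integral, and the volume estimate \eqref{ineq:bd-muB} controls the mass on each slice uniformly. The only minor subtlety is handling the small-$n$ regime, where Lemma \ref{lem:expdecay-mu} is not directly applicable and one falls back on the trivial uniform bound from continuity of $\bS_{x,g_0,Q}$.
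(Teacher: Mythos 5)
Your treatment of the $L^1$ claim is essentially the paper's argument (inclusion $V^\pm_n\subset G(x^\pm(n),2)$, the uniform bound of Proposition \ref{pro:bd-muB}, geometric series), and is correct; note only that $F_0$ need not vanish on all of $U_0$ (the convention $n(g)=+\infty$ applies when $g$ lies in no $V^\pm_n$, and $U_0$ is not disjoint from the $V^\pm_n$), but this only affects your claimed equality, not the upper bound you actually need.

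The second part, however, contains a genuine gap. The Decay Lemma \ref{lem:expdecay-mu} requires a lower bound on $d(g,x)=\inf_{z\in g}d(z,x)$, the distance from the point $x$ to the geodesic $g$. Your inequality $d(g,x)\geq d(x,x^\pm(n))-d(g,x^\pm(n))$ is not a valid triangle inequality for a point-to-set distance: writing $p=x^\pm(n)$, the correct chain gives $d(x,z)\geq d(x,p)-d(z,p)$ for the point $z\in g$ realizing $d(g,x)$, and $d(z,p)$ may be far larger than $d(g,p)$, since $z$ is chosen closest to $x$, not to $p$. (A geodesic passing within distance $2$ of $x^+(n)$ can a priori come very close to $x$; what rules this out is not the condition $d(g,x^+(n))<2$ you use, but the other half of the membership condition, $d(g,x^+(p))\geq 2$ for $p<n$.) The paper closes exactly this gap with Lemma \ref{lem:hyp}: from $d(g,x^\pm(n))<2$ together with $d(g,x^\pm(n-1))\geq 2$, a convexity argument (via a Lambert quadrilateral computation) yields $d(x,g)\geq n(g)$. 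This hyperbolic-geometry input is not a formality — without it, the only bound membership in $V^\pm_n$ gives directly is $d(g,x)\geq 2$, which is useless for the exponential decay. Your handling of the small-$n$ regime and of $U_0$ is fine, but the main case needs Lemma \ref{lem:hyp} (or an equivalent convexity estimate) in place of the triangle inequality.
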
 
We now observe that the second item of proposition \ref{pro:exchan-int1} is an immediate consequence of this lemma.

\begin{proof} We first prove that $F_0$ and all its powers are   in $L^1({\GT},\mu)$. 
Observe that $V^\pm_n\subset G(x^\pm(n),2)$. It follows from proposition \ref{pro:bd-muB} that $\mu(V^\pm_n)\leq K(2)$. Moreover, for any $g$ in $V_n^{\pm}$, $F_0(g)^k\leq e^{-kn}$. The decomposition of lemma \ref{lem:GG-decomp} implies that $F_0^k$ is in $L^1({\GT},\mu)$.

Let $g$ be an element of ${\GT}$. 
\begin{enumerate}
	\item When  $g$ belongs to $U_0$, then by lemma \ref{lem:vanishing}, $\bS_{x,g_0,Q}(\beta_{\rho(g)})=0$. Hence $\left\vert \bS_{x,g_0,Q}(\beta_{\rho(g)})\right\vert \leq A (F_0(g))^a$, for any positive  $A$ and $a$.
	\item  When  $g$ does not belong to $U_0$, then  $g$ belongs to $V^\pm_{n(g)}$ with $n(g)<\infty$. By  lemma \ref{lem:hyp}, we have $d(x,g)\geq n(g)$. It follows from lemma \ref{lem:expdecay-mu} that for any positive function $\psi$, we have 
	$$
	\left\vert \bS_{x,g_0,Q}(\psi\beta_{\rho(g)})\right\vert\leq Ke^{-kn(g)}=KF_0(g)^k\ .
	$$
	\end{enumerate}
The last inequality concludes the proof.
\end{proof}

\subsection{Proof of the exchange formula of proposition \ref{pro:exchan-int1}}\label{sec:proof-thirditem-exchange}

Let us choose,  for any positive real $R$, a {\em cut-off function} $\psi_R$, namely a function on $\hh$ with values in $[0,1]$, with support in the ball with center $x$ and radius $R+1$, and equal to 1 on the ball of radius $x$ and radius $R$.
We write 
\begin{eqnarray}
	\left\vert \bS_{x,g_0,Q}\left(\int_{\GT} \beta_{\rho(g)}\ \d\mu(g)\right)-\int_{\GT} \bS_{x,g_0,Q} \left(\beta_{\rho(g)}\right)\d\mu(g)\right\vert\leq A(R)+B(R)+C(R)\ , \label{eq:exchgABC}
\end{eqnarray}
where
\begin{eqnarray*}
	A(R)&=& \left\vert \bS_{x,g_0,Q}\left(\int_{\GT} \beta_{\rho(g)}\ \d\mu(g)\right)-\bS_{x,g_0,Q}\left(\psi_R\int_{\GT}   \beta_{\rho(g)}\d\mu(g)\right)\right\vert\ ,\crcr
		B(R)&=& \left\vert \bS_{x,g_0,Q}\left(\psi_R\int_{\GT} \beta_{\rho(g)}\ \d\mu(g)\right)-\int_{\GT} \bS_{x,g_0,Q}\left(\psi_R \beta_{\rho(g)}\right)\ \d\mu(g)\right\vert\ ,\crcr
		C(R)&=&\left\vert\int_{\GT} \bS_{x,g_0,Q}\left( \psi_R \beta_{\rho(g)}\right)\ \d\mu(g)-\int_{\GT} \bS_{x,g_0,Q}\left( \beta_{\rho(g)}\right)\ \d\mu(g) \right\vert\ .
\end{eqnarray*}
We will prove the exchange formula (the third item of proposition \ref{pro:exchan-int1}) as an immediate consequence of the following three steps
\vskip 0.2 truecm
\noindent{\sc Step 1:} By lemma \ref{lem:mub-bLa},  $\alpha=\int_{\GT}\beta_{\rho(g)}\d\mu_g$  is in $ \bLa^\infty(E)$. By definition of a cutoff function,  the support of $(1-\psi(R))\ \alpha$  vanishes at any point $y$ so that $d(x,y)<R$. Thus the  exponential decay lemma \ref{lem:exp-decay} guarantees that 
$$
A(R)=\vert \bS_{x,g_0,Q}\left((1-\psi(R))\ \alpha\right)\vert \leq K_4e^{-k_4R}\Vert \alpha\Vert_\infty\ .
$$ 
Hence $\lim_{R\to\infty}A(R)=0$.
\vskip 0.2 truecm
\noindent{\sc Step 2:} Observe that 
$$
\psi_R\int_{\GT} \beta_{\rho(g)}\ \d\mu(g)=\int_{\GT} \psi_R\beta_{\rho(g)}\ \d\mu(g)\ .
$$
Moreover the function $g\mapsto\psi_R\beta_g$ is continuous from ${\GT}$ to $ \bLa^\infty(E)$. Thus  follows from the continuity of $\bS_{x,g_0,Q}$ proved in proposition 
\ref{pro:J-cont} implies that $B(R)=0$.

\vskip 0.2 truecm
\noindent{\sc Final Step:} As a consequence of Lebesgue's dominated convergence theorem and the domination proved in lemma \ref{lem:domination}, we have that
$\lim_{R\to\infty} C(R)=0$.
\vskip 0.1 truecm
Combining all steps
	$$
	\lim_{R\to\infty}(A(R)+B(R)+C(R))=0\ .
	$$ 
	Hence thanks to equation \eqref{eq:exchgABC}, we have
	$$\bS_{x,g_0,Q}\left(\int_{\GT} \beta_{\rho(g)}\ \d\mu(g)\right)=\int_{\GT} \bS_{x,g_0,Q} \left(\beta_{\rho(g)}\right)\ \d\mu(g)\ .$$
\subsection{Proof of Theorem \ref{theo:oint-exchang2}}\label{sec:proo-ointexchang2}
 We assume now that $\mu$ is a $\Gamma$-compact current of order $k>1$. We may also assume -- by decomposing into the positive and negative part, that $\mu$ is a positive current.
\begin{proof}[Proof of the first item] We want to show that
$\int_{{\GT}^p} \Omega_{\rho(w(H))}\d \mu(H)$ --- defined pointwise --- is an element of $ \bLa^\infty(E)$. 

Since $\mu$ is $\Gamma$-compact, it follows that the core diameter of any $H$ in the support of $\mu$ is bounded by some constant $R_0$ by proposition \ref{pro:core-cont}.

It will be enough to prove that 
\begin{eqnarray*}
\int_{{\GT}^p}\Vert\Omega_{\rho(w(H))}(y)\Vert_y\ \d \mu(H)&\leq & K_0\ , \label{eq:GG-Ome-K0}
	\end{eqnarray*}	 for some constant  $K_0$ that depends on $\mu$. 
Let  $\mathcal K$  be a fundamental domain for the action of $\Gamma$ on ${\GT}^p$.	Observe now that 
\begin{eqnarray*}	
\int_{{\GT}^p}\Vert\Omega_{\rho(w(H))}(y)\Vert_y\ \d \mu(H)
	&=&  \sum_{\gamma\in\Gamma}\int_{\gamma \mathcal K}\Vert\Omega_{\rho(w(H))}(y)\Vert_y\ \d\mu(H)\\ =\int_{\mathcal K}\left(\sum_{\gamma\in\Gamma}\Vert\Omega_{\rho(w(H))}(\gamma(y))\Vert_y\right)\ \d\mu(H)&=&\int_\mathcal K \Vert\psi_{w(H),y}\Vert_{\ell^1(\Gamma)}\ \d\mu(H)\ ,
	\end{eqnarray*}
	where $$
\psi_{w(H),y}\ :\ \gamma\ \mapsto \Vert\Omega_{\rho(w(H))}(\gamma(y))\Vert_y\ .
$$
By the second assertion of corollary \ref{coro:bd-Om}, the  map $\psi_{H,y}$ is in $\ell^1(\Gamma)$ and its norm is bounded by a continuous function of the core diameter $r(w(H))$ of $w(H)$, hence by a continuous function of $r(H)$ by inequality \eqref{ineq:natural-core}, hence by a constant on the support of $\mu$, since $r$ is $\Gamma$-invariant and continuous by proposition \ref{pro:core-cont} and $\mu$ is $\Gamma$-compact.

Since  $r(H)$ is bounded on the support of $\mu$,  
the  first item of the theorem follows.
\end{proof}

\begin{proof}[Proof of the second item] 
		
Let us consider the map 
 $$
 \Psi: H\mapsto\oint_{\rho(G)}\Omega_{\rho(w(H))}=\int_\hh \tr\left(\Omega_{\rho(w(H))}\wedge \Omega_{\rho(G)}\right)\  ,
 $$  where we used formula \eqref{eq:Omega-int} in the last equality. Our goal is to prove $\Psi$ is in $L^1({\GT}^p,\mu)$. We have that 
$$
\Vert \Omega_{\rho(w(H))}\wedge \Omega_{\rho(G)}(y)\Vert\leq \Vert \Omega_{\rho(w(H))}(y)\Vert\ \Vert \Omega_{\rho(G)}(y)\Vert \ . 
$$
It follows that 
\begin{eqnarray*}
\int_{{\GT}^p} \vert \Psi(H)\vert \ \d\mu(H) &\leq& \int_{{\GT}^p} \int_\hh \Vert \Omega_{\rho(G)}(y)\Vert\   \Vert \Omega_{\rho(w(H))}(y)\Vert\ \d y \ \d\mu(H)\ 	\cr &\leq &  \int_\hh \Vert\Omega_{\rho(G)}(y)\Vert\  \left( \int_{{\GT}^p}   \Vert \Omega_{\rho(w(H))}(y)\Vert\ \ \d\mu(H)\right)\ \d y \cr 
&\leq&  K_0\int_\hh \Vert\Omega_{\rho(G)}(y)\Vert\ \d y =  K_0\ \  \left\Vert\Omega_{\rho(G)}\right\Vert_{L^1(\hh)}\ ,
\end{eqnarray*}
where we used the inequality in the first item in the third inequality. We can now conclude by using the first assertion, corollary \ref{coro:bd-Om}.\end{proof}

\begin{proof}[Proof of the exchange formula] We use again a family of cutoff functions $\{\psi_n\}_{n\in\mathbb N}$ defined on ${\GT}^p$ with values in $[0,1]$ so that each $\psi_n$ has a compact support, and  $\psi_n$ converges to $1$ uniformly on every compact set. 

It follows from the Lebesgue's dominated convergence theorem and the second item  that 
\begin{eqnarray}
		\lim_{n\to\infty} \int_{{\GT}^p}\left(
		\oint_{\rho(G)}\Omega_{\rho(w(H))}
		\right)
		\psi_n\ \d\mu(H) =\int_{{\GT}^p}\left(\oint_{\rho(G)}\Omega_{\rho(w(H))}\right) \ \d\mu(H)\ .\label{ineq:ointoint-ex1}
\end{eqnarray}
Recall now that by the last assertion of corollary \ref{coro:bd-Om},  $\Vert\Omega_{\rho(H)}\Vert_\infty$ is bounded on every compact set and $\Gamma$-invariant,  hence bounded on the support of $\mu$.  Thus we have   the following convergence in $\bLa^\infty(E)$
\begin{eqnarray}
		\lim_{n\to\infty} \int_{{\GT}^p}\Omega_{\rho(w(H))}
	\ \psi_n\ \d\mu(H) =\int_{{\GT}^p}\Omega_{\rho(w(H))} \ \d\mu(H)\ ,\label{eq:conv-ointoint}
	\end{eqnarray}
From the continuity obtained in proposition \ref{pro:oint-cont}, we then have  that 
\begin{eqnarray}
		\lim_{n\to\infty}\oint_{\rho(G)} \int_{{\GT}^p}\Omega_{\rho(w(H))} \ 
	\psi_n\ \d\mu(H) =\oint_{\rho(G)}\int_{{\GT}^p}\Omega_{\rho(w(H))} \ \d\mu(H)\ .\label{ineq:ointoint-ex2}
\end{eqnarray}
Finally, for every $n$, since $\psi_n$ has compact support
the  following  formula holds
$$
\oint_{\rho(G)}\left( \int_{{\GT}^p}\Omega_{\rho(w(H))} \ 
	\psi_n\ \d\mu(w(H))\right)= \int_{{\GT}^p}\left(\oint_{\rho(G)}\Omega_{\rho(w(H))}\right) \ 
	\psi_n\ \d\mu(H)\ .
	$$
The exchange formula now follows from both assertions \eqref{ineq:ointoint-ex1} and 
\eqref{ineq:ointoint-ex2}.
\end{proof}

\section{Hamiltonian and brackets: average of correlation  and length functions}\label{sec:average}

Recall that in this part, we have left the realm of uniformly hyperbolic bundles in general and focus only on periodic ones. This corresponds to the study of Anosov representations of the fundamental group of a closed surface.

The fact that $S$ is closed allows us to introduce a new structure: the smooth part of the representation variety of projective representations carries the Goldman symplectic form, defined in paragraph \ref{sec:gold-sympl}, see also \cite{Labourie:2013ka}. Hence we have a Poisson bracket on functions on the character variety.

In this section, we recall the definition of  {\em length functions} in paragraph \ref{sec:defi-length} and introduce {\em  averaged correlation functions}  in paragraph \ref{sec:defi-corell}. The main results are then described in paragraph \ref{sec:main-result}.

The proof of these results occupy the rest of the section.
\begin{enumerate}
	\item In paragraph \ref{sec:regu}, we prove the regularity of the functions that we consider.
	\item In paragrah \ref{sec:hamil-length},  we use the result about derivatives of length functions to identify the Hamiltonian vector field of these length functions, and compute their Poisson brackets.
	\item In paragraph \ref{sec:bra-length-discrete}, we compute the bracket of a length function with a (non averaged) correlation function, using the formula obtained in proposition \ref{pro:dT-integ} relating the derivative of a correlation function with the ghost integration.
	\item In paragraph \ref{sec:bra-length-corel}, we move to compute the Poisson bracket of a length function with an averaged  correlation function.
	\item In paragraph \ref{sec:hamil-corell} we use the previous computation to identify the Hamiltonian of a correlation function.
	\item Finally in paragraph \ref{sec:bra-corell}, we compute the Poisson bracket of correlation functions.
\end{enumerate}
All these computations rely on the technical results  dealt with in the previous section: exchanging integrals.

\subsection{Averaged length function: definition}\label{sec:defi-length}
 As a first step in the construction, let us consider a $\Theta$-decorated current $\mu^\aa$ supported on $\GG\times\{\aa\}$ where $\aa$ is in $\Theta$. The associated {\em length function} on the character variety  of Anosov  representations is the function $\Ell^\aa_{\mu^\aa}$  defined by 
\begin{equation}
\Ell^\aa_{\mu^\aa}(\rho)\defeq\int_{\USi/\Gamma} R_\aa^\sigma\ \d\mu^\aa\ ,	\label{eq-def:length-aa}
\end{equation}
where $R_\aa^\sigma$ is the (complex valued in the case of complex bundles) 1-form associated to a section $\sigma$ of $\det(F_\aa)$ by $\nabla_u\sigma=R^\sigma(u)\cdot\sigma$. Although $R^\sigma$ depends on the choice of the section $\sigma$, the integrand over $\USi$ does not in the real case.  In the complex case, we see the length functions as taking values in $\mathbb C/2\pi i \mathbb Z$, since $2\pi i\mathbb Z$ is the the group of periods of the form $\frac{\d z}{z}$.

 Recall that in our convention $\det(F_\aa)$ is a contracting bundle and thus the real part of $\Ell_\mu$ is positive. Moreover for a closed geodesic $\gamma$, the associated geodesic current, supported on $\GG\times\{\aa\}$ is also denoted by $\gamma^\aa$.
\begin{eqnarray}
\exp\left(- \Ell^\aa_\gamma(\rho)\right)=\det\left(\left.\operatorname{Hol}(\gamma)\right\vert_{F_\aa} \right)\ ,\label{eq:length-holo}	
\end{eqnarray}
 where $\operatorname{Hol}(\gamma)$ is the holonomy of $\gamma$.
 For a geodesic current $\delta$ supported on a closed geodesic, the length function $\Ell_\delta$ is analytic. This extends to all geodesic currents by density and Morera's Theorem (See \cite{Bridgeman:2015ba} for a related discussion in the real case). The notion extends naturally -- by additivity -- to a general $\Theta$-geodesic current.
 
 We can now extend the length function to any $\Theta$-geodesic current. Let $\mu$ be a $\Theta$-geodesic current on $\GG\times\Theta$, we can then write uniquely
 $$
 \mu=\sum_{\aa\in\Theta}\mu^\aa\ ,
 $$
 where $\mu^\aa$ is supported on $\GG\times\{\aa\}$, then by definition the $\mu$-averaged length function\footnote{In the complex case, since the logarithm, hence the length, is defined up to an additive constant, the Hamiltonian is well defined and the bracket of a length function and any other function makes sense.}
 is
\begin{equation}
 \Ell_\mu(\rho)\defeq \sum_{\aa\in\Theta}\Ell^\aa_{\mu^\aa}(\rho)\ .\label{eq-def:length}
	\end{equation}

\subsection{Averaged correlation function: definition}\label{sec:defi-corell}

When $w$ is a natural map, $\mu$ a $(\rho,w)$-integrable cyclic current, the associated  {\em averaged correlation function of order $n$} $\T_{w(\mu)}$ on the moduli space of $\Theta$-Anosov representations is  defined by 
\begin{equation}
	\T_{w(\mu)}(\rho)\defeq\int_{{\GT}^n/\Gamma}\T_{w(G)}(\rho) \ \d\mu(G)\  , 
\end{equation}   
where  ${G}=(G_1,\ldots,G_p)$ with  and $\T_G$ is the correlation function associated to a $\Theta$-configuration of geodesics defined in paragraph \ref{sec:cor-func}. As we shall see in proposition \ref{pro:reg-corr}, the function $\T_{w(\mu)}$ is analytic . 

\subsection{The main result}\label{sec:main-result} Our main result is a formula for the Poisson bracket of those functions. We use a slightly different convention, writing $\T^k$ for a correlation function of order $k$ and $\T^1_\mu=\Ell_\mu$.

\begin{theorem}[\sc Ghost representation]\label{theo:poiss-bracket} Let $\mu$ be  either a $w$-integrable $\Theta$-cyclic currents at $\rho_0$ or a $\Theta$-geodesic current. Similarly, let $\nu$ be  either a $v$-integrable $\Theta$-cyclic currents at $\rho_0$ or a $\Theta$-geodesic current.  

Then the measure $\mu\otimes\nu$ is $z$-integrable at $\rho_0$, where $z(G,H)=[w(G),v(H)]$ and moreover
 \begin{eqnarray*}
	\{\T^p_{w(\mu)},\T^n_{v(\nu)}\}(\rho)&=&
	\int_{{\GT}^{p+n}/\Gamma} \bI_\rho\left(w(G),v(H)\right)\ \d\mu(G)\d\nu(H)\crcr&=&\int_{{\GT}^{p+n}/\Gamma} \T_{[w(G),v(H)]}(\rho)\ \d\mu(G)\d\nu(H)\ .
\end{eqnarray*}
\end{theorem}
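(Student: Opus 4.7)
The strategy is to identify the Hamiltonian vector field of each averaged correlation/length function as a $\Gamma$-averaged ghost dual form, pair two such Hamiltonians via the Goldman symplectic form, and then convert the resulting ghost intersection into a correlation function using Proposition \ref{pro:I-T}.

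First, by Propositions \ref{pro:dT-integ} and \ref{pro:dual-ghost-form}, a variation $\dot\nabla$ of the flat connection acts on a single correlation function by
$$
\d\T_G(\dot\nabla) \;=\; \oint_{\rho(G)}\dot\nabla \;=\; \int_\hh \tr(\dot\nabla \wedge \Omega_{\rho(G)}).
$$
Differentiating the averaged function $\T^p_{w(\mu)}$ under the $\mu$-integral — justified by the analyticity of $G\mapsto \T_{w(G)}$ (Proposition \ref{pro:ana-TG}) and the domination provided by Corollary \ref{coro:bd-Om} — produces
$$
\d\T^p_{w(\mu)}(\dot\nabla) \;=\; \int_{{\GT}^p/\Gamma} \oint_{\rho(w(G))}\dot\nabla\ \d\mu(G).
$$
Unfolding to $\hh$ through a fundamental domain for $\Gamma$ and using the $\Gamma$-invariance of $\dot\nabla$ rewrites this as $\int_S \tr(\dot\nabla \wedge \bar\Omega_{w(\mu)})$, where $\bar\Omega_{w(\mu)}$ is the Poincar\'e-summed and $\Gamma$-averaged dual form on $S$. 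Comparison with $\bOm(\alpha,\beta)=\int_S\tr(\alpha\wedge\beta)$ identifies $\bar\Omega_{w(\mu)}$ (up to sign) with the Hamiltonian vector field of $\T^p_{w(\mu)}$.

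Next, I would evaluate $\d\T^n_{v(\nu)}$ on this Hamiltonian. Bringing one average back inside the ghost integration via Theorem \ref{theo:oint-inter1} and the other via Theorem \ref{theo:oint-exchang2}, one obtains
$$
\{\T^p_{w(\mu)}, \T^n_{v(\nu)}\}(\rho) \;=\; \int_{{\GT}^{p+n}/\Gamma} \oint_{\rho(w(G))}\Omega_{\rho(v(H))}\ \d\mu(G)\,\d\nu(H).
$$
By the definition of ghost intersection and Proposition \ref{pro:I-T}, the integrand equals $\bI_\rho(w(G),v(H)) = \T_{[w(G),v(H)]}(\rho)$, yielding both equalities of the theorem.

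The case where $\mu$ or $\nu$ is a $\Theta$-geodesic current — so the associated function is a length function $\Ell_\mu$ — reduces to the same framework via the $\ell_g$ generators of Proposition \ref{pro:gh-swap}: the differential of $\Ell_\gamma$ for a periodic class is computed from the holonomy formula \eqref{eq:length-holo} and matches a ghost integration against the trace-normalized projector form $\beta_{\rho(g)}$, consistent with the $[\ell_g,\cdot]$ rules. Density of periodic currents together with analyticity then extends the formula to all geodesic currents. Integrability of $\mu\otimes\nu$ under $z(G,H)=[w(G),v(H)]$ follows from the core-diameter inequality \eqref{ineq:natural-core} for natural maps, which bounds $r(z(G,H))$ in terms of $r(G)$ and $r(H)$, combined with the separate integrability hypotheses on $\mu$ and $\nu$.

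The main obstacle will be the simultaneous justification of all the integral exchanges — verifying that the averaged dual form is geodesically bounded, descends to an $\End(E)$-valued 1-form on $S$, and pairs absolutely integrably against another such form on a neighborhood of $\rho_0$. This is precisely what section \ref{sec:exch-int} is designed to provide, leaning on the exponential decay of $\Omega_{\rho(G)}$ (Proposition \ref{pro:dual-ghost-form}) and the $L^1$/$\ell^1$ Poincar\'e-series estimates of Corollary \ref{coro:bd-Om}.
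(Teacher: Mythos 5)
Your overall architecture is the paper's: identify the Hamiltonians of the averaged functions as $\Gamma$-averaged dual forms, pair them with the Goldman form, exchange integrals via Theorems \ref{theo:oint-inter1} and \ref{theo:oint-exchang2}, and convert the resulting ghost intersection into $\T_{[w(G),v(H)]}$ via Proposition \ref{pro:I-T}. But there is a genuine gap at the very first step, in how you identify the Hamiltonian. You write
$$
\d\T_G(\dot\nabla)=\oint_{\rho(G)}\dot\nabla=\int_\hh\tr\bigl(\dot\nabla\wedge\Omega_{\rho(G)}\bigr),
$$
and then unfold over a fundamental domain to compare with $\bOm$. The first equality is Proposition \ref{pro:dT-integ} and holds for bounded variations; the second is equation \eqref{eq:Omega-int}, which is proved only for $\alpha$ in $\bXi(E)$, i.e.\ geodesically bounded forms. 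A $\Gamma$-equivariant variation $\dot\nabla$ is never geodesically bounded unless it vanishes (the paper says this explicitly after Definition \ref{def:bded-form}): the proof of the duality passes through the alternative formulas of Propositions \ref{pro:gintegr-alt} and \ref{pro:gintegr-alt-II}, whose line integrals over complete visible and ghost edges need not even converge for a merely bounded equivariant form, and whose derivation uses the cocycle identity \eqref{eq:triangle-vanish}. So the displayed chain does not identify $\Omega_{w(\mu)}$ as the Hamiltonian. The paper avoids this entirely: it never pairs $\Omega_{\rho(G)}$ against an arbitrary equivariant $\dot\nabla$, but instead tests the candidate Hamiltonian only against the Hamiltonians $H^0_\nu$ of length functions (which are built from the genuinely geodesically bounded forms $\beta_{\rho(g)}$), computes $\d\Ell_\nu(\Omega_{w(\mu)})$ directly with fundamental-domain manipulations, and then invokes the density of differentials of length functions in the cotangent space (Proposition 10.1 of \cite{Bridgeman:2015ba}) to conclude. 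That density input is essential and is absent from your argument.

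Relatedly, the length-function case is not a routine reduction "via the $\ell_g$ generators": the swapping-algebra rules \eqref{eq:ext-swap} concern the formal ghost bracket, not the symplectic identification. The paper must separately establish that $H^0_\mu$, the trace-free part of $-\int_{\GT}\beta_{\rho(g)}\,\d\mu(g)$, is the Hamiltonian of $\Ell_\mu$ (Proposition \ref{pro:ham-leng}), again using the wedge duality \eqref{eq:duality2} and the same density argument; the mixed bracket $\{\T_{w(\mu)},\Ell_\nu\}$ then needs its own exchange of integrals (Proposition \ref{pro:dTlmu}). Your appeal to "density of periodic currents" does not substitute for this, since the issue is density in the cotangent space of the character variety, not in the space of currents. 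Once these two points are repaired, the rest of your outline — the two exchange theorems and Proposition \ref{pro:I-T} — matches the paper's proof of Proposition \ref{pro:brack-cor} and closes the argument.
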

As a corollary, generalizing Theorem \ref{theo:A} given in the introduction, using a simple induction and proposition \ref{pro:bracket-nat} we get
\begin{corollary}[\sc Poisson stability]\label{coro:Poisson-stab}
	The vector space generated by length functions, averaged correlations functions and constants is stable under Poisson bracket. More precisely, let $\mu_1$, \ldots $\mu_p$ cyclic currents of order $n_i$, and $N=n_1+\ldots n_p$ then 
 \begin{eqnarray*}
	\{\T^{n_1}_{\mu_1},\{\T^{n_2}_{\mu_2},\ldots \{\T^{n_{p-1}}_{\mu_{p-1}},\T^{n_p}_{\mu_p}\}\ldots\}\}(\rho)&=&
	\int_{{\GT}^{N}/\Gamma} \T^N_{[G_1,[G_2,[\ldots,[G_{p-1},G_p]\ldots]]]}(\rho)\ \d\mu_1(G_1)\ldots\d\mu_1(G_p)\ .
\end{eqnarray*}
\end{corollary}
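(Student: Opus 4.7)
The plan is to proceed by induction on $p$, with Theorem \ref{theo:poiss-bracket} supplying both the base case and the inductive step. The base case $p=2$ is immediate: apply Theorem \ref{theo:poiss-bracket} with $w=v=\Id$ to get
$$\{\T^{n_1}_{\mu_1},\T^{n_2}_{\mu_2}\}(\rho)=\int_{{\GT}^{n_1+n_2}/\Gamma}\T_{[G_1,G_2]}(\rho)\,\d\mu_1(G_1)\d\mu_2(G_2).$$

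For the inductive step, set $M=n_2+\cdots+n_p$ and introduce the map
$$v\colon (G_2,\ldots,G_p)\longmapsto [G_2,[G_3,[\ldots,[G_{p-1},G_p]\ldots]]],$$
which is natural by Proposition \ref{pro:bracket-nat} (and extends linearly to tuples of elements of the ghost algebra). By the inductive hypothesis, the inner nested bracket equals
$$\{\T^{n_2}_{\mu_2},\ldots\{\T^{n_{p-1}}_{\mu_{p-1}},\T^{n_p}_{\mu_p}\}\ldots\}(\rho)=\int_{{\GT}^M/\Gamma}\T_{v(G_2,\ldots,G_p)}(\rho)\,\d\mu_2(G_2)\cdots\d\mu_p(G_p),$$
which is precisely the averaged correlation function $\T^M_{v(\mu_2\otimes\cdots\otimes\mu_p)}$ associated to the $v$-cyclic current $\mu_2\otimes\cdots\otimes\mu_p$. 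We then invoke Theorem \ref{theo:poiss-bracket} with $(\mu,w)=(\mu_1,\Id)$ and $(\nu,z)=(\mu_2\otimes\cdots\otimes\mu_p,v)$ to obtain
$$\{\T^{n_1}_{\mu_1},\T^M_{v(\mu_2\otimes\cdots\otimes\mu_p)}\}(\rho)=\int_{{\GT}^N/\Gamma}\T_{[G_1,v(G_2,\ldots,G_p)]}(\rho)\,\d\mu_1(G_1)\cdots\d\mu_p(G_p),$$
and the identity $[G_1,v(G_2,\ldots,G_p)]=[G_1,[G_2,[\ldots,[G_{p-1},G_p]\ldots]]]$ yields the stated formula.

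The only subtle point, and what I expect to be the main obstacle, is the propagation of integrability through the induction: to apply Theorem \ref{theo:poiss-bracket} at each step we must know that $\mu_2\otimes\cdots\otimes\mu_p$ is $v$-integrable at $\rho_0$ where $v$ is the iterated bracket. Because $v$ is natural, $v(G_2,\ldots,G_p)$ is a linear combination, with uniformly bounded coefficients, of ghost polygons whose visible edges are visible edges of the $G_i$; in particular the core diameter bound \eqref{ineq:natural-core} holds. The pointwise bounds required by Definition \ref{def:integrable-current} then follow by combining the individual integrability bounds of the $\mu_i$ through Fubini on ${\GT}^{n_2}\times\cdots\times{\GT}^{n_p}$, in the same spirit as the exchange-of-integrals arguments of Section \ref{sec:exch-int}. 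Once integrability is secured, the induction closes and the corollary follows.
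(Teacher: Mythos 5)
Your proof is correct and is essentially the paper's argument, which is stated only as ``a simple induction using Theorem \ref{theo:poiss-bracket} and Proposition \ref{pro:bracket-nat}''; you have filled in the same induction the authors intend. Note that the integrability propagation you flag as the main obstacle is already supplied by Theorem \ref{theo:poiss-bracket} itself, whose conclusion includes that $\mu\otimes\nu$ is $z$-integrable for $z(G,H)=[w(G),v(H)]$, so this output at each stage is exactly the hypothesis needed for the next.
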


In the course of the proof, we will also compute the Hamiltonians of the corresponding functions.

\begin{theorem}[\sc Hamiltonian] Let $\mu$ be a $\Theta$-geodesic current.
	The Hamiltonian of the length function $\Ell_\mu$ is $H^0_\mu$ the trace free part of $H_\mu$, where 
\begin{eqnarray*}
		H_\mu\defeq-\int_{{\GT}}\beta_{\rho(g)}\ \d\mu(g)\ , 
\end{eqnarray*}

Let $w$ be a natural function. Let $\nu$ be a $(\rho,w)$ integrable cyclic current. The Hamiltonian of the correlation function $\T_w(\nu)$ of order $n$, with $n>1$ is 
\begin{eqnarray*}
		\Omega_{w(\nu)}\defeq\int_{{\GT}^n}\Omega_{\rho(w(G))}\ \d\nu(G)\ , 
\end{eqnarray*}
Both $H_\mu$ and $\Omega_{w(\nu)}$ are in $ \bLa^\infty(E)$.
\end{theorem}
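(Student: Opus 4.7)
The boundedness assertion that $H_\mu,\Omega_{w(\nu)}\in \bLa^\infty(E)$ is essentially already in hand: for $H_\mu$ it is exactly Lemma~\ref{lem:mub-bLa}, while for $\Omega_{w(\nu)}$ it is the first item of Theorem~\ref{theo:oint-exchang2} applied to the $w$-cyclic $\Gamma$-compact (or more generally $\rho$-integrable) current~$\nu$. So we concentrate on the two Hamiltonian identities, which amount to showing that for every bounded variation $\fam{\nabla}$ of $\rho=(\nabla,h)$, one has
\begin{equation*}
	\d\T_{w(\nu)}(\dot\nabla)=\bOm(\Omega_{w(\nu)},\dot\nabla)\ \ ,\ \ \d\Ell_\mu(\dot\nabla)=\bOm(H^0_\mu,\dot\nabla)\ .
\end{equation*}

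For the averaged correlation function, the plan is to differentiate under the integral sign and then move the exterior derivative inside the ghost integral: using Proposition~\ref{pro:dT-integ} pointwise in $G$, followed by the dual form rewriting of Proposition~\ref{pro:dual-ghost-form}, we obtain
\begin{equation*}
	\d\T_{w(\nu)}(\dot\nabla)
	=\int_{{\GT}^n/\Gamma}\oint_{\rho(w(G))}\dot\nabla\ \d\nu(G)
	=\int_{{\GT}^n/\Gamma}\int_{\hh}\tr\bigl(\dot\nabla\wedge\Omega_{\rho(w(G))}\bigr)\ \d\nu(G)\ .
\end{equation*}
The key step is then to unfold $\int_\hh=\sum_{\gamma\in\Gamma}\int_D$ over a fundamental domain $D$ for $\Gamma$, use the $\Gamma$-equivariance of $\Omega_{\rho(\cdot)}$ (and of $\dot\nabla$, which descends to $S$) to rewrite $\Omega_{\rho(w(\gamma G))}=\gamma_{*}\Omega_{\rho(w(G))}$, fold the resulting sum against the $\Gamma$-invariant measure $\nu$ to pass from ${\GT}^n/\Gamma$ to ${\GT}^n$, and finally exchange the $\int_D$ and $\int_{{\GT}^n}$ integrals by Theorem~\ref{theo:oint-exchang2}. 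The outcome is $\int_D\tr(\dot\nabla\wedge\Omega_{w(\nu)})=\int_S\tr(\dot\nabla\wedge\Omega_{w(\nu)})$, which is exactly $\bOm(\Omega_{w(\nu)},\dot\nabla)$ up to the Goldman-form sign.

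For the length function, the strategy is to compute the variation for a single closed $\Theta$-geodesic $\gamma$ via equation~\eqref{eq:length-holo}: a direct differentiation of $\Ell^\aa_\gamma(\rho)=-\log\det(\operatorname{Hol}(\gamma)|_{F_\aa})$ gives
\begin{equation*}
	\d\Ell^\aa_\gamma(\dot\nabla)=-\int_{\gamma}\tr\bigl(\pp^\aa(\gamma)\cdot\dot\nabla\bigr)\ .
\end{equation*}
Integrating this identity against the $\Theta$-geodesic current $\mu$ (with density of weighted closed geodesics, or by direct computation from \eqref{eq-def:length-aa} using the definition of $R^\sigma_\aa=\tr(\pp^\aa\nabla)$), one gets
\begin{equation*}
	\d\Ell_\mu(\dot\nabla)=-\int_{{\GT}/\Gamma}\!\int_{g}\tr(\pp(g)\cdot\dot\nabla)\,\d\mu(g)\ .
\end{equation*}
Rewriting the inner line integral via the duality~\eqref{eq:duality1}, $\int_g\tr(\pp(g)\cdot\dot\nabla)=\int_\hh\tr(\beta_{\rho(g)}\wedge\dot\nabla)$, applying Proposition~\ref{pro:exchan-int1} to exchange the $\int_\hh$ and $\int_{{\GT}/\Gamma}$ integrations, and then unfolding--folding as above, yields
\begin{equation*}
	\d\Ell_\mu(\dot\nabla)=-\int_S\tr(H_\mu\wedge\dot\nabla)=\bOm(H^0_\mu,\dot\nabla)\ ,
\end{equation*}
the last step because $\dot\nabla$ takes values in the trace-free part of $\End(E)$ (we are varying in the moduli of Anosov representations in $\mathsf{SL}$-type groups, or, equivalently, because the Goldman form only pairs the trace-free part).

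The main technical obstacle is the justification of the integral exchanges and of the unfolding procedure: one must keep track of $\Gamma$-equivariance of $\Omega_{\rho(\cdot)}$ and $\beta_{\rho(\cdot)}$, show that the resulting iterated integrals are absolutely convergent, and legitimately swap the order. This is exactly what Theorems~\ref{theo:oint-inter1} and~\ref{theo:oint-exchang2} were designed to do, using the $L^1$ and exponential-decay bounds of Corollary~\ref{coro:bd-Om}; with those in hand, every step above is a direct manipulation.
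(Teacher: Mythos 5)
Your proposal identifies the right ingredients for the boundedness assertions (Lemma \ref{lem:mub-bLa} and the first item of Theorem \ref{theo:oint-exchang2}), but the core of your argument --- verifying $\d\T_{w(\nu)}(\dot\nabla)=\bOm(\Omega_{w(\nu)},\dot\nabla)$ and $\d\Ell_\mu(\dot\nabla)=\bOm(H^0_\mu,\dot\nabla)$ against an \emph{arbitrary} variation $\dot\nabla$ --- has a genuine gap. The pivotal step, rewriting $\oint_{\rho(w(G))}\dot\nabla$ as $\int_\hh\tr\left(\dot\nabla\wedge\Omega_{\rho(w(G))}\right)$, invokes the second item of Proposition \ref{pro:dual-ghost-form} (equation \eqref{eq:Omega-int}), which is only established for $\alpha$ in $\bXi(E)$, i.e.\ for \emph{geodesically bounded} forms. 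A variation $\dot\nabla$ is merely in $\bLa^\infty(E)$, and since it is $\Gamma$-equivariant it is never geodesically bounded unless it vanishes --- the paper says this explicitly right after introducing $\bXi(E)$. The identity \eqref{eq:Omega-int} is derived from the alternative formulas of Propositions \ref{pro:gintegr-alt} and \ref{pro:gintegr-alt-II}, whose proofs use the cocycle property \eqref{eq:triangle-vanish} over ideal triangles; for a $\Gamma$-equivariant bounded form those boundary integrals need not converge, let alone vanish. The same problem affects your length-function computation: applying the duality \eqref{eq:duality1} to $\tr(\pp(g)\dot\nabla)$ over the \emph{complete} geodesic $g$ is unjustified (the integral over the infinite strip supporting $\omega_g$ against a merely bounded form need not converge absolutely), whereas the derivative formula \eqref{eq:der-length} only involves an integral over $\USi/\Gamma$.

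The paper circumvents this by never pairing $H^0_\mu$ or $\Omega_{w(\nu)}$ with a general $\dot\nabla$. Instead it tests the candidate Hamiltonian only against the Hamiltonians $H^0_\nu$ of length functions --- which are averages of the geodesically bounded forms $\beta_{\rho(g)}$, so that \eqref{eq:duality1}, \eqref{defeq:oint} and the exchange theorems legitimately apply --- and then concludes with the fact, quoted from \cite{Bridgeman:2015ba}, that differentials of length functions generate the cotangent space on a dense open set (see the ends of the proofs of Propositions \ref{pro:ham-leng} and \ref{theo:ham-cor}). This density argument is the missing idea in your proposal; without it, or without an independent proof that \eqref{eq:Omega-int} extends to bounded $\Gamma$-equivariant closed forms, the chain of equalities you outline does not close.
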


\subsection{Preliminary and convention in  symplectic geometry} 
Our convention is that if $f$ is a smooth function and $\bOm$ a symplectic form,  the {\em Hamiltonian vector field} $X_f$ of $f$  and the {\em Poisson bracket} $\{f,g\}$ of $f$ and $g$ are  defined by
\begin{eqnarray} 
		\d f(Y)&=&\bOm(Y,X_f)\ , \ \label{eqdef:ham-vf}\\
		\{f,g\}&=&\d f(X_g)=\bOm(X_g,X_f)=- \d g(X_f)\ .
\label{eqdef:Poisson-fg}\end{eqnarray}

\vskip 0.5 truecm

Observe that if $\Omega$ is a complex valued symplectic form -- which naturally take entries in the complexified vector bundle -- and $f$ a complex valued function then the Hamiltonian vector field is a complexified vector field. The bracket of two complex valued functions is then a complex valued function. 

In the sequel, we will not write different results in the complex case (complex valued symplectic form and functions) and the (usual) real case.

\subsection{Regularity of averaged correlations functions}\label{sec:regu} We prove here
\begin{proposition}\label{pro:reg-corr}
	Let $w$ be a natural function. Let $\mu$ be a $(\rho,w)$-integrable current, then 
	\begin{enumerate}
		\item $\T_{w(\mu)}$ is an analytic function in a neighborhood of $\rho$,
		\item For any tangent vector $v$ at $\rho$, then $\d\T_{w(G)}(v)$ is in $L^1(\mu)$ and 
		$$ \d\T_{w(\mu)}(v)=\int_{{\GT}_\star^n/\Gamma}\d\T_{w(G)}(v)\d\mu(G)\ .$$
	\end{enumerate}
\end{proposition}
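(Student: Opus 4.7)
The plan is to establish both statements simultaneously by transporting the problem to a complex parameter disk, applying Morera's theorem and Cauchy's integral formula, and using Fubini's theorem with the domination provided by $(\rho,w)$-integrability. First I would fix an arbitrary real analytic path through $\rho$ with tangent $v$, and complexify it to obtain a holomorphic family $u \mapsto \rho_u$ defined on a disk in $\mathbb{C}$ containing $0$, with $\rho_0 = \rho$. After shrinking the disk, I may assume $\rho_u$ stays in the neighborhood $U$ of $\rho$ furnished by Definition \ref{def:integrable-current}, so that the uniform bound $|\T_{w(G)}(\rho_u)| \leq F(G)$ holds for some $F \in L^1({\GT}_\star^n/\Gamma,\mu)$. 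By Proposition \ref{pro:ana-TG} each function $u \mapsto \T_{w(G)}(\rho_u)$ is holomorphic on that disk.

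For analyticity of $u \mapsto f(u) \defeq \T_{w(\mu)}(\rho_u)$, I would apply Morera's theorem: for any triangle $T$ in the disk, Fubini's theorem (justified since $F$ is integrable and $T$ has finite length) yields
$$
\oint_T f(u)\,\d u = \int_{{\GT}_\star^n/\Gamma}\left( \oint_T \T_{w(G)}(\rho_u)\,\d u\right) \d\mu(G) = 0,
$$
each inner integral vanishing by Cauchy's theorem. Hence $f$ is holomorphic, and letting the analytic path vary proves item (i).

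For item (ii), fix $r > 0$ such that $\{|u| \leq r\}$ is contained in our disk. Cauchy's integral formula gives pointwise
$$
\d\T_{w(G)}(v) = \frac{1}{2\pi i}\oint_{|u|=r}\frac{\T_{w(G)}(\rho_u)}{u^2}\,\d u,
$$
so $|\d\T_{w(G)}(v)| \leq F(G)/r$, proving that $\d\T_{w(G)}(v)$ lies in $L^1(\mu)$. Applying the same formula to $f$ and exchanging the contour integral with $\d\mu$ by Fubini (once more dominated by $F(G)/r$) yields
$$
\d\T_{w(\mu)}(v) = \frac{1}{2\pi i}\oint_{|u|=r}\frac{f(u)}{u^2}\,\d u = \int_{{\GT}_\star^n/\Gamma} \d\T_{w(G)}(v)\,\d\mu(G).
$$

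The main obstacle is ensuring the domination by $F$ extends uniformly over a complex neighborhood of $\rho$ rather than just a real one; this is precisely why Definition \ref{def:integrable-current} is phrased in terms of complexified Anosov representations. Once this is secured, both the Morera argument and the Cauchy-formula derivative exchange become routine applications of Fubini.
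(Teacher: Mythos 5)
Your argument is correct and rests on the same two pillars as the paper's proof --- complexification of the family so that Proposition \ref{pro:ana-TG} applies, and the uniform domination $|\T_{w(G)}(\sigma)|\leq F(G)$ over a complex neighbourhood that Definition \ref{def:integrable-current} is designed to supply --- but the mechanics differ in a worthwhile way. The paper exhausts ${\GT}_\star^n/\Gamma$ by compacts $K_m$, shows via dominated convergence that the truncated functions $\T_m$ converge uniformly to $\T_{w(\mu)}$, invokes Morera to get analyticity of the limit together with $C^\infty$ convergence, and then deduces the $L^1$ statement and the exchange of derivative and integral somewhat indirectly from Lemma \ref{lem:L1}. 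You instead apply Morera--Fubini directly to the full integral to get item (i), and for item (ii) you use the Cauchy integral formula on a circle of radius $r$ to obtain the explicit pointwise bound $|\d\T_{w(G)}(v)|\leq F(G)/r$; this gives integrability of the derivative with a concrete dominating function and makes the final Fubini exchange immediate, which is cleaner than the paper's detour through the abstract exhaustion lemma. The one point worth a sentence in either write-up is that analyticity of a function on a finite-dimensional chart requires slightly more than holomorphy along each complexified line (local boundedness plus separate holomorphy, or Hartogs); your uniform bound $|f(u)|\leq \Vert F\Vert_{L^1}$ supplies the needed local boundedness, so the gap is cosmetic and is equally present in the paper's own phrasing.
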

As in proposition \ref{pro:ana-TG}, we work in the context of complex uniform hyperbolic bundles, possibly after complexification of the whole situtation.

\begin{proof}
Let us first treat the case when $\mu$ is $\Gamma$-compact. In that case, the functions $\T_G:\rho\mapsto T_{w(G)}$ are all complex analytic by proposition \ref{pro:ana-TG},  uniformly bounded with uniformly bounded derivatives in the support of $\mu$. Thus the result follows from classical results. 

We now treat the non $\Gamma$-compact case. Let now consider an exhaustion of  ${\GT}_\star^n/\Gamma$ by compacts $K_n$ and write $\mu_n=1_{K_n}\mu$. Let then 
$$
\T_n=\int_{K_n}\T_{w(\mu_n)}\d\mu\ .
$$
Then by our integrability hypothesis and Lebesgue dominated convergence Theorem,  $\T_n$ converges uniformly to $\T_{w(\mu)}$. Since all $\T_n$ are complex analytic, by Morera Theorem, $\T_{w(\mu)}$ is complex analytic and $\T_n$ converges $C^\infty$ to $\T_{w(\mu)}$.
It thus follows that 
$$
\d\T_{w(\mu)}(v)=\lim_{n\to\infty}\d\T_n(v)=\lim_{n\to\infty}\int_{K_n}\d\T_{w(G)}(v)\d\mu(G)\ .
$$
We now conclude the proof using Lemma \ref{lem:L1}.
\end{proof}

\subsection{Length functions: their Hamiltonians and brackets}\label{sec:hamil-length}
We first start by computing the bracket and Hamiltonian of length functions. The first step in our proof is to understand the variation of length. 
\begin{proposition}\label{pro:lw}
	The derivatives of a length function with respect to a variation $\dot\nabla$ is given by 
	\begin{eqnarray}
	\d\Ell_\mu(\dot\nabla)=\int_{\Theta\times\USi/\Gamma}\tr\left(\pp\dot\nabla\right)\ \d\mu(x)\ .\label{eq:der-length}
\end{eqnarray}
\end{proposition} 
\begin{proof} By the linearity of the definition, see equation \eqref{eq-def:length},  it is enough to consider a $\Theta$-geodesic current $\mu^\aa$ supported on $\GG\times\{\aa\}$.

Let $E^\aa\defeq\bigwedge^{\dim(F_\aa)} E$, and $\Lambda^\aa$ the natural exterior representation from $\mk{sl}(E)$ to $\mk{sl}(E^\aa)$.
Then by \cite[Lemma 4.1.1]{Labourie:2018fj} and formula \eqref{eq:length-holo} we have 
	\begin{eqnarray}
	\d\Ell_\mu(\dot\nabla)=\int_{\{\aa\}\times\USi/\Gamma}\tr\left(\pp^1_\aa \Lambda^\aa(\dot\nabla)\right)\ \d\mu^\aa(x)\ ,\end{eqnarray}
where $\pp^1_\aa$ is the section of $\End(E_\aa)$ given by the projection on the line  $\det(F_a)$ induced by the projection on $F_\aa$ parallel to $F_\aa^\circ$ -- see section \ref{sec:Theta} for notation.

We now conclude by observing--using just a litle bit of linear algebra--
 that for any element in $\mk{sl}(E)$
$$
\tr\left(\pp^1_\aa \Lambda^\aa(A)\right)=\tr\left(\pp_\aa A\right)\ .$$
Indeed let us choose a basis $(e_1,\ldots, e_p)$ of $F_\aa$ completed by a basis $(f_1,\ldots, f_m)$ of $F_\aa^\circ$ and choose a metric so that this basis  is orthonormal. Then 
\begin{eqnarray*}
& &\Lambda^\aa(A)(e_1\wedge\ldots\wedge e_p)=\sum_{i=1^p}e_1\wedge\ldots e_{i-1}\wedge A(e_i)\wedge e_{i+1}\wedge\ldots e_p\ ,\\
& &\tr\left(\pp^1_\aa \Lambda^\aa(A)\right)=\braket{e_1\wedge\ldots\wedge e_p ,\Lambda^\aa(A)(e_1\wedge\ldots\wedge e_p)}
=\sum_{i=1}^p\braket{e_i\ ,\ A(e_i)}=\tr(\pp_\aa A)\ .\qedhere
\end{eqnarray*}
\end{proof}
Let then
\begin{eqnarray*}
	H_\mu= -\int_{{\GT}}\beta_{\rho(g)}\ \ \d \mu(g) \ . 
\end{eqnarray*}
We proved that $H_\mu$ lies in $ \bLa^\infty(E)$ in lemma \ref{lem:mub-bLa}.
We now prove the following proposition.
\begin{proposition}[\sc Length functions]\label{pro:ham-leng}
	The Hamiltonian vector field of $\Ell_\mu$ is given by $H^0_\mu$, which is the trace free part of $H^\mu$. Then 
	\begin{equation}
	\{\Ell_\nu,\Ell_\mu\}=\bOm(H^0_\mu,H^0_\nu)=\int_{\GT_{\star}^2/\Gamma}\bI_\rho(g,h)\ \d  \nu(g)\otimes \d \mu(h) \ ,\label{eq:Gold-munu}
	\end{equation}
\end{proposition}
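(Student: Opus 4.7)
The proof proceeds in two steps. First, I identify $H_\mu^0$ as the Hamiltonian of $\Ell_\mu$. Starting from the derivative formula \eqref{eq:der-length} and rewriting $\int_{\USi/\Gamma}\tr(\pp\dot\nabla)\,d\mu$ as $\int_{\GT/\Gamma}\int_g \tr(\pp(g)\dot\nabla)\,d\mu(g)$, I would use the duality $\int_g \alpha = \int_\hh \omega_g \wedge \alpha$ together with the pointwise identity $\omega_g \wedge \tr(\pp(g)\dot\nabla) = \tr(\beta_{\rho(g)} \wedge \dot\nabla)$ (a consequence of $\beta_{\rho(g)}=\omega_g\pp(g)$) to turn each line integral into an integral over $\hh$. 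Because the integrand $\tr(\beta_g \wedge \dot\nabla)(x)$ is $\Gamma$-invariant under the diagonal action on $\hh \times \GT$, the measure $dx \otimes d\mu$ descends consistently, so Fubini allows one to switch the base of integration from $\hh$ to $S$. Combining this with the exchange Theorem \ref{theo:oint-inter1} to commute the $d\mu(g)$ integration past the wedge produces
\[
d\Ell_\mu(\dot\nabla) = -\int_S \tr(H_\mu \wedge \dot\nabla) = \bOm(\dot\nabla, H_\mu).
\]
Since $\dot\nabla$ is $\mk{sl}(E)$-valued, the scalar part of $H_\mu$ contributes nothing under the trace pairing, so $\bOm(\dot\nabla, H_\mu) = \bOm(\dot\nabla, H_\mu^0)$; by \eqref{eqdef:ham-vf} this proves $X_{\Ell_\mu} = H_\mu^0$.

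For the bracket, by \eqref{eqdef:Poisson-fg} we have $\{\Ell_\nu, \Ell_\mu\} = \bOm(H_\mu^0, H_\nu^0) = \int_S \tr(H_\mu^0 \wedge H_\nu^0)$. Substituting $H_\mu^0 = -\int \beta_g^0\,d\mu(g)$ and $H_\nu^0 = -\int \beta_h^0\,d\nu(h)$ and exchanging integration orders via Theorem \ref{theo:oint-inter1} combined with the $\Gamma$-invariance argument above, I obtain
\[
\bOm(H_\mu^0, H_\nu^0) = \int_{\GT^2/\Gamma}\Bigl(\int_\hh \tr(\beta_g^0 \wedge \beta_h^0)\Bigr)\,d\mu(g)\,d\nu(h).
\]
The inner integral evaluates explicitly: since $\pp(g)$ and $\pp(h)$ are constant endomorphisms in the trivialization where $\nabla$ is trivial (proposition \ref{pro:pbd-unique}), one has $\tr(\beta_g \wedge \beta_h) = (\omega_g \wedge \omega_h)\,\tr(\pp(g)\pp(h))$ pointwise. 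Combined with $\int_\hh \omega_g \wedge \omega_h = \epsilon(g, h)$ from proposition \ref{prop:inter} and a short computation absorbing the scalar correction coming from $\beta^0$, this gives
\[
\int_\hh \tr(\beta_g^0 \wedge \beta_h^0) = \epsilon(g,h)\Bigl(\T_{\lceil g, h\rceil}(\rho) - \tfrac{\Theta_g\Theta_h}{\dim E}\Bigr) = -\bI_\rho(g,h),
\]
the last equality using $\epsilon(h,g) = -\epsilon(g,h)$ and the definition of $\bI_\rho$ for geodesics. Applying the antisymmetry $\bI_\rho(g,h) = -\bI_\rho(h,g)$ and renaming variables converts the integrating measure from $d\mu \otimes d\nu$ into $d\nu \otimes d\mu$, producing the stated formula; the restriction to $\GT^2_\star$ is automatic since $\bI_\rho$ vanishes when $g \simeq h$.

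The main obstacle is keeping the various Fubini exchanges rigorous: the non-compact support of the integrands forces us to rely on the exchange theorems of section \ref{sec:exch-int} and on careful $\Gamma$-equivariance bookkeeping. The finite integrability of $\mu \otimes \nu$ against $\bI_\rho$ needed for the right-hand side of the formula to make sense follows from corollary \ref{cor_int_wedge}, since $\bI_\rho$ is supported on pairs of intersecting geodesics.
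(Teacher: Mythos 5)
Your second step --- the computation of $\bOm(H^0_\mu,H^0_\nu)$ by substituting the pointwise definitions of $H^0_\mu$ and $H^0_\nu$, swapping the fundamental domain from the $\hh$-factor to the $\GG^2_\star$-factor, and evaluating $\int_\hh\omega_h\wedge\omega_g=\epsilon(g,h)$ --- is essentially the paper's argument; the relevant tools are the fundamental-domain exchange of lemma \ref{lem:interDELTA} and proposition \ref{prop:inter} rather than Theorem \ref{theo:oint-inter1}, but the substance and the sign bookkeeping agree with \eqref{eq:Gold-munu}.

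The gap is in your first step. You cannot disintegrate $\d\Ell_\mu(\dot\nabla)=\int_{\USi/\Gamma}\tr(\pp\,\dot\nabla)\,\d\mu$ as $\int_{\GT/\Gamma}\int_g\tr(\pp(g)\dot\nabla)\,\d\mu(g)$: the action of $\Gamma$ on $\GT$ is not proper, so no fundamental domain exists there, and the inner integral over the whole geodesic diverges in general --- $\dot\nabla$ is a $\Gamma$-equivariant form, and the paper notes explicitly that such forms are never geodesically bounded unless they vanish, so neither the $L^1$ condition nor the duality \eqref{eq:duality1} applies to $\tr(\pp(g)\dot\nabla)$. (This divergence is exactly why the paper replaces the naive pairing $\int_g\tr(\pp\,\alpha)$ by the line integration $\bS$ built from the exponentially decaying commutators $[\alpha,\pp]\,\pp$.) Theorem \ref{theo:oint-inter1} concerns ghost integration of averaged projector forms and does not cover this pairing either. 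Consequently the identity $\d\Ell_\mu(\dot\nabla)=\bOm(\dot\nabla,H^0_\mu)$ is not established for arbitrary $\dot\nabla$ by your argument. The paper circumvents this by proving the identity only for $\dot\nabla=H^0_\nu$ --- where the geodesic forms $\omega_h$ inside $H^0_\nu$ supply the localization making every integral converge, the last line of the chain being recognized as $-\d\Ell_\mu(H^0_\nu)$ via \eqref{eq:der-length} --- and then invoking \cite[Proposition 10.1]{Bridgeman:2015ba}, that differentials of length functions generate the cotangent space on a dense subset, to upgrade this to $H^0_\nu=\operatorname{Ham}(\Ell_\nu)$. That density input is missing from your proposal and cannot be dispensed with along the direct route you describe.
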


Observe that if $\mu$ and $\nu$ are both supported on finitely many geodesics, then the support of $\mu\otimes\nu$ is finite in ${\GT}^2$ and its cardinality is the geometric intersection number of the support of $\mu$, with the support of $\nu$.  This is a generalization of Wolpert cosine formula, see \cite{Wolpert:1981vt}.

 Remark that $\epsilon\mu\otimes \nu$ is supported in ${\GT}^2$ on a set on which $\Gamma$ acts properly.	
\begin{proof}
	Let us first consider the computation of $\bOm(H_\mu,H_\nu)$. Let $\Delta_0$ be a fundamental domain for the action of $\Gamma$ on $\hh$  and  $\Delta_1$ be a fundamental domain for the action of $\Gamma$ on $\GT^2$. Then denoting $\pp^0_g$ the traceless part of $\pp_g$
	\begin{eqnarray*}
\bOm(H^0_\mu,H^0_\nu)&=&\int_{\Delta_0}\tr\left(\left(\int_{{\GT}}\beta^0_h \d\mu(h)\right)\wedge \left(\int_{{\GT}}\beta^0_g \d\nu(g)\right)\right)\crcr
&=&\int_{\Delta_0}\int_{{\GT}\times{\GT}}
\omega_h\wedge\omega_g \tr\left(\pp^0 (g)\pp^0 (h)\right)\ \d\mu(h)\d\nu(g)\crcr
&=&	\int_{\Delta_1}\int_{\hh}\omega_h\wedge\omega_g \tr\left(\pp^0 (g)\pp^0 (h)\right)\ \d\mu(h)\d\nu(g)\crcr
&=&\int_{\Delta_1} \epsilon(h,g)\tr(\pp^0 (g)\pp^0 (h))\ \d\mu(h) \d\nu(g)\ .
\end{eqnarray*}
Let us comment on this series of equalities: the first one is the definition of the symplectic form and that of $H_\mu$ and $H_\nu$, for the second one, we use the pointwise definition of $H_\mu$ and $H_\nu$, for the third one we use proposition \ref{prop:inter}. Observe that the final equality gives formula \eqref{eq:Gold-munu}.

From the third equality we also have
\begin{eqnarray*}
\bOm(H^0_\mu,H^0_\nu)
&=&\int_{\Delta_1}\left(\int_{g}\omega_h \tr\left(\pp^0 (g)\pp^0 (h)\right)\right)\ \d\mu(g)\d\nu(h)\ .
\end{eqnarray*}
Let now consider the fibration $z:\USi\times {\GT}\to{\GT}^2$ and observe that $z^{-1}(\Delta_1)$ is a fundamental domain for the action of $\Gamma$ in $\USi\times {\GT}$.
  Let $\Delta_2$ be a fundamental domain for the action of $\Gamma$ on $\USi$ and observe that $\Delta_2\times{\GT}$ is a fundamental domain for the action on $\Gamma$ on $\USi\times {\GT}$. Then the above equation leads to 
\begin{eqnarray*}
\bOm(H^0_\mu,H^0_\nu)
&=&	\int_{z^{-1}(\Delta_1)}\omega_h \tr\left(\pp^0 (g)\pp^0 (h)\right)\ \d\mu(g)\d\nu(h)\crcr
=\int_{\Delta_2\times{\GT}} \omega_h \tr\left(\pp^0 (g)\pp^0 (h)\right)\ \d\mu(g)\d\nu(h)&=&\int_{\Delta_2}\tr\left(\pp^0 (g)\int_{\GT} \beta^0_{\rho(h)}\d\nu(h) \right)\d\mu(g)\ \crcr
=-\int_{\Delta_2}\tr\left(\pp^0 (g)H^0_\nu\right)\d\mu(g) \ &=&-\d \Ell_\mu(H^0_\nu)=\d \Ell_\nu(H^0_\mu)\ .
\end{eqnarray*}
 As a conclusion, it remains to prove that $H^0_\mu$ is the Hamiltonian of $\Ell_\mu$. For simplicity, let us prove it first when $\mu$ is the current $\delta_g$ supported on the closed geodesic $g$ of $\hh/\Gamma$.  Let $X$ be a closed $\Gamma$ equivariant 1-form with values in $\End_0(E)$, where $\End_0(E)$ is the vector space of traceless endormorphisms, on $\hh$.Then, by proposition~\ref{pro:lw}
$$
\d L_{\delta(g)}(X)=\int_g\tr(\pp^0(g) X^*)\ .
$$
Let $\bar g$ be a lift of $g$ in $\hh$, $\Gamma_0$ the subgroup of $\Gamma$ fixing $\bar g$. Then we can rewrite the previous equation as
\begin{eqnarray*}
	 \d L_{\delta(g)}(X)
	 &=&
\sum_{\gamma\in \Gamma/\Gamma_0}\int_{\gamma\bar g\cap \Delta_0}	\tr \left(
\pp^0(\gamma \bar g) X\right)\ .
\end{eqnarray*}

Let us now use  $\chi_0$ the characteristic function of $\Delta_0$. We now have the string of equalities that we explain after
\begin{eqnarray*}
	 \d L_{\delta(g)}(X)&=& \sum_{\gamma\in \Gamma/\Gamma_0}\int_{\gamma\bar g}	\tr \left(
\pp^0(\gamma \bar g) \chi_0 X \right) \crcr
	&=& \sum_{\gamma\in \Gamma/\Gamma_0}\int_{\hh}\omega_{\gamma \bar g}\wedge	\tr \left(
\pp^0(\gamma \bar g) \chi_0 X\right)\cr  
	&=& \int_{\hh} \tr\left(\left(\sum_{\gamma\in \Gamma/\Gamma_0}\omega_{\gamma \bar g}\pp^0(\gamma \bar g) \right)\wedge	
\chi_0 X\right) \cr 
&=&  \int_{\hh} \tr\left(\left(\int_\GG \omega_h\pp^0(h)\ \d \delta_g(h)\ \right)\wedge	
 \chi_0 X\right)\cr 
&=&\int_{\hh}\tr(H^0_{\delta_g}\wedge \chi_0X)=\int_{\Delta_0}\tr(H^0_{\delta_g}\wedge X)=\bOm(H^0_{\delta_g},X)\ . 
\end{eqnarray*}

The second equality comes from the fact that $\chi_0 X$ is 
with compact support, hence geodesically bounded and we can use proposition \ref{prop:inter}. The third equality uses the fact that we have a sum over only finitely many $\gamma$ in $\Gamma/\Gamma_0$. The fourth uses the definition of $\delta_g$, and the last ones comes from the definitions. 

We have proved that $H^0_\mu$ is the Hamiltonian of $\Ell_\mu$ for all $\mu$, geodesic currents for a closed geodesic. The general result follows by density of this type of currents.

This completes the proof.
\end{proof}
As noted, the above gives a generalization of Wolpert's cosine formula. Explicitly we have for two $\Theta$-geodesic currents $\mu,\nu$ then
\begin{equation}
\{\Ell_\nu,\Ell_\mu\} = \int_{(\GT^2)_\star/\Gamma}\epsilon(g,h)\left(\tr\left(\pp(g)\pp(h)\right) - \frac{\Theta(g)\Theta(h)}{\dim(E)}\right)\ \d \mu(g)\d  \nu(h)\ .\label{eq:cosine}
\end{equation}
\subsection{Bracket of length function and discrete correlation function}\label{sec:bra-length-discrete}
We have

\begin{proposition}
	Let $G$ be a $\Theta$-configuration and $\mu$ a $\Theta$-geodesic current, then 
	\begin{eqnarray*}
		\{\T_G,\Ell_\mu\}=-\int_{{\GT}}\left(\oint_{\rho(G)}\beta_{\rho(g)}\right)\ \d \mu(g)=\int_{{{\GT}}}\bI_\rho(G,g) \d \mu(g)\ .
	\end{eqnarray*}

\end{proposition}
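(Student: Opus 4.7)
The plan is to assemble the Hamiltonian description of $\Ell_\mu$, the integral formula for $\d\T_G$, and the Fubini-type exchange theorem into a short chain of equalities.

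First, I would apply the Poisson bracket convention \eqref{eqdef:Poisson-fg}: since the Hamiltonian vector field of $\Ell_\mu$ is $H^0_\mu$ by Proposition \ref{pro:ham-leng}, we have
$$\{\T_G,\Ell_\mu\}(\rho)=\d\T_G(H^0_\mu)\ .$$
Next, I would invoke Proposition \ref{pro:dT-integ}, which identifies $\d\T_G$ with ghost integration, to rewrite this as $\oint_{\rho(G)}H^0_\mu$. The trace-part correction $H_\mu-H^0_\mu$ is a 1-form with values in the center of $\End(E)$, so Proposition \ref{pro:notrace} guarantees $\oint_{\rho(G)}(H_\mu-H^0_\mu)=0$, and therefore
$$\d\T_G(H^0_\mu)=\oint_{\rho(G)}H_\mu=-\oint_{\rho(G)}\left(\int_{\GT}\beta_{\rho(g)}\,\d\mu(g)\right)\ ,$$
using the definition of $H_\mu$.

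The key step is now to exchange the two integrals. This is precisely the content of Theorem \ref{theo:oint-inter1}, which asserts that $\int_{\GT}\beta_{\rho(g)}\,\d\mu(g)$ lies in $\bLa^\infty(E)$, that $g\mapsto\oint_{\rho(G)}\beta_{\rho(g)}$ is in $L^1({\GT},\mu)$, and that the exchange
$$\oint_{\rho(G)}\left(\int_{\GT}\beta_{\rho(g)}\,\d\mu(g)\right)=\int_{\GT}\left(\oint_{\rho(G)}\beta_{\rho(g)}\right)\d\mu(g)$$
holds. The combined result is therefore
$$\{\T_G,\Ell_\mu\}=-\int_{\GT}\left(\oint_{\rho(G)}\beta_{\rho(g)}\right)\d\mu(g)\ .$$

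Finally, the second equality is simply the unpacking of the definition of ghost intersection from equation \eqref{defeq:oint}, where $\bI_\rho(G,g)=-\oint_{\rho(G)}\beta_{\rho(g)}$. The only nontrivial ingredient in the whole argument is the Fubini exchange supplied by Theorem \ref{theo:oint-inter1}; everything else is a direct substitution of previously established identities, so I do not expect any real obstacle here beyond correctly aligning signs and the trace-free reduction.
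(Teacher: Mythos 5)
Your argument is correct and follows essentially the same route as the paper: identify the Hamiltonian of $\Ell_\mu$, apply Proposition \ref{pro:dT-integ} to express $\d\T_G$ as a ghost integral, exchange integrals via Theorem \ref{theo:oint-inter1}, and conclude with the definition \eqref{defeq:oint}. Your explicit invocation of Proposition \ref{pro:notrace} to handle the trace-free correction is a small point of extra care that the paper's own terse proof leaves implicit.
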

\begin{proof} By proposition \ref{eq:dT-integ}, we have
$$
\d \T_G(H_\mu)=-\oint_{\rho(G)}\left(\int_{{\GT}}\beta_{\rho(g)}\d\mu\right)\ .
$$	
Thus by the exchange formula \eqref{eq:exchan-int1}, we have 
$$
\d \T_G (H_\mu)=-\int_{\GT}\left(\oint_{\rho(G)}\beta_{\rho(g)}\right)\ \d \mu(g) \ .
$$	
Thus we conclude by using equation \eqref{defeq:oint}
$$
\{\T_G,\Ell_\mu\}= \d \T_G (H_\mu)=-\int_{\GT}\left(\oint_{\rho(G)}\beta_{\rho(g)}\right)\ \d \mu(g)=\int_{\GT} \bI_\rho(G,g)\ \d\mu(g) \ .
$$	
\end{proof}

\subsection{Bracket of length functions and correlation functions}\label{sec:bra-length-corel}

Our first objective is, given a family of flat connections $\fam{\nabla}$ whose variation at zero is $\dot\nabla$, to compute $
\d \T_\mu(\dot\nabla)
$.

\begin{proposition}[\sc Bracket of length and correlation functions]\label{pro:dTlmu} Assume that the $\Theta$-cyclic current $\mu$ is $(\rho,w)$-integrable. Then
	\begin{eqnarray*}
		\{\T_{w(\mu)},\Ell_\nu\}(\rho)&=&\int_{{{\GT}}^{n+1}/\Gamma}\bI_\rho(w(G),g)\ \d \nu(g)\ \d\mu(G)\ \ .
	\end{eqnarray*}
\end{proposition}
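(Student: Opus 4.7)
The plan is to chain together: the definition of the Poisson bracket, the formula for the Hamiltonian of $\Ell_\nu$ (Proposition \ref{pro:ham-leng}), the formula for the derivative of a correlation function as a ghost integral (Proposition \ref{pro:dT-integ}), and finally the exchange theorem (Theorem \ref{theo:oint-inter1}), closing up with the definition of ghost intersection.

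First I would use the defining identity $\{\T_{w(\mu)},\Ell_\nu\} = \d\T_{w(\mu)}(X_{\Ell_\nu})$ together with Proposition \ref{pro:ham-leng}, which identifies $X_{\Ell_\nu}$ with the trace-free part $H^0_\nu$ of $H_\nu = -\int_{\GT}\beta_{\rho(g)}\,\d\nu(g)$. Then, by the linearity/regularity statement in Proposition \ref{pro:reg-corr}, the derivative passes through the $\mu$-average:
\begin{equation*}
\{\T_{w(\mu)},\Ell_\nu\}(\rho)=\int_{{\GT}^n/\Gamma}\d\T_{w(G)}(H^0_\nu)\,\d\mu(G).
\end{equation*}
Proposition \ref{pro:dT-integ} rewrites $\d\T_{w(G)}(H^0_\nu)=\oint_{\rho(w(G))}H^0_\nu$, and Proposition \ref{pro:notrace} lets me replace $H^0_\nu$ by $H_\nu$ inside the ghost integral, since ghost integration annihilates central-valued forms.

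Next I would apply the Exchange Theorem \ref{theo:oint-inter1} (with ghost polygon $w(G)$ and current $\nu$) to move the ghost integral inside the $\nu$-integral. The hypothesis that $\nu$ is a $\Gamma$-invariant geodesic current is exactly what Theorem \ref{theo:oint-inter1} requires: the integrand $\int_{\GT}\beta_{\rho(g)}\,\d\nu(g)$ lies in $\bLa^\infty(E)$ and the function $g\mapsto \oint_{\rho(w(G))}\beta_{\rho(g)}$ lies in $L^1({\GT},\nu)$. This yields
\begin{equation*}
\oint_{\rho(w(G))}H_\nu = -\int_{\GT}\oint_{\rho(w(G))}\beta_{\rho(g)}\,\d\nu(g) = \int_{\GT}\bI_\rho(w(G),g)\,\d\nu(g),
\end{equation*}
using the very definition \eqref{defeq:oint} of $\bI_\rho$. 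Substituting back and unfolding the double integral over the quotient (which one does by a standard Fubini argument using the $\Gamma$-invariance of both $\mu$ and $\nu$) produces the claimed formula
\begin{equation*}
\{\T_{w(\mu)},\Ell_\nu\}(\rho)=\int_{{\GT}^{n+1}/\Gamma}\bI_\rho(w(G),g)\,\d\mu(G)\,\d\nu(g).
\end{equation*}

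The main obstacle is justifying the successive Fubini-type manipulations and the passage of $\d$ through the integral $\int\T_{w(G)}\,\d\mu$. The integrability of $\mu$ under $w$ at $\rho$ gives an $L^1$ bound on $\T_{w(G)}$, but one needs a similar uniform $L^1$ bound on $\d\T_{w(G)}(H_\nu)=\int_{\GT}\bI_\rho(w(G),g)\,\d\nu(g)$ to apply Lebesgue domination. This follows from combining the $\nu$-side bound in Lemma \ref{lem:Krho}/Proposition \ref{pro:mu-intersect} with the integrability hypothesis on $\mu$, together with the regularity afforded by Proposition \ref{pro:reg-corr}; everything else is formal substitution. Care is also needed to keep track of the trace-free replacement $H_\nu\mapsto H^0_\nu$, but that reduction is exactly Proposition \ref{pro:notrace}.
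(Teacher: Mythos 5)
Your proposal is correct and follows essentially the same route as the paper's proof: identify the Hamiltonian of $\Ell_\nu$ via Proposition \ref{pro:ham-leng}, differentiate under the $\mu$-integral by Proposition \ref{pro:reg-corr}, rewrite the derivative as a ghost integral by Proposition \ref{pro:dT-integ}, handle the trace-free adjustment by Proposition \ref{pro:notrace}, and conclude with the exchange formula and the definition of $\bI_\rho$. The sign bookkeeping and the fundamental-domain unfolding also match the paper's argument.
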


\begin{proof} 
	By Theorem \ref{pro:ham-leng}, the Hamiltonian vector field of $\Ell_\nu$ is given by
	$$
	H^0_\nu=-\int_{{\GT}}\beta^0_{\rho(g)}\ \d\nu(g)\ .
	$$
Let $\Delta$ be a fundamental domain for the action of $\Gamma$ on $\GG^n$, and observe that $\Delta\times{\GT}$ is a fundamental domain for the action of $\Gamma$ on ${\GT}^{n+1}$. It follows since $H_\nu$ is $\rho$-equivariant and proposition \ref{pro:dT-integ}
 that
\begin{eqnarray*}
	\{\T_{w(\mu)},\Ell_\nu\}=\d \T_{w(\mu)}(H^0_\nu)
		&=&\int_{\Delta}\d\T_{w(G)}(H^0_\nu)\ \d\mu(G)\crcr
	=\int_{\Delta}\left(\oint_{\rho(w(G))}H^0_\nu\right)\ \d\mu(G)	&=&-\int_{\Delta}\int_{\GT} \left(\oint_{\rho(w(G))}\beta_{\rho(g)}\right)\ \ \d\nu(g)\d\mu(G)\crcr
	= \int_{\Delta}\int_{\GT} \left(\bI_\rho(w(G),g)\right)\ \ \d\nu(g)\d\mu(G)
	&=&\int_{{\GT}^{n+1}/\Gamma} \bI_\rho(w(G),g) \ \d\mu(G)\d\nu(g)\ .
\end{eqnarray*}
For the second equality we used proposition \ref{pro:reg-corr} and  that integrating a 1-form with values in the center gives a trivial result by proposition \ref{pro:notrace}.
\end{proof}

\subsection{Hamiltonian of correlation functions}\label{sec:hamil-corell}

We  now prove the following result.

\begin{proposition}[\sc Hamiltonian of correlation functions]\label{theo:ham-cor} Let $w$ be a natural function.
	Let $\mu$ be a $(\rho,w)$-integrable $\Theta$-current. Then for every $y$ in $\hh$, $\Omega_{\rho(G)}$ belongs to $L^1({\GT}^p,\mu)$. Moreover
	\begin{equation}
\Omega_{w(\mu)}(\rho)\defeq \int_{{\GT}^p}\Omega_{\rho(w(G))}\d\mu(G)\ \end{equation}
seen as vector field on the character variety, is the Hamiltonian of the correlation function $\T_{w(\mu)}$. 
\end{proposition}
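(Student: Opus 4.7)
The plan is to characterise $\Omega_{w(\mu)}$ as the Hamiltonian by verifying the defining identity $\d\T_{w(\mu)}(\dot\nabla) = \bOm(\dot\nabla, \Omega_{w(\mu)})$ from \eqref{eqdef:ham-vf}, after first making sense of $\Omega_{w(\mu)}$ as a closed $\End(E)$-valued 1-form on $S$ representing a cohomology class in $H^1_\rho(S)$.

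First I would establish the pointwise $L^1$-ness and the descent to $S$. For each $y\in\hh$, the exponential decay bound \eqref{eq:bd-Omega} from proposition \ref{pro:dual-ghost-form}(3) gives $\Vert\Omega_{\rho(w(G))}(y)\Vert_y\leq Ke^{-a\,d(y,\bary(w(G)))}$ with constants depending on the core diameter of $w(G)$; combined with the naturality inequality \eqref{ineq:natural-core}, the local finiteness of those $G$ with $\bary(w(G))$ near $y$, and the $(\rho,w)$-integrability of $\mu$, this yields the $L^1$-claim. Since $\mu$ is $\Gamma$-invariant and the assignment $G\mapsto\Omega_{\rho(w(G))}$ is $\Gamma$-equivariant, the pointwise integral descends to a well-defined form $\Omega_{w(\mu)}$ on $S$; it is closed because each $\Omega_{\rho(w(G))}$ lies in $\bXi(E)$ by proposition \ref{pro:dual-ghost-form}(1), so $\Omega_{w(\mu)}$ defines a cohomology class as required.

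The computation of $\d\T_{w(\mu)}(\dot\nabla)$ then proceeds in three steps. By the regularity proposition \ref{pro:reg-corr}, one may exchange differentiation and integration: $\d\T_{w(\mu)}(\dot\nabla)=\int_{{\GT}^p/\Gamma}\d\T_{w(G)}(\dot\nabla)\,\d\mu(G)$. By proposition \ref{pro:dT-integ}, each integrand equals the ghost integral $\oint_{\rho(w(G))}\dot\nabla$. Next I would apply the key identity $\oint_{\rho(w(G))}\dot\nabla=\int_\hh\tr(\dot\nabla\wedge\Omega_{\rho(w(G))})$, then swap the resulting double integral by a Fubini-type argument modelled on theorem \ref{theo:oint-exchang2} and unfold via the $\Gamma$-equivariance of $\dot\nabla$ and $\Omega_{\rho(w(G))}$ combined with the $\Gamma$-invariance of $\mu$, recognising the outcome as $\int_S\tr(\dot\nabla\wedge\Omega_{w(\mu)})=\bOm(\dot\nabla,\Omega_{w(\mu)})$, which closes the argument by the defining relation \eqref{eqdef:ham-vf}.

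The main obstacle is precisely the key identity $\oint_{\rho(w(G))}\dot\nabla=\int_\hh\tr(\dot\nabla\wedge\Omega_{\rho(w(G))})$ in the case when $\dot\nabla$ is only bounded, since proposition \ref{pro:dual-ghost-form}(2) established it for $\alpha\in\bXi(E)$ and a $\rho$-equivariant variation $\dot\nabla$ cannot be geodesically bounded. To circumvent this, I would approximate by truncations $\chi_R\dot\nabla$ where $\chi_R$ is a smooth cutoff centered at $\bary(w(G))$; the truncated form is compactly supported hence trivially in $\bXi(E)$, so the identity applies. As $R\to\infty$, the right-hand side converges by dominated convergence using the exponential decay \eqref{eq:bd-Omega} of $\Omega_{\rho(w(G))}$, while the left-hand side is controlled through the alternative expression of proposition \ref{pro:2-altcons}, whose integrands $\tr(\alpha\,\pp_i[\pp_i,\PP_i])$ and $\tr(\alpha\,[\pp_i,\PP_i]\,\pp_i)$ along the rays $\gamma_i^\pm$ decay exponentially by the exponential decay lemma \ref{lem:exp-decay}, allowing dominated convergence in both sides and yielding the identity.
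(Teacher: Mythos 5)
Your overall route is genuinely different from the paper's: you try to verify the Hamiltonian identity $\d\T_{w(\mu)}(\dot\nabla)=\bOm(\dot\nabla,\Omega_{w(\mu)})$ against an \emph{arbitrary} variation $\dot\nabla$, which forces you to pair $\Omega_{\rho(w(G))}$ with a bounded equivariant form. The paper instead pairs the candidate vector field $\Omega_{w(\mu)}$ only with differentials of length functions: it computes $\d\Ell_\nu(\Omega_{w(\mu)})$ via formula \eqref{eq:der-length}, which holds for any bounded variation, reduces everything to the duality \eqref{eq:duality1} applied to the genuinely geodesically bounded forms $\beta_{\rho(g)}$, identifies the result with $\{\Ell_\nu,\T_{w(\mu)}\}$, and concludes by the density of the $\d\Ell_\nu$ in the cotangent space (\cite[Proposition 10.1]{Bridgeman:2015ba}). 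That dual argument is engineered precisely to avoid the identity you isolate as your ``main obstacle''.

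That obstacle is real, and your truncation does not resolve it. The form $\chi_R\dot\nabla$ is not closed ($\d(\chi_R\dot\nabla)=\d\chi_R\wedge\dot\nabla$), so it is not in $\bXi(E)$ --- geodesically bounded forms are closed by definition \ref{def:geod-bded} --- and $\oint_{\rho(G)}(\chi_R\dot\nabla)$ is not even defined, since ghost integration is defined through a primitive. Hence at no finite $R$ do you possess the identity you want to pass to the limit in. If you truncate the primitive instead and use $\d(\chi_R\dot A)$, which is closed, compactly supported and does lie in $\bXi(E)$, then proposition \ref{pro:oint-cont} gives $\oint_{\rho(G)}\d(\chi_R\dot A)=0$ for every $R$, while $\oint_{\rho(G)}\dot\nabla=\oint_{\rho(G)}\d\dot A$ is nonzero in general: the ghost integral is a boundary-at-infinity functional (the paper notes it factors through bounded cohomology) and is not continuous under compactly supported approximation. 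What your strategy actually requires is that the two functionals $\alpha\mapsto\oint_{\rho(G)}\alpha$ and $\alpha\mapsto\int_\hh\tr(\alpha\wedge\Omega_{\rho(G)})$ --- which agree on $\bXi(E)$ and both annihilate $\d\beta$ for $\beta$ bounded --- also agree on closed bounded equivariant forms; this amounts to decomposing $\dot\nabla$ as an element of $\bXi(E)$ plus the differential of a bounded section, a nontrivial statement proved nowhere in the paper and not supplied by your argument. A secondary gap: your Fubini step is ``modelled on'' Theorem \ref{theo:oint-exchang2}, but that theorem is stated for $\Gamma$-compact currents, and the passage to general $(\rho,w)$-integrable currents requires the separate exhaustion-and-Morera approximation that the paper carries out and your outline omits.
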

We first prove proposition \ref{theo:ham-cor} under the additional hypothesis that $\mu$ is a $\Gamma$-compact current, then move to the general case by approximation.

\begin{proof}[Proof for a  $\Gamma$-compact current]

Assume $\mu$ is a $\Gamma$-compact current. By the density of derivatives of length functions, it is enough to prove that for any geodesic current $\nu$ associated to a length function $\Ell_\nu$ whose Hamiltonian is $H_\nu$ we have 
$$
\{\Ell_\nu,\T_{w(\mu)}\}=\bOm(\Omega_{w(\mu)},H_\nu)=\d\Ell_\nu(\Omega_{w(\mu)})\ .
$$	
Then using a fundamental domain $\Delta_0$ for the action of $\Gamma$ on $\USi$, and $\Delta_1$ a fundamental domain for the action of $\Gamma$ on ${\GT}^n$, and finally denoting $\nu_0$ the flow invariant measure in $\USi$ associated to the current $\nu$
\begin{eqnarray*}
\d\Ell_\nu(\Omega_{w(\mu)})
	=\int_{\Delta_0} \tr(\pp\Omega_{w(\mu)})\d\nu_0(g)	&=&\int_{\Delta_0}\left(\int_{{\GT}^n} \tr(\pp (g)\Omega_{\rho(w(G))})\ \d\mu(G)\right)\d\nu_0(g)\crcr
	=\int_{{\GT}^n}\!\left(\int_{\Delta_0} \!\!\tr(\pp (g)\Omega_{\rho(w(G))})\ \d\nu_0(g)\right)\d\mu(G)	&=&\int_{\Delta_1}\left(\int_{\USi} \tr(\pp (g)\Omega_{\rho(w(G))})\ \d\nu_0(g)\right)\d\mu(G)\crcr
	=\int_{\Delta_1}\!\int_{\GT}\!\int_{g} \tr(\pp (g)\Omega_{\rho(w(G))}))\ \d\nu(g)\d\mu(G)	&=&\int_{{\GT}^n/\Gamma}\!\!\left(\int_{\GT}\!\int_{\hh} \!\!\tr(\omega_g\pp (g)\wedge\Omega_{\rho(w(G))})\ \right)\d\nu(g) \d\mu(G)\crcr
	=-\int_{\left(\GT^n/\Gamma\right)\times \GT}\bI_\rho(w(G),g)\ \d\mu(G)\d\nu(g)&=&\{\Ell_\nu,\T_\mu\}\  .
	\end{eqnarray*}
The first equality uses equation \eqref{eq:der-length}, the second uses the definition of $\Omega_\mu$, the third one comes from Fubini's theorem, the fourth one from lemma
\ref{lem:interDELTA}, the fifth one from the fibration from $\USi$ to $\GG$, the sixth one from formula \eqref{eq:duality1}, the seventh one definition \eqref{defeq:oint}.
\end{proof}

\begin{proof}[Proof for an integrable $\mu$] Let us now prove the general case when $\mu$ is a $\rho$-integrable current. Let us consider an exhaustion $\seq{K}$ of ${\GT}^p/\Gamma$ by compact sets. Assume that the interior of $K_{m+1}$ contains $K_m$. Let $\mathcal K$ be a fundamental domain of the action $\Gamma$ on  ${\GT}_\star^p$.
Let 
$$
\T_m(\rho)\defeq\int_{K_m}\T_{w(G)}(\rho)\ \d\mu(G)\ .
$$
The functions $\T_m$ are analytic and converges $C^0$ on every compact set to $\T_\mu$ by the integrability of $\mu$. Thus, by Morera's Theorem,  $\T_\mu$ is analytic and $\seq{\T}$ converges $C^\infty$ on every compact . Let us call $X$ the Hamiltonian vector field of $\T_\mu$ and $X_m$ the Hamiltonian vector field of $\T_m$. It follows that $\seq{X}$ converges to $X$. 

We let $C_m$ be the preimage of $K_m$ in $\GT^p$. We have just proven in the previous paragraph that the Hamiltionian of $\T_m$ is
$$
X_m=\int_{C_m}\Omega_{\rho(H)}\ \d \mu\ .$$
From corollary \ref{coro:bd-Om}, for every $y$ and $H$, the function
$\gamma\mapsto \Vert\Omega_{\rho(\gamma w(H))}(y)\Vert$, is in $\ell^1(\gamma)$. 
It follows that 
$$
X_m(y)=\int_{C_m\cap \mathcal K}\left(\sum_{\gamma\in\Gamma}\Omega_{\rho(\gamma H)}(y)\ \d\mu(H)\right)\ .
$$
Since $\{X_m(y)\}_{m\in\mathbb N}$ converges for any exhaustion of $\mathcal K$ to $X(y)$. It follows by lemma \ref{lem:L1} that 
$$
H\mapsto \sum_{\gamma\in\Gamma}\Omega_{\rho(\gamma w(H))}(y)\ \d\mu(H)\ ,
$$ is in $L^1(\mathcal K,\mu)$ and that 
$$X(y)= \int_{\mathcal K}\sum_{\gamma\in\Gamma}\Omega_{\rho(\gamma w(H))}(y)\ \d\mu(H)=\int_{{\GT}^p}\Omega_{\rho(w(H))}(y)\ \d\mu(H)\ ,
$$
where we applied  Fubini again in the last equality. This is what we wanted to prove.
\end{proof}

\subsection{Bracket of correlation functions}\label{sec:bra-corell} We have
\begin{proposition}[\sc Bracket of correlation functions]\label{pro:brack-cor}
Let $\mu$ and $\nu$ be two integrable $\Theta$-currents of rank $m$ and $n$ respectively. Let $p=m+n$, then
	\begin{equation}
		\{\T_{w(\nu)},\T_{v(\mu)}\}=\int_{{{\GT}}^{p}/\Gamma}\bI_\rho(w(H),v(G))\ \d\nu\otimes\d\mu(H,G)\ .
	\end{equation}	
\end{proposition}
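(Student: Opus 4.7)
The plan is to assemble three earlier results: the Hamiltonian formula from proposition \ref{theo:ham-cor}, the derivative formula from proposition \ref{pro:dT-integ}, and the Exchange Theorem II (\ref{theo:oint-exchang2}). By the defining relation \eqref{eqdef:Poisson-fg} of the Poisson bracket and proposition \ref{theo:ham-cor}, the Hamiltonian of $\T_{v(\mu)}$ is the vector field represented pointwise by $\Omega_{v(\mu)} = \int_{{\GT}^m} \Omega_{\rho(v(G))}\, d\mu(G)$, which lies in $\bLa^\infty(E)$. Hence
\[
\{\T_{w(\nu)},\T_{v(\mu)}\} = d\T_{w(\nu)}(\Omega_{v(\mu)}).
\]

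Next, I would use proposition \ref{pro:reg-corr} to bring the exterior derivative inside the $\nu$-integral, obtaining $\int_{{\GT}^n/\Gamma} d\T_{w(H)}(\Omega_{v(\mu)})\, d\nu(H)$, and then apply proposition \ref{pro:dT-integ} to rewrite each term as a ghost integral, giving
\[
\{\T_{w(\nu)},\T_{v(\mu)}\} = \int_{{\GT}^n/\Gamma} \left(\oint_{\rho(w(H))}\Omega_{v(\mu)}\right) d\nu(H).
\]
The crucial step is to swap $\oint_{\rho(w(H))}$ with the $\mu$-integral defining $\Omega_{v(\mu)}$; this is exactly what Theorem \ref{theo:oint-exchang2} provides, and it identifies the inner expression with $\int_{{\GT}^m}\bI_\rho(w(H),v(G))\, d\mu(G)$ by the definition of the ghost intersection. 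Finally, using $\Gamma$-invariance of $\bI_\rho$ one unfolds the integration on ${\GT}^n/\Gamma \times {\GT}^m$ into an integration on ${\GT}^{p}/\Gamma$ with respect to $\mu\otimes\nu$.

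The main obstacle is that Theorem \ref{theo:oint-exchang2} is stated under the hypothesis that the current is $\Gamma$-compact, whereas here $\mu$ and $\nu$ are only assumed integrable. The plan to handle this is the same approximation argument used in the second half of the proof of proposition \ref{theo:ham-cor}: choose exhaustions $\{K_m\}$ of ${\GT}^m/\Gamma$ and $\{K'_n\}$ of ${\GT}^n/\Gamma$ by compact sets, form the $\Gamma$-compact truncated currents $\mu_m = 1_{K_m}\mu$ and $\nu_n = 1_{K'_n}\nu$, apply the identity just derived to these truncations, and pass to the limit. Analyticity and uniform convergence via Morera's theorem (as used in proposition \ref{pro:reg-corr}) guarantee $C^\infty$ convergence of $\T_{w(\nu_n)}$ and $\T_{v(\mu_m)}$, hence convergence of the Poisson brackets, while the integrability hypotheses combined with Lebesgue's dominated convergence theorem justify the passage to the limit on the right-hand side, which is $L^1$ by the joint integrability assumption. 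This completes the proof.
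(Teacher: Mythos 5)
Your argument is correct and follows essentially the same route as the paper's proof: the Hamiltonian of $\T_{v(\mu)}$ from proposition \ref{theo:ham-cor}, differentiation under the integral via proposition \ref{pro:reg-corr}, the ghost-integral formula of proposition \ref{pro:dT-integ}, and the exchange formula of Theorem \ref{theo:oint-exchang2}. Your extra truncation step to reduce to $\Gamma$-compact currents addresses the mismatch between the $\Gamma$-compactness hypothesis of Theorem \ref{theo:oint-exchang2} and the mere integrability assumed here --- a point the paper's proof passes over silently --- and it mirrors the approximation argument already used in the proof of proposition \ref{theo:ham-cor}.
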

\begin{proof} We have 
\begin{eqnarray*}
  	\{\T_{w(\nu)},\T_{v(\mu)}\}=\d\T_{w(\nu)}(\Omega_{v(\mu)})
  	&=&\int_{{\GT}^n/\Gamma}\d\T_{w(H)}(\Omega_{v(\mu)})\ \d\nu(H)\crcr
  	=\int_{{\GT}^n/\Gamma}\left(\oint_{\rho(w(H))}\Omega_{v(\mu)}\right)\ \d\nu(H)  	&=&\int_{{\GT}^n/\Gamma}\left(\oint_{\rho(w(H))}\int_{{\GT}^m}\Omega_{\rho(v(G))}\d\mu(G)\right)\ \d\nu(H)\crcr
  	=\int_{{\GT}^n/\Gamma}\left(\int_{{\GT}^m}\oint_{\rho(w(H))}\Omega_{\rho(v(G))}\d\mu(G)\right)\ \d\nu(H)
  	&=&\int_{{\GT}^p/\Gamma}\bI_\rho(w(H),v(G))\ \d\nu(H) \d\mu(G) \ .
\end{eqnarray*}
The crucial point in this series of equalities is the exchange formula for the fifth equality which comes from Theorem \ref{theo:oint-exchang2}. \end{proof}

With the above, we have completed the proof of the ghost representation Theorem \ref{theo:poiss-bracket}.

\part{Applications} 

We prove in this last part, two applications of the main result. First the convexity of length functions (in the projective case), then we explain that geodesic laminations define commuting subalgebras  on the character varieties, a feature which is well known for Hitchin components by Bonahon--Dreyer \cite{Bonahon:2014woa}, Zhe Sun  and Tengren Zhang \cite{Sun:2017}, Sun--Wienhard--Zhang \cite{Sun:2020vm}, and and goes back in Teichmüller theory to Bonahon \cite{Bonahon:1996}, but that we extend to any  deformation space of Anosov representations.

\section{Convexity of length functions}\label{sec:convex}

Our goal is a generalization of Kerckhoff theorem \cite{Kerckhoff:1983th} of the convexity of length functions, as well as  a generalization of Wolpert's Sine Formula for the second derivatives along twist orbits \cite{Wolpert:1983td}. Both results will follow from computations in the ghost algebra combined with the Ghost Representation Theorem \ref{theo:poiss-bracket}. 

Our first theorem is a generalization of Wolpert's Sine Formula.

\begin{theorem}\label{theo:sine-formula}
Let $\mu$ be an oriented  geodesic current supported on non-intersecting geodesics. Then for any geodesic current $\nu$ and any projective representation $\rho$, we have 
\begin{eqnarray*}
\{\Ell_\mu,\{\Ell_\mu,\Ell_\nu\}\}(\rho)=2\int_{\GG^{3,+}/\Gamma}\!\!\! \epsilon(g_0,h)\epsilon(g_1,h)\left(\T_{\lceil g_1,h,g_0\rceil -\lceil g_1,h\rceil\lceil g_0,h\rceil }\right)(\rho)\ \  \d \mu^2 (g_1,g_0) \d \nu(h)  \ .
\end{eqnarray*}
where $\GG^{3,+}$  is the set of $(g_1,h,g_0)$ so that $h$ intersects both $g_1$ and $g_0$, as well as $h$ intersecting $g_1$ before $g_0$. 
\end{theorem}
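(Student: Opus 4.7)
The plan is to reduce the triple bracket to a ghost-algebra calculation via Corollary~\ref{coro:Poisson-stab}, simplify the resulting integrand using the hypothesis that $\mu$ is supported on non-intersecting geodesics together with the rank-one projector identities available in the projective case, and then recover the final formula by a symmetrization over the two orderings in which $h$ can meet $g_0$ and $g_1$. By Corollary~\ref{coro:Poisson-stab},
\[
\{\Ell_\mu,\{\Ell_\mu,\Ell_\nu\}\}(\rho)=\int_{\GT^3/\Gamma}\T_{[g_1,[g_0,h]]}(\rho)\,d\mu(g_1)\,d\mu(g_0)\,d\nu(h),
\]
so the task is to explicitly evaluate $\T_{[g_1,[g_0,h]]}$ on the support of $\mu\otimes\mu\otimes\nu$.

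First I unwind the inner bracket $[g_0,h]=\epsilon(h,g_0)\bigl(\lceil h,g_0\rceil-\Theta_h\Theta_{g_0}\casper\bigr)$; centrality of $\casper$ yields $[g_1,[g_0,h]]=-\epsilon(h,g_0)[\lceil h,g_0\rceil,g_1]$. The ghost polygon associated to $\lceil h,g_0\rceil$ has visible edges $\theta_2=h$, $\theta_4=g_0$ and ghost edges $\theta_1=(h^-,g_0^+)$, $\theta_3=(g_0^-,h^+)$, and the ghost-bracket formula $[G,g_1]=\sum_j(-1)^{j+1}\epsilon(g_1,\theta_j)\lceil g_1,\theta_j^{\ast}\rceil$ produces four terms. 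The two visible-edge terms give length-four configurations that, by the rank-one identity $\pp(h)\pp(g_0)\pp(h)=\T_{\lceil h,g_0\rceil}(\rho)\,\pp(h)$ (and its analogue with $h$ and $g_0$ exchanged), collapse to the products $\T_{\lceil g_1,h\rceil}\T_{\lceil g_0,h\rceil}$ and $\T_{\lceil g_1,g_0\rceil}\T_{\lceil h,g_0\rceil}$; the two ghost-edge terms contribute $\T_{\lceil g_1,h,g_0\rceil}$ and $\T_{\lceil g_1,g_0,h\rceil}$, respectively. Since $\mu$ is supported on non-intersecting pairs, $\epsilon(g_1,g_0)=0$ on $\mathrm{supp}(\mu\otimes\mu)$, so the $\theta_4$ contribution drops out, and the cocycle identity~\eqref{eq:triang-intersect} applied to the ideal quadrilateral $(\bar\theta_1,\theta_2,\bar\theta_3,\theta_4)$ gives $\epsilon(g_1,\theta_1)+\epsilon(g_1,\theta_3)=\epsilon(g_1,h)$.

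The decisive geometric input, and the step I expect to be the main obstacle, is the claim: on $\GG^{3,+}$, the ghost edge $\theta_3=(g_0^-,h^+)$ does not intersect $g_1$, so $\epsilon(g_1,\theta_3)=0$ and hence $\epsilon(g_1,\theta_1)=\epsilon(g_1,h)$ by the cocycle. I prove this by placing the six endpoints $h^{\pm},g_0^{\pm},g_1^{\pm}$ on $\partial_\infty\hh$: the assumption that $h$ meets $g_1$ before $g_0$ together with the non-intersection of $g_1$ and $g_0$ forces $g_0^-$ and $h^+$ to lie in the same connected component of $\partial_\infty\hh\setminus\{g_1^-,g_1^+\}$, which is precisely the non-alternation condition expressing $\epsilon(g_1,\theta_3)=0$. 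Substituting this into the expanded formula yields, on $\GG^{3,+}$,
\[
\T_{[g_1,[g_0,h]]}(\rho)=\epsilon(g_0,h)\epsilon(g_1,h)\bigl(\T_{\lceil g_1,h,g_0\rceil}-\T_{\lceil g_1,h\rceil}\T_{\lceil g_0,h\rceil}\bigr)(\rho).
\]

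To conclude, I split $\GG^3/\Gamma=\GG^{3,+}/\Gamma\sqcup\GG^{3,-}/\Gamma$ away from a null set on which $\epsilon(g_0,h)\epsilon(g_1,h)=0$, and invoke the $\mu\otimes\mu$-symmetry: the involution $(g_1,g_0)\leftrightarrow(g_0,g_1)$ is measure-preserving and exchanges $\GG^{3,+}$ with $\GG^{3,-}$, while the elementary identity $\lceil g_0,g_1,h\rceil=\lceil g_1,h,g_0\rceil$ of cyclic configurations shows that the $\GG^{3,-}$ contribution coincides, after the variable swap, with the $\GG^{3,+}$ integrand. Summing the two produces the factor $2$ in front of the integral over $\GG^{3,+}/\Gamma$ and completes the proof.
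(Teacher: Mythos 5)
Your proposal is correct and follows essentially the same route as the paper: reduce via Corollary~\ref{coro:Poisson-stab} to the integral of $\T_{[g_1,[g_0,h]]}$, evaluate the triple bracket in the (extended swapping/ghost) algebra using $\epsilon(g_0,g_1)=0$ and the key geometric fact that on $\GG^{3,+}$ the ghost edge $(h^+,g_0^-)$ misses $g_1$ while the other ghost edge carries the full intersection number $\epsilon(g_1,h)$, and then symmetrize over the two orderings to produce the factor $2$ — this is precisely the content of the paper's Lemma~\ref{lem:tripBrak} combined with the Jacobi-identity symmetrization. Your only additions are a supplied geometric justification for $\epsilon(g_1,(h^+,g_0^-))=0$ (which the paper asserts) and the phrasing of the symmetrization via an explicit involution and cyclicity of configurations rather than by citing Jacobi; both are harmless restatements of the same argument.
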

Observe that in this formula, the representation is just assumed to be Anosov.

Let us first say, following Martone--Zhang \cite{Martone:2019uf} that a representation has {\em a positive cross ratio} if for all intersecting geodesics $g$ and $h$
$$
0<\T_{\lceil g,h \rceil}(\rho)<1\ .
$$

We now restate the Convexity Theorem \ref{theo:B}.

\begin{theorem}[\sc Convexity)]\label{theo:fin-convex}
	Let $\mu$ be an oriented  geodesic current supported on non-intersecting geodesics. Then for any geodesic current $\nu$ and any projective representation $\rho$ with a positive cross ratio, we have 
\begin{equation}
	\{\Ell_\mu,\{\Ell_\mu,\Ell_\nu\}\}(\rho)\geq 0\ .
\end{equation}
Furthermore the inequality is strict if and only if $i(\mu,\nu) \neq 0$.
\end{theorem}

We start by computing double brackets in the Ghost Algebra.

\subsection{Double derivatives of length functions in the swapping algebra}
In order to prove our convexity result, we will need to calculate  double brackets. By Theorem  \ref{theo:ext-ghost}, as the map $A \rightarrow \T_A$ on the ghost algebra factors through the extended swapping bracket $\mathcal B_0$, it suffices to do our calculations in $\mathcal B_0$. For simplicity, we will further denote the elements $\ell_g$ in $\mathcal B_0$ by $g$.
 
\begin{lemma}\label{lem:tripBrak}
Let $h$ be an oriented  geodesic and  $g_0, g_1$ be two geodesics so that $\epsilon(g_0,g_1)=0$. Let $\epsilon_i=\epsilon(g_i,h)$. \begin{enumerate}
	\item Assume first that $\epsilon_0\epsilon_1=0$, then 
$
[g_1,[g_0,h]]=0$.
\item Assume otherwise that $h$ intersect $g_1$ before $g_0$ or that $g_1=g_0$. Then \begin{eqnarray*}
[g_1,[g_0,h]]&=& \epsilon_1\epsilon_0  \left(\lceil g_1,h,g_0 \rceil-  \lceil g_1,h\rceil \ \lceil g_0,h \rceil \right)\\
 &=&\epsilon_1\epsilon_0\   \lceil g_1,h\rceil \ \lceil g_0,h \rceil\ \left(\lceil \gamma_0 ,\gamma_1 \rceil-1 \right)\ ,
	\end{eqnarray*}
	where  $\gamma_0 \defeq (g_0^+,h^-) $ and $\gamma_1 \defeq (h^+, g_1^-)$. \footnote{Observe that $\gamma_0$ and $\gamma_1$ are not phantom geodesics by hypothesis.}
\end{enumerate}
\end{lemma}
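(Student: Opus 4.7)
The plan is to transport the computation to the extended swapping algebra $\mathcal B_0$ via the bracket-preserving surjection $\pi$ of Theorem \ref{theo:ext-ghost}. Applying \eqref{def:swap-bracketgh} and antisymmetry gives $[g_0,h] = -\epsilon_0\,\pi(\lceil g_0,h\rceil)$ modulo a central Casimir term, which is killed by $[g_1,\cdot\,]$. If $\epsilon_0 = 0$ the first assertion of the lemma is immediate, so assume $\epsilon_0 \neq 0$. The ghost polygon associated to $\lceil g_0,h\rceil$ has visible edges $g_0,h$ and ghost edges $\theta_1 = (g_0^-,h^+)$, $\theta_3 = (h^-,g_0^+)$; applying $[g_1,\cdot\,]$ via formula \eqref{def:ghost-bracketgH} together with $\epsilon(g_0,g_1)=0$, I obtain
\begin{equation*}
[g_1,[g_0,h]] = \epsilon_0\,\pi(\lceil g_0,h\rceil)\bigl[\epsilon(g_1,\theta_1)\,\pi(\lceil g_1,\theta_1\rceil) + \epsilon(g_1,\theta_3)\,\pi(\lceil g_1,\theta_3\rceil) - \epsilon_1\,\pi(\lceil g_1,h\rceil)\bigr].
\end{equation*}

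The cocycle identity \eqref{eq:triang-intersect} applied to the ideal triangles with vertex sets $\{g_0^-,g_0^+,h^+\}$ and $\{h^-,g_0^+,h^+\}$ (using $\epsilon(g_1,g_0)=0$) yields $\epsilon(g_1,\theta_1) + \epsilon(g_1,\theta_3) = \epsilon_1$, so the bracketed term rewrites as $\epsilon(g_1,\theta_1)\bigl[\pi(\lceil g_1,\theta_1\rceil) - \pi(\lceil g_1,\theta_3\rceil)\bigr] + \epsilon_1\bigl[\pi(\lceil g_1,\theta_3\rceil) - \pi(\lceil g_1,h\rceil)\bigr]$. For the first claim, when $\epsilon_1 = 0$ and $\epsilon_0 \neq 0$, the hypothesis $\epsilon(g_0,g_1)=0$ rules out $g_1 \simeq h$ (which would force $\epsilon(g_0,g_1) = \pm\epsilon_0 \neq 0$), so $g_1$ is disjoint from both $g_0$ and $h$; a sector analysis of the four sectors of $\hh\setminus(g_0\cup h)$ then shows that both endpoints of $g_1$ lie on the same side of each of $\theta_1$ and $\theta_3$, whence $\epsilon(g_1,\theta_1) = \epsilon(g_1,\theta_3) = 0$ and the bracket vanishes. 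For the second claim with $\epsilon_0\epsilon_1 \neq 0$, I treat the two allowed subcases: under the hypothesis that $h$ crosses $g_1$ before $g_0$ with $g_1 \neq g_0$, tracking the cyclic order of the six boundary endpoints across each sign combination $(\epsilon_0,\epsilon_1) \in \{\pm 1\}^2$ shows that both endpoints of $g_1$ lie on the same side of $\theta_1$ while $g_1$ meets $\theta_3$ transversally, giving $\epsilon(g_1,\theta_1) = 0$ and $\epsilon(g_1,\theta_3) = \epsilon_1$; in the degenerate case $g_1 = g_0$, direct substitution yields $\pi(\lceil g_0,\theta_1\rceil) = \pi(\lceil g_0,\theta_3\rceil) = 1$, so the first bracketed term vanishes algebraically rather than through intersection numbers.

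In either subcase the expression reduces to $[g_1,[g_0,h]] = \epsilon_0\epsilon_1\,\pi(\lceil g_0,h\rceil)\bigl[\pi(\lceil g_1,\theta_3\rceil) - \pi(\lceil g_1,h\rceil)\bigr]$. A direct expansion from the definition of $\pi$ yields the algebraic identity
\begin{equation*}
\pi(\lceil g_0,h\rceil)\,\pi(\lceil g_1,\theta_3\rceil) = \pi(\lceil g_1,h,g_0\rceil),
\end{equation*}
since both sides reduce to the same product $(g_1^+,h^-)(h^+,g_0^-)(g_0^+,g_1^-)/\bigl[(g_1^+,g_1^-)(h^+,h^-)(g_0^+,g_0^-)\bigr]$; this establishes the first displayed equality of the lemma. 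For the factored form, one checks directly from the definition of $\pi$ that $\pi(\lceil g_1,h,g_0\rceil) = \pi(\lceil g_1,h\rceil)\,\pi(\lceil g_0,h\rceil)\,\pi(\lceil \gamma_0,\gamma_1\rceil)$ with $\gamma_0 = (g_0^+,h^-)$ and $\gamma_1 = (h^+,g_1^-)$, a routine cancellation of cross-ratio factors. I expect the main obstacle to be the geometric case analysis for the second claim: verifying that the ``crossing before'' hypothesis forces $\epsilon(g_1,\theta_1) = 0$ requires careful bookkeeping of the cyclic positions of all six boundary points across each combination of signs $\epsilon_0,\epsilon_1 \in \{\pm 1\}$, as well as the separate treatment of the degenerate case $g_1 = g_0$.
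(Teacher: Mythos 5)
Your proof is correct and follows essentially the same route as the paper's: pass to the extended swapping algebra via $\pi$, expand the double bracket with formula \eqref{def:ghost-bracketgH}, use the crossing hypothesis to evaluate the intersection numbers of $g_1$ with the two ghost edges of $\lceil g_0,h\rceil$, and verify the cross-ratio identities $\lceil g_1,\theta_3\rceil\,\lceil g_0,h\rceil=\lceil g_1,h,g_0\rceil$ and the factored form by direct expansion. The only organizational differences are that the paper disposes of the case $\epsilon_1=0$ by the Jacobi identity ($[g_1,[g_0,h]]=[g_0,[g_1,h]]$ since $[g_0,g_1]=0$) rather than by your sector analysis, and that your regrouping via the cocycle relation $\epsilon(g_1,\theta_1)+\epsilon(g_1,\theta_3)=\epsilon_1$ treats the degenerate case $g_1=g_0$ --- where the ghost edges share endpoints with $g_1$ and the individual intersection numbers are half-integers, so the paper's direct assertions $\epsilon(g_1,\eta_0)=0$ and $\epsilon(g_1,\zeta_0)=\epsilon_1$ do not literally hold --- more carefully than the paper does.
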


\begin{proof}
From equation \eqref{eq:ext-swap} of paragraph \ref{sec:bracket-ghost},
\begin{equation*}
		[g_0,h]=\epsilon(h,g_0)\lceil g_0,h \rceil\  + \epsilon(g_0,h)\casper\ . 
	\end{equation*}
	It follows that if $\epsilon(g_0,h)=0$, then  \begin{eqnarray}
[g_1,[g_0,h]]=0\  .\label{eq:llsym}
\end{eqnarray}
	The same holds whenever $\epsilon(g_1,h)=0$ by the symmetry given  the Jacobi identity for the extended swapping bracket, which gives, since $[g_0,g_1]=0$,  \begin{eqnarray*}
[g_1,[g_0,h]]=[g_0,[g_1,h]]\ .
\end{eqnarray*}
	
	Assume now that $\epsilon_0\epsilon_1\not=0$.
		Then let $(g_0,\zeta_0,h,\eta_0)$ be the associated ghost polygon to $\lceil g_0,h\rceil $ with ghost edges $\zeta_0 = (g_0^+,h^-)$ and $\eta_0 = (h^+,g_0^-)$.
	Thus from the hypothesis $\epsilon(g_0,g_1)=0$, and using the notation $\epsilon_i=\epsilon(g_i,h)$ we get from equation \eqref{def:ghost-bracketgH}
	\begin{eqnarray*}
	[g_1,[g_0,h]]
			=-\epsilon_0\lceil g_0,h \rceil \left(\epsilon_1 \lceil g_1,h\rceil- \epsilon(g_1,\zeta_0)\lceil g_1,\zeta_0\rceil -\epsilon(g_1,\eta_0)\lceil g_1, \eta_0 \rceil \right)\ . 
	\end{eqnarray*}
	Since $h$ intersects $g_1$ before $g_0$, we have $\epsilon(g_1,\eta_0)=0$ and  $\epsilon(g_1,\zeta_0)=\epsilon(g_1,h)$. 	Thus 
	\begin{eqnarray}
		[g_1,[g_0,h]]
			&=&\epsilon_1\epsilon_0 \left( \lceil g_1, \zeta_0 \rceil \lceil g_0,h \rceil- \lceil g_1,h\rceil \lceil g_0,h \rceil \right)\ \label{eq:gho-brakll}. 
	\end{eqnarray}
As $\zeta_0 = (g_0^+,h^-)$,  formulating the computations the swapping algebra, we get
\begin{eqnarray}
  \lceil g_1, \zeta_0 \rceil \lceil g_0,h \rceil &=& \frac{(g_1^+,h^-)(g_0^+,g_1^-)(g_0^+,h^-)(h^+,g_0^-)}{(g_1^+,g_1^-)(g_0^+,h^-)(g_0^+,g_0^-)(h^+,h^-)} \cr
  &=& \frac{(g_1^+,h^-)(h^+,g_0^-)(g_0^+,g_1^-)}{(g_1^+,g_1^-)(h^+,h^-)(g_0^+,g_0^-)} = \lceil g_1, h, g_0 \rceil\ .
  \label{hrel}\end{eqnarray}
Similarly
\begin{eqnarray}\frac{\lceil g_1, h, g_0 \rceil}{ \lceil g_1, h \rceil \lceil g_0, h\rceil} &=&   \frac{(g_0^+,g_1^-)(h^+,h^-)}{(g_0^+,h^-)(h^+,g_1^-)} = \lceil (g_0^+,h^-),(h^+,g_1^-)\rceil\ .\label{hrel2}\end{eqnarray} The result then follows from equations \eqref{hrel} and the fact that $\gamma_0=(g_0^+,h^-)$ and $\gamma_1=(h^+,g_1^-)$. \end{proof}
\subsection{Proof of the Sine Formula Theorem \ref{theo:sine-formula}}

We are now in position to prove the sine formula.

\begin{proof}
		By the Representation Theorem and its corollary \ref{coro:Poisson-stab} 
	\begin{equation*}
		\{\Ell_\mu,\{\Ell_\mu,\Ell_\nu\}\}(\rho)=\int_{\GG^3/\Gamma}\T_{[g_1,[g_0, h]]}(\rho)\ \d\mu(g_0)\d\mu(g_1)\d\nu(h)\ .	\end{equation*}
	Since the support of $\mu$ consists of non intersecting geodesics, we have for $g_0$ and $g_1$ in the support of $\mu$, $[g_0,g_1]=0$. Hence the
 Jacobi identity for the swapping bracket gives 
$$ [g_0,[g_1, h]]=[g_1,[g_0, h]]\ ,$$
for $g_0$ and $g_1$ in the support of $\mu$. It follows that 
$$\int_{\GG^3/\Gamma}\T_{[g_1,[g_0, h]]}(\rho)\ \d\mu(g_0)\d\mu(g_1)\d\nu(h)\ =2\int_{\GG^{3,+}/\Gamma}\T_{[g_1,[g_0, h]]}(\rho)\ \d\mu(g_0)\d\mu(g_1)\d\nu(h)\ .$$
	Then we use lemma \ref{lem:tripBrak} to conclude.
		\end{proof}
 \subsection{Positivity}

Recall that a projective representation $\rho$ has a positive cross ratio (according to Martone--Zhang\cite{Martone:2019uf})  if for all $g,h$ intersecting geodesics $0 < \T_{\lceil g,h\rceil}(\rho) < 1$. 
Our goal is the following.

\begin{proposition}[\sc Sign proposition] \label{cor:signlemma}
	Assume $\rho$ is a projective representation with a positive cross ratio.  Let $g_1$, $g_0$ be such that $\epsilon(g_0,g_1) = 0$. Then we have the inequality
		$$
	\T_{[g_1,[g_0, h]]}(\rho)\geq0\ .
	$$
	Furthermore the inequality is strict if and only if $h$ intersects both $g_0, g_1$ in their interiors.
\end{proposition}
We first  give an equivalent definition of positivity.
\begin{lemma} \label{pro:inter-pos}
	A projective representation  $\rho$ has a positive cross ratio if and only if for all $(X,Y,y,x)$ cyclically oriented 
	$$
		\T_{\lceil (X,x),(Y,y)\rceil}(\rho)>1\ .
	$$
\end{lemma}
\begin{proof} Let $X,x,Y,y$ be 4 points. We observe that $(X,Y,y,x)$ is cyclically  oriented if and only if  geodesics $(X,y),(Y,x)$ intersect. The result then follows from
$$
\lceil (X,x),(Y,y)\rceil =\frac{(X,y)\ (Y,x)}{(X,x)\ (Y,y)}=\left(\frac{(X,x)\ (Y,y)}{(X,y)\ (Y,x)}\right)^{-1}=\lceil (X,y),(Y,x)\rceil^{-1}\ .\qedhere
$$ 
\end{proof}

We now prove  proposition \ref{cor:signlemma}

\begin{proof}[Proof of proposition \ref{cor:signlemma}] The Jacobi identity  for the swapping bracket \ref{pro:ext-swapp}
 gives that $[g_0,[g_1,h]]=[g_1,[g_0,h]]$ since $[g_0,g_1]=0$. Thus in the statement of the proposition, we can always assume that not only  $h$ intersects both $g_1$ and $g_0$, but furthermore that  $h$ intersects $g_1$ before $g_0$.  By lemma \ref{lem:tripBrak}, to prove the proposition it is enough to prove that 
	\begin{equation}
\epsilon_1\epsilon_0	\T_{\lceil g_1 ,h , g_0\rceil- \lceil g_1,h\rceil \lceil g_0,h \rceil}(\rho)\geq 0 \ ,
	\end{equation}
	and furthermore the inequality is strict if and only if $h$ intersects both $g_0, g_1$ in their interiors  (i.e. if and only if $|\epsilon_0\epsilon_1| = 1$).
By Lemma \ref{lem:tripBrak} we have, since  $g_1$ meets $h$ before $g_0$. 
	\begin{eqnarray*}\epsilon_0\epsilon_1\left(\lceil g_1,h,g_0\rceil - \lceil g_1,h\rceil\lceil g_0,h\rceil\right) &=&\epsilon_1\epsilon_0\   \lceil g_1,h\rceil \ \lceil g_0,h \rceil\ \left(\lceil \gamma_0 ,\gamma_1 \rceil-1 \right)\ .
	\end{eqnarray*}
	where  $\gamma_0 \defeq (g_0^+,h^-) $ and $\gamma_1 \defeq (h^+, g_1^-)$. We will also freely use that if $x^+=y^-$ or $x^-=y^+$, then $\T_{\lceil x,y\rceil}=0$, while if $x^+=y^+$ or $x^-=y^-$ then $\T_{\lceil x,y\rceil}=1$.
\vskip 0.2 truecm 
\noindent{\sc First case: $\epsilon_0\epsilon_1 =0$.}
In that case, we have equality.

\vskip 0.2 truecm 
\noindent{\sc Second case: $0<\vert\epsilon_0\epsilon_1\vert <1$.} In that situation one of the end point of $h$ is an end point of $g_0$ or $g_1$.  
\begin{enumerate}
	\item Firstly, the cases  $g_0^\pm  = h^-$ or $g_1^\pm = h^+$ are impossible since $h$ meets $g_1$ before $g_0$. 
	\item Secondly if  $g_1^+= h^-$ or $g_0^- = h^+$, then 
	 $
			\T_{\lceil g_1,h\rceil}\T_{\lceil g_0,h\rceil}(\rho) = 0$.

	\item Finally, if  $g_1^-= h^-$ or $g_0^+ = h^+$, then either $\gamma_0^+=\gamma_1^+$ or $\gamma_0^-=\gamma_1^-$. In both cases, $\T_{\lceil\gamma_0,\gamma_1\rceil}(\rho)=1$ and hence 
 it follows that $\T_{ \lceil g_1 ,h , g_0\rceil- \lceil g_1,h\rceil \lceil g_0,h \rceil}(\rho)=0$.
\end{enumerate}

\vskip 0.2 truecm 
\noindent{\sc Final case: $\vert\epsilon_0\epsilon_1\vert=1$.}

As both $g_0$ and $g_1$ intersect $h$ and $\rho$ has a positive cross ratio, then by lemma \ref{pro:inter-pos},
\begin{eqnarray}
	\T_{\lceil g_1,h\rceil \ \lceil g_0,h \rceil}(\rho)=\T_{\lceil g_1,h\rceil}(\rho) \T_{ \lceil g_0,h \rceil}(\rho)>0\ .\label{eq:TTpos}
\end{eqnarray}
We can then split into two  cases as in figure \eqref{fig:gammacurves}:
\begin{enumerate}
	\item \label{conf1} If $\epsilon_0\epsilon_1>0$, then $\gamma_0$ and $\gamma_1$ do not intersect, and $(h^-,g_0^+,h^+,g_1^-)$ is a cyclically oriented quadruple. Hence, by definition
	$\T_{\lceil\gamma_0,\gamma_1\rceil}(\rho)>1
	$. See figure \eqref{fig:gammacurves1})
	\item \label{conf2}If now $\epsilon_0\epsilon_1<0$, then $\gamma_0$ and $\gamma_1$  intersect, and by lemma  \ref{pro:inter-pos}
	$\T_{\lceil\gamma_0,\gamma_1\rceil}(\rho)<1 
	$.(see figure \eqref{fig:gammacurves0})
\end{enumerate}
\begin{figure}[h]
 \begin{subfigure}[h]{0.3\textwidth} \begin{center}
  \includegraphics[width=0.8\textwidth]{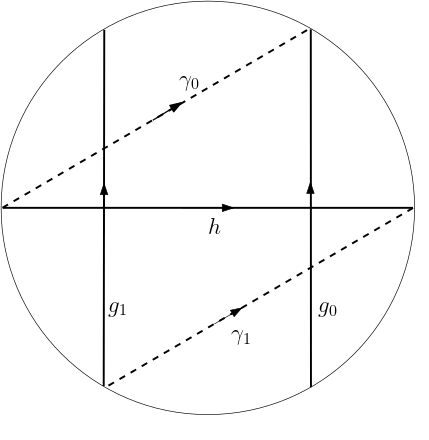}
  \end{center}
    \caption{$\epsilon_0\epsilon_1>0$}\label{fig:gammacurves1}
 \end{subfigure} 
 \quad
\begin{subfigure}[h]{0.3\textwidth}   \begin{center}
    \includegraphics[width=0.8\textwidth]{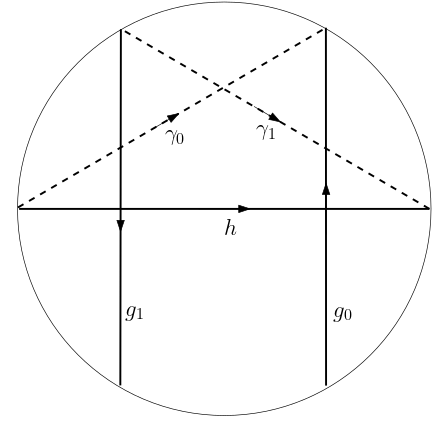}
    \caption{$\epsilon_0\epsilon_1<0$}\label{fig:gammacurves0}
  \end{center}
  \end{subfigure}
   \caption{Curves $\gamma_0$ and $\gamma_1$}
   \label{fig:gammacurves}
\end{figure}
Combining both cases, we get that 
\begin{eqnarray}
\epsilon_0\epsilon_1	\left(\T_{\lceil\gamma_0,\gamma_1\rceil}(\rho)-1\right)> 0\ . \label{eq:TTTpos}
\end{eqnarray}
The result follows from equations \eqref{eq:TTTpos} and \eqref{eq:TTpos}.\end{proof}

\subsection{Proof of the Convexity Theorem \ref{theo:fin-convex}}
\begin{proof}
	By the representation theorem and its corollary \ref{coro:Poisson-stab} 
	\begin{equation*}
		\{\Ell_\mu,\{\Ell_\mu,\Ell_\nu\}\}(\rho)=\int_{\GG^3/\Gamma}\T_{[g_1,[g_0, h]]}(\rho)\ \d\mu(g_0)\d\mu(g_1)\d\nu(h)\ .	\end{equation*}
	Since by the Sign Proposition \ref{cor:signlemma}, the integrand is non-negative, the integral is non-negative. 
	
	Let us  finally treat the equality case.	
	If $i(\mu,\nu)= 0$ then  for all $g$ in the support of $\mu$ and $h$ in the support of $\nu$, $|\epsilon(g,h)| < 1$. Thus by the equality case of proposition \ref{cor:signlemma} for $g_0,g_1$ in the support of $\mu$ and $h$ in the support of $\nu$   then
	$$\T_{[g_1,[g_0, h]]}(\rho) = 0\ .$$
	Thus the integral is zero for $i(\mu,\nu)= 0$.
	
	If $i(\mu,\nu) \neq 0$ then there exists $g_0, h$ in the supports of $\mu,\nu$ respectively such that $|\epsilon(g_0,h)| = 1$.  If $h$ is descends to a closed geodesic then it is invariant under an element $\gamma$ of $\Gamma$ then we let $g_1 = \gamma g_0$. Then the triple $(g_1,g_0,h)$ is in the support of $\mu\otimes\mu\otimes \nu$. Thus $\T_{[g_1,[g_0, h]]}(\rho) > 0$ and the integral is positive. If $h$ does not descend to a closed geodesic, then as any geodesic current is a limit of a discrete geodesic currents, it follows that $h$ intersects   $g_1 = \gamma g_0$ for some $\gamma$ in $\Gamma$. Again the triple $(g_1,g_0,h)$ are in the support of $\mu\otimes\mu\otimes \nu$ with  $\T_{[g_1,[g_0, h]]}(\rho) > 0$. Thus the integral is positive.
	This completes the proof of Theorem \ref{theo:fin-convex}.  \end{proof}

\section{Commuting subalgebras}

Our second application allows us to construct commuting subalgebras in the Poisson algebra of correlation functions for projective Ansov representations. Let $\mathcal L$ be a geodesic lamination.  Associated to this lamination we get several functions that we called {\em associated to the lamination}
\begin{enumerate}
	\item The length functions associated to geodesic currents supported on the lamination, 
	\item functions associated to any complementary region of the lamination. 
\end{enumerate}
Let  $F_{\mathcal L}$ be the vector space generated by these functions.
Our result is then,

\begin{theorem}\label{theo:commuting}
	Let $\mathcal L$ be a geodesic lamination, then  the vector space $F_{\mathcal L}$ consists of pairwise Poisson commuting functions.
\end{theorem}

An interesting example is the case of the maximal geodesic lamination coming from a decomposition into pair of pants. An easy check gives that  there are $6g-6$ length functions, and $4g-4$ triangle functions. Thus we have $10g-10$ commuting functions. However in the $\pslt$ case the dimension of the space is $16g-16$ and it follows that there are relations between these functions. It is interesting to notice that these relations may not be algebraic ones:  In that specific case some relations are given by the higher identities \cite{McShane-Lab} generalizing Mirzakhani--McShane identities.

  As we said before the fact that these subalgebras are commuting is related, in the special  case of Hitchin representations, to the coordinate systems discussed in \cite{Bonahon:2014woa,Sun:2017,Sun:2020vm}.

 \subsection{Triangle functions and double brackets}

Let $\delta_0=(a_1,a_2,a_3)$ be an oriented  ideal triangle, we associate to such a triangle the configuration
\begin{equation}
	t_0\defeq \lceil a_1,a_3,a_2\rceil\ .
\end{equation}
The reader should notice the change of order. 

One can make the following observation. First $t\ \bar t =1$. Thus for a self-dual representation $\rho$, we have  $\T_t(\rho)^2=1$ and in particular $\T_t$ is constant along self dual representations.

\begin{lemma}[\sc Brackets of triangle functions]
Let $t_0 = \lceil a_1,a_3,a_2\rceil $ be a triangle, then 
\begin{eqnarray*}
	[t_0, g]
		&=&	\sum_{j\in\{1,2,3\}}
		\epsilon(a_j,g)\  t_0\ \left(\lceil g,a_j\rceil+\lceil g,\bar a_j\rceil\right)\ . 
\end{eqnarray*}
Let $t_1 = \lceil b_1,b_3,b_2\rceil $. Then
\begin{eqnarray*}
[t_1,t_0] = t_1\cdot t_0\sum_{i,j\in\{1,2,3\}} \epsilon(a_i,b_j)(\lceil a_i,b_j\rceil + \lceil a_i, \overline b_j\rceil+\lceil \overline a_i,b_j\rceil + \lceil \overline a_i, \overline b_j\rceil ) = t_0\sum_{i\in\{1,2,3\}} [t_1,\overline a_i-a_i]\ .
\end{eqnarray*}
\end{lemma}
\begin{proof} Observe first that the hypothesis imply that $[t_0,t_1]=0$. Thus, by the Jacobi identity, 
$$
[t_0,[t_1,g]]=[t_1,[t_0, g]]\ .
$$
The ghost polygon associated to $t$ is $(a_1,\overline a_2,a_3,\overline a_1,a_2,\overline a_3)$. 
Thus 
\begin{eqnarray*}
	[t_0, g]
		=	t_0 \sum_{j\in\{1,2,3\}}
		\epsilon(a_j,g)\lceil g,a_j\rceil-\epsilon(\overline a_j,g)\lceil g,\bar a_j\rceil
		= t_0 \sum_{j\in\{1,2,3\}}
		\epsilon(a_j,g)\left(\lceil g,a_j\rceil+\lceil g,\bar a_j\rceil\right)\  . 
\end{eqnarray*}

For $t_0,t_1$ we have
\begin{eqnarray*}
[t_1,t_0] &=& t_1\cdot t_0\sum_{i,j\in\{1,2,3\}} \epsilon(a_i,b_j)\lceil a_i,b_j\rceil - \epsilon(a_i,\overline b_j)\lceil a_i, \overline b_j\rceil-\epsilon(\overline a_i,b_j)\lceil \overline a_i,b_j\rceil + \epsilon(\overline a_i,\overline b_j)\lceil \overline a_i, \overline b_j\rceil\nonumber\\
&=& t_0\cdot t_1\sum_{i,j\in\{1,2,3\}} \epsilon(a_i,b_j)(\lceil a_i,b_j\rceil +\lceil a_i, \overline b_j\rceil+\lceil \overline a_i,b_j\rceil + \lceil \overline a_i, \overline b_j\rceil)\nonumber\\
&=& t_0\sum_{i\in\{1,2,3\}} [t_1, \overline a_i- a_i]
\end{eqnarray*}

\end{proof}

\subsection{Proof of Commuting Subalgebra Theorem \ref{theo:commuting}}
We now consider $\mathcal L$ a maximal lamination and $F_{\mathcal L}$ the vector space of functions associated to $\mathcal L$ generated by triangle functions and length functions supported on $\mathcal L$. The proof of Theorem \ref{theo:C} that  $F_{\mathcal L}$ is a commuting subalgebra now follows from the below proposition.

\begin{proposition}
Let $g$ be disjoint from the interior of ideal triangle $\delta$. Then $g$ and the triangle function $t$ commute.
Similarly let $\delta_0, \delta_1$ be ideal triangles with disjoint interiors. Then the associated triangle functions $t_0,t_1$  commute.
\end{proposition}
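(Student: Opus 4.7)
The plan is to apply directly the two formulas of the preceding \emph{Brackets of triangle functions} lemma. For the first assertion, that lemma gives
\begin{equation*}
[t_0, g] \;=\; t_0 \sum_{j=1}^{3} \epsilon(a_j, g)\bigl(\lceil g, a_j\rceil + \lceil g, \bar a_j\rceil\bigr),
\end{equation*}
and by Theorem~\ref{theo:ext-ghost} it suffices to verify $\T_{[t_0,g]}(\rho)=0$ for every projective Anosov $\rho$. The hypothesis that $g$ is disjoint from the interior of $\delta$ means $g$ does not cross any side $a_j$ inside $\hh$, so $\epsilon(a_j, g)$ can be nonzero only if $g$ and $a_j$ share an endpoint at infinity. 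In the generic case, where $g$ shares no vertex with $\delta$, all three intersection numbers $\epsilon(a_j, g)$ are zero and the bracket $[t_0, g]$ already vanishes in the ghost algebra.

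For the triangle-triangle statement, the same lemma supplies
\begin{equation*}
[t_1, t_0] \;=\; t_0 \sum_{i=1}^{3} [t_1, \bar a_i - a_i].
\end{equation*}
Since $\delta_0$ and $\delta_1$ have disjoint interiors, each side $a_i$ of $\delta_0$ lies in the closed triangle $\bar\delta_0$ and is therefore disjoint from the interior of $\delta_1$, and similarly for its reverse $\bar a_i$. Applying the first part of the proposition to each pair $(t_1, a_i)$ and $(t_1, \bar a_i)$ then gives $[t_1, a_i] = [t_1, \bar a_i] = 0$ in the $\T$-algebra, so summing over $i$ collapses the expression to $[t_1, t_0] = 0$.

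The main technical point arises in the degenerate situation of the first part where an endpoint of $g$ happens to coincide with a vertex $v_k$ of $\delta$. Exactly two of the $\epsilon(a_j, g)$ are then nonzero, taking values $\pm\tfrac12$, and by the triangle cocycle \eqref{eq:triang-intersect} they add up to zero. The remaining task is to check that the accompanying correlation sums $\T_{\lceil g, a_j\rceil}(\rho) + \T_{\lceil g, \bar a_j\rceil}(\rho)$ combine with the two sign-opposite intersection numbers so that the whole expression still vanishes; this can be done by expanding each term in the extended swapping algebra via Theorem~\ref{theo:ext-ghost} and matching the resulting multifractions at the two sides meeting $v_k$. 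This boundary verification is the only delicate piece; with it in hand, the proposition follows immediately from the two lemma formulas above.
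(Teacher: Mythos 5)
Your overall strategy is the same as the paper's: apply the bracket formulas of the preceding lemma, dispose of the generic case where all $\epsilon(a_j,g)$ vanish, and reduce the triangle--triangle statement to the geodesic--triangle one via $[t_1,t_0]=t_0\sum_i[t_1,\overline a_i-a_i]$. However, you have left unproved exactly the step that carries the content of the proposition. In the degenerate case your argument needs two facts: (i) $\sum_j\epsilon(a_j,g)=0$, which indeed follows from the cocycle relation \eqref{eq:triang-intersect} (or from the cyclic ordering at the common endpoint), and (ii) that the factors $\lceil g,a_j\rceil+\lceil g,\bar a_j\rceil$ multiplying the two sign-opposite intersection numbers are \emph{equal}, since otherwise the sum $\sum_j\epsilon(a_j,g)\bigl(\lceil g,a_j\rceil+\lceil g,\bar a_j\rceil\bigr)$ has no reason to vanish. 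You state (ii) as "the remaining task" and assert it "can be done by expanding each term in the extended swapping algebra," but you never do the expansion. The paper's proof rests on the explicit identity: whenever $\epsilon(g,h)=\pm\tfrac12$ one has $\lceil g,h\rceil+\lceil g,\bar h\rceil=1$ (e.g.\ if $g^+=h^-$ then $\lceil g,h\rceil=0$ while the multifraction for $\lceil g,\bar h\rceil$ is $(\bar h\cdot g)/(g\cdot\bar h)=1$). With this, each bracket collapses to $t\sum_j\epsilon(a_j,g)$ and vanishes by (i). Without it, your proof is not complete at its only delicate point.

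Two smaller omissions: you do not treat the case $g=a_1$ or $g=\overline{a_1}$ (a side of the triangle), where the two sides meeting $g$ with nonzero $\epsilon$ do so at \emph{different} vertices, so your picture of "matching multifractions at the two sides meeting $v_k$" does not literally apply; the paper handles this separately using $\epsilon(a_1,a_2)=-\epsilon(a_1,a_3)$. And since two triangles with disjoint interiors may share a side or a vertex, the second assertion genuinely needs the degenerate case of the first, so the gap propagates to your proof of $[t_1,t_0]=0$ as well.
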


\begin{proof}
We first make an observation. If $\epsilon(g, h) = \pm 1/2$ then
$$\lceil g,h \rceil + \lceil g, \overline h\rceil = 1\ .$$
To see this,  assume $g^+=h^-$. Then
$\lceil g,h\rceil = 0$ and $\lceil g, \overline h\rceil$ has ghost polygon $(g,\overline h,\overline h,  g)$ giving
$$ \lceil g, \overline h\rceil = \frac{\overline h \cdot g}{g\cdot \overline h} =1\ .$$
By symmetry, this holds for all $g,h$ with $\epsilon(g,h) = \pm 1/2$.

Let $g$ be disjoint from the interior of ideal triangle $\delta = (a_1,a_2,a_3)$. Then from above
$$[g, t] = t\sum_{i\in\{1,2,3\}} \epsilon(g,a_i)(\lceil g,a_i\rceil + \lceil g, \overline a_i\rceil) = t\sum_{i\in\{1,2,3\}} \epsilon(g,a_i)\ .$$

If $\epsilon(g,a_i) = 0$ for all $i$ then trivially $[ g, t] = 0$. Thus we  can assume $\epsilon(g, a_1) = 0$ and $\epsilon(g,a_2),\epsilon(g,a_3) \neq 0$.  If $g = a_1$ then as $\epsilon(a_1, a_2) = -\epsilon(a_1, a_3)$  then $[g, t] =0$. Similarly for $g = \overline{a_1}$. 

Otherwise $g, a_2, a_3$ share a common endpoint and $a_2,a_3$ have opposite orientation at the common endpoint. Therefore as $g$ is not between $a_2$ and $a_3$ in the cyclic ordering about their common endpoint, then $\epsilon(g,a_2)= -\epsilon(g,a_3)$ giving $[g, t] =0$.

Let $t_0, t_1$ be the triangle function associated to ideal polygons $\delta_0, \delta_1$ with  $t_0 = [a_1, a_3,a_2]$.  Then from above
$$[t_1,t_0] = t_0\sum_i [t_1,\overline a_i-a_i]\ .$$
Thus if  $t_0,t_1$ have  ideal triangles with disjoint interiors then by the above, $[a_i,t_1] = [\overline a_i,t_1]=0$ giving $[t_0,t_1]=0$.
\end{proof}
\part{Addendum}
\begin{appendix}
\section{Fundamental domain and $L^1$-functions}
If $\Gamma$ is a countable group acting on $X$ preserving a measure $\mu$, a {\em $\mu$-fundamental domain} for this action is a measurable set $\Delta$ so that
$
\sum_{\gamma\in\Gamma}{\boldsymbol 1}_{\gamma(\Delta)}=1$, $\mu$-almost everywhere.
A function $F$ on $X$ is $\Gamma$-invariant if for every $\gamma$ in $\Gamma$,
$
F=F{\circ}\gamma$,  $\mu$--almost everywhere. Then 
\begin{lemma}\label{lem:fund-doma}
For any $\Gamma$-invariant positive  function, if  $\Delta_0$ and $\Delta_1$ are $\mu$-fundamental domain  then 
$$
\int_{\Delta_0} F\ \d\mu=\int_{\Delta_1} F\ \d\mu\ .
$$	
\end{lemma}
\begin{proof}
Using the $\gamma$-invariance of $F$
\begin{equation*}
		\int_{\Delta_0}F=\sum_{\gamma\in\Gamma}\int_X F\cdot {\boldsymbol 1}_{\Delta_0\cap\gamma(\Delta_1)}\d\mu
	=\sum_{\eta\in\Gamma}\int_X F\cdot {\boldsymbol 1}_{\eta(\Delta_0)\cap\Delta_1}\d\mu
	=	\int_{\Delta_1}F\ .\qedhere
\end{equation*}
\end{proof}
Let $\Gamma$ be a group acting properly on $X_0$ and $X_1$ preserving $\mu_0$ and $\mu_1$ respectively. Assume that $\Delta_0$ -- respectively $\Delta_1$ -- is a fundamental domain for the action of $\Gamma$ on $X_0$ and $X_1$, then.
\begin{lemma}\label{lem:interDELTA}
Let $F$ be a positive function on $X_0\times X_1$ which is $\Gamma$ invariant, where $\Gamma$ acts diagonally and the action on each factor preserves measures called $\mu_0$ and $\mu_1$ and admits a fundamental domain called $\Delta_0$ and $\Delta_1$, then 
$$
\int\!\!\!\int_{\Delta_0\times X_1}F\ \d\mu_0\otimes\d\mu_1=\int\!\!\!\int_{X_0\times \Delta_1}F\ \d\mu_0\otimes\d\mu_1\ .
$$
\end{lemma}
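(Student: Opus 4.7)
The strategy is to reduce the statement directly to Lemma \ref{lem:fund-doma} by recognizing that both $\Delta_0\times X_1$ and $X_0\times \Delta_1$ are fundamental domains for the diagonal action of $\Gamma$ on $X_0\times X_1$ equipped with the product measure $\mu_0\otimes \mu_1$.

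First I would verify that $\Delta_0\times X_1$ is a $(\mu_0\otimes\mu_1)$-fundamental domain for the diagonal action. For almost every $(x_0,x_1)\in X_0\times X_1$, there exists a unique (up to a $\mu_0$-null set) $\gamma\in\Gamma$ such that $\gamma^{-1}x_0\in\Delta_0$; then $\gamma^{-1}(x_0,x_1)=(\gamma^{-1}x_0,\gamma^{-1}x_1)\in\Delta_0\times X_1$, since $\gamma^{-1}X_1=X_1$. Equivalently, the characteristic function identity
$$
\sum_{\gamma\in\Gamma}\mathbf{1}_{\gamma(\Delta_0\times X_1)}(x_0,x_1)=\sum_{\gamma\in\Gamma}\mathbf{1}_{\gamma\Delta_0}(x_0)\,\mathbf{1}_{\gamma X_1}(x_1)=\sum_{\gamma\in\Gamma}\mathbf{1}_{\gamma\Delta_0}(x_0)=1
$$
holds $\mu_0\otimes\mu_1$-almost everywhere, which is exactly the fundamental domain condition. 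The same argument applies symmetrically to $X_0\times\Delta_1$.

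Second, since by hypothesis $F$ is $\Gamma$-invariant under the diagonal action and positive (so all integrals make sense in $[0,\infty]$), Lemma \ref{lem:fund-doma} applied to the product measure and the diagonal action immediately yields
$$
\int\!\!\!\int_{\Delta_0\times X_1}F\,\d\mu_0\otimes\d\mu_1=\int\!\!\!\int_{X_0\times \Delta_1}F\,\d\mu_0\otimes\d\mu_1,
$$
as both sides equal the common value of the integral over any fundamental domain.

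There is essentially no obstacle here beyond verifying the fundamental domain property of the product sets, which is a routine measure-theoretic check relying on the fact that the diagonal action fixes each factor setwise only through its first coordinate (or second, respectively). The positivity hypothesis removes any integrability concerns needed to invoke Lemma \ref{lem:fund-doma}.
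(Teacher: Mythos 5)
Your proof is correct and follows essentially the same route as the paper: both arguments observe that $\Delta_0\times X_1$ and $X_0\times\Delta_1$ are fundamental domains for the diagonal action on the product, and then invoke Lemma \ref{lem:fund-doma} (the paper additionally cites Fubini, which is implicitly what you use when passing from the $\mu_0$-a.e.\ identity $\sum_\gamma\mathbf{1}_{\gamma\Delta_0}=1$ to the $\mu_0\otimes\mu_1$-a.e.\ identity on the product). Your explicit verification of the fundamental domain property is a fine, slightly more detailed version of the paper's one-line observation.
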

\begin{proof}
Indeed $\Delta_0\times X_1$ and  $X_0\times \Delta_1$ are both fundamental domains for the diagonal action of $\Gamma$ on $X_0\times X_1$. The lemma then follows from the previous one and Fubini's theorem.
\end{proof}
Let $f$ be a continuous function defined on a topological space $X$. Let $\mu$ be a Radon measure on $X$. Then the following lemma holds as a consequence of Lebesgue dominated convergence.
 
 \begin{lemma}\label{lem:L1}
 	Assume that there exists a real constant $k$ so that for every exhausting sequence $\seq{K}$ of compacts of $X$,
 	$
 	\lim_{m\to\infty}\int_{K_m}f\ \d\mu =k 	$.
 	Then $f$ belongs to $L^1(X,\mu)$ and $\int_Xf\ \d\mu=k$.
 \end{lemma}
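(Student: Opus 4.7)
The strategy is to first show $f\in L^1(X,\mu)$; the identity $\int_X f\,\d\mu=k$ then follows by monotone convergence applied separately to the continuous nonnegative parts $f^+$ and $f^-$ of $f$, which gives $\int_{K_m} f\,\d\mu\to\int_X f\,\d\mu$ along any exhausting sequence, and this limit must equal $k$ by hypothesis. So the task reduces to proving $\int_X f^+\,\d\mu<\infty$ and $\int_X f^-\,\d\mu<\infty$.

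Suppose toward a contradiction that $\int_X f^+\,\d\mu=\infty$ (the case of $f^-$ is symmetric). In the easy subcase $\int_X f^-\,\d\mu<\infty$, monotone convergence gives $\int_{K_m}f^+\,\d\mu\to\infty$ and $\int_{K_m}f^-\,\d\mu\to\int_X f^-\,\d\mu<\infty$ along any exhausting sequence, so $\int_{K_m} f\,\d\mu\to\infty$, contradicting that the limit equals the finite constant $k$.

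The harder subcase is when both $\int_X f^+\,\d\mu$ and $\int_X f^-\,\d\mu$ are infinite; here we must construct an exhausting sequence along which the integral diverges. Fix any exhausting sequence $\{V_n\}$ of compact subsets of $X$. The set $\{f>0\}$ is open and $\int_{\{f>0\}} f\,\d\mu=\infty$. For any compact $C$, the integral $\int_C f^+\,\d\mu$ is finite since $f^+$ is bounded on $C$ and $\mu$ is locally finite. Inner regularity of the Radon measure applied to the open set $\{f>0\}\setminus C$ therefore yields, for any prescribed real $N$, a compact $D\subset\{f>0\}\setminus C$ with $\int_D f\,\d\mu\geq N$. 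Build $K_n$ inductively: $K_1=V_1$, and given $K_n$, pick a compact $D_n\subset\{f>0\}\setminus(K_n\cup V_{n+1})$ with $\int_{D_n} f\,\d\mu\geq n+\int_{K_n\cup V_{n+1}}f^-\,\d\mu$ and set $K_{n+1}=K_n\cup V_{n+1}\cup D_n$. The sequence $\{K_n\}$ is nested, exhausts $X$ (since it contains $V_n$), and satisfies $\int_{K_n}f\,\d\mu\geq n\to\infty$, giving the contradiction.

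The main obstacle is this inductive construction in the doubly-infinite case: the payoff of inner regularity is that at each step one can find compact pieces of $\{f>0\}$ with arbitrarily large $f$-integral lying outside any prescribed compact, which lets us force the running integral to grow without bound while keeping the sequence nested and exhausting. Everything else is immediate from monotone convergence.
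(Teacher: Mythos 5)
Your proof is correct. Note that the paper states this lemma at the end of its appendix without giving any proof, so there is no argument of the authors' to compare against; your contradiction argument (splitting into the case where exactly one of $\int f^{\pm}\,\d\mu$ is infinite, handled by monotone convergence along any exhaustion, and the case where both are infinite, handled by building a bad exhaustion) is a complete and natural way to fill this in. Two small points deserve an extra line in a careful write-up. First, the inner regularity you invoke is that of $\mu$, not of the measure $f^{+}\mu$ directly: to get a compact $D\subset\{f>0\}\setminus C$ with $\int_D f\,\d\mu\geq N$ one should first intersect with a large compact $V_n$ so that $\int_{(\{f>0\}\setminus C)\cap V_n}f\,\d\mu>N$, and then use inner regularity of $\mu$ on this finite-measure set together with the boundedness of $f$ on $V_n$ to pass to a compact subset losing arbitrarily little. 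Second, if ``exhausting sequence'' is taken to require $K_n\subset\operatorname{int}(K_{n+1})$ (as the paper does elsewhere), your constructed sequence is only nested with union $X$; this is repaired by passing to a subsequence, since every compact eventually lies in the interior of some $V_m\subset K_m$, and the divergence $\int_{K_{n_j}}f\,\d\mu\to\infty$ persists along the subsequence. Neither point affects the validity of the argument.
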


\section{A lemma in hyperbolic geometry}

\begin{lemma}\label{lem:hyp}
	For any geodesic $g$ and $g_0$, where $g_0$ is parametrized by  arclength, the following holds.
	If $R> 1$ and $d(g_0(R), g)<2$, while $d(g_0(R-1),g)\geq 2$, then
	$$
	d(g_0(0),g)\geq R\ .
	$$
\end{lemma}

\begin{proof}
We let $h$ be a geodesic with $d(g_0(R),h) = d(g_0(R-1),h) = 2$. Then we observe that $d(g_0(0),g) \geq d(g_0(0), h)$. We drop perpendiculars from $g_0(R-1),g_0(R-\frac{1}{2})$ and $g_0(0)$ to $h$.  The perpendicular from $g_0(R-1)$ to $h$ is length $2$ and let $a$ be the length of the perpendicular from $g_0(R-\frac{1}{2})$. For the Lambert quadrilateral with opposite sides of length $a$ and $2$ we have 
\begin{eqnarray*}
	\sinh(a)\cosh\left(\frac{1}{2}\right) = \sinh(2)\ &,&\
	\sinh(a)\cosh\left( R-\frac{1}{2}\right)=\sinh D\ ,\end{eqnarray*}
where $D = d(g_0(0), h)$. It follows easily that 
$$\frac{e^D}{2} \geq \sinh(D) = \sinh(a) \cosh\left(R-\frac{1}{2}\right) \geq  \frac{\sinh (a)}{2} e^{R-1/2}\ .$$
Thus
$$d(g_0(0),g) \geq D \geq R-\frac{1}{2}+\log(\sinh(a)) \geq R\ .\qedhere
$$\end{proof}

\section{The Jacobi identity for the $\Theta$-ghost bracket}\label{app:Jac}
 
We now explain  the Jacobi identity for polygons with disjoint set of vertices.

\subsection{Linking number on a set} Let us recall some constructions  from \cite{Labourie:2012vka}. Let $\PP$ be  a set, $\mathcal G_1$ be the set of pair of points of $\PP$. We denote temporarily the pair $(X,x)$ with the symbol $Xx$. We also define  a {\em linking number} on $\PP$ to be  a map from 
${\PP}^4$ to a commutative ring $\mathbb A$
$
(X,x,Y,y)\to \epsilon(Xx,Yy)
$,
so that for all points $X,x,Y,y,Z,z$ the following conditions are satisfied  
\begin{eqnarray*}
		\epsilon(Xx,Yy)+\epsilon(Xx,yY)=\epsilon(Xx,Yy)+\epsilon(Yy,Xx)&=&0\ ,\\
\epsilon(zy,XY)+\epsilon(zy,YZ)+\epsilon(zy,ZX)&=&0\ ,\\
\epsilon(Xx,Yy).\epsilon(Xy,Yx)&=&0\ .
\end{eqnarray*}
The second author proved in \cite[Proposition 2.1.3]{Labourie:2012vka} the following.
\begin{proposition}[\sc The hexagonal relation]\label{pro:hex}
Let $(X,x,Z,z,Y,y)$ be 6 points on the set ${\PP}$ equipped with an linking number, then
\begin{eqnarray}
	\epsilon(Xy,Zz)+\epsilon(Yx,Zz)=\epsilon(Xx,Zz)+\epsilon(Yy,Zz).
\end{eqnarray}
Moreover, if $
\{X,x\}\cap\{Y,y\}\cap\{Z,z\}=\emptyset
$, 
then
\begin{eqnarray}
	\epsilon(Xx,Yy)\epsilon(Xy,Zz)+\epsilon(Zz,Xx)\epsilon(Zx,Yy)+\epsilon(Yy,Zz)\epsilon(Yz,Xx)&=&0\ ,\\ 
\epsilon(Xx,Yy)\epsilon(Yx,Zz)+\epsilon(Zz,Xx)\epsilon(Xz,Yy)+\epsilon(Yy,Zz)\epsilon(Zy,Xx)&=&0\ .
\end{eqnarray}
\end{proposition}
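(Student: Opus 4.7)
The first (additive) identity follows directly from the cocycle relation~(3), applied twice. Namely, fix the pair $Zz$ and apply the cocycle identity
$$\epsilon(Zz,AB)+\epsilon(Zz,BC)+\epsilon(Zz,CA)=0$$
first to the triple of points $(X,Y,y)$ and then to the triple $(X,Y,x)$. Subtracting the two resulting equations cancels the common term $\epsilon(Zz,XY)$ and, after using the antisymmetry relation~(1) to rewrite $\epsilon(Zz,yX)=-\epsilon(Zz,Xy)$ and $\epsilon(Zz,xX)=-\epsilon(Zz,Xx)$, one reads off
$$\epsilon(Zz,Xy)+\epsilon(Zz,Yx)=\epsilon(Zz,Xx)+\epsilon(Zz,Yy).$$
Applying the pair-antisymmetry~(2) to both sides yields the stated hexagonal relation. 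This is the easy part of the proposition and serves as the linear backbone of what follows.

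The two quadratic Plücker-type identities require the genericity hypothesis $\{X,x\}\cap\{Y,y\}\cap\{Z,z\}=\emptyset$ together with the product-vanishing relation~(4): $\epsilon(ab,cd)\cdot\epsilon(ad,cb)=0$. My approach is to use the hexagonal relation just proved to expand each of the three terms in the sum
$$S\defeq\epsilon(Xx,Yy)\epsilon(Xy,Zz)+\epsilon(Zz,Xx)\epsilon(Zx,Yy)+\epsilon(Yy,Zz)\epsilon(Yz,Xx)$$
by rewriting the second factor in each product. For instance, using the hexagonal identity with the three points $Y,y,x$ (with respect to the fixed pair $Zz$ paired with the appropriate third point) expresses $\epsilon(Xy,Zz)$ as a combination involving $\epsilon(Xx,Zz)$ and $\epsilon(Yy,Zz)$. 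Performing the analogous substitution for all three terms produces a sum whose summands pair up, after relabelling, into products of the form $\epsilon(Ab,Cd)\cdot\epsilon(Ad,Cb)$, each of which vanishes by relation~(4). The third identity follows from the second by interchanging the roles of the two letters in one of the pairs (say $Y\leftrightarrow y$), tracking the sign changes via relation~(1).

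\textbf{Main obstacle.} The delicate point is the bookkeeping needed to see that, after all hexagonal substitutions, the remaining terms group into exactly those quadratic products killed by relation~(4), with matching signs. One must make a small case analysis according to which of the three ``Plücker factors'' $\epsilon(Xx,Yy)$, $\epsilon(Yy,Zz)$, $\epsilon(Zz,Xx)$ vanish: when one of them is zero, two of the three summands of $S$ automatically drop out and the identity reduces (again by~(4) and~(1)) to a single Plücker-type vanishing. When none vanishes, the substitution scheme above applies directly. The genericity hypothesis is used precisely to guarantee that the hexagonal relation can be applied to the triples of distinct points appearing in the rewriting step. Once both quadratic identities are established, they feed directly into Theorem~\ref{theo:jac1} via the ghost-configuration rewriting of~$[A,[B,C]]$, yielding the Jacobi identity for triples of ghost polygons with empty common vertex set.
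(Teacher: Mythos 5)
Your argument for the first (additive) identity is correct and complete: applying the cocycle relation to the triples $(X,Y,y)$ and $(X,Y,x)$ with $Zz$ held fixed, subtracting, and using antisymmetry is exactly the right computation, and it requires no genericity hypothesis. Note for context that the paper itself gives no proof of this proposition: it is quoted verbatim from \cite[Proposition 2.1.3]{Labourie:2012vka}, so the only thing to assess here is your argument on its own terms.

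For the two quadratic identities the plan has a genuine gap, which becomes visible if one actually carries out the proposed substitution. Set $P=\epsilon(Xx,Yy)$, $Q=\epsilon(Yy,Zz)$, $R=\epsilon(Zz,Xx)$. The hexagonal relation gives $\epsilon(Xy,Zz)=-R+Q-\epsilon(Yx,Zz)$, $\epsilon(Zx,Yy)=-Q+P-\epsilon(Xz,Yy)$, $\epsilon(Yz,Xx)=-P+R-\epsilon(Zy,Xx)$; substituting into the left-hand side $S_1$ of the second identity, the terms $PQ$, $QR$, $RP$ cancel in pairs and one is left with
\begin{equation*}
S_1=-\bigl(\epsilon(Xx,Yy)\,\epsilon(Yx,Zz)+\epsilon(Zz,Xx)\,\epsilon(Xz,Yy)+\epsilon(Yy,Zz)\,\epsilon(Zy,Xx)\bigr)=-S_2,
\end{equation*}
i.e.\ exactly minus the left-hand side of the third identity. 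The surviving products, such as $\epsilon(Xx,Yy)\,\epsilon(Yx,Zz)$, spread six distinct letters over their two factors; they are never of the form $\epsilon(Ab,Cd)\,\epsilon(Ad,Cb)$, so relation (4) cannot be applied to them. Indeed, since the hexagonal relation only rewrites $\epsilon(\cdot,Zz)$ as a linear combination of other terms $\epsilon(\cdot,Zz)$, no sequence of such substitutions can ever manufacture a product of two linking numbers of recombinations of the same two pairs. Your scheme therefore proves only that the second and third identities are equivalent to one another; it proves neither. Three further symptoms confirm that the hard part is missing: (i) the hexagonal relation needs no genericity, so your stated use of the hypothesis $\{X,x\}\cap\{Y,y\}\cap\{Z,z\}=\emptyset$ cannot be where it actually enters — it must enter through relation (4); (ii) if one of $P,Q,R$ vanishes, only one of the three summands of $S_1$ drops out, not two, so the announced case reduction does not occur; (iii) the third identity is not obtained from the second by the single swap $Y\leftrightarrow y$ (that swap turns $\epsilon(Xy,Zz)$ into $\epsilon(XY,Zz)$, which does not appear in the third identity) — one must reverse all three pairs simultaneously, or simply use the relation $S_1+S_2=0$ derived above. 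What is needed, and absent from the plan, is a separate mechanism (in \cite{Labourie:2012vka} a case analysis on which linking numbers vanish) that actually brings relation (4) and the genericity hypothesis to bear.
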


\subsection{The ghost algebra of a set with a linking number}

\subsubsection{Ghost polygons and edges}
We say a {\em geodesic} is a pair of points in $\PP$. We write $g=(g_-,g_+)$. A {\em configuration} $G \defeq  \lceil g_1,\ldots g_n\rceil$  is a tuple of geodesics $(g_1,\ldots g_n)$ up to cyclic ordering, with $n\geq 1$. The positive integer $n$ is the rank of the configuration.

To a configuration of rank greater than 1, we associate a {\em ghost polygon}, also denoted $G$ which is  a tuple  $G = (\theta_i,\ldots,\theta_{2n})$ where $g_i = \theta_{2i}$ are the {\em visible edges} and $\phi_i = \theta_{2i+1}\defeq ((g_{i+1})_-,(g_i)_+)$ are the ghost edges.

 The {\em ghost index} $i_e$ of an edge $e$ is an element of $\mathbb Z/2\mathbb Z$ which is zero for a visible edge and one for a ghost edge. In other words 
$
i_{\theta_k}\defeq k\ [2]
$.

We will then denote by $G_{\circ}$ the set of edges (ghost or visible) of the configuration $G$. 

Geodesics, or rank 1 configurations, play a special role. In that case $G=\lceil g\rceil$, by convention $G_{\circ}$ consists of of single element $g$ which is a visible edge.  

\subsubsection{Opposite edges} We now define the {\em opposite} of an edge in a configuration. Recall that a configuration is a tuple up to cyclic permutation. In this section we will denote a tuple by
$
\lfloor g_1,\ldots g_n\rfloor
$. We denote by   $\bullet$ the concatenation of tuples:
$$
\lfloor g_1,\ldots g_n\rfloor\bullet\lfloor h_1,\ldots h_p\rfloor\defeq \lfloor g_1,\ldots g_n, h_1,\ldots h_p\rfloor \ .
$$
	
 We introduce the following notation. If $\theta$ is a visible edge of $G$, we define $\theta_+ = \theta_- = \theta$ and if $\theta$ is a ghost edge of $G$ then we define $\theta_+$ to be the visible edge after $\theta$ and $\theta_-$ the visible edge before. The opposite of an edge is 
 $\theta^* \defeq  \lfloor \theta_{+}\ldots\theta_{-} \rfloor  $
 where the ordering is an increasing ordering of visible edges from $\theta_{+}$ to $\theta_{-}$.
 More specifically 
\begin{enumerate}
	\item For a visible edge $g_i$, the opposite is the tuple
$g_i^* = \lfloor g_i, g_{i+1},\ldots g_{i-1},g_i\rfloor$,
\item while for a ghost edge $\phi_i$ the opposite is
$\phi_i^* = \lfloor g_{i+1},g_{i+2},\ldots, g_{i-1}, g_i\rfloor$. 
\item if $\lceil h\rceil$ is a rank 1 configuration. The opposite of its unique edge $h$ is $h$ itself.

\end{enumerate} 

\subsection{Ghost bracket and our main result} 

We now define the {\em ghost algebra} of $\PP$ to be the polynomial algebra  $\mathcal A_0$ freely generated by ghost polygons and geodesics. The ghost algebra is equipped with the antisymmetric  {\em ghost bracket}, given on the generators $\mathcal A$ by, for two ghosts polygons $B$ and $C$ and geodesics $g$ and $h$,\begin{eqnarray}
	[B,C] &=& \sum_{(b,c)\in B_{\circ}\times C_{\circ}}\epsilon(c,b)(-1)^{i_b+i_c} \lceil c^*, b^*\rceil\  .
	\end{eqnarray}
	It is worth writing down the brackets of two geodesics $g$ and $h$, as well as the bracket of a geodesic $g$ and a configuration $B$,
	 \begin{eqnarray}
		-[g,B]=[B,g] &=& \sum_{b\in B_{\circ}}\epsilon(g,b)(-1)^{i_b+1} \lceil g,b^*\rceil\ ,\\
		-[g,h]=[h,g]&=&\epsilon(g,h) \lceil g, h\rceil\ .
\end{eqnarray}

Our goal in this section is to prove

 \begin{theorem}[\sc Jacobi identity]\label{theo:jac}
 Let $A$, $B$, $C$ be three ghost polygons with no common vertices: 
 \begin{equation}
 V_A\cap V_B\cap V_C=\emptyset,  \label{ass:disjoint-support}
 \end{equation}
 where $V_G$ is the set of vertices  of the ghost polygon $G$. Then the ghost bracket satisfies the Jacobi identity for $A$, $B$, $C$: 
$$			[A,[B,C]]+[B,[C,A]]+[C,[A,B]] =0\ .$$
 \end{theorem} 
 
 As the formula for the bracket differs based on whether ghost polygons are rank 1 or higher, we will need to consider the different cases based on the rank of the three elements. We will denote rank 1 elements by $a,b,c$ and higher rank by $A,B,C$.  For $a$, $b$ and $c$ edges in $A$, $B$, $C$ ghost or otherwise, we label their ghost indexes by $i_a$ ,$i_b$, $i_c$ and their opposites by $a^*, b^*, c^*$.

\subsection{Preliminary: more about opposite edges} 
 
We also use the following notation:  if  $\theta_k$ and $\theta_l$ are two edges, ghost or visible   of a ghost polygon,  then
 $$G(\theta_k,\theta_l) = \lfloor {\theta_{k}}_+\ldots {\theta_{l}}_-\rfloor\ ,$$
 where again this is an increasing ordering of visible edges.  The tuple $G(\theta_k,\theta_l)$ is an ``interval" defined by $\theta_k$ and $\theta_l$.
In order to continue our description of the triple brackets, we need to understand, in the above formula, what are the opposite of $\phi^*$ in  $[b^*,c^*]$. Our preliminary result is the following.

\begin{lemma}[\sc Opposite edges in a bracket]\label{lem:cbk} Let $B$ and $C$ be two ghost polygons, $b$ and $c$ edges in $B$ and $C$ respectively.
Let $\phi$ be an edge in $\lceil b^*,c^*\rceil$, then we have the following eight possibilities
\begin{enumerate}
\item[1:] Either $\phi$ is an edge of $B$, different from $b$ or a ghost edge, then $$
\phi^*= G(\phi,b)\bullet c^*\bullet G(b,\phi)\ ,$$ 
\item[2:] $b$ is a visible edge, $\phi$ is the initial edge $b$ in $b^*$ and then
$$
\phi^*=b^*\bullet c^*\bullet b\ .
$$
\item[3:] $b$ is a visible edge, $\phi$ is the final edge $b$ in $b^*$ and then
$$
\phi^*=b\bullet c^*\bullet b^*\ .
$$
\item[4, 5, 6:] Or $\phi$ is an an edge of $C$, and the three items above apply with some obvious symmetry, giving three more possibilities.
\item[7:] or $\phi$ is the  edge $u_{b,c}\defeq (c_-^-,b_+^+)$ of $\lceil b^*,c^*\rceil$ which is neither an edge of $b$ nor an edge of $c$, a ghost edge, and
	$$
	\phi^*=\lfloor c^*,b^*\rfloor \ .
	$$
\item[8:]  $\phi$ is the  edge $u_{c,b}\defeq (b_-^-,c_+^+)$ of $\lceil b^*,c^*\rceil$ which is neither an edge of $b$ nor an edge of $c$,  a ghost edge, and
	$$
	\phi^*=\lfloor b^*,c^*\rfloor \ .
	$$
\end{enumerate}
\end{lemma}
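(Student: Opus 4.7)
The plan is to systematically unwind the definition of opposite edge applied to the ghost polygon $P := \lceil b^*, c^* \rceil$. The first step is to make the combinatorial structure of $P$ explicit: its visible edges are obtained by concatenating the visible edges listed in $b^*$ and $c^*$ cyclically, and its ghost edges are of three types:
\begin{enumerate}
\item[(i)] ghost edges of $B$ sitting between consecutive visible edges coming from $b^*$;
\item[(ii)] ghost edges of $C$ sitting between consecutive visible edges coming from $c^*$;
\item[(iii)] two new junction ghost edges, namely $u_{b,c} = (c_-^-, b_+^+)$ between the end of $b^*$ and the start of $c^*$, and $u_{c,b} = (b_-^-, c_+^+)$ between the end of $c^*$ and the start of $b^*$.
\end{enumerate}
Here I use crucially the observation that when $b$ is visible it appears twice in $b^*$ (at both ends), whereas when $b$ is ghost the two endpoints $b_+$ and $b_-$ are distinct visible edges of $B$; the analogous dichotomy holds for $c$.

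Once this picture is set, for any edge $\phi$ of $P$ the opposite $\phi^*$ is by definition the tuple $\lfloor \phi_+, \ldots, \phi_- \rfloor$ read off cyclically in $P$ from $\phi_+$ to $\phi_-$. Each of the eight cases becomes a direct identification:
For cases 1 and 4, when $\phi$ lies in the interior of $b^*$ (resp.\ $c^*$) — i.e., $\phi$ is a visible edge of $B$ distinct from $b$, or a ghost edge of $B$ different from the junction ones — the points $\phi_\pm$ are inherited from $B$, so traversing from $\phi_+$ through the remaining visible edges of $b^*$, then across the block $c^*$, then back through the beginning of $b^*$ up to $\phi_-$, yields $G(\phi,b)\bullet c^*\bullet G(b,\phi)$. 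For cases 2, 3, 5, 6, one uses that a visible $b$ (resp.\ $c$) appears twice in $b^*$ (resp.\ $c^*$): if $\phi$ is the initial copy of $b$ in $b^*$, then $\phi_+$ is the visible edge after it in $b^*$ and $\phi_-$ is the last visible edge before it in the cyclic order of $P$, which is $b$ itself, so $\phi^* = b^*\bullet c^*\bullet b$; case 3 is symmetric and cases 5, 6 are the $(b,c)$-swapped analogues. For cases 7 and 8, $u_{b,c}$ goes from $b_+^+$ to $c_-^-$, so $(u_{b,c})_+ = c_+$ and $(u_{b,c})_- = b_-$, and reading off the cyclic sequence produces exactly $\lfloor c^*, b^*\rfloor$; case 8 is the mirror image.

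The argument is entirely combinatorial and consists of matching endpoints; the main obstacle is notational rather than conceptual. The delicate point is tracking the doubled occurrence of $b$ when $b$ is visible (and likewise for $c$), which changes both the identification of $\phi_\pm$ at the extremities of $b^*$ and the status of the junction ghost edges — one must check that $u_{b,c}$ and $u_{c,b}$ are genuinely new ghost edges (not collapsed to phantoms or coinciding with existing ones), and this uses only the definition $\theta^*_\pm = \theta$ for visible $\theta$. A clean way to organize the bookkeeping is to draw the cyclic sequence of visible edges of $P$ as a circle, mark the two junction ghosts, and treat the eight cases as the eight positions one can occupy on this circle; with that picture in hand each identification is immediate and the lemma follows.
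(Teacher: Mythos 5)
Your argument is correct and is precisely the ``careful book-keeping'' that the paper's one-line proof defers to: you make the edge structure of $\lceil b^*,c^*\rceil$ explicit (the interior edges of each block, the doubled occurrence of a visible $b$ or $c$ at the two ends of its block, and the two junction ghost edges) and then read off each opposite from its position in the cyclic order, which is exactly how the eight cases arise. One caveat on cases 7--8: the ghost edge sitting between $b_-$ and $c_+$ is $(c_+^-,b_-^+)$ rather than $(c_-^-,b_+^+)$ --- the two agree only when $b$ and $c$ are both visible --- so the endpoint formula you quote from the statement is incompatible with your (correct) positional identification; this appears to be a typo in the lemma itself, and your conclusion $(u_{b,c})_+=c_+$, $(u_{b,c})_-=b_-$, hence $\phi^*=\lfloor c^*,b^*\rfloor$, should be justified from the location of the junction in the cyclic order, as you in effect do, rather than from that formula.
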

\begin{proof}
	This follows from a careful book-keeping and the previous definitions.  
\end{proof}
\subsection{Cancellations}

Let us introduce the following quantities for any triple of polygons $A$, $B$, $C$ whatever their rank. They will correspond to the cases obtained corresponding to the cases observed in lemma \ref{lem:cbk}:
\begin{eqnarray*}
\hbox{Case 1: }	P_1(A,B,C)&\defeq& \sum_{\substack{(a,c,b,\phi)\in A_{\circ}\times C_{\circ}\times B_{\circ}^2\\ \phi\not=b}} \epsilon(a,\phi)\epsilon(c,b)(-1)^{i_a+i_\phi+i_b+i_c} \lceil a^* \bullet G(\phi,b)\bullet c^*\bullet G(b,\phi) \rceil\ ,\\
\hbox{Case 2: } P_2(A,B,C)&\defeq& \sum_{\substack{(a,b,c,\phi)\in A_{\circ}\times B_{\circ}\times C_{\circ}^2\\  \phi\not=c}} \epsilon(a,\phi)\epsilon(c,b)(-1)^{i_a+i_\phi+i_b+i_c} \lceil a^*\bullet  G(\phi,c)\bullet b^*\bullet G(c,\phi) \rceil\ ,\\
\hbox{Case 3: }Q_1(A,B,C)&\defeq& \sum_{(a,b,c)\in A_{\circ}\times B_{\circ}\times C_{\circ}} \epsilon(a,b)\epsilon(c,b)(-1)^{i_a+i_c} \lceil a^*\bullet b \bullet c^*\bullet b^* \rceil\ ,\\
		\hbox{Case 4: }Q_2(A,B,C)&\defeq& \sum_{(a,b,c)\in A_{\circ}\times B_{\circ}\times C_\circ} \epsilon(a,b)\epsilon(c,b)(-1)^{i_a+i_c} \lceil a^*\bullet b^* \bullet c^*\bullet b\rceil \ ,\\
		\hbox{Case 5: }R_1(A,B,C)&\defeq& \sum_{(a,b,c)\in A_{\circ}\times B_{\circ}\times C_\circ} \epsilon(a,c)\epsilon(c,b)(-1)^{i_a+i_b} \lceil a^*\bullet c \bullet b^*\bullet c^* \rceil\ ,\\
		\hbox{Case 6: }R_2(A,B,C)&\defeq& \sum_{(a,b,c)\in A_{\circ}\times B_{\circ}\times C_{\circ}} \epsilon(a,c)\epsilon(c,b)(-1)^{i_a+i_b} \lceil a^*\bullet c^* \bullet b^*\bullet c \rceil\ ,\\
		\hbox{Case 7: }S_1(A,B,C)&\defeq& \sum_{(a,b,c)\in A_{\circ}\times B_{\circ}\times C_{\circ}} \epsilon(a,u_{b,c})\epsilon(c,b)(-1)^{i_a+i_c+i_b} \lceil a^*\bullet c^* \bullet b^* \rceil\ ,\\
			\hbox{Case 8: }S_2(A,B,C)&\defeq& \sum_{(a,b,c)\in A_{\circ}\times B_{\circ}\times C_{\circ}} \epsilon(a,u_{c,b})\epsilon(c,b)(-1)^{i_a+i_c+i_b} \lceil a^*\bullet b^* \bullet c^* \rceil\ .				\end{eqnarray*}

			We then have, 
\begin{lemma}[\sc Cancellations]
				We have the following cancellations, where the two last ones use the hypothesis \eqref{ass:disjoint-support}
\begin{eqnarray}
	P_1(A,B,C)+P_2(C,A,B)&=&0\ ,\  \hbox{\em first cancellation}\ ,\crcr
	R_1(A,B,C)+Q_2(B,C,A)&=&0\ ,\  \hbox{\em second cancellation-1}\ ,\crcr
	R_2(A,B,C)+Q_1(B,C,A)&=&0\ ,\  \hbox{\em second cancellation-2}\ ,\crcr
	S_1(A,B,C)+S_1(B,C,A)+S_1(C,A,B)&=&0\  ,\  \hbox{\em hexagonal cancellation-1}\ ,\cr
		S_2(A,B,C)+S_2(B,C,A)+S_2(C,A,B)&=&0\  ,\  \hbox{\em hexagonal cancellation-2}\ .\label{eq:cancel}
\end{eqnarray}
	\end{lemma}
\begin{proof} For the first cancellation, we have 
\begin{eqnarray*}
		& &P_1(A,B,C)+P_2(C,A,B)\\
	&=&\sum_{\substack{(a,c,b,\phi)\in A_{\circ}\times C_{\circ}\times B_{\circ}^2\\ \phi\not=b}}\epsilon(a,\phi)\epsilon(c,b)(-1)^{i_a+i_\phi+i_b+i_c} \lceil a^* \bullet G(\phi,b)\bullet c^*\bullet G(b,\phi) \rceil\\
	&+&\sum_{\substack{(c,a,b,\phi)\in  C_{\circ}\times A_{\circ}\times B_{\circ}^2\\ \phi\not=b}}\epsilon(c,\phi)\epsilon(b,a) (-1)^{i_a+i_\phi+i_b+i_c} \lceil c^* \bullet G(\phi,b)\bullet a^*\bullet G(b,\phi) \rceil\\
	&=&\sum_{\substack{(a,c)\in A_{\circ}\times C_\circ \\ (b_0,b_1)\in B_{\circ}^2\\ b_0\not=b_1}}\left(\epsilon(a,b_1)\epsilon(c,b_0) + \epsilon(c,b_0)\epsilon(b_1,a) \right)(-1)^{i_a+i_{b_0}+i_{b_1}+i_c} \lceil a^* \bullet G(\phi,b)\bullet c^*\bullet G(b,\phi) \rceil=0\ ,\end{eqnarray*}
where we used  the change of variables $(b_0,b_1)=(b,\phi)$ in the second line and $(b_0,b_1)=(\phi,b)$ in the third and used the cyclic invariance.

The second cancellation-1 follows by a similar argument
\begin{eqnarray*}
		R_1(A,B,C)+Q_2(B,C,A)
	&=&\sum_{(a,b,c)\in A_{\circ}\times B_{\circ}\times C_{\circ}} \epsilon(a,c)\epsilon(c,b)(-1)^{i_a+i_b} \lceil b^*\bullet c^* \bullet a^*\bullet c) \rceil\\\
	&+&\sum_{(a,b,c)\in A_{\circ}\times B_{\circ}\times C_{\circ}} \epsilon(b,c)\epsilon(a,c)(-1)^{i_a+i_b} \lceil b^*\bullet c^* \bullet a^*\bullet c) \rceil\ =0\ .
\end{eqnarray*}
Similarly for the second cancellation-2. Finally the hexagonal cancellation-1 follows from the hexagonal relation
\begin{eqnarray*}
\epsilon(a,u_{b,c})\epsilon(c,b)+\epsilon(b,u_{c,a})\epsilon(a,c)+\epsilon(c,u_{a,b})\epsilon(b,a)=0	\ ,
\end{eqnarray*}
which is itself a consequence of lemma \ref{pro:hex} and the assumption \eqref{ass:disjoint-support}. A  similar argument works the second hexagonal relation. \end{proof}

\subsection{The various possibilities for the triple bracket}
We have to consider 3 different possibilities for the triple brackets $[A,[B,C]]$ taking in account whether $B$ and $C$ have rank 1.

The following lemma will be a consequence of  lemma \ref{lem:cbk}. We will also use the following conventions: 
\begin{eqnarray*}
	& &\hbox{ if } Q_1(U,V,W)=Q_2(U,V,W)\hbox{ , then we write } Q(U,V,W)\defeq Q_1(U,V,W)=Q_2(U,V,W)\ ,\\
	& &\hbox{ if } R_1(U,V,W)=R_2(U,V,W)\hbox{ , then we write } R(U,V,W)\defeq R_1(U,V,W)=R_2(U,V,W)\ .
\end{eqnarray*}
\begin{lemma}[\sc Triple bracket]\label{lem:trip}
	We have the following four possibilities (independent of the rank of $U$) for the triple brackets
\begin{enumerate}
	\item The polygons $V$ and $W$ have both rank greater than 1, then \begin{eqnarray}
[U,[V,W]] &=&P_1(U,V,W)+P_2(U,V,W)+Q_1(U,V,W)+Q_2(U,V,W)\nonumber\\
&+&R_1(U,V,W)+R_2(U,V,W)+ S_1(U,V,W)+S_2(U,V,W)\ .\label{eq:22}
\end{eqnarray}
\item Both $v\defeq V$ and $w\defeq W$ have rank 1, then  \begin{eqnarray}
	[U,[v,w]]&=&Q(U,v,w)+R(U,v,w)+S_1(U,v,w)+S_2(U,v,w)\ .\label{eq:21}
	\end{eqnarray}
\item The polygon  $W$ has  rank greater than  1, while $v\defeq V$ has rank 1, then 
\begin{eqnarray*}
	[U,[v,W]]=P_2(U,v,W)+Q(U,v,W)+R_1(U,v,W)+R_2(U,v,W)+S_1(U,v,W)+S_2(U,v,W) \ .
	\end{eqnarray*}
	\item The polygon  $W$ has  rank greater than  1, while $v\defeq V$ has rank 1, then 
\begin{eqnarray*}
	[U,[V,w]]&=&P_2(U,W,v)+R(U,W,v)+Q_1(U,W,v)+R_2(U,W,v)\\ &+& S_1(U,W,v)+S_2(U,W,v) \ .
	\end{eqnarray*}
\end{enumerate}

\end{lemma} 
\begin{proof} This is deduced from lemma \ref{lem:cbk}. Indeed we deduce from that lemma that  we have 
\begin{enumerate}
	\item if $V$ is a geodesic, then case 1 does not happen, and case 3 and case 4 coincide, thus
	$$
	P_1(U,V,W)=0\ , \ Q_1(U,V,W)=Q_2(U,V,W)\eqdef Q(U,V,W)\ .	$$
	\item Symmetrically, if $W$ is a geodesic, then case 2 does not happen, and case 5 and case 6 coincide, thus
	$$
	P_2(U,V,W)=0\ , \ R_1(U,V,W)=R_2(U,V,W)\eqdef R(U,V,W)\ .	\qedhere$$
\end{enumerate}
\end{proof}
\subsection{Proof of the Jacobi identity}
We will use freely in that paragraph lemma \ref{lem:trip}
\begin{proof}[Proof 1:  all three ghost polygons have rank greater than 1]
The previous discussion gives 
\begin{eqnarray*}
[A,[B,C]]&=&	P_1(A,B,C)+P_2(A,B,C)+Q_1(A,B,C)+Q_2(A,B,C)\\
&+&R_1(A,B,C)+R_2(A,B,C)+ S_1(A,B,C)+S_2(A,B,C)\ ,\\
\lbrack B,[C,A]]&=&P_1(B,C,A)+P_2(B,C,A)+Q_1(B,C,A)+Q_2(B,C,A)\\
&+&R_1(B,C,A)+R_2(B,C,A)+ S_1(B,C,A)+S_2(B,C,A)\ ,\\
\lbrack C,[A,B]]&=&P_1(C,A,B)+P_2(C,A,B)+Q_1(C,A,B)+Q_2(C,A,B)\\
&+&R_1(C,A,B)+R_2(C,A,B)+ S_1(C,A,B)+S_2(C,A,B)\ .
\end{eqnarray*}
The proof of the Jacobi identity then follows from  the  cancellations \eqref{eq:cancel}.
\end{proof}

\begin{proof}[Proof  2: all three ghost polygons have rank  1], then   writing $a\defeq A$, $b\defeq B$ and $c\defeq C$, we have 
\begin{eqnarray*}
	[a,[b,c]]&=&Q(a,b,c)+R(a,b,c)+S_1(a,b,c)+S_2(a,b,c)\ ,\\
	\lbrack b,[c,a]]&=&Q(b,c,a)+R(b,c,a)+S_1(b,c,a)+S_2(b,c,a)  \\
		\lbrack c,[b,a]]&=&Q(c,a,b)+R(c,a,b)+S_1(c,a,b)+S_2(c,a,b)\ . 
	\end{eqnarray*}
The Jacobi identity follows from the  cancellations \eqref{eq:cancel}.
\end{proof}

\begin{proof}[Proof 3: exactly one of the three polygons has  rank 1] 
Assume $a\defeq A$ is a geodesic, $B$ and $C$ has rank greater than  1. Then 
\begin{eqnarray*}
[a,[B,C]]&=&P_1(a,B,C)+P_2(a,B,C)+Q_1(a,B,C)+Q_2(a,B,C)\\
&+&R_1(a,B,C)+R_2(a,B,C)+ S_1(a,B,C)+S_2(a,B,C)\ ,\\
\lbrack C,[a,B]]&=&P_2(C,a,B)+Q_1(C,a,B)+Q_2(C,a,B)
+R(C,a,B)+ S_1(C,a,B)+S_2(C,a,B)\ ,\\
\lbrack B,[C,a]] &=&P_1(B,C,a)+R_1(B,C,a)+R_2(B,C,a)
+Q(B,C,a)+ S_1(B,C,a)+S_2(B,C,a)\ .
\end{eqnarray*} 
Then again the cancellations \eqref{eq:cancel}, yields the Jacobi identity in that case.
\end{proof}

\begin{proof}[Proof of the final possibility : exactly two of the three polygons have  rank 1] 

We have here that $A$ has rank greater than 1, while $b\defeq B$ and $c\defeq C$ are geodesics, then
\begin{eqnarray*}
	[A,[b,c]]&=&Q(A,b,c)+R(A,b,c)+S_1(A,b,c)+S_2(A,b,c)\ 
	,\\
	\lbrack b,[c,A]]&=&P_1(b,c,A)+Q_1(b,c,A)+Q_2(b,c,A)
+R(b,c,A)+ S_1(b,c,A)+S_2(b,c,A)\ ,\\
	\lbrack c,[A,b]]&=&P_2(c,A,b)+R_1(c,A,b)+R_2(c,A,b)
+Q(c,A,b)+ S_1(c,A,b)+S_2(c,A,b)\ .
\end{eqnarray*} 
For the last time,  the cancellations \eqref{eq:cancel}, yields the Jacobi identity in that case.	
\end{proof}

\end{appendix}

\bibliographystyle{amsplain}
\bibliography{./poisson.bib}

\end{document}